\newtheorem{proposition}{Proposition}
\newtheorem{lemma}[proposition]{Lemma}
\newtheorem{corollary}[proposition]{Corollary}
\newtheorem{theorem}[proposition]{Theorem}
\definecolor{bluep}{rgb}{0.2,0.2,0.6}
\definecolor{cadmium}{rgb}{0.0,0.42,0.24}
\theoremstyle{definition}
\newtheorem{definition}[proposition]{Definition}
\newtheorem{remark}[proposition]{Remark}
\newtheorem{hypothesis}[proposition]{Hypothesis}
\newtheorem*{remark*}{Remark}
\newtheorem*{example*}{Example}
\newtheorem{example}[proposition]{Example}
\newcommand{\Int}{\operatorname{Int}}
\newcommand{\fib}{\operatorname{fib}}
\newcommand{\codim}{\operatorname{codim}}
\newcommand{\Aa}{\mathcal{A}}
\newcommand{\Bu}{\operatorname{B_1}}
\newcommand{\Bd}{\operatorname{B_2}}
\newcommand{\Bt}{\operatorname{B_3}}
\newcommand{\Pu}{\operatorname{P_1}}
\newcommand{\Pd}{\operatorname{P_2}}
\newcommand{\Pt}{\operatorname{P_3}}
\newcommand{\Cu}{\operatorname{C_1}}
\newcommand{\Cd}{\operatorname{C_2}}
\newcommand{\Ct}{\operatorname{C_3}}
\newcommand{\Tu}{\operatorname{T_1}}
\newcommand{\Td}{\operatorname{T_2}}
\newcommand{\Tt}{\operatorname{T_3}}
\newcommand{\rank}{\operatorname{rank}}
\newcommand{\modd}{\operatorname{mod}}
\newcommand{\Rr}{\operatorname{R}}
\newcommand{\LC}{\operatorname{LC}}
\newcommand{\Diff}{\operatorname{Diff}}
\newcommand{\cl}{\operatorname{cl}}
\newcommand{\Cl}{\operatorname{Cl}}
\newcommand{\PSOn}{\operatorname{P_{\SO(n)}}}
\newcommand{\PSO}{\operatorname{P_{\SO(2)}}}
\newcommand{\scal}{\operatorname{scal}}
\newcommand{\SO}{\operatorname{SO}}
\newcommand{\Imag}{\operatorname{Im}}
\newcommand{\Spin}{\operatorname{Spin}}
\newcommand{\Psp}{\operatorname{P_{Spin(n)}}}
\newcommand{\Pspd}{\operatorname{P_{Spin(2)}}}
\newcommand{\Te}{\mathcal{T}}
\newcommand{\NN}{\mathcal{N}}
\newcommand{\Ss}{\mathcal{S}}
\newcommand{\II}{\mathcal{I}}
\newcommand{\sus}{\operatorname{sus}}
\newcommand{\sbm}{\operatorname{sb}}
\newcommand{\cc}{\operatorname{c}}
\newcommand{\cp}{\operatorname{cp}}
\newcommand{\fffb}{\operatorname{fff_b}}
\newcommand{\fffc}{\operatorname{fff_c}}
\newcommand{\Xs}{\operatorname{X_{cp}^1}}
\newcommand{\Xd}{\operatorname{X_{cp}^2}}
\newcommand{\Xt}{\operatorname{X_{cp}^3}}
\newcommand{\Bm}{\operatorname{Br_2}}
\newcommand{\Ba}{\operatorname{Br_1}}
\newcommand{\Ddd}{\operatorname{D}}
\newcommand{\Dd}{\slashed{\Ddd}}
\newcommand{\WF}{\operatorname{WF}}
\newcommand{\ff}{\operatorname{ff}}
\newcommand{\ffb}{\operatorname{ff_b}}
\newcommand{\tb}{\operatorname{tb}}
\newcommand{\ffc}{\operatorname{ff_c}}
\newcommand{\tf}{\operatorname{tf}}
\newcommand{\lp}{\left(}
\newcommand{\rp}{\right)}
\newcommand{\EE}{\mathcal{E}}
\newcommand{\PP}{\operatorname{P}}
\newcommand{\Diag}{\operatorname{Diag}}
\newcommand{\Ker}{\operatorname{Ker}}
\newcommand{\dvol}{\operatorname{dvol}}
\newcommand{\Spec}{\operatorname{Spec}}
\newcommand{\End}{\operatorname{End}}
\newcommand{\supp}{\operatorname{supp}}
\newcommand{\id}{\operatorname{id}}
\newcommand{\Tr}{\operatorname{Tr}}
\newcommand{\dd}{\operatorname{d}}
\newcommand{\RR}{\mathbb{R}}
\newcommand{\CC}{\mathbb{C}}
\title[Degenerating Dirac spectrum]{On the Dirac spectrum on degenerating Riemannian surfaces}
\author{Cipriana Anghel}
\email{cipriana.anghel-stan@mathematik.uni-goettingen.de; cianghel@imar.ro}
\begin{document}

\address{
\begin{flushleft}
Mathematisches Institut, Universität Göttingen, Bunsenstr. 3-5, 37073, Göttingen, Germany
\end{flushleft}  
\begin{flushleft}
Institutul de Matematică al Academiei Române, Calea Griviței 21, 010702, Bucharest, Romania
\end{flushleft}
}

\date{\today}

\begin{abstract}
We study the behavior of the spectrum of the Dirac operator on degenerating families of compact Riemannian surfaces, when the length $t$ of a simple closed geodesic shrinks to zero, under the hypothesis that the spin structure along the pinched geodesic is non-trivial. The difficulty of the problem stems from the non-compactness of the limit surface, which has finite area and two cusps. The main idea in this investigation is to construct an adapted pseudodifferential calculus, in the spirit of the celebrated \emph{b}-algebra of Melrose, which includes both the family of Dirac operators on the family of compact surfaces and the Dirac operator on the limit non-compact surface, together with their resolvents. We obtain smoothness of the spectral projectors, and $t^2 \log t$ regularity for the cusp-surgery trace of the relative resolvent in the degeneracy process as $t \searrow 0$.
\end{abstract}

\maketitle

\section{Introduction} \label{introducere}
\subsection*{Context and Motivation}\label{degen}
Let $X$ be a smooth compact oriented surface of genus $g \geq 2$ and denote by $ \mathcal M_{-1}(X)$ the set of hyperbolic metrics on $X$. By the classical Poincaré-Koebe uniformization theorem of Riemann surfaces, $ \mathcal M_{-1}(X)$ is in one-to-one correspondence with the set of complex structures on $X$. The Teichmüller space $\mathcal T_g$ is defined by factoring the set $ \mathcal M_{-1}(X)$ by the connected component of $\id_M$ in the group of diffeomorphisms of $X$. It is a non-complete Kähler manifold of dimension $3g-3$. 

Let $\gamma \subset X$ be a simple closed curve. Consider a smooth family of metrics $(g_t)_{1 \geq t \geq 0}$ on $X \setminus \gamma$ which near $\gamma$ look like hyperbolic cylinders:
\begin{equation}\label{metr}
\begin{aligned}
g_t = \frac{dx^2}{x^2 +t^2} + \lp x^2 + t^2 \rp d y^2,  && && && && && (x,y) \in  \lp  - \frac{t}{\sinh \tfrac{t}{2}}, \frac{t}{\sinh \tfrac{t}{2}} \rp \times S^1.
\end{aligned}
\end{equation}

For $t>0$, $g_t$ extends to a smooth metric on $X$, hyperbolic on a uniform neighborhood of the pinched curve $\gamma$ which becomes a simple closed geodesic, while $(X \setminus \gamma,g_0)$ is a complete Riemannian non-compact surface with two hyperbolic cusps. We call such a family of metrics a \emph{pinching process} along the geodesic $\gamma$. We stress that the metric $g_t$ does not need to be hyperbolic everywhere, but only locally, near $\gamma$. Our results remain valid when $\gamma$ is disconnected, i.e., when we pinch simultaneously several disjoint simple curves, but for simplicity we will describe below the case where we only pinch one geodesic.

If we choose the family of metrics $(g_t)_{t \geq 0}$ in the set $\mathcal M_{-1}(X)$, pinching up to $3g-3$ disjoint simple closed geodesics gives a path in the Teichmüller space escaping towards infinity. Such degeneration phenomena were studied by Ji \cite{Ji}, Bär \cite{Bar}, Schulze \cite{Schul}, and very recently by Stan \cite{rares}. Furthermore, using an adapted Selberg trace formula, Monk and Stan \cite{rarlaura} studied the distribution of the eigenvalues of the Dirac operator on typical (in Weil–Petersson sense) surfaces of large genus $g$ with $o(\sqrt{g})$ cusps. We could also mention an ongoing study of Benabida \cite{benabida} on the spectrum of the Laplacian undergoing a certain type of degeneracy in higher dimensions.

It is well-known that the spectrum of a geometric elliptic differential operator like the Laplacian or the Dirac operator varies continuously under smooth perturbations of the metric. Our aim is:
\begin{quote}
Study the continuity of the spectrum of the spin Dirac operator during a pinching process. 
\end{quote} 
The difficulty of the problem arises from the non-compactness of the limit surface. 

\subsection*{Key analytic point} 
There exist examples of geometric Laplacians with purely discrete spectrum on certain non-compact manifolds, like the Laplacian acting on forms \cite{laplforme}, and the magnetic Laplacian \cite{laplmagnetic}. In general, the scalar Laplacian may even have purely continuous spectrum on non-compact manifolds, e.g. on euclidean spaces. Bär \cite{Bar} proved that under some \emph{invertibility condition} on each cusp, the spectrum of the Dirac operator on complete hyperbolic surfaces of finite area is purely discrete. More precisely, if we glue a circle at the ``end" of each cusp, we require the spin structure to be non-trivial along this circle, which is equivalent to the invertibility of the Dirac operator on the circle defined with respect to the induced spin structure. Moroianu \cite{moroweyl} generalised Bär's result on manifolds of arbitrary dimension proving a Weyl's law for a wide class of open manifolds with cusp-like ends.

\subsection*{Idea of the solution} The main technique for investigating the continuity of the spectrum of the Dirac operator during a pinching process is to construct a tailored pseudodifferential calculus (similar to Melrose's \emph{b}-calculus \cite{melrose}) which includes both the family of Dirac operators $\Dd_t$ on the family of compact surfaces $(X,g_t)_{t > 0}$, and the Dirac operator $\Dd_0$ on the limit non-compact surface $(X \setminus \gamma,g_0)$. This adapted pseudodifferential calculus is closely related to the cusp calculus \cite{melnistor} (a particular case of the $\varphi$-calculus \cite{melmaz98}). More precisely, it is the cusp-calculus with a time parameter, we denote it by $\Psi^{*,*,*}_{\cp}(X)$, and we construct it for an $n$-dimension manifold, even though we will apply it for our particular case of a surface. An alternate strategy that we do not pursue here is the groupoid approach to constructing pseudodifferential algebras as in \cite{nistorammannlauter}.

McDonald \cite{mcdonald} developed the surgery \emph{b}-calculus to study families of Laplace-Beltrami operators on a compact manifold with a family of metrics which degenerate to a conic one. Building upon his work, a slightly modified version of the calculus was also utilized in \cite{melmaz}. Furthermore, Mazzeo-Melrose \cite{melmaz98} introduced the fibered cusp calculus, which provided a pseudodifferential generalization of the Atiyah-Patodi-Singer index theorem. 

Recently, Albin, Rochon, Sher \cite{ars} introduced the fibered cusp calculus with a parameter in order to study the spectrum of the Hodge Laplacian having coefficients in a flat bundle on a compact manifold which degenerates to a manifold with fibered cusps. Our cusp-surgery calculus is a particular case of their $\varphi$-surgery calculus, since they treat the more complicated case when the boundary fibrates over another closed manifold. We believe that it is worth including in this work all the details of the construction, since the focus of our investigation regards fully-elliptic differential operators, leading to different and more straightforward proofs. Very recently, Melrose \cite{generalisedproducts} announced a general method for constructing calculi with parameters using generalised products. 

We will describe the cusp-surgery pseudodifferential operators in $\Psi^{*,*,*}_{\cp}(X)$ as distributions on a certain blown-up space $\Xd$, conormal to the closure of the $(0, \infty) \times \Diag$, where $\Diag$ is the diagonal inside $X \times X$ (see Fig. \ref{defop}). Furthermore, we will impose certain polyhomogeneous behavior of the distributions towards the boundary faces of $\Xd$. Since we are interested in studying the limit case $\{ t=0 \}$ in the pinching process, we need our cusp-surgery pseudodifferential operators to be well-defined only for a small interval time $[0, t_0]$. 

A challenging and crucial result in constructing this calculus is the so-called Composition Theorem, which establishes that the calculus is closed under composition. As customary, this theorem is proved through the use of a triple space, along with multiplication of conormal distributions, and the Pull-back and Push-forward Theorems for conormal distributions (see for instance \cite{mel92}, \cite{grieser}), however we stress that the geometric structure of the triple space and of the companion \emph{b}-fibrations are by no means trivial.

To each cusp-surgery pseudodifferential operator $A \in \Psi^{*,*,*}_{\cp}(X)$, we will associate three leading symbols: $\sigma_{\cp}(A)$, $\NN(A)$, and $\Te(A)$. More precisely, the cusp-parameter symbol $\sigma_{\cp}(A)$ is the leading term in the principal symbol of the conormal distribution $k_A$. The normal operator $\NN(A)$ is a normalization of the restriction of $A$ to the cusp front face $\ffc$. Finally, the temporal operator $\Te (A)$ is the normalized restriction of $A$ to $\tb$, the lift of the temporal boundary $\{ t=0 \} \times X \times X \subset [0, \infty) \times X \times X$ to the double space $\Xd$ (see Fig. \ref{doublesp}).

If an operator $A \in  \Psi^{*,*,*}_{\cp}(X)$ has all the three symbols invertible, one can construct a parametrix modulo residual operators, i.e., operators belonging to $\Psi^{-\infty,-\infty,-\infty}_{\cp}(X)$.

\subsection*{Main results} 
Let $\Pspd X \longrightarrow X$ be a spin structure non-trivial along $\gamma$ (see Hypothesis \ref{hypot}). We manage to merge the correspondent family of Dirac operators $\lp \Dd_t \rp_{t>0}$ on the Riemannian surfaces $(X,g_t)$, together with the Dirac operator $\Dd_0$ on the limit non-compact surface $(X \setminus \gamma ,g_0)$, to obtain a cusp-surgery differential operator 
\[ \Dd \in \Psi^{1,1,0}_{\cp}(X), \]
in the sense that the restriction of $\Dd$ at time $t$ is $\Dd_t$, for every $t \geq 0$.

A crucial result is to prove that the normal operator $\NN(\Dd)$ of $\Dd$ is invertible. Here we rely on the hypothesis of non-triviality of the spin structure along the geodesic $\gamma$. We first prove that if $\lambda$ is not an eigenvalue of $\Dd_0$, then the resolvent family $(\Dd_t-\lambda)^{-1}$ for small time $t$ belongs to the calculus. 

\begin{theorem}\label{resolvintro}
Let $X$ be a compact oriented surface and let $\gamma \subset X$ be a simple closed curve, possibly with several connected components. Consider a smooth family of metrics $(g_t)_{t \geq 0}$ on $X \setminus \gamma$ defining a pinching process along $\gamma$ as in \eqref{metr}. Fix a spin structure $\Pspd X \longrightarrow X$ non-trivial along $\gamma$ in the sense of Hypothesis \ref{hypot}. Let $(\Dd_t)_{t > 0}$ be the family of Dirac operators on $X$ corresponding to $(g_t)_{t > 0}$, and let $\Dd_0$ be the Dirac operator on $X \setminus \gamma$ corresponding to the metric $g_0$. 

If $\lambda \in \RR \setminus \Spec \Dd_0$, then there exists $t_0 (\lambda) >0$ such that the operator $(\Dd_t - \lambda)$ is invertible for every $ 0 \leq t \leq t_0 (\lambda)$, and the resolvents $(\Dd_t-\lambda)^{-1}_{0 \leq t \leq t_0 (\lambda)}$ are the restriction at time $t$ of a cusp-surgery operator
\[(\Dd-\lambda)^{-1} \in \Psi^{-1,-1,0}_{\cp}(X). \]
\end{theorem}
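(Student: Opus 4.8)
The plan is to show that $\Dd-\lambda$, regarded as an element of $\Psi^{1,1,0}_{\cp}(X)$ via $\lambda\cdot\id\in\Psi^{0,0,0}_{\cp}(X)\hookrightarrow\Psi^{1,1,0}_{\cp}(X)$, is \emph{fully elliptic} in the cusp-surgery calculus, i.e. that its three leading symbols $\sigma_{\cp}(\Dd-\lambda)$, $\NN(\Dd-\lambda)$ and $\Te(\Dd-\lambda)$ are all invertible. Granting this, the Composition Theorem together with surjectivity of the three symbol maps produces a two-sided parametrix $G_0\in\Psi^{-1,-1,0}_{\cp}(X)$ with $(\Dd-\lambda)G_0-\id$ and $G_0(\Dd-\lambda)-\id$ in the residual ideal $\Psi^{-\infty,-\infty,-\infty}_{\cp}(X)$, and a final Neumann-series correction turns $G_0$ into a genuine inverse on a small time interval $[0,t_0(\lambda)]$. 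The cusp-parameter symbol is immediate: $\sigma_{\cp}(\Dd-\lambda)=\sigma_{\cp}(\Dd)$ is the principal symbol of a Dirac operator, i.e. Clifford multiplication, invertible off the zero section.

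Next, the temporal operator is $\Te(\Dd-\lambda)=\Dd_0-\lambda$, the Dirac operator of the complete surface $(X\setminus\gamma,g_0)$ shifted by $\lambda$. Because the spin structure is non-trivial along $\gamma$, the Dirac operator on each emerging cusp circle is invertible, so by Bär's theorem $\Dd_0$ is essentially self-adjoint with \emph{purely discrete} spectrum; hence for $\lambda\in\RR\setminus\Spec\Dd_0$ the operator $\Dd_0-\lambda$ is invertible on $L^2(X\setminus\gamma)$. Moreover $\Dd_0-\lambda$ is fully elliptic as a cusp operator on $X\setminus\gamma$: its symbol is elliptic and its cusp normal operator coincides with that of $\Dd_0$ — the shift $\lambda\,\id$ is of strictly lower cusp weight and drops out at the cusp face — which is invertible precisely by Bär's invertibility condition. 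Thus its inverse lies in $\Psi^{-1,-1}_{\cp}(X\setminus\gamma)$, i.e. $\Te(\Dd-\lambda)$ is invertible in the temporal-face calculus; the dependence of $t_0$ on $\lambda$ enters here through the distance from $\lambda$ to $\Spec\Dd_0$.

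The remaining, and central, point is the invertibility of $\NN(\Dd-\lambda)$ at the cusp front face $\ffc$. Again the shift $\lambda\,\id$ is subleading there, so $\NN(\Dd-\lambda)=\NN(\Dd)$, and one must show that $\NN(\Dd)$ — the Dirac operator of the rescaling-invariant model carried by $\ffc$, which is a finite-area surface with two hyperbolic cusps modelling the pinched neck — is invertible. Decomposing along the $S^1$-factor of the degenerating hyperbolic cylinder \eqref{metr}, the non-triviality of the spin structure along $\gamma$ removes the zero Fourier mode, and on every remaining mode the model reduces to a first-order ODE system with a confining potential of size $|k+\tfrac12|$, $k\in\mathbb{Z}$, all of which are invertible. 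This is the same mechanism as in Bär's analysis of a single cusp, now applied to the neck model, and it yields invertibility of $\NN(\Dd)$ with inverse in the model calculus attached to $\ffc$.

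With the three symbols inverted, choose $G_0\in\Psi^{-1,-1,0}_{\cp}(X)$ realizing these inverses as leading symbols; by the Composition Theorem $(\Dd-\lambda)G_0=\id-R$ and $G_0(\Dd-\lambda)=\id-R'$ with $R,R'\in\Psi^{-\infty,-\infty,-\infty}_{\cp}(X)$, and after one round of improvement the left and right parametrices agree modulo residual operators, so $\id-R$ is invertible from both sides once $R$ is dealt with. A residual operator is smoothing and vanishes to infinite order at every boundary face, in particular at the temporal boundary $\tb$, so $\|R_t\|_{L^2(X,g_t)\to L^2(X,g_t)}=O(t^\infty)$; fix $t_0(\lambda)>0$ with $\|R_t\|\le\tfrac12$ for $t\le t_0(\lambda)$. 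All powers $R^n$ are residual, and the Neumann series $\sum_{n\ge0}R^n$ can be summed asymptotically inside $\Psi^{-\infty,-\infty,-\infty}_{\cp}(X)$ to an element $\id+S$ inverting $\id-R$. Then $(\Dd-\lambda)^{-1}:=G_0(\id+S)\in\Psi^{-1,-1,0}_{\cp}(X)$ is a genuine two-sided inverse of $\Dd-\lambda$ over $[0,t_0(\lambda)]$; restricting at each $t\in(0,t_0(\lambda)]$ shows $\lambda\notin\Spec\Dd_t$ and that $(\Dd_t-\lambda)^{-1}$ is its restriction, while at $t=0$ it restricts to $(\Dd_0-\lambda)^{-1}$. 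I expect the normal-operator step to be the main obstacle: the front face $\ffc$ carries a genuinely non-compact model, so $\NN(\Dd)$ is itself an operator on a manifold with ends whose own indicial data must be controlled, and it is exactly here that the hypothesis on the spin structure along $\gamma$ — the analogue of Bär's condition, now uniform in the surgery parameter — is indispensable; by contrast, the asymptotic summation of the Neumann series in the residual ideal is routine.
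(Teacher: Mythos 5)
Your proof follows the paper's strategy exactly: establish full ellipticity of $\Dd-\lambda$ (all three symbols $\sigma_{\cp}$, $\NN$, $\Te$ invertible), build a parametrix $G_0\in\Psi^{-1,-1,0}_{\cp}(X)$ with residual error, and upgrade to a genuine inverse on a small time interval via a Neumann series using the $O(t^\infty)$ decay of the residual operator norm. The $\sigma_{\cp}$ and $\Te$ steps, and the final Neumann argument, are handled correctly and in the same way as the paper.

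There is, however, a genuine conceptual error in the normal-operator step. You describe $\NN(\Dd)$ as ``the Dirac operator of the rescaling-invariant model carried by $\ffc$, which is a finite-area surface with two hyperbolic cusps modelling the pinched neck,'' and then speak of controlling the ``indicial data'' of this operator on a manifold with ends. That is not what the normal operator is. The two-cusped finite-area surface is the model that the \emph{temporal} face $\tb$ sees: $\Te(\Dd)=\Dd_0$ on $X\setminus\gamma$. The cusp front face $\ffc$, by contrast, is a line bundle over $[0,\pi]\times\gamma\times\gamma$, and $\NN(\Dd)$ is a $[0,\pi]_\theta$-family of \emph{translation-invariant (suspended)} operators on $\mathbb{R}_u\times\gamma$. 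Its invertibility is not a question of indicial data at the ends of a cusp surface; it is established by taking the Fourier transform in the $\mathbb{R}_u$-fiber to obtain the indicial family, which the paper computes explicitly to be $\Dd_{\vert\gamma}+i\xi\,\tfrac{1}{2\sqrt2}(1+\sin^2\theta)$, whence $\II(\Ddd)(\xi)^*\II(\Ddd)(\xi)=\Dd_{\vert\gamma}^2+\tfrac{\xi^2}{8}(1+\sin^2\theta)^2\geq\tfrac14$ by the non-triviality of the spin structure along $\gamma$. Your appeal to ``removing the zero Fourier mode'' and eigenvalues $|k+\tfrac12|$ captures the right mechanism, but the geometric frame you attach it to is wrong, and the gap would become real the moment you tried to carry the computation out: you would be working with the wrong model space. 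A separate, smaller imprecision is the phrase ``summed asymptotically inside $\Psi^{-\infty,-\infty,-\infty}_{\cp}(X)$'' -- an asymptotic sum only fixes a class modulo a lower-order correction, which inside the residual ideal buys you nothing; what is actually needed (and what the $O(t^\infty)$ bound supplies) is genuine convergence of the Neumann series in all $\mathcal C^k$ norms on $[0,t_0]\times X\times X$, producing an honest element of the residual class.
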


Our second main result is the smoothness of the spectral projectors in a pinching process.
\begin{theorem}\label{convprspectrintro1}
Under the hypotheses of Theorem \ref{resolvintro}, let $\lambda_0$ be an eigenvalue for the limit operator $\Dd_0$. Consider $\epsilon>0$ such that 
\[ [\lambda_0-\epsilon, \lambda_0 +\epsilon] \cap \Spec \Dd_0 = \{ \lambda_0 \}. \]
Then there exists $t_1(\lambda_0)>0$ such that the family of spectral projectors $\PP_{ [\lambda_0-\epsilon, \lambda_0 +\epsilon]}$ for $t \in [0, t_1(\lambda)]$  belongs to $\Psi^{-\infty,-\infty,0}_{\cp}(X)$. More precisely, its Schwartz kernel is smooth on $[0, t_1(\lambda_0)) \times X \times X$ and it vanishes rapidly at $\{ t=0 \} \times \gamma \times \gamma$.
\end{theorem}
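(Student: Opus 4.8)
The plan is to represent the spectral projector as a Riesz contour integral of the resolvent family supplied by Theorem \ref{resolvintro}, and then to extract the asserted regularity from membership in the cusp-surgery calculus. Since $\Dd$ is formally self-adjoint, each $\Dd_t$ with $t\ge 0$ is essentially self-adjoint (completeness at $t=0$, compactness for $t>0$) with real spectrum, and by Bär's theorem $\Spec\Dd_0$ is discrete with $\lambda_0$ isolated. By hypothesis $\lambda_0\pm\epsilon\notin\Spec\Dd_0$, so the positively oriented circle $\mathcal{C}=\{\lambda\in\CC:|\lambda-\lambda_0|=\epsilon\}$ avoids $\Spec\Dd_0$ and encloses $\lambda_0$ and no other point of it. I will show below that for small $t$ the circle $\mathcal{C}$ also avoids $\Spec\Dd_t$ and encloses exactly $\Spec\Dd_t\cap[\lambda_0-\epsilon,\lambda_0+\epsilon]$; granting this, for $t\in[0,t_1)$
\[ \PP_{[\lambda_0-\epsilon,\lambda_0+\epsilon]}=\frac{1}{2\pi i}\oint_{\mathcal{C}}(\lambda-\Dd_t)^{-1}\,d\lambda, \]
the right-hand side at $t=0$ being the finite-rank orthogonal projector onto the $\lambda_0$-eigenspace of $\Dd_0$.

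The technical core is to upgrade Theorem \ref{resolvintro} to a statement holding uniformly along $\mathcal{C}$: for every $\lambda\in\mathcal{C}$ one has $(\Dd-\lambda)^{-1}\in\Psi^{-1,-1,0}_{\cp}(X)$, depending continuously (indeed holomorphically) on $\lambda$, with a single $t_1=t_1(\lambda_0)>0$ working for all $\lambda$ on the compact contour. For this I would rerun the parametrix construction of Theorem \ref{resolvintro} with $\lambda$ ranging over $\mathcal{C}$ rather than over a single real point and verify that the three leading symbols of $\Dd-\lambda$ remain invertible: $\sigma_{\cp}(\Dd-\lambda)=\sigma_{\cp}(\Dd)$ is elliptic, and $\NN(\Dd-\lambda)=\NN(\Dd)$, since $-\lambda$ is of lower order than $\Dd$ both pseudodifferentially and at the cusp front face $\ffc$, so the invertibility of $\NN(\Dd)$ obtained from the non-triviality of the spin structure along $\gamma$ applies unchanged. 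Only the temporal symbol sees the perturbation, $\Te(\Dd-\lambda)=\Dd_0-\lambda$, and it is invertible precisely because $\lambda\notin\Spec\Dd_0$ — self-adjointness of $\Dd_0$ covering the non-real $\lambda\in\mathcal{C}$ and the hypothesis covering $\lambda=\lambda_0\pm\epsilon$. Thus $\Dd-\lambda$ has a parametrix $Q_\lambda\in\Psi^{-1,-1,0}_{\cp}(X)$ with residual error in $\Psi^{-\infty,-\infty,-\infty}_{\cp}(X)$; after shrinking $t_1$ this error is, uniformly in $\lambda\in\mathcal{C}$, a contraction on $\{t\le t_1\}$, and a Neumann series inside the calculus produces $(\Dd-\lambda)^{-1}\in\Psi^{-1,-1,0}_{\cp}(X)$. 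The same symbol computation at $\lambda=\lambda_0\pm\epsilon$ gives invertibility of $\Dd_t-(\lambda_0\pm\epsilon)$ for small $t$, which retroactively justifies the contour identity above. \textbf{I expect this uniform, holomorphic version of Theorem \ref{resolvintro} — particularly the identity $\NN(\Dd-\lambda)=\NN(\Dd)$ and the uniformity of the residual error along $\mathcal{C}$ — to be the main obstacle.}

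With this in hand, the conormality and polyhomogeneity conditions defining $\Psi^{-1,-1,0}_{\cp}(X)$ pass to integrals of a continuous family of Schwartz kernels on the double space $\Xd$ against $d\lambda$ over the compact contour $\mathcal{C}$, so
\[ \PP:=\frac{1}{2\pi i}\oint_{\mathcal{C}}(\lambda-\Dd)^{-1}\,d\lambda\ \in\ \Psi^{-1,-1,0}_{\cp}(X), \]
and by the first paragraph the restriction of $\PP$ at time $t$ equals $\PP_{[\lambda_0-\epsilon,\lambda_0+\epsilon]}$ on $(X,g_t)$ for all $t\in[0,t_1)$. It remains to bootstrap the orders. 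For every such $t$ the restriction $\PP_t$ is a genuine spectral projector, hence $\PP_t^2=\PP_t$; since two elements of the calculus coinciding for all $t>0$ coincide identically, $\PP^2=\PP$ in the calculus. Iterating the Composition Theorem gives $\PP=\PP^{N}\in\Psi^{-N,-N,0}_{\cp}(X)$ for every $N\ge1$ (the diagonal and cusp-front-face orders add under composition, while the temporal order remains $0$), so $\PP\in\bigcap_{N\ge1}\Psi^{-N,-N,0}_{\cp}(X)=\Psi^{-\infty,-\infty,0}_{\cp}(X)$. Finally, unpacking this membership: order $-\infty$ at the diagonal makes the kernel smooth there, while order $-\infty$ at $\ffc$ together with regularity (order $0$) at the temporal boundary $\tb$ let the kernel descend to a smooth function on $[0,t_1)\times X\times X$ that vanishes to infinite order at the corner $\{t=0\}\times\gamma\times\gamma$ where $\tb$ meets $\ffc$ — which is exactly the claimed behavior.
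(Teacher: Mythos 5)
Your proof is correct but takes a genuinely different route for the final smoothing step. Both you and the paper begin with the Riesz contour-integral representation and the uniformity of Theorem~\ref{resolvintro} along the compact contour $\mathcal C(\lambda_0,\epsilon)$ (the paper packages the latter as Remark~\ref{holom}), arriving at $\PP \in \Psi^{-1,-1,0}_{\cp}(X)$. Where you diverge is the upgrade to $\Psi^{-\infty,-\infty,0}_{\cp}(X)$. You exploit idempotency: $\PP_t^2=\PP_t$ for $t>0$, density of $\{t>0\}$ in the blown-up double space gives $\PP^2=\PP$ as kernels in the calculus, and iterating the Composition Theorem~\ref{compositiontheorem} forces $\PP=\PP^N\in\Psi^{-N,-N,0}_{\cp}(X)$ for every $N$. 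The paper instead computes $\sigma_{\cp}(\PP)$ and $\NN(\PP)$ directly — both vanish because $\sigma_{\cp}\,\Rr(\lambda)$ and $\NN\,\Rr(\lambda)$ are $\lambda$-independent, hence holomorphic on the disk and integrate to zero — dropping the orders to $(-2,-2,0)$, and then observes that a small-calculus parametrix $Q(\lambda)\in\Psi^{-1,-1,0}_{\cp}(X)$ exists for \emph{all} $\lambda\in\mathbb C$ (only $\sigma_{\cp}$ and $\NN$ need be invertible, and they are independent of $\lambda$), agrees with $\Rr(\lambda)$ modulo $\Psi^{-\infty,-\infty,0}_{\cp}(X)$ on the contour, and is entire in $\lambda$, so the contour integral vanishes modulo residuals. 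Your bootstrap is shorter and cleaner, precisely because the operator is a projector; the paper's argument is heavier but does not rely on idempotency (so it extends to other holomorphic functional-calculus constructions) and explicitly exhibits why the leading symbols of $\PP$ vanish. Both routes conclude identically via Proposition~\ref{desumflare}.
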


When the metrics $(g_t)_{t \geq 0}$ are hyperbolic (on the entire surface $X$), Pfäffle \cite[Theorem 1.2]{pfaffle} proved that the Dirac spectrum of the limit surface is approximated by the spectra of the compact surfaces $(X,g_t)$. As a corollary to Theorem \ref{convprspectrintro1}, we obtain the convergence of the spectrum of the Dirac operator in the pinching process along the geodesic $\gamma$, generalizing Pfäffle's result for metrics that need to be hyperbolic \emph{only} near the pinched geodesic $\gamma$.

\begin{corollary}\label{convspectruluii}
Let $\epsilon>0$, $a,b \in \mathbb R \setminus \Spec \Dd_0$. Then there exists $t_2 (\epsilon,a,b)>0$ for which the Dirac operators $(\Dd_t)_{0 \leq t \leq t_2}$ have the same number $k$ of eigenvalues with multiplicities in the interval $(a,b)$. We denote by $\lp \lambda_j(t)\rp_{1 \leq j \leq k}$ the eigenvalues of $\Dd_t$ in the interval $(a,b)$ arranged in increasing order. Then for any $\lambda_j(0) \in \Spec \Dd_0 \cap (a,b)$ and for any $t \leq t_2$ we have
\[  |\lambda_j(t) - \lambda_j(0)| < \epsilon.  \]
\end{corollary}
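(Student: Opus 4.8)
The plan is to deduce Corollary~\ref{convspectruluii} from Theorem~\ref{convprspectrintro1} together with Theorem~\ref{resolvintro}, via the standard argument that spectral projectors of finite rank depending continuously (here, smoothly) on a parameter have locally constant rank. First I would observe that by the hypothesis $a,b \notin \Spec \Dd_0$ and the discreteness of $\Spec \Dd_0$ (which holds by Bär's result, invoked via Hypothesis~\ref{hypot}), the intersection $\Spec \Dd_0 \cap (a,b)$ is a finite set $\{ \mu_1 < \dots < \mu_m \}$ with total multiplicity $k$. Choose $\epsilon>0$ smaller than the given one and small enough that the intervals $[\mu_i - \epsilon, \mu_i + \epsilon]$ are pairwise disjoint, contained in $(a,b)$, and each meets $\Spec \Dd_0$ only in $\mu_i$; also pick $c_0 < a$ and $d_0 > b$ with $c_0, d_0, a, b \notin \Spec \Dd_0$ and $[c_0,a],[b,d_0]$ disjoint from $\Spec \Dd_0$. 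Applying Theorem~\ref{resolvintro} to each of the finitely many values $a,b,c_0,d_0,\mu_i \pm \epsilon$ and taking the minimum of the resulting $t_0$'s, and also applying Theorem~\ref{convprspectrintro1} to each $\mu_i$ and taking the minimum of the $t_1$'s, yields a single $t_2 = t_2(\epsilon,a,b)>0$ below which all the relevant resolvents and spectral projectors lie in the cusp-surgery calculus.

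Next I would argue that on $[0,t_2]$ the number of eigenvalues of $\Dd_t$ in $(a,b)$ is constant. The spectral projector $\PP_{[a,b]}$ (for $t \in [0,t_2]$) can be written as a sum of commuting contributions: by Theorem~\ref{convprspectrintro1} each $\PP_{[\mu_i - \epsilon, \mu_i + \epsilon]}$ belongs to $\Psi^{-\infty,-\infty,0}_{\cp}(X)$ with Schwartz kernel smooth on $[0,t_2)\times X \times X$; and since $[c_0,a]$, $[b,d_0]$, and the gaps between consecutive intervals $[\mu_i+\epsilon,\mu_{i+1}-\epsilon]$ avoid $\Spec \Dd_0$, there is no spectrum of $\Dd_0$ there, and by the resolvent statement combined with a contour-integral (Riesz projector) representation $\PP_{[\mu_i+\epsilon,\mu_{i+1}-\epsilon]} = \frac{1}{2\pi i}\oint (\Dd_t - z)^{-1}\, dz$ around that gap interval, this projector is the zero operator for all small $t$ as it is for $t=0$ (its kernel is smooth in $t$ and vanishes at $t=0$, and being a projector of integer rank it stays $0$). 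Hence for $t \in [0,t_2]$, $\PP_{(a,b)}(\Dd_t) = \sum_{i=1}^m \PP_{[\mu_i-\epsilon,\mu_i+\epsilon]}(\Dd_t)$. The rank of each summand is $\Tr \PP_{[\mu_i-\epsilon,\mu_i+\epsilon]}(\Dd_t) = \int_X \operatorname{tr} k_{\PP_i}(t,p,p)\, \dvol(p)$, which is a continuous integer-valued function of $t \in [0,t_2)$ (continuity of the pointwise trace follows from smoothness of the kernel up to $t=0$, using the rapid vanishing at $\gamma\times\gamma$ to control the integral near the cusps), hence constant and equal to its value at $t=0$, namely the multiplicity of $\mu_i$ in $\Spec \Dd_0$. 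Summing gives that $\Dd_t$ has exactly $k$ eigenvalues in $(a,b)$ for all $t \le t_2$.

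Finally I would establish the quantitative statement $|\lambda_j(t) - \lambda_j(0)| < \epsilon$. Fix $i$ and consider the interval $I_i = [\mu_i - \epsilon, \mu_i + \epsilon]$. By the previous paragraph, $\Dd_t$ has, counted with multiplicity, exactly $m_i := \operatorname{mult}(\mu_i)$ eigenvalues in $I_i$ for $t \le t_2$; combined with the fact that the gaps between the $I_i$'s and the end intervals carry no eigenvalues of $\Dd_t$, this forces the $m_i$ eigenvalues of $\Dd_t$ that lie in $I_i$ to be exactly $\lambda_{n_i+1}(t), \dots, \lambda_{n_i+m_i}(t)$ where $n_i = \sum_{l<i} m_l$, i.e.\ the block of the ordered list $(\lambda_j(t))_j$ corresponding to $\mu_i$. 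Since at $t=0$ all of $\lambda_{n_i+1}(0), \dots, \lambda_{n_i+m_i}(0)$ equal $\mu_i$, and at time $t$ all of $\lambda_{n_i+1}(t), \dots, \lambda_{n_i+m_i}(t)$ lie in $I_i = [\mu_i-\epsilon,\mu_i+\epsilon]$, we get $|\lambda_j(t) - \lambda_j(0)| \le \epsilon$ for every $j$ in this block, hence (after replacing $\epsilon$ by the slightly smaller value used above, which is still $< $ the originally given $\epsilon$) the strict inequality for all $j = 1,\dots,k$.

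The main obstacle I anticipate is the technical justification that membership of $\PP_{I_i}$ in $\Psi^{-\infty,-\infty,0}_{\cp}(X)$ genuinely yields a pointwise-continuous (indeed $C^\infty$) fibrewise trace $t \mapsto \Tr \PP_{I_i}(\Dd_t)$ down to $t=0$: one must use the precise description of the residual calculus — smoothness of the Schwartz kernel on $[0,t_1)\times X \times X$ together with the rapid decay at $\{t=0\}\times\gamma\times\gamma$ guaranteed by Theorem~\ref{convprspectrintro1} — to see that the restriction to the diagonal is a smooth family of densities on $X$ with integrable (in fact rapidly decaying near the forming cusps) behaviour, so that integration and the $t\to 0$ limit commute. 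Everything else is the classical finite-rank continuity-of-projectors argument plus bookkeeping of finitely many applications of Theorems~\ref{resolvintro} and~\ref{convprspectrintro1}.
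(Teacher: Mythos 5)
Your proof is correct and follows essentially the same strategy as the paper's: use Theorem~\ref{convprspectrintro1} to get smoothness up to $t=0$ of the (integer-valued) trace of the relevant spectral projectors, conclude local constancy of the rank, and localize to small intervals around each eigenvalue of $\Dd_0$ to obtain the quantitative bound. You are more explicit than the paper's terse argument — in particular the decomposition of $(a,b)$ into intervals $[\mu_i-\epsilon,\mu_i+\epsilon]$ plus gap intervals, the vanishing of the gap projectors via Theorem~\ref{resolvintro} and the contour representation, and the block-matching of $\lambda_j(t)$ with $\lambda_j(0)$ — all of which the paper leaves implicit but your account correctly supplies.
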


If $A \in \Psi^{m, \alpha, \beta}_{\cp}(X)$ is a trace-class cusp-surgery operator, meaning that the orders $m, \alpha, \beta$ satisfy the following inequalities:
\begin{align*}
 m<  -2, && \alpha < -1, && \beta \leq 0,
 \end{align*}
its cusp-trace is a function ${}^{\cp} \! \Tr (A) : [0, \infty) \longrightarrow \mathbb C$ which associates to each time $t$ the integral over the $t$-time slice in the diagonal ``plane" $\Delta \subset \Xd$ (see Fig. \ref{defop}). In fact, for $t>0$, the cusp trace associates to $A$ exactly the $L^2$-trace of the operator $A$ at time $t$ acting on $(X,g_t)$.

\begin{proposition}\label{traceclassintro}
Let $A \in \Psi^{m, \alpha, \beta}_{\cp}(X)$ be a trace-class cusp-surgery pseudodifferential operator.
\begin{itemize}
\item[$i)$] If $\alpha - \beta \notin \mathbb Z$, then 
\[ {}^{\cp}{\!} \Tr A \in t^{- \alpha} \mathcal C^{\infty}  [0, \infty) + t^{- \beta} \mathcal C^{\infty}  [0, \infty) . \]
\item[$ii)$] If $\alpha - \beta \in \mathbb Z$, then 
\[  {}^{\cp}{\!} \Tr A \in t^{ \min (-\alpha, - \beta)} \mathcal C^{\infty}  [0, \infty) + t^{ \max(-\alpha,-\beta) } \log t \cdot \mathcal C^{\infty}  [0, \infty) . \]
\end{itemize}
\end{proposition}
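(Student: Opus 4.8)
The plan is to compute the cusp-trace by reducing it to an integral of the Schwartz kernel of $A$ over the lifted diagonal $\Delta \subset \Xd$, and then to read off the asymptotic expansion in $t$ from the polyhomogeneous behavior of $k_A$ at the boundary faces that $\Delta$ meets. Recall that $\Delta$ is the closure of $(0,\infty)\times\Diag$ in the double space $\Xd$; because the operator is trace-class ($m < -2$, $\alpha < -1$, $\beta\le 0$), the kernel restricts to an integrable density on each time slice $\Delta_t$, and for $t>0$ this integral is literally the $L^2$-trace of $A$ acting on $(X,g_t)$, which is smooth in $t$ on $(0,\infty)$. So the only issue is the behavior as $t\searrow 0$, i.e.\ near the corner where $\Delta$ hits both the temporal boundary $\tb$ and the cusp front face $\ffc$.

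First I would localize. Away from $\gamma$ the kernel is smooth up to $\{t=0\}$ on a set where the diagonal is uniformly interior to $\ffc$, so that piece of the trace contributes a term in $\mathcal C^\infty[0,\infty)$, and in fact it converges to the $L^2$-trace of $A_0$ on the limit surface; this is harmless. The real contribution comes from a collar neighborhood of $\gamma$. There I would introduce the projective coordinates adapted to $\Xd$ near the intersection $\tb\cap\ffc$, in which the two boundary defining functions can be taken to be (essentially) $t$ and the cusp variable $x$, with the diagonal sitting at a fixed point in the fiber directions. The polyhomogeneity hypothesis says $k_A$ has an asymptotic expansion in powers of these two defining functions (with the leading exponents governed by $\alpha$ at $\ffc$ and $\beta$ at $\tb$, plus the integer steps and possible logarithmic terms coming from the index sets). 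Restricting to $\Delta$ and integrating out the fiber variable (which is compact, the circle $S^1$ in the surface case, up to the $1/x$ Jacobian factor of the cusp metric) turns this into a one-variable problem: I am left with an integral of the form $\int_0^{c} f(t,x)\,\tfrac{dx}{x}$ (or with the appropriate cusp density), where $f$ is polyhomogeneous in $(t,x)$ jointly near $(0,0)$ with exponents $\ge -\alpha$ in $x$-type behavior and $\ge -\beta$ associated to $t$.

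The key step is then the Mellin-type lemma: if $f(t,x)$ is classical polyhomogeneous near the corner $\{t=0\}\cap\{x=0\}$ of a two-dimensional corner, with index sets $E$ at $\{x=0\}$ (leading exponent $-\alpha$) and $F$ at $\{t=0\}$ (leading exponent $-\beta$), then $g(t) := \int_0^c f(t,x)\,d\mu(x)$ is polyhomogeneous at $t=0$ with index set contained in $F \cup (E + F_{\mathrm{min}})$-type combinations — concretely producing the two families $t^{-\alpha}\mathcal C^\infty$ and $t^{-\beta}\mathcal C^\infty$, where a resonance $\alpha - \beta \in \mathbb Z$ between the two exponent ladders creates the $\log t$ term at order $\max(-\alpha,-\beta)$. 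This is the standard phenomenon: integrating $x^{-\alpha}$ against $dx/x$ over $(0,c)$ produces a $t^{-\alpha}$ boundary contribution from the lower endpoint once $x$ is rescaled by $t$, while the $t^{-\beta}$ ladder passes through the integral untouched; when the two ladders collide one gets a double pole in the Mellin transform, hence the logarithm. I would phrase this cleanly by taking the Mellin transform in $t$ (or in the joint blow-down variable) and tracking poles, or equivalently by pushing forward the polyhomogeneous conormal density under the projection $\Xd \supset \Delta \to [0,\infty)_t$ and invoking the Push-forward Theorem for polyhomogeneous distributions, which gives exactly the union-of-index-sets-with-extended-union rule that manifests as $\log t$ precisely in the integer-difference case.

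The main obstacle I anticipate is bookkeeping the index sets correctly through the restriction to $\Delta$ and the fiber integration: one must check that $\Delta$ meets $\ffc$ transversally (so the restriction of a polyhomogeneous function stays polyhomogeneous with the expected leading exponent $-\alpha$ rather than something worse), that the cusp density $dx/x$ on the fiber contributes the right shift, and that no extra boundary faces of $\Xd$ (e.g.\ the lift of $\{x=0\}$ away from $t=0$, or side faces) are hit by $\Delta$ in a way that would contribute additional terms — the trace-class conditions $m<-2$, $\alpha<-1$ are exactly what guarantees absolute convergence and suppresses those. Once the geometry of $\Delta\hookrightarrow\Xd$ relative to $\ffc$ and $\tb$ is pinned down, the two cases $(i)$ and $(ii)$ fall out of the Push-forward Theorem, with the dichotomy $\alpha-\beta\notin\mathbb Z$ versus $\alpha-\beta\in\mathbb Z$ being precisely the non-overlapping versus overlapping condition on the two index ladders.
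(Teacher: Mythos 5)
Your proposal is correct and follows essentially the same route as the paper: the cusp-surgery trace is the push-forward of $k_A|_\Delta$ under the $b$-fibration $\Xs\cong\Delta\to[0,\infty)$, and the Push-forward Theorem gives the index set $(-\alpha+\mathbb N)\,\overline{\cup}\,(-\beta+\mathbb N)$ at $\{0\}$, producing $\log t$ precisely when $\alpha-\beta\in\mathbb Z$. The only slip is that your ``away from $\gamma$'' contribution lies in $t^{-\beta}\mathcal C^\infty[0,\infty)$, not in $\mathcal C^\infty[0,\infty)$, unless $\beta=0$; the global Push-forward argument you invoke at the end already handles this correctly and makes the preliminary localization unnecessary.
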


 Notice that for $t>0$, the cusp-surgery trace exists whenever $\alpha>-1$, and is clearly a $\mathcal C^{\infty}$ function. The relevance of the result above is that it describes the behavior of the cusp surgery trace towards $\{ t=0 \}$. In particular, it is of class $\mathcal C^{1}$. As a corollary, we study the cusp-surgery trace of the resolvent of the Dirac operator.

\begin{corollary}\label{puterirez}
Consider an integer $k \geq 3$, and let $\lambda \in \mathbb R \setminus \Spec \Dd_0$. Denote by  
\[ \Rr(\lambda)=(\Dd-\lambda)^{-1} \in \Psi_{\cp}^{-1,-1,0} \lp X \rp\]
the family of resolvents of the Dirac operator as in Theorem \ref{resolvintro}. Then the $k^{\text{th}}$ power of the resolvent is trace-class as in Proposition \ref{traceclassintro}, and its cusp-surgery trace function ${}^{\cp}{\!}\Tr \lp \Rr (\lambda)^k \rp$ is of Hölder class $\mathcal C^{k-1, \alpha}$ in $t \geq 0$, for any $\alpha \in (0,1)$.
\end{corollary}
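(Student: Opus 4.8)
The plan is to obtain the statement as a direct consequence of the Composition Theorem for the cusp-surgery calculus together with Proposition~\ref{traceclassintro}: once $\Rr(\lambda)^k$ is placed in $\Psi_{\cp}^{*,*,*}(X)$ with the correct orders, the Hölder regularity is an elementary feature of the model function $t^{k}\log t$.

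First I would locate the power of the resolvent in the calculus. By Theorem~\ref{resolvintro} we have $\Rr(\lambda)=(\Dd-\lambda)^{-1}\in\Psi_{\cp}^{-1,-1,0}(X)$ on $[0,t_0(\lambda)]$. Since the calculus is closed under composition and the three orders $(m,\alpha,\beta)$ add under it — the cusp-parameter symbol $\sigma_{\cp}$ being multiplicative, and the normal and temporal operators $\NN$, $\Te$ being multiplicative as well — an induction on $k$ yields $\Rr(\lambda)^k\in\Psi_{\cp}^{-k,-k,0}(X)$. For $k\geq 3$ the orders $m=-k$, $\alpha=-k$, $\beta=0$ satisfy $m<-2$, $\alpha<-1$, $\beta\leq 0$, so $\Rr(\lambda)^k$ is a trace-class cusp-surgery operator in the sense of Proposition~\ref{traceclassintro} (and for $t>0$ its cusp-surgery trace is the $L^2$-trace of $(\Dd_t-\lambda)^{-k}$ on $(X,g_t)$, since restriction at time $t$ is multiplicative).

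Next I would apply Proposition~\ref{traceclassintro}. Here $\alpha-\beta=-k\in\mathbb Z$, so case $ii)$ applies with $\min(-\alpha,-\beta)=\min(k,0)=0$ and $\max(-\alpha,-\beta)=\max(k,0)=k$, giving
\[ {}^{\cp}{\!}\Tr \lp \Rr(\lambda)^k \rp \in\mathcal C^{\infty}[0,\infty)+t^{k}\log t\cdot\mathcal C^{\infty}[0,\infty) \]
on $[0,t_0(\lambda)]$. It then remains to note that $t^{k}\log t$ belongs to $\mathcal C^{k-1,\alpha}$ near $t=0$ for every $\alpha\in(0,1)$: differentiating $j$ times for $1\leq j\leq k-1$ produces only terms $t^{k-j}\log t$ and $t^{k-j}$, so $t^{k}\log t$ extends to a $\mathcal C^{k-1}$ function on $[0,\infty)$ whose derivatives up to order $k-1$ all vanish at $0$, while its $(k-1)$-st derivative is a linear combination of $t\log t$ and $t$, which is Hölder of any exponent $<1$ but not Lipschitz (the formal $k$-th derivative $\sim\log t$ is unbounded). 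As $\mathcal C^{k-1,\alpha}_{\mathrm{loc}}[0,\infty)$ contains $\mathcal C^{\infty}[0,\infty)$ and is stable under multiplication by smooth functions, the displayed sum is of class $\mathcal C^{k-1,\alpha}$ in $t\geq 0$, which is the assertion.

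The only points demanding care — and the closest thing to an obstacle — are the additivity of the three orders under the (already established) Composition Theorem, so that $\Rr(\lambda)^k$ really sits in $\Psi_{\cp}^{-k,-k,0}(X)$ and Proposition~\ref{traceclassintro} is applicable with $\alpha=-k$, $\beta=0$; and the recognition that the single factor $t^{k}\log t$ in case $ii)$ is exactly what costs one derivative relative to the naive $\mathcal C^{\infty}$ expectation, so that $\mathcal C^{k-1,\alpha}$ (rather than $\mathcal C^{k}$) is both attainable and, generically, sharp.
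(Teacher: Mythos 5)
Your proposal is correct and follows exactly the paper's route: place $\Rr(\lambda)^k$ in $\Psi_{\cp}^{-k,-k,0}(X)$ via the Composition Theorem, verify the trace-class conditions of Proposition~\ref{traceclassintro} for $k\geq 3$, and apply case $ii)$ with $\alpha=-k$, $\beta=0$ to get ${}^{\cp}{\!}\Tr(\Rr(\lambda)^k)\in\mathcal C^{\infty}[0,\infty)+t^{k}\log t\cdot\mathcal C^{\infty}[0,\infty)$. The paper ends at that display with ``the conclusion follows''; your final paragraph just spells out the elementary fact that $t^{k}\log t\in\mathcal C^{k-1,\alpha}$ near $t=0$, which is a welcome bit of explicitness rather than a deviation.
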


Remark that ${}^{\cp}{\!} \Tr \lp \Rr(\lambda)^k \rp$ is smooth for $t>0$ so, as above, the content of this corollary lies in the behavior of the cusp-surgery trace as a function of $t$ towards $t \to 0$.

\begin{corollary}\label{improverares}
Let $\lambda, \lambda_0 \in \mathbb R$ such that the cusp differential operators $\Dd_0^2-\lambda$ and $\Dd_0^2-\lambda_0$ are invertible. Denote the resolvents of the squared Dirac operator by 
\begin{align*}
\widetilde{\Rr}(\lambda):= \lp \Dd^2-\lambda\rp^{-1} \in \Psi_{\cp}^{-2,-2,0}(X), && \widetilde{\Rr}(\lambda_0):=  \lp \Dd^2-\lambda_0 \rp^{-1} \in \Psi_{\cp}^{-2,-2,0}(X).
\end{align*}
Then the relative resolvent $\widetilde{\Rr} (\lambda) - \widetilde{\Rr} (\lambda_0) \in \Psi_{\cp}^{-3,-2,0}(X)$ is trace-class and its cusp-surgery trace behaves, as a function of $t$ near $t=0$, as follows:
\[ {}^{\cp}{\!} \Tr \lp \widetilde{\Rr} (\lambda) - \widetilde{\Rr} (\lambda_0) \rp \in \mathcal C^{\infty}  [0, \infty) + t^{2 } \log t \ \mathcal C^{\infty}  [0, \infty) . \]
\end{corollary}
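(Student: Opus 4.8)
The plan is to deduce the statement from Proposition \ref{traceclassintro}. Once $\widetilde{\Rr}(\lambda)-\widetilde{\Rr}(\lambda_0)$ is recognised as a trace-class cusp-surgery operator in $\Psi_{\cp}^{-3,-2,0}(X)$, we are in the critical case $\alpha-\beta=-2\in\mathbb Z$ of that proposition, and part $(ii)$ gives
\[
{}^{\cp}{\!}\Tr\lp\widetilde{\Rr}(\lambda)-\widetilde{\Rr}(\lambda_0)\rp\in t^{\min(2,0)}\,\mathcal C^{\infty}[0,\infty)+t^{\max(2,0)}\log t\cdot\mathcal C^{\infty}[0,\infty)=\mathcal C^{\infty}[0,\infty)+t^{2}\log t\,\mathcal C^{\infty}[0,\infty),
\]
which is exactly the assertion. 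So the whole content is the membership in $\Psi_{\cp}^{-3,-2,0}(X)$.

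First I would record that $\widetilde{\Rr}(\lambda)=(\Dd^{2}-\lambda)^{-1}$ and $\widetilde{\Rr}(\lambda_0)=(\Dd^{2}-\lambda_0)^{-1}$ belong to $\Psi_{\cp}^{-2,-2,0}(X)$, the analogue of Theorem \ref{resolvintro} for the squared operator. The operator $\Dd^{2}-\lambda$ is fully elliptic in the calculus: its cusp-parameter symbol is the scalar $|\xi|_{g}^{2}$; its normal operator is $\NN(\Dd^{2}-\lambda)=\NN(\Dd)^{2}$, invertible because $\NN(\Dd)$ is — here the non-triviality of the spin structure along $\gamma$ (Hypothesis \ref{hypot}) enters, and note that $\lambda$, being of lower order at the cusp front face $\ffc$, does not affect the normal operator; and its temporal operator is $\Te(\Dd^{2}-\lambda)=\Dd_{0}^{2}-\lambda$, invertible by assumption. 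So, exactly as for Theorem \ref{resolvintro}, a parametrix modulo $\Psi^{-\infty,-\infty,-\infty}_{\cp}(X)$ can be upgraded to the genuine inverse, giving $\widetilde{\Rr}(\lambda),\widetilde{\Rr}(\lambda_0)\in\Psi_{\cp}^{-2,-2,0}(X)$.

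Then the second resolvent identity gives the exact equality
\[
\widetilde{\Rr}(\lambda)-\widetilde{\Rr}(\lambda_0)=(\lambda-\lambda_0)\,\widetilde{\Rr}(\lambda)\,\widetilde{\Rr}(\lambda_0),
\]
so by the Composition Theorem for $\Psi^{*,*,*}_{\cp}(X)$ the relative resolvent is again a cusp-surgery operator, lying in $\Psi_{\cp}^{-3,-2,0}(X)$ — in fact in a smaller space, since the conormal order of the product drops to $-4$, but $\Psi_{\cp}^{-3,-2,0}(X)$ is precisely what we need, being already inside the trace-class range $m<-2$, $\alpha<-1$, $\beta\le0$ of Proposition \ref{traceclassintro}. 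Once this membership is in hand, Proposition \ref{traceclassintro}$(ii)$ concludes as in the first paragraph.

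The step that really requires work is the order bookkeeping inside the Composition Theorem: following the behaviour at $\ffc$ and at $\tb$ through the triple space and its companion \emph{b}-fibrations, and using the multiplicativity $\NN(AB)=\NN(A)\NN(B)$, $\Te(AB)=\Te(A)\Te(B)$ of the symbols — in particular checking that the temporal order is exactly $0$, since the temporal operators $\Dd_{0}^{2}-\lambda$ and $\Dd_{0}^{2}-\lambda_0$ differ and so nothing cancels at $\tb$. Together with the verification that $\Dd^{2}-\lambda$ is fully elliptic, that is the only non-routine input; the remainder is a direct appeal to Proposition \ref{traceclassintro}.
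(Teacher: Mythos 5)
Your proof is correct, and the final appeal to Proposition \ref{traceclassintro}$(ii)$ is exactly as in the paper, but the way you establish membership in $\Psi_{\cp}^{-3,-2,0}(X)$ is genuinely different. The paper observes that by multiplicativity of $\sigma_{\cp}$ both resolvents share the \emph{same} principal symbol $\left[\sigma_{\cp}(\Dd^2)\right]^{-1}$ (since $\lambda$ is of lower order it does not affect the order-$2$ symbol), so the difference lies in the kernel of $(\sigma_{\cp})_{-2}$; the short exact sequence in Proposition \ref{sirscurtexact}$(i)$ then drops the conormal order from $-2$ to $-3$, and the $\ffc$-order gain to $-2$ is a separate remark. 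You instead invoke the second resolvent identity $\widetilde{\Rr}(\lambda)-\widetilde{\Rr}(\lambda_0)=(\lambda-\lambda_0)\widetilde{\Rr}(\lambda)\widetilde{\Rr}(\lambda_0)$ and the Composition Theorem \ref{compositiontheorem}, which places the difference in $\Psi_{\cp}^{-4,-4,0}(X)\subset\Psi_{\cp}^{-3,-2,0}(X)$. This is in fact a \emph{stronger} conclusion than what the corollary (and the paper's proof) records: feeding $\alpha=-4$, $\beta=0$ into Proposition \ref{traceclassintro}$(ii)$ would already give ${}^{\cp}{\!}\Tr(\widetilde{\Rr}(\lambda)-\widetilde{\Rr}(\lambda_0))\in\mathcal C^\infty[0,\infty)+t^4\log t\,\mathcal C^\infty[0,\infty)$, so the regularity at $t=0$ improves from $\mathcal C^{1,\alpha}$ to $\mathcal C^{3,\alpha}$ — you might as well state that. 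Both routes require the same underlying ingredients (full ellipticity of $\Dd^2-\lambda$ so that the resolvents live in the calculus, and the multiplicativity of the three symbols), so neither is more economical, but the resolvent-identity route has the advantage of giving the sharper order count with no extra work.
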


In a future work, we intend to apply these corollaries to improve the result of Stan \cite{rares} for the asymptotic behavior of the Dirac Selberg zeta function on degenerating hyperbolic surfaces.

\subsection*{Acknowledgements} 
I am indebted to my former PhD advisor, Sergiu Moroianu, for suggesting to me this project as part of my PhD thesis, for countless enlightening discussions which led to the development of this work, and for introducing me to the world of pseudodifferential calculi. I would like to thank Daniel Grieser for giving me access to his excellent online course on Singular Analysis and for several discussions on the subject. I am grateful to Frédéric Rochon for sharing insights on his paper \cite{ars} and also on the paper of McDonald \cite{mcdonald} during a conference at CIRM. I am thankful to Rareș Stan, Léo Bénard, Thomas Schick, Victor Nistor, Oussama Benabida and Álvaro Sánchez Hernández for interesting questions and discussions regarding this project.

\subsection*{Funding}
This work was partly supported by the PNRR-III-C9-2023-I8 grant CF 149/31.07.2023 {\em Conformal Aspects of Geometry and Dynamics}.

\section{Manifolds with corners}\label{sem1}
We review below the elements from Melrose's theory of manifolds with corners \cite{melrose}, \cite{melmaz} needed for the construction of the cusp-surgery calculus. In the presentation, we will mainly follow \cite{cursgrieser}, \cite{grieser}, and \cite{mel92}. The local model for such a manifold is given by $\RR^n_k:=[0,\infty)^k \times \mathbb R^{n-k}$. We say that the non-negative variables near the corner $\{ 0 \} \times \mathbb R^{n-k}$ are of \emph{$x$-type} and the real variables are of \emph{$y$-type}.

Using the topology induced by the metric from $\RR^n$, we say that $U \subset \mathbb R^n_k$  is an \emph{open set} if for any $p \in U$ there exists an $\epsilon>0$ such that $\{ q \in \mathbb R^n_k: \ \vert p-q \vert < \epsilon  \} \subset U$. This definition is equivalent to the existence of an open set $\tilde{U} \subset \mathbb R^n$ such that $U=\tilde{U} \cap \mathbb R^n_k$, where in this description we used the induced topology from $\RR^n$.

Let $U \subset \mathbb R^n_k$ be an open set. A function $f: U \longrightarrow \mathbb C$ is \emph{smooth} if $f_{\vert_{\accentset{\circ}{U}}}$ is smooth in the usual sense and all the derivatives of $f$ extend continuously to $U$. Using Borel's Lemma, it is easy to prove that $f \in \mathcal C^{\infty}(U)$ if and only if it is the restriction to $U$ of a smooth function defined on an open set in $\mathbb R^n$. Furthermore, a function $f=(f_1,...,f_m): U \longrightarrow \mathbb R^m$ is called \emph{smooth} if each component $f_j$ is smooth, for all $j=\overline{1,m}$. Finally, a function $f: U_1 \longrightarrow U_2$ between two open sets $U_1 \subset \mathbb R^n_{k_1}$, $U_2 \subset \mathbb R^n_{k_2}$ is said to be a \emph{diffeomorphism} if $f$ is bijective, smooth and the inverse $f^{-1}$ is also smooth. 

Consider a point $p$ in an open set $U \subset \mathbb R^n_k$ and let $T_p^+U$  be the inward tangent cone, i.e. the set of all vectors $v \in \mathbb R^n$ such that there exist $\epsilon>0$ and a smooth integral curve $\gamma: [0,\epsilon) \longrightarrow U$ such that $\dot{\gamma}(0)=v$. Then we define the \emph{codimension of the point $p$} denoted by $\codim p$ to be the codimension  in $\RR^n_k$ of the biggest vector space inside $T_pU^+$. Remark that the codimension of a point is invariant to diffeomorphisms. 

\begin{definition}
A Hausdorff, second countable, topological space $X$ is called a \emph{weak manifold with corners} of dimension $n$ if there exist an open covering $(U_j)_{j \in J}$ and a family of homeomorphisms $\varphi_j: U_j \longrightarrow \varphi_j(U_j) \subset \mathbb R^n_n$ such that $\varphi_i \circ \varphi_j^{-1}$ is a diffeomorphism for all $i,j \in J$.
\end{definition}

Notice that any point in $\RR^n_k$, $0 \leq k \leq n$, has a neighborhood which is diffeomorphic to a neighbourhood of a point in $\RR^n_n$. In the above definition, a chart centered at a point $p \in X$ actually takes values into $\RR^n_{k(p)}$, where $k(p)$ is a function on $X$ with values in $\{ 0,1,...,n \}$. If $X$ is a weak manifold with corners, the codimension of every point $p \in X$ is well-defined as 
$ \codim p := \codim \varphi_j(p), $
for any $j \in J $ such that $p \in U_j$. 

A \emph{face of codimension k} of $X$ is the closure of a connected component of the set of points $p \in X$ of codimension $k$. Furthermore, a \emph{boundary hypersurface} is a face of codimension $1$ and we will denote the set of all boundary hypersurfaces by $\mathcal F_1(X)$.

\begin{definition}\label{submfd}
Let $X$ be a weak manifold with corners. We say that a connected set $S \subset X$ is a \emph{submanifold of dimension $n'$} if for any point $p \in S$ there exist a chart $(\varphi,U)$ centered at $p$ and an open set $U'$ in the neighborhood of $0$ in $\RR^n$ such that
\[ \varphi_{\vert_S} : S \cap U \longrightarrow \lp \RR^{n'}_{k'} \times \{ 0 \}^{n-n'} \rp \cap U' \subset \mathbb R^n_k.  \] 
\end{definition}

\begin{definition}
A space $X$ is called a \emph{manifold with corners} if it is a weak manifold with corners and all its boundary hypersurfaces are submanifolds.
\end{definition}

From now on, let $X$ be a manifold with corners. A function $f: X \longrightarrow \mathbb C$ is \emph{smooth} if $f \circ \varphi_j^{-1}$ is smooth for any $j \in J$, and more general, a function $f: X \longrightarrow Y$ between manifolds with corners is called \emph{smooth} if $\varphi_i \circ f \circ \psi_j^{-1}$ is smooth for any $i,j \in J$. A central notion in Melrose's theory of blow-ups of manifolds with corners is the notion of product-submanifold.

\begin{definition}
We say that $Y \subset X$ is a \emph{$p$-submanifold} if for any point $p \in Y$ there exist local coordinates in a neighborhood $U$ centered in $p$ 
\[ \varphi= \lp x_1,...,x_k,y_1,...,y_{n-k} \rp : U \longrightarrow U' \subset \mathbb R^n_k,  \] 
in which $\varphi (Y \cap U)$ is given by the vanishing of some $x_i$'s and/or some $y_j$'s.
\end{definition}
Remark that any $p$-submanifold is a submanifold in the sense of Definition \ref{submfd}, while the converse is not always true. For example, $\RR^2_2=[0,\infty) \times [0, \infty) \times \{ 0 \}$ is a submanifold of $\RR^3_1= \RR \times \RR \times [0,\infty)$, but it is \emph{not} a $p$-submanifold.
Furthermore, $\mathbb R^2_1 = [0, \infty) \times \mathbb R \times \{ 0\}$ is a p-submanifold of $ \mathbb R^3_2=[0,\infty) \times \RR \times [0,\infty)$.

\begin{definition}\label{defbdf}
Let $H$ be a boundary hypersurface in $X$. We say that a smooth function $\rho: X \longrightarrow [0,\infty)$ is a \emph{boundary defining function} for $H$ if $\rho^{-1}(0)=H$ and $d_x \rho \neq 0$, for any $x \in H$.
\end{definition} 

If $\rho$ and $\rho'$ are boundary defining functions for $H$, then there exists a unique smooth function $u:X \longrightarrow (0, \infty)$ such that $\rho'=u \rho$. Furthermore, if $H$ is a boundary hypersurface of $X$, then $H$ is a $p$-submanifold if and only if $H$ has a boundary defining function (see for instance \cite{cursgrieser}).

A function $f=(f_1,...,f_{n'}): U_1 \longrightarrow U_2$ between open sets $U_1 \subset \mathbb R^n_k$, $U_2 \subset \mathbb R^{n'}_{k'}$ is a \emph{b-map} if for any $j=\overline{1,k'}$, we either have $f_j \equiv 0$ or $f_j \lp x_1,...,x_k,y \rp = a_j(x,y) \prod_{i=1}^k x_i^{e_{ij}}$, where $a_j$ is a strictly positive smooth function and the $ e_{ij} $'s are natural numbers.

\begin{definition}\label{bmap}
Let $f: X \longrightarrow Y$ be a smooth function between manifolds with corners. We say that $f$ is a \emph{b-map} if for any boundary hypersurface $H$ of $Y$, either $f^* \rho_H \equiv 0$, or 
\[f^* \rho_H = a_H \prod_{G \in \mathcal F_1 (X)} \rho_G^{e(G,H)}, \]
where $a_H$ is a strictly positive smooth function, all the $e(G,H)$'s are natural numbers, and we denoted by $\rho_G$ a boundary defining function for the boundary hypersurface $G$.
\end{definition}

First, the condition $f^* \rho_{H} \equiv 0$ is equivalent to saying that $f(x) \in H$ for all $x \in H$, which means that the image of $f$ is a subset of $H$. If this case never appears, we call $f$ an \emph{interior b-map}. Second, let $G$ be a fixed boundary hypersurface of $X$ and $H$ a boundary hypersurface of $Y$. If the coefficient $e(G,H)=0$, then $f^* \rho_H >0$ on the interior of $G$, so $\rho_H \lp f(x) \rp >0 $ for any interior point $x$ of $G$, which implies that $f(\accentset{\circ}{G}) \cap H = \emptyset$, meaning that the points in the image of $\accentset{\circ}{G}$ are far away from $H$. Finally, if $e:=e(G,H)>0$, then $f^* \rho_H = a \rho_G^e$, where the function $a$ is strictly positive on a neighborhood of the interior of $G$. It follows that $\rho_H (f(p))=a(p) \rho_G^e(p)$, thus if $p$ is at a certain distance $\rho$ of $\accentset{\circ}{G}$, then $f(p)$ is approximately at distance $\rho^e$ from $H$.

\section{Blowing-up \texorpdfstring{$p$-submanifolds}{p submanifolds}}
In order to blow-up a point $p$ in $X$, we first remove it, and then we add a point for each direction (half-line) which emanates from $p$ inside $X$ (see e.g. \cite{grieserquasi}). The simplest example is introducing polar coordinates on $\RR^n \setminus \{ 0 \}$, i.e., the blow-up of the origin in $\RR^n$ which is given by
\[  \left[ \RR^n; \{ 0 \} \right]:= [0, \infty) \times S^{n-1}, \]
together with the blow-down map
\begin{align*}
 \beta: \left[ \RR^n; \{ 0 \} \right] \longrightarrow \RR^n, && \beta(r, \omega)= r \omega.
\end{align*} 
The new added face $\ff:= \beta^{-1} \lp \{ 0 \} \rp$ is called the \emph{front face} of the blow-up. Following  \cite[Lecture 5]{cursgrieser}, we obtain the following result. 

\begin{proposition}\label{bwmwc}
Let $U_i^{\pm}= \left\{ (r, \omega) \in [0, \infty) \times S^{n-1}: \  \pm \omega_i >0  \right\}$, for $i=\overline{1,n}$, where we denoted the components of $\omega$ by $( \omega_1,...,\omega_n)$. The space $[\RR^n, \{ 0 \}]$ is a manifold with the following $2n$ charts:
\begin{align*}
{}&\varphi_i^{+} : U_i^{+} \longrightarrow \{ z_i >0   \} \times \RR^{n-1} \subset \RR^n, &&\varphi_i^{+} (r, \omega)= \lp r \omega_i, \frac{\omega_i'}{\omega_i}  \rp,
\end{align*}
where $\omega_i'=(\omega_1,...,\hat{\omega_i},...,\omega_n)$ is obtained from $\omega$ by erasing the $i^{th}$ component. 
\end{proposition}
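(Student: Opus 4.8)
The plan is to verify directly that the $2n$ maps $\varphi_i^{\pm}$ form a smooth atlas on $[\RR^n;\{0\}] = [0,\infty)\times S^{n-1}$, compatible with its manifold-with-corners structure. First I would check that the sets $U_i^{\pm}$ do cover $[0,\infty)\times S^{n-1}$: since $\omega \in S^{n-1}$, at least one coordinate $\omega_i$ is nonzero, so $(r,\omega)$ lies in $U_i^{+}$ or $U_i^{-}$ for that index. Each $U_i^{\pm}$ is open in $[0,\infty)\times S^{n-1}$ because it is the preimage of an open half-space under the continuous map $\omega \mapsto \omega_i$. Then I would show each $\varphi_i^{\pm}$ is a homeomorphism onto its image: the inverse of $\varphi_i^{+}$ sends $(z_i, u) \in \{z_i>0\}\times\RR^{n-1}$ (writing $u = \omega_i'/\omega_i$) to $r = z_i\sqrt{1+|u|^2}$ and $\omega = (u_1,\dots,u_{i-1},1,u_i,\dots,u_{n-1})/\sqrt{1+|u|^2}$ with the $i$th slot equal to $1/\sqrt{1+|u|^2}$ after normalization; this is manifestly smooth, and one checks it composes with $\varphi_i^{+}$ to the identity on both sides. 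The case of $U_i^{-}$ is identical up to a sign. Note that the image $\{z_i>0\}\times\RR^{n-1}$ sits inside $\RR^n_1 = [0,\infty)\times\RR^{n-1}$ only on its interior; more precisely $\varphi_i^{\pm}$ maps the front face $\{r=0\}\cap U_i^{\pm}$ to $\{z_i = 0\}$ and the rest to $\{z_i>0\}$, so the target should be taken as an open subset of $\RR^n_1$, consistently with the manifold-with-corners structure where the front face $\ff$ is the single boundary hypersurface.

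Next I would verify the cocycle condition, i.e. that $\varphi_j^{\pm}\circ(\varphi_i^{\pm})^{-1}$ is a diffeomorphism on overlaps, in the sense of Section \ref{sem1} (smooth with smooth inverse between open subsets of $\RR^n_1$). On the overlap $U_i^{+}\cap U_j^{+}$ both $\omega_i$ and $\omega_j$ are positive, and the transition map is a rational change of the fiber coordinates on $S^{n-1}$ together with the rescaling $z_i \mapsto z_j = z_i \,\omega_j/\omega_i$; expressing $\omega_j/\omega_i$ in terms of the $\varphi_i^{+}$-coordinates $u = \omega_i'/\omega_i$ gives a smooth positive function (no vanishing denominators on the overlap), so the transition is smooth, and symmetry gives smoothness of the inverse. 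This simultaneously exhibits $\beta$ as a $b$-map whose only nontrivial boundary exponent is $e(\ff,\partial\RR^n)$ when one further blows down, but for the present statement only the chart compatibility is needed.

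The main obstacle, modest as it is, is bookkeeping: tracking which coordinate of $\omega$ is being divided out and confirming that the normalization $|\omega|=1$ is correctly encoded so that $(\varphi_i^{\pm})^{-1}$ genuinely lands back in $[0,\infty)\times S^{n-1}$, and that the front face $r=0$ is carried to the single coordinate hyperplane $z_i=0$ rather than to a higher-codimension corner — this is what guarantees $[\RR^n;\{0\}]$ is a manifold with corners (indeed with boundary) and not merely a weak one. Once the explicit inverse formulas above are in hand, every claim reduces to elementary calculus on $\RR^n$, so I would present the inverse maps and the transition functions explicitly and leave the routine smoothness checks to the reader.
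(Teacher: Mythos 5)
Your proof is correct and is the standard verification: you write down the explicit inverse formulas
$r = z_i\sqrt{1+|u|^2}$, $\omega = (u_1,\dots,u_{i-1},1,u_i,\dots,u_{n-1})/\sqrt{1+|u|^2}$, check that the $U_i^\pm$ cover $[0,\infty)\times S^{n-1}$, and verify that the transition maps $\varphi_j^\pm\circ(\varphi_i^\pm)^{-1}$ are diffeomorphisms of open subsets of $\RR^n_1$. The paper itself supplies no proof — it simply cites Grieser's lecture notes — so there is no alternative argument to compare against; your route is the expected one.

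Two small remarks. First, you correctly flag that the stated target $\{z_i>0\}\times\RR^{n-1}$ in the proposition should really be $\{z_i\ge 0\}\times\RR^{n-1}\cong\RR^n_1$, since points with $r=0$ in $U_i^+$ land on $\{z_i=0\}$; this is a notational slip in the paper, and your correction is right. Second, the parenthetical remark that the chart computation ``simultaneously exhibits $\beta$ as a $b$-map whose only nontrivial boundary exponent is $e(\ff,\partial\RR^n)$'' is a bit off: $\RR^n$ has no boundary hypersurfaces, so the $b$-map condition for $\beta\colon[\RR^n;\{0\}]\to\RR^n$ is vacuous (all exponents $e(\ff,H)$ are absent because there is no $H$). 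You rightly note this is not needed for the present statement, so it does not affect the proof, but the aside as phrased is not quite meaningful in this case.
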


Remark that $\beta :   \left[ \RR^n, \{ 0 \} \right]  \setminus \ff \longrightarrow \RR^n \setminus \{ 0 \} $ is the identity diffeomorphism. Furthermore, we define
\[  \left[ \RR^n_k ; \{ 0 \} \right]:= [0, \infty) \times \lp S^{n-1} \cap \RR^n_k \rp ,  \]
which is again a manifold with corners using similar charts as in Proposition \ref{bwmwc}. Remark that blow-ups typically can produce corners of higher codimension than the initial manifold. 

In general, the blow-up of a point $p$ in a manifold with corners $X$ is given by 
\[    \left[ X; \{ p \} \right]:= X \setminus \{ p \}  \sqcup S_p^+ X, \]
where $S_p^+ X:= \lp  T_p^+ X \setminus \{ 0 \} \rp / (0, \infty) $ is the set of inward  directions starting at $p$. 

Let us fix $Y$ a \emph{p}-submanifold in $X$. Consider the \emph{normal bundle}
$  NY:=(TX)_{\vert_{Y}} / TY  $
and denote by $N^+Y$ the set of vectors in $NY$ which are represented by an inward pointing vector. Moreover, let $S^+Y$ be the unit vectors in $N^+Y$. Then as a set,
\[ [X; Y]:= \lp X \setminus Y \rp \sqcup S^+Y, \]
and if we denote by $\pi: S^+ Y \longrightarrow Y$ the projection, then the blow-down map is given by
\[
\beta (p)= \left\{
    \begin{array}{ll}
        p & \mbox{ if } p \in X \setminus Y, \\
        \pi(p) & \mbox{ if } p \in Y.
    \end{array}
\right.
\]
One can check that $[X;Y]$ has a structure of manifold with corners using projective coordinates as in Proposition \ref{bwmwc} for the local model:
\[ \left[ \RR^n_{k} \times \RR^{n'}_{k'}; \RR^n_{k} \times \{ 0 \} \right]:= \RR^n_k  \times  \left[ \RR^{n'}_{k'};  \{ 0 \} \right]. \]
The new added face $S^+Y$ is called the \emph{front face} and it is a fibration of rank $n-m-1$ over the blown-up $p$-submanifold $Y$, where $n$ is the dimension of the ambient $X$, and $m$ is the dimension of $Y$. Moreover, $\beta$ is a \emph{b}-map. 

Furthermore, if $T$ is a connected subspace of $X$, we define the \emph{lift} of $T$ to $[X; Y]$ as
\[
\beta^* (T)= \left\{
    \begin{array}{ll}
        \beta^{-1} (T) & \mbox{ if } T \subset Y, \\
        \overline{\beta^{-1} (T \setminus Y)} & \mbox{ if  $T$ is not a subset of $Y$}.
    \end{array}
\right.
\]

If we have a sequence of p-submanifolds $Y_1 \subset X$,...,$Y_k \subset \left[ \left[ \left[ X;Y_1 \right]; Y_2 \right]...; Y_{k-1} \right]$, then we define the \emph{iterated blow-up}
$   \left[ X; Y_1;... ;Y_k \right] :=  \left[ \left[ \left[ X ;Y_1 \right] ; Y_2 \right] ...; Y_{k} \right].      $

\begin{definition}
Let $X$ be a manifold with corners, and let $Y,Z \subset X$ be p-submanifolds.
\begin{itemize} 
\item[$i)$] We say that $Y$ and $Z$ intersect \emph{cleanly} if  for any point $p \in Y \cap Z$, there exists local coordinates centered at $p$ in which both $Y$ and $Z$ are subspaces of coordinates. 
\item[$ii)$] We say that $Y$ and $Z$ intersect \emph{transversally} if they intersect cleanly and for any point $p \in Y \cap Z$ we have $ T_pY + T_p Z = T_p X.  $
\end{itemize}
\end{definition}
One can prove that in the context of the previous definition, if $Y$ and $Z$ intersect cleanly and $\beta: [X; Y] \longrightarrow X$ is the blow-down map, then $\beta^* Z$ is a p-submanifold in $[X; Y]$. Now we can state the result regarding the commutation of blow-ups (see e.g. \cite[Section~2.2]{melmaz})).

\begin{theorem}\label{comuteblowups}
Let $X$ be a manifold with corners, and let $Y,Z \subset X$ be p-submanifolds which intersects cleanly. Then $\left[ \left[ X;Y \right] ; Z \right] \simeq \left[ \left[ X;Z \right] ; Y \right]$ (in the sense that the lift of $\id : X \setminus \lp Y \cup Z \rp \longrightarrow  X \setminus \lp Y \cup Z \rp$ to the blow-ups extends smoothly up to the boundaries) if and only if $Y$ and $Z$ either intersect transversally, or $Y \subset Z$, or $Z \subset Y$.
\end{theorem}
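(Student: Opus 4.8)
The plan is to reduce the claim to a local computation and then to a short list of model configurations. Blowing up a $p$-submanifold is local and natural: over an open set $U\subset X$ one has $\beta^{-1}(U)=[U;W\cap U]$ canonically, and a map of manifolds with corners is a diffeomorphism as soon as it is one in every chart. Over $X\setminus(Y\cup Z)$ both iterated blow-ups are canonically $X$, and near a point of $Y\setminus Z$ or of $Z\setminus Y$ only one of the two blow-ups is non-trivial, so the lift of the identity extends there automatically; hence the issue is local around a point $p\in Y\cap Z$. First I would invoke the clean intersection hypothesis to choose coordinates centred at $p$ in which $Y$ and $Z$ are \emph{simultaneously} coordinate subspaces. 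Since blowing up a coordinate subspace introduces polar coordinates only in the vanishing variables, the variables tangent to both $Y$ and $Z$ split off as an inert product factor, and the $x$-type versus $y$-type distinction among the remaining variables will play no role below. We are thus reduced to $X=\RR^N$ with $Y=\{x_i=0:i\in I\}$ and $Z=\{x_j=0:j\in J\}$, where $I\cup J=\{1,\dots,N\}$.

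In this model $Y$ and $Z$ are transversal iff $I\cap J=\emptyset$; $Y\subset Z$ iff $J\subset I$; and $Z\subset Y$ iff $I\subset J$. The complementary \emph{bad} case is exactly that $I\cap J$, $I\setminus J$ and $J\setminus I$ are all non-empty, the minimal instance being $N=3$, $Y=\{x_1=x_2=0\}$, $Z=\{x_2=x_3=0\}$ (two lines through the origin spanning only a plane). For the ``if'' direction I would treat the three cases in turn. If $I\cap J=\emptyset$ then $I$ and $J$ partition $\{1,\dots,N\}$ and $[[X;Y];Z]=[\RR^I;\{0\}]\times[\RR^J;\{0\}]$, which is visibly symmetric in $Y$ and $Z$, so the lift of the identity extends. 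If $Z\subset Y$ (the case $Y\subset Z$ being symmetric), discarding the variables tangent to $Y$ reduces to $X=\RR^a_x\times\RR^b_y$ with $Z=\{x=0,\,y=0\}$ a point lying on the coordinate subspace $Y=\{x=0\}$; the claim becomes the classical statement that blowing up a nested pair of $p$-submanifolds is order-independent, which I would check by a direct computation in the standard projective charts of the iterated blow-up (cf.\ \cite{melmaz}, \cite{grieser}), using that the lift of the outer submanifold remains a $p$-submanifold after the first blow-up.

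For the ``only if'' direction it suffices, after the reduction above, to exhibit the failure in the minimal bad model $X=\RR^3$ with $Y=\{x_1=x_2=0\}$, $Z=\{x_2=x_3=0\}$. I would write out both iterated blow-ups in projective charts adapted to the triple locus $\{x_1=x_2=x_3=0\}$: blow up $Y$, then the lift $\{\eta=x_3=0\}$ of $Z$ with $\eta:=x_2/x_1$; versus blow up $Z$, then the lift of $Y$ with $\nu:=x_1/x_3$ in the analogous role. In these charts the interior identity carries the coordinate $\eta$ to a function of the form $x_3/\nu$, namely a variable tangent to the front face divided by the front-face direction variable $\nu$, which is not continuous at $\{\nu=0\}$. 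Concretely, along a curve such as $t\mapsto(x_3,\zeta,\nu)=(t,1,t^2)$ — which converges in $[[X;Z];Y]$ to a boundary point — the first-order blow-up coordinate $\eta$ equals $1/t$, so the lift of the identity leaves every compact subset of the matching chart of $[[X;Y];Z]$ and hence admits no continuous, \emph{a fortiori} no smooth, extension.

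I expect the main obstacle to be the nested (``if'') case together with the ``only if'' direction: the transversal case and the reductions are routine bookkeeping, but in the nested case one must track every projective chart of the iterated blow-up and verify smoothness of the comparison map and of its inverse in each — the essential use of clean intersection being precisely that the lift of the second submanifold survives as a $p$-submanifold, so that the second blow-up is even defined. For the ``only if'' direction the delicate point is not to stop at the observation that the naive coordinate changes \emph{look} singular, but to certify genuine non-extendability; the divergent curve above is the cleanest such certificate, and one could equivalently point to a boundary hypersurface of one iterated blow-up whose image fails to be a submanifold of the other.
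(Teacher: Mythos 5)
The overall reduction is the right framework and is standard: localize at a point of $Y\cap Z$ using the clean intersection hypothesis, bring both submanifolds to coordinate-subspace normal form, split off the directions tangent to both, and classify by the index sets $I,J$. The ``if'' direction is then correct in outline. (Note that the paper only states this result with a citation to Mazzeo--Melrose; you are supplying a proof where the paper offers none.) One small slip: in the nested case the phrase ``discarding the variables tangent to $Y$'' should be ``having already discarded the variables tangent to \emph{both} $Y$ and $Z$'' --- the variables indexed by $J\setminus I$ are tangent to $Y$ but are still needed to cut out $Z$, so you cannot drop them.

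The genuine gap is in the ``only if'' step. First, the displayed transition does not follow from your definitions: with $\eta=x_2/x_1$ and $\nu=x_1/x_3$ one gets $\eta=x_2/(\nu x_3)$, not $x_3/\nu$, and $\nu=x_1/x_3$ is not in fact one of the projective coordinates arising from blowing up $Z=\{x_2=x_3=0\}$ and then the lift of $Y$ (those involve ratios with $x_2$ in the first step). Second, and more fundamentally, the inference ``the curve escapes every compact subset of the matching chart, hence the identity does not extend'' is invalid: escaping one projective chart of $[[X;Y];Z]$ does not preclude convergence in a complementary chart, and for natural test curves it typically \emph{does} converge. For instance with $(x_1,x_2,x_3)=(t^2,t^4,t)$ the coordinate $\zeta=x_3/\eta=1/t$ diverges, yet the curve converges smoothly to a corner in the chart $(x_1,\eta/x_3,x_3)=(t^2,t,t)$. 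A correct certificate of non-extendability must be global. The cleanest here is the face-incidence count you allude to at the end but do not carry out: in $[[X;Y];Z]$ the codimension-three corner formed by the two front faces together with the lift of $\{x_3=0\}$ is nonempty (it is the origin in the chart $(x_1,\eta,x_3/\eta)$), whereas in $[[X;Z];Y]$ the second front face lies over $\beta^*Y$, which in the polar angle of the $Z$-blow-up sits on the lift of $\{x_2=0\}$ and is therefore disjoint from the lift of $\{x_3=0\}$, so the corresponding triple corner is empty. Since any extension of the interior identity must intertwine the blow-down maps and hence preserve the corner stratification, no such extension exists. If you prefer a curve argument, you need two interior curves with the same limit in one iterated blow-up but distinct limits in the other; a single divergent curve is not enough.
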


\section{The {\it b}-tangent space}
We say that a vector field $V \in \mathcal V(X)$ is a \emph{b-vector field} if $V$ is tangent to all the boundary hypersurfaces of $X$. We denote the set of \emph{b}-vector fields by $\mathcal V_b(X)$ and remark that it forms a Lie algebra. One can check that in local coordinates $\lp x_1,...,x_k,y_1,...y_{n-k} \rp$, such a vector field can be written as
\[ V=\sum_{i=1}^n a_i x_i \partial_{x_i} + \sum_{j=1}^{n-k} b_j \partial_{y_j},  \]
where the $a_i$'s and the $b_j$'s are smooth functions on $X$.

Using the Serre-Swan Theorem, there exists a bundle ${}^b TX$ over $X$ such that its sections are exactly the \emph{b}-vector fields. It is  the celebrated \emph{b-tangent bundle} (see e.g. \cite[Section 2]{mel92}) and it is the proper replacement for the tangent bundle $TX$ in the context of \emph{b}-geometry. The fiber of ${}^b TX$ to a point $p \in X$ can be described as
\[ {}^bT_pX = \mathcal V_b(X) / I_p  \mathcal V_b(X),   \]
where $I_p=\{  f \in \mathcal C^{\infty} (X) : f(p)=0 \}$ is the ideal inside $\mathcal C^{\infty}(X)$ of smooth functions vanishing at $p$. 
There exists a natural inclusion map 
$ \iota: {}^b TX \longrightarrow TX, $
which is a diffeomorphism over the interior of $X$. The \emph{b-normal space} at $p \in X$ is defined as 
$ {}^b N_pX:=\Ker \iota_p: {}^b T_pX \longrightarrow T_pX, $
and remark that its dimension is equal to $\codim p $.

If $f :X \longrightarrow Y$ is an interior \emph{b}-map between manifolds with corners, one can check that its differential $ d_p f : T_pX \longrightarrow T_{f(p)} Y  $ extends by continuity and defines the \emph{b-differential} ${}^b d_p f$ which makes the following diagram commutative:
\begin{equation}
\centering
\begin{split}
\begin{tikzpicture}
[x=1mm,y=1mm]
\node (tl) at (0,15) {$T_p X$};
\node (tr) at (30,15) {$T_{f(p)} Y$};
\node (bl) at (0,0) {${}^b T_p X$};
\node (br) at (30,0) {${}^b T_{f(p)} Y.$};
\draw[->] (bl) to (tl);
\draw[->] (br) to (tr);
\draw[->,densely dashed] (bl) to node[below,font=\small]{${}^b d_pf$} (br);
\draw[->] (tl) to node[above,font=\small]{$d_pf$} (tr);
\end{tikzpicture}
\end{split}
\end{equation}

The set of \emph{b-differential operators} is the universal envelopping algebra of the Lie algebra $\mathcal V_b(X)$:
\[   \Diff_b^m (X):=\left\{ a + \sum_{k=1}^m V_{i_1}...V_{i_k}: \ a \in \mathcal C^{\infty}(X), \ V_{i_l} \in \mathcal V_b(X) \right\}. \]

\section{{ \it b}-fibrations}
Following \cite[Section 2]{mel92}) , an interior \emph{b}-map $f: X \longrightarrow Y$ between manifolds with corners is called a \emph{b-fibration} if 
\begin{itemize}
\item[$i)$] $f$ is a \emph{b-submersion} i.e., $d_p f: {}^b T_p X \longrightarrow {}^{b}T_{f(p)} Y$ is surjective for any $p \in X$.
\item[$ii)$] $f$ is \emph{b-normal} i.e., $d_p f: {}^b N_p X \longrightarrow {}^{b}N_{f(p)} Y$ is surjective for any $p \in X$.
\end{itemize}

Notice  that $f: \mathbb R^2_2 \longrightarrow \mathbb R^1_1$, $f(x_1,x_2)=x_1x_2$ is a \emph{b}-fibration, but the blow-down map $\beta: [\mathbb R^2_2 ; 0] \longrightarrow \mathbb R^2_2$ is \emph{not} a \emph{b}-fibration. Recall the following result from \cite[Proposition 2.4.2]{cartemelrose}.
\begin{proposition}\label{echivbnormal}
A \emph{b}-map $f: X \longrightarrow Y$ is \emph{b}-normal if and only if for any boundary hypersurface of $X$, $e(H,H') \neq 0$ (see Definition \ref{bmap}) for at most one boundary hypersurface $H'$ of $Y$.
\end{proposition}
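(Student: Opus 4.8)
The plan is to prove both implications by working in local coordinates and reducing $b$-normality at a point to a rank condition on a submatrix of the exponent matrix $(e(H,H'))$ of $f$. Fix $p \in X$ with $\codim p = k$ and let $H_1,\dots,H_k$ be the boundary hypersurfaces of $X$ through $p$; let $H_1',\dots,H_{k'}'$ be the boundary hypersurfaces of $Y$ through $f(p)$, so $\codim f(p)=k'$. Choose coordinates $(x_1,\dots,x_k,y)$ near $p$ with $x_i=\rho_{H_i}$, and $(x_1',\dots,x_{k'}',y')$ near $f(p)$ with $x_j'=\rho_{H_j'}$. Since the boundary defining functions of the hypersurfaces of $X$ not passing through $p$ are strictly positive near $p$, they may be absorbed into a positive smooth factor, so Definition \ref{bmap} gives, near $p$,
\[ f^* x_j' = \tilde a_j \prod_{i=1}^k x_i^{e_{ij}}, \qquad \tilde a_j>0 \text{ smooth}, \quad e_{ij}:=e(H_i,H_j')\in\mathbb N. \]
(We assume $f$ is interior; when $f^*\rho_{H'}\equiv 0$ for some $H'$ one first replaces $Y$ by the boundary face $\bigcap\{H' : f^*\rho_{H'}\equiv 0\}$ and argues for the induced map.)

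Next I would identify ${}^b d_p f$ on the $b$-normal space. Recall that ${}^b N_p X = \Ker(\iota_p)\subset {}^b T_p X$ is spanned by $x_1\partial_{x_1},\dots,x_k\partial_{x_k}$, so in the $b$-cotangent basis $\{dx_i/x_i,\, dy_l\}$ the annihilator of ${}^b N_p X$ is the span of the $dy_l$, and likewise on $Y$. The transpose of ${}^b d_p f$ is the $b$-cotangent pullback, and
\[ f^*\frac{dx_j'}{x_j'} = d\log(f^*x_j') = d\log\tilde a_j + \sum_{i=1}^k e_{ij}\,\frac{dx_i}{x_i}. \]
Because $x_i(p)=0$, the term $d\log\tilde a_j$ lies in the span of the $dy_l$ at $p$, hence dies modulo the $dy$'s; therefore the transpose of ${}^b d_p f\colon {}^b N_p X\to {}^b N_{f(p)}Y$ is, in these bases, the $k\times k'$ matrix $E_p=(e_{ij})$. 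Thus ${}^b d_p f$ is surjective on the $b$-normal spaces at $p$ if and only if the columns of $E_p$ are linearly independent, i.e. $\rank E_p = k' = \codim f(p)$. Moreover, since $f(p)\in H_j'$ one has $0=\rho_{H_j'}(f(p))=f^*x_j'(p)=\tilde a_j(p)\prod_i x_i(p)^{e_{ij}}$ with $\tilde a_j(p)>0$, so each column of $E_p$ has at least one positive entry. This reduction — in particular the vanishing of the lower-order contribution $d\log\tilde a_j$ on the normal directions, together with the bookkeeping that only the hypersurfaces through $p$ and $f(p)$ enter $E_p$ — is the technical heart of the argument; what follows is combinatorics on the zero pattern of $E_p$.

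For the implication ``$\Leftarrow$'', assume every boundary hypersurface $H$ of $X$ has $e(H,H')\neq 0$ for at most one $H'$. Then each row of $E_p$ has at most one nonzero entry while each of its $k'$ columns is nonzero; picking for each column a row carrying a positive entry gives an injective assignment (two columns sharing a row would force that row to have two nonzero entries), so the corresponding $k'\times k'$ submatrix is diagonal with positive diagonal, whence $\rank E_p=k'$ and $f$ is $b$-normal at $p$ — and $p$ was arbitrary. For ``$\Rightarrow$'', suppose some boundary hypersurface $H$ of $X$ has $e(H,H_1')>0$ and $e(H,H_2')>0$ with $H_1'\neq H_2'$. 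Take a codimension-one point $p$ in the interior of $H$, so $\dim{}^b N_p X=1$. For $r=1,2$, evaluating $f^*\rho_{H_r'}=a_{H_r'}\prod_G\rho_G^{e(G,H_r')}$ at $p$ yields $0$, since $\rho_H(p)=0$ occurs there with exponent $e(H,H_r')>0$ while $\rho_G(p)>0$ for $G\neq H$; hence $f(p)\in H_1'\cap H_2'$, so $\codim f(p)\geq 2$ and $\dim{}^b N_{f(p)}Y\geq 2$. Then ${}^b d_p f$ cannot map the one-dimensional space ${}^b N_p X$ onto ${}^b N_{f(p)}Y$, contradicting $b$-normality. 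As flagged, the one genuinely delicate step is the matrix description of the $b$-differential on the normal spaces; everything else is routine.
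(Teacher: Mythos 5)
Your proof is correct and follows essentially the same route as the paper: both identify ${}^b d_p f$ on the $b$-normal spaces with the exponent matrix $(e_{ij})$ in suitable local coordinates, and both handle the ``$\Rightarrow$'' direction by passing to a codimension-one point of $H$ whose image then sits on a corner of codimension $\geq 2$. You fill in two things the paper leaves implicit — the verification that the $d\log\tilde a_j$ contribution vanishes on the $b$-normal directions, and the full generalisation of the example to the converse implication — but the underlying idea is the same.
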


Let $f: \mathbb R^3_3 \longrightarrow \mathbb R^2_2$ given by  $f(x_1,x_2,x_3)=(x_1':=x_1x_2^3,x_2':=x_2 x_3^3)$ and consider a point $p$ in the plane $ \{ x_2=0 \}$. Then $\codim p=1$, while $\codim f(p) = 2$. It follows that $f_* :{}^b N_p \RR_3^3 \longrightarrow {}^{b}N_{f(p)} \RR^2_2$ is not surjective, thus $f$ is not \emph{b}-normal.
The direct implication of Proposition \ref{echivbnormal} is an easy generalisation of this example. Before proving the other implication, we need the following remark (see e.g. \cite[Section 2]{mel92}).

\begin{remark}\label{rmkbnorm}
If a \emph{b}-map $f:X \longrightarrow Y$ is \emph{b}-normal, then we can choose coordinates such that 
\[ f^*x_j'=\prod_{r \in I_j} x_r^{e(r_j,j)},  \]
where the sets $I_j \subset \{1,2,...,k \}$ are disjoint. Indeed, since $f$ is a \emph{b}-map, we have that
\[ f^*x_1'=a_1 \prod x_{i_1}^{e(i_1,1)}...x_{i_l}^{e(i_l,1)}.  \]
Using the direct implication of Proposition \ref{echivbnormal} and the fact that $f$ is \emph{b}-normal, it follows that $x_{i_1}$ does not appear in any other $f^* x_j'$, $j \neq 1$. Adjust the boundary defining function $x_{i_1}$ such that the term $x_{i_1}^{e(i_1,1)}$ absorbs the positive function $a_1$. We proceed in a similar way for the rest of $x_j'$'s.
\end{remark}

\begin{proof}[Proof of Proposition \ref{echivbnormal}]
"$\Leftarrow$" By Remark \ref{rmkbnorm}, we can suppose that $f^* x_1'=x_{i_1}^{\alpha_1}...x_{i_k}^{\alpha_{i_k}}$. Then one can easily check that $f_*(x_{i_1} \partial_{x_{i_1}})=\alpha_1 x_1' \partial_{x_1'}$. Since ${}^b N_{f(p)}Y$ is generated by $x_i' \partial_{x_i'}$ and these are attained through $f_*$, it follows that $f_*: {}^b N_p X \longrightarrow {}^b N_{f(p)}Y$ is surjective.
\end{proof}

If $f$ is a \emph{b}-submersion, we can choose coordinates such that $f^* y_j'=y_j$ for all $j \in \{1,2,...,n'-k' \}$.  Furthermore, one can check that a composition of two \emph{b}-fibrations remains a \emph{b}-fibration. We conclude this section with the following statement.
\begin{proposition}\label{bfib}
An interior \emph{b}-map $f: X \longrightarrow Y$ between manifolds with corners is a \emph{b}-fibration if and only if we can find local coordinates on $X$ and $Y$
such that f is a projection between the $y$-type variables, and any $x_i$ appears in the \emph{b}-map formula of the pull-back $f^* x_j$ for at most one of the $x_j'$'s:
\[ (x_1,...,x_k,y_1,...,y_{n-k}) \longmapsto (x_1',...,x_l',y_1',...,y_{m-l}') . \]
\end{proposition}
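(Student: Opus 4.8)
The plan is to prove Proposition \ref{bfib} by establishing both implications via the coordinate normal forms already developed for \emph{b}-submersions and \emph{b}-normal maps, and then showing that the two normal forms can be achieved simultaneously.

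For the ``$\Leftarrow$'' direction, suppose $f$ is given in coordinates by $(x,y) \mapsto (x',y')$ where $f$ is a projection on the $y$-variables (i.e. $f^* y_j' = y_j$ after relabeling, with possibly some $y_j$ not appearing) and each $x_i$ occurs in at most one $f^* x_j'$. The \emph{b}-differential on the $y$-type directions $\partial_{y_j}$ is then visibly surjective onto the span of the $\partial_{y_j'}$, and since each $f^* x_j' = \prod_{i \in I_j} x_i^{e(i,j)}$ with the $I_j$ disjoint, the computation $f_*(x_i \partial_{x_i}) = e(i,j)\, x_j' \partial_{x_j'}$ (for $i \in I_j$) shows that the \emph{b}-normal vectors $x_j' \partial_{x_j'}$ are all hit; combining these, $d_p f$ is surjective on ${}^b T_p X$, so $f$ is a \emph{b}-submersion, and it is \emph{b}-normal because the disjointness of the $I_j$ is exactly the condition of Proposition \ref{echivbnormal}. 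Hence $f$ is a \emph{b}-fibration.

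For the ``$\Rightarrow$'' direction, assume $f$ is a \emph{b}-fibration. Since $f$ is \emph{b}-normal, Remark \ref{rmkbnorm} gives coordinates in which $f^* x_j' = \prod_{r \in I_j} x_r^{e(r,j)}$ with the $I_j \subset \{1,\dots,k\}$ pairwise disjoint — this is precisely the ``each $x_i$ appears in at most one $f^* x_j'$'' condition, and the positive prefactors $a_j$ have been absorbed. Separately, since $f$ is a \emph{b}-submersion, one can arrange $f^* y_j' = y_j$, i.e. $f$ is a projection on the $y$-type variables. The remaining point — and the one I expect to be the main obstacle — is to check that these two coordinate adjustments can be performed \emph{at the same time}: the change of coordinates from Remark \ref{rmkbnorm} only rescales the $x_i$'s (each $x_{i}$ is replaced by $x_i$ times a positive smooth function), so it does not disturb a chosen set of $y$-type coordinates, and conversely normalizing the $y$-coordinates to make $f$ a projection can be done after fixing the $x$-normalization since a \emph{b}-submersion with $f^*x_j'$ already monomial still has surjective \emph{b}-differential on the $y$-directions modulo the \emph{b}-normal part. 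One must verify that the composite coordinate change is still a valid chart on a (possibly smaller) neighborhood and preserves the manifold-with-corners structure; this is where care is needed, but it is essentially bookkeeping with the two normal-form lemmas rather than a new idea.

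Thus the proof reduces to invoking Proposition \ref{echivbnormal} and Remark \ref{rmkbnorm} for the \emph{b}-normal structure, the standard \emph{b}-submersion normal form for the $y$-variables, and a compatibility check that the two normalizations coexist; the converse direction is the short computational part already modeled on the proof of Proposition \ref{echivbnormal}.
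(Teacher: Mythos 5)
The paper states Proposition \ref{bfib} without proof, immediately after assembling exactly the ingredients you invoke --- Proposition \ref{echivbnormal}, Remark \ref{rmkbnorm}, and the b-submersion normal form $f^*y_j'=y_j$ --- so your proof is the one the paper implicitly intends, and it is correct. The compatibility point you flag is handled correctly: the coordinate change of Remark \ref{rmkbnorm} multiplies each $x_i$ by a positive smooth function and leaves the $y$-block alone, while replacing $y$-coordinates by the $f^*y_j'$ (which is legitimate since b-submersion makes these functionally independent of the $\tfrac{dx_i}{x_i}$ in ${}^bT^*_pX$) leaves the $x$-block alone, so the two normalizations coexist on a common chart.
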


\section{Polyhomogeneous conormal functions}
A set $E \subset \CC \times \mathbb N$ is called an \emph{index set} if the following conditions are satisfied:
\begin{itemize}
\item[1)] For any $s \in \mathbb R$, the set $E_{\leq s}:= \{ (z,k) \in E : \ \Re z \leq s  \}$ is finite.
\item[2)] If $(z,k) \in E$ and $l \geq k$, then also $(z,l) \in E$.
\item[3)] If $(z,k) \in E$, then $(z+1,k) \in E$.
\end{itemize}

An index set which satisfies the third condition is actually called a \emph{smooth} index set; we will only work with this type of index sets. An \emph{index family} for a manifold with corners $X$ is a family of index sets for all the boundary hypersurfaces $H$ of $X$. From now on, we will denote by $0$ or $\mathbb N$ the index set $\{ (n,0) : \ n \in \mathbb N \}$, and we denote by $\infty$ the empty index set. 

The \emph{extended union} of two index sets $E,F$ is defined as
\[  E \overline{\cup} F = E \cup F \cup \{ (z,k+l+1): (z,k)\in E, (z,l) \in F \}. \]

Let us introduce polyhomogeneous conormal functions, i.e., smooth functions on the interior of a manifold with corners, but with a certain behavior towards the boundary. Following \cite[Section 1.4]{gripolyhom}, \cite[Section 2.2]{grieserquasi}, we start with the simpler case of $\RR^2_2$. 

\begin{definition}\label{defconormal}
For any $s,t \in \RR$, the \emph{conormal functions} on $\RR_+^2$ are 
\[ \Aa^{(s,t)} \lp \RR^2_2 \rp:= \left\{ u \in \mathcal C^{\infty} \lp (0, \infty)^2 \rp : \ (x \partial_x)^j (y \partial_y)^k u(x,y) = \mathcal O (x^sy^t) \ \forall j,l \in \mathbb N  \right\} . \]
\end{definition}

\begin{definition}\label{defpoly}
We say that $u \in \Aa^{E,F} \lp \RR^2_2 \rp$ is a \emph{polyhomogeneous conormal function} on $\RR^2_2$ with index sets $E,F$ if $u$ is smooth on the interior $(0, \infty)^2$ and there exist $\lp a_{z,k} \rp_{(z,k) \in E} \in \Aa^{F} \lp {(0, \infty)}_{y} \rp$, $\lp b_{w,l} \rp_{(w,l) \in F} \in \Aa^{E} \lp {(0, \infty)}_{x} \rp$, and a natural number $N$ such that 
\begin{align*}
{}&u(x,y)= \sum_{(z,k) \in E_{\leq s}} a_{z,k}(y) x^z \log^k x + R_s(x,y) \text{ and} \\
{}&u(x,y)= \sum_{(w,l) \in F_{\leq t}} b_{w,l}(x) y^w \log^l y + R'_t(x,y),
\end{align*} 
where the remainders are conormal functions $R_s \in \Aa^{s,-N}(\RR^2_2)$, $R'_t \in \Aa^{-N,t}(\RR^2_2)$ 
\end{definition}

The key in Definition \ref{defpoly} is the existence of the natural number $N$ which provides the compatibility near the corner. In the same manner as in Definitions \ref{defconormal} and \ref{defpoly}, we define the \emph{conormal} functions and \emph{polyhomogeneous conormal} functions on the local model $\RR^n_k$. One can prove the following result (see e.g. \cite[Proposition 1.24]{gripolyhom}).

\begin{proposition}
If $\EE$ be an index family for $\RR^n_k$, then $\Aa^{\EE} \lp \RR^n_k \rp$ is invariant to coordinate changes.
\end{proposition}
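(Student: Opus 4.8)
The plan is to reduce the statement to the local model and then check that polyhomogeneity, as formulated in Definition \ref{defpoly}, survives a diffeomorphism of $\RR^n_k$. First I would recall that a coordinate change $\Phi$ of $\RR^n_k$ (in the sense defined earlier, i.e.\ a diffeomorphism of a neighbourhood of a corner point) is a \emph{b}-map in both directions: it preserves the boundary hypersurfaces $\{x_i=0\}$ (up to relabelling) and its pull-back on each boundary defining function has the form $\Phi^*x_i = a_i \prod_j x_j^{e_{ij}}$; but since $\Phi$ is a diffeomorphism, the exponent matrix $(e_{ij})$ is a permutation matrix. Thus, after relabelling, $\Phi^*x_i = a_i(x,y)\, x_i$ with $a_i$ smooth and strictly positive, and $\Phi^*y_j$ is a smooth function. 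This is precisely the content of the statement that $\mathcal V_b(\RR^n_k)$ is preserved, and it is the structural fact I would lean on.

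Next I would show the conormality is preserved, i.e.\ $\Aa^{(s_1,\dots,s_k)}(\RR^n_k)$ is invariant; this is the easy half. Since $\Phi^*x_i = a_i x_i$ with $a_i>0$ smooth, the functions $\Phi^*(x_i^{s_i})= a_i^{s_i} x_i^{s_i}$ differ from $x_i^{s_i}$ by a smooth positive factor, and smooth factors do not change the order of vanishing measured by the operators $(x_i\partial_{x_i})^j(y_l\partial_{y_l})^m$. More precisely, the key computation is that $\Phi_*$ carries the \emph{b}-vector fields $x_i\partial_{x_i}$ and $\partial_{y_l}$ into $\mathcal C^\infty$-combinations of $x_j\partial_{x_j}$ and $\partial_{y_m}$; hence applying such operators to $\Phi^*u$ produces $\Phi^*$ of conormal functions, keeping the estimates $\mathcal O(\prod x_i^{s_i})$ intact. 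This handles the remainder terms $R_s, R'_t$ appearing in Definition \ref{defpoly} automatically.

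For the polyhomogeneous part, I would argue one boundary hypersurface at a time and then invoke the compatibility near the corner. Fix the hypersurface $H=\{x_1=0\}$. Writing $\Phi^*x_1 = a_1(x,y)x_1$, I expand $a_1$ in a Taylor series in $x_1$ at $x_1=0$ with coefficients smooth in the remaining variables; then $(\Phi^*x_1)^z = x_1^z a_1(0,\cdot)^z (1 + x_1 b)^z$ and the binomial series in $x_1$ together with $\log(\Phi^*x_1) = z\log x_1 + \log a_1$ shows that $\Phi^*(x_1^z\log^k x_1 \cdot c)$, for $c$ depending on the other variables, is again an expansion in powers $x_1^{z+n}\log^{\le k} x_1$ with coefficients smooth (hence conormal, by the conormal case already treated) in the other variables, with index set contained in $E_1\,\overline{\cup}\,\mathbb N = E_1$ (using that $E_1$ is a smooth index set). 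Iterating over all $k$ boundary hypersurfaces, and using the natural number $N$ in Definition \ref{defpoly} to control the joint remainder near the corner exactly as in the original definition, I obtain the asymptotic expansions of $\Phi^*u$ with the \emph{same} index family $\EE$. The invariance of $\Aa^{\EE}(\RR^n_k)$ follows.

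The main obstacle I anticipate is purely bookkeeping rather than conceptual: keeping track of the joint (corner) compatibility. A single-boundary-hypersurface expansion is elementary, but verifying that the expansions at different faces remain mutually compatible after the coordinate change — i.e.\ that one can still choose a uniform $N$ and conormal remainders as in Definition \ref{defpoly} — requires estimating the mixed remainders $R_s$, $R'_t$ under $\Phi^*$ and re-expanding them. The cleanest route is to prove the result first for $\RR^2_2$ (two boundary hypersurfaces, as in the definition), where the compatibility condition is explicit, and then deduce the general $\RR^n_k$ case by treating the $y$-variables as harmless parameters and inducting on $k$; at each stage the smoothness-in-the-other-variables of the Taylor coefficients of the $a_i$'s is what makes the induction close. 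I would also remark that this is essentially \cite[Proposition 1.24]{gripolyhom} and refer there for the remaining routine details.
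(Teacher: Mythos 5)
The paper does not prove this proposition itself; it only cites \cite[Proposition 1.24]{gripolyhom}. Your sketch is essentially the standard argument that one finds there and in Melrose's lectures: use that a diffeomorphism of $\RR^n_k$ is an interior $b$-map in both directions with permutation exponent matrix, hence (after relabelling) $\Phi^*x_i = a_i x_i$ with $a_i>0$ smooth; then conormality is preserved because $\Phi_*$ maps $\mathcal V_b$ to $\mathcal V_b$, and polyhomogeneity is preserved face-by-face by Taylor-expanding the $a_i$ and re-expanding powers and logarithms, with the corner remainders handled by the $\Aa^{(s,t)}$-estimates. So the route is the same as the cited source, and there is no real divergence to report.

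Two small places to tighten. First, the identity $\log(\Phi^*x_1)=z\log x_1+\log a_1$ is a slip: since $\Phi^*x_1=a_1x_1$, one has $\log(\Phi^*x_1)=\log x_1+\log a_1$ (no factor $z$); the factor $z$ appears only after you raise to the power $z$ in $(\Phi^*x_1)^z=a_1^z x_1^z$. The subsequent conclusion — that $\Phi^*\bigl(x_1^z\log^k x_1\cdot c\bigr)$ expands in terms $x_1^{z+n}\log^{j}x_1$ with $n\in\mathbb N$ and $j\le k$ — is nevertheless correct. Second, the line ``with index set contained in $E_1\,\overline{\cup}\,\mathbb N = E_1$'' is not literally what you use: the extended union adds log powers, whereas the re-expansion produces \emph{fewer}. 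What you actually need is that $(z,k)\in E_1$ forces $(z+n,j)\in E_1$ for all $n\ge 0$ and $j\le k$. The shift $z\mapsto z+n$ is the smoothness condition (3) of an index set, and the drop $k\mapsto j\le k$ is the monotonicity-in-$k$ requirement; since the paper's stated condition (2) reads as \emph{upward}-closure in $k$, you should either invoke the standard (downward-closed) convention explicitly or observe that after reordering the sum one can always reabsorb the lower log powers into the coefficient of the highest one, so the index family does not grow. Neither point changes the structure of your argument, which is sound.
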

Remark that a function $u \in \Aa^{\EE} \lp \RR^n_k \rp$ if and only if $\rho u \in \Aa^{\EE} \lp \RR^n_k \rp$, for any $\rho \in \mathcal C^{\infty} \lp \RR^n_k \rp$. Thus, following \cite[Section 1.6]{gripolyhom}, for an open set $U \subset \RR^n_k$, we can define the \emph{polyhomogeneous conormal} functions on the open set $U$ as smooth functions $u$ on $U$ intersected with the interior of $\RR^n_k$ such that $\rho u \in \Aa^{\EE} \lp \RR^n_k \rp $ for all $ \rho \in \mathcal C^{\infty}_{c} (U)$. 

Let $X$ be a manifold with corners, let $\EE=({\EE}_H)_{H \in \mathcal F_1 (X)}$ be an index family for $X$, i.e., a family of index sets for all the boundary hypersurfaces $H \in \mathcal F_1(X)$. Furthermore, let $s=(s_H)_{H \in \mathcal F_1(X)}$ be a family of real numbers. Denote by $\Aa^s(X)$ the set of \emph{conormal} functions on $X$, i.e., smooth functions $f$ on the interior of $X$ which satisfy the condition $Pf = \mathcal O (\rho^s),$ for any $P \in \Diff_b^*(X)$, where 
\[  \rho^s:= \prod_{H \in \mathcal F_1(X)}  \rho_H^{s_H}.  \]

\begin{definition}
Let $X$ be a manifold with corners and $\EE$ an index family for $X$. We denote by $\Aa^{\EE}(X)$ the set of \emph{polyhomogeneous conormal functions} $f$ on $X$ i.e., the smooth functions on the interior of $X$ which are polyhomogeneous conormal in  each chart with the corresponding index sets. More precisely, for any chart $\varphi: U \longrightarrow \tilde{U} \subset \RR^n_k$, 
\[u \circ \varphi^{-1} \in \Aa^{\tilde{\EE}} \lp \tilde{U} \rp, \]
where $\tilde{\EE} \lp \varphi (H \cap U) \rp:=\EE(H)$, for any boundary hypersurface $H$ which intersects $U$.
\end{definition}

\section{Densities and {\it b}-densities}
We say that $\mu : TX^{\wedge} {}^{n} \longrightarrow [0, \infty)$ is an \emph{$\alpha$-density} on the manifold with corners $X$ of dimension $n$ if for any real number $\lambda \neq 0$ and any $u \in TX^{\wedge} {}^n$
$ \mu (\lambda u) = \vert \lambda \vert^{\alpha} \mu(u).  $
We denote the space of $\alpha$-densities by $\Omega^{\alpha}(X)$ and notice that it is a $\RR$-vector bundle of rank $1$. We can integrate $1$-densities with compact support using partitions of unity, and furthermore, we regard a density $\mu$ as a distribution in the following way
\begin{align*}
\mu : \mathcal C^{\infty}_{c} (X) \longrightarrow \RR, &&
\varphi \longmapsto \int_X \varphi \mu.
\end{align*}
If $f : X \longrightarrow Y$ is a map between manifolds with corners and $\mu$ is a density on $X$, one can check that \emph{the push-forward} $f_* \mu$ is well-defined as a distribution
\[ \int_Y \varphi f_* \mu : = \int_X \lp f^* \varphi \rp \mu . \]
If $X$ is a product space, then the push-forward is integration in the fibers, and the result is given by a density. In the study of \emph{b}-geometry, it is suitable to work with \emph{b}-densities.
\begin{definition}
Let $\EE$ be a family of index sets for the manifold with corners $X$. We say that $\mu \in \Aa^{\EE} \lp X , \Omega(X) \rp$ if $\mu$ is a smooth density on the interior of $X$ and in local coordinates
\[ \mu=v(x,y) dx_1...d x_k dy_1...dy_{n-k}, \]
where $v$ is  polyhomogeneous conormal function on $X$ with index set $\EE$.

Furthermore, we say that $\mu$ is a \emph{b-density} $\mu \in \Aa^{\EE} \lp X , \Omega^b(X) \rp$ if locally
\[ \mu=v(x,y) \frac{dx_1}{x_1}...\frac{d x_k}{x_k} dy_1...dy_{n-k}, \]
where $v \in \Aa^{\EE}(X)$. 
\end{definition}

\begin{proposition}\label{ridbdens}
Let $X$ be a manifold with corners, let $Y \subset X$ be a p-submanifold, and consider $\mu \in \Omega^b(X)$ a \emph{b}-density. If the blow-up $[X;Y]$ is locally given by $[\RR^n_k; 0 ]$ and $\beta: [X; Y] \longrightarrow X$ is the blow-down map, then 
\[ \beta^* \mu \in \rho_{\ff}^{n-k} \Omega^b \lp [X; Y] \rp, \]
where $\rho_{\ff}$ is a boundary defining function for the front face of the blow-up.
\end{proposition}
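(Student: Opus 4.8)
The plan is to reduce to the stated local model and carry out the pullback computation in the projective charts of the blow-up. Since lying in $\rho_{\ff}^{\,n-k}\Omega^b$ is a local and coordinate-invariant property — replacing $\rho_{\ff}$ by another boundary defining function of $\ff$ multiplies $\rho_{\ff}^{\,n-k}$ by a positive smooth function, which is absorbed into the $\Omega^b$ factor — it is enough, by hypothesis, to take $X=\RR^n_k$, $Y=\{0\}$, and to work in the projective charts of Proposition \ref{bwmwc} (adapted to $\RR^n_k$). Moreover, away from the front face $\ff=\beta^{-1}(\{0\})$ the blow-down $\beta$ is a diffeomorphism (the identity in the natural coordinates), so there $\beta^*\mu$ is already a smooth b-density and $\rho_{\ff}$ is smooth and positive; hence the entire content of the statement concerns the behaviour of $\beta^*\mu$ in a neighbourhood of $\ff$, including the corners it forms with the lifts of the hypersurfaces $\{x_i=0\}$.

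Write $\mu=v(x,y)\,\omega$ with $v$ smooth on $\RR^n_k$ and $\omega=\frac{dx_1}{x_1}\wedge\cdots\wedge\frac{dx_k}{x_k}\wedge dy_1\wedge\cdots\wedge dy_{n-k}$. I would first compute in a chart where an $x$-variable, say $x_1$, is the radial coordinate: there $x_i=x_1 s_i$ for $2\le i\le k$, $y_j=x_1 u_j$, and $(x_1,s_2,\dots,s_k,u_1,\dots,u_{n-k})$ are coordinates on a copy of $\RR^n_k$ with $x_1=\rho_{\ff}$. Then $\frac{dx_i}{x_i}=\frac{dx_1}{x_1}+\frac{ds_i}{s_i}$ carries no power of $x_1$, whereas $dy_j=x_1\bigl(u_j\frac{dx_1}{x_1}+du_j\bigr)$ carries exactly one; since $\frac{dx_1}{x_1}$ already occurs once in $\beta^*\omega$, each remaining factor can only contribute its component not proportional to $\frac{dx_1}{x_1}$, and expanding the exterior product yields
\[
\beta^*\omega=x_1^{\,n-k}\,\frac{dx_1}{x_1}\wedge\frac{ds_2}{s_2}\wedge\cdots\wedge\frac{ds_k}{s_k}\wedge du_1\wedge\cdots\wedge du_{n-k}
=\rho_{\ff}^{\,n-k}\,\omega_b,
\]
with $\omega_b$ a nowhere-vanishing smooth b-density. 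The charts in which a $y$-variable is the radial coordinate are treated identically — there one power of $\rho_{\ff}$ comes from $dy_1=y_1\frac{dy_1}{y_1}$ and the remaining $n-k-1$ from the other $dy_j$'s — and the extreme cases $k=0$ and $k=n$ are the degenerate versions of these two computations.

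It then remains to observe that $\beta$ is smooth, so $v\circ\beta$ is smooth on $[\RR^n_k;\{0\}]$, whence $\beta^*\mu=(v\circ\beta)\,\rho_{\ff}^{\,n-k}\,\omega_b\in\rho_{\ff}^{\,n-k}\,\Omega^b\bigl([\RR^n_k;\{0\}]\bigr)$; if instead $v$ is polyhomogeneous conormal with some index family, $v\circ\beta$ is again polyhomogeneous conormal by the pull-back theorem for conormal functions under the b-map $\beta$, and the same conclusion holds with the appropriate index family. There is no serious obstacle here: the only point requiring care is the exterior-algebra bookkeeping in the second step, namely that precisely $n-k$ powers of the radial variable are produced, all of them coming from the $y$-type directions transverse to $\ff$, while the $k$ ``b-normalized'' directions $\frac{dx_i}{x_i}$ contribute none — this is exactly why the shift is $n-k$ rather than the $n$ one would obtain for ordinary densities. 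I would carry out this computation once in each of the two chart types and invoke symmetry for the rest.
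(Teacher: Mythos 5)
Your proof is correct and follows essentially the same route as the paper's: reduce to the local model $[\RR^n_k;\{0\}]$, pull back the standard b-density in the two types of projective charts (radial coordinate of $x$-type versus $y$-type), and verify in each that exactly $n-k$ powers of $\rho_{\ff}$ appear, all produced by the $y$-type differentials. Your added remark explaining \emph{why} the count is $n-k$ (only $dy_j$, not $dx_i/x_i$, carries a power of the radial variable) and your closing observation about polyhomogeneous coefficients are nice clarifications, but the core computation is the same as in the paper.
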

\begin{proof}
It suffices to prove the result locally. Consider $[ \RR^n_k; \{ 0 \}  ]$ and denote the coordinates on $\RR^n_k$ by $(x_1,...,x_k,y_1,...,y_{n-k})$, where the $x_j$'s are positive variables of $x$-type and $y_j$'s are real variables. Consider the standard \emph{b}-density
\[ \mu = \frac{dx_1}{x_1}\wedge...\wedge \frac{dx_k}{x_k} \wedge dy_1 \wedge...\wedge dy_k \in \Omega^b \lp \RR^n_k \rp. \]
A first chart on the blow-up space $[\RR^n_k; \{ 0 \}]$ is given by the projective coordinates
\[ \lp x_1, \xi_2:= \frac{x_2}{x_1},..., \xi_k:= \frac{x_k}{x_1}, \eta_1:= \frac{y_1}{x_1},...,\eta_k:= \frac{y_k}{x_1}  \rp. \]
Then the lift of $\mu$ through the blow-down map $\beta$ is given by
\begin{align*}
{}&  \frac{dx_1}{x_1}\wedge \frac{x_1 d\xi_2 + \xi_2 dx_1}{x_1 \xi_2}...\wedge \frac{x_1 d \xi_k + \xi_k dx_1}{x_1 \xi_k} \wedge \lp x_1 d\eta_1 + \eta_1 dx_1 \rp \wedge...\wedge \lp x_1 d\eta_{n-k} + \eta_{n-k} dx_1  \rp \\
={}& x_1^{n-k} \frac{d x_1}{x_1} \wedge \frac{d \xi_2}{\xi_2}\wedge...\wedge \frac{d \xi_{k}}{\xi_{k} } d\eta_1 \wedge ... \wedge d\eta_{n-k} \in \rho_{\ff}^{n-k} \Omega^b \lp [\RR^n_k; \{ 0 \}] \rp,
\end{align*}
where we denoted by $\rho_{\ff}$ the boundary defining function for the front face of the blow-up (which in these coordinates is $x_1$). In all the other projective coordinates in which we divide by $x_j$, $j = \overline{2, n-k}$, the computations work in a similar manner. Let us now consider the projective coordinates
\[ \lp X_1:=\frac{x_1}{y_1},..., X_k:=\frac{x_k}{y_1}, y_1, Y_2:= \frac{y_2}{y_1},..., Y_{n-k}:= \frac{y_{n-k}}{y_1} \rp \]
and notice that $y_1$ is a boundary defining function for the front face. 
It follows that
\begin{align*}
\beta^* \mu= {}& \frac{y_1 dX_1}{X_1 y_1} \wedge ...\wedge  \frac{y_1 dX_k}{y_1 X_k} \wedge dy_1 \wedge \lp y_1 dY_2 \rp \wedge ... \wedge \lp y_1 dY_{n-k} \rp \\
={}& y_1^{n-k} \frac{dX_1}{X_1} \wedge...\wedge \frac{d X_k}{X_k} \wedge \frac{dy_1}{y_1} \wedge dY_2 \wedge...\wedge Y_{n-k} \in \rho_{\ff}^{n-k} \Omega^b \lp [\RR^n_k; \{ 0\} ] \rp,
\end{align*}
We proceed in a similar way for all the remaining projective charts where we divide by $y_j$'s.
\end{proof}

\section{The Pull-Back and the Push-Forward Theorems for polyhomogeneous conormal functions}
Now we are able to state the Push-Forward and Pull-Back Theorems (the proofs can be found in \cite[Theorem 3, Theorem 5]{mel92}).

\begin{theorem}[The Pull-back Theorem for polyhomogeneous conormal functions]\label{pullbackphg}
Let $f: X \longrightarrow Y$ be an interior \emph{b}-map between manifolds with corners and let $u \in \mathcal A^{\EE}(Y)$ be a polyhomogeneous conormal function on $Y$ with index set $\EE$. Then the pull-back $f^*u $ belongs to $ \mathcal A^{f^* \EE}(X)$, where for every $G \in \mathcal F_1(X)$,
\[  \lp f^* \EE \rp (G):=\sum_{H \in \mathcal F_1(Y)} e(G,H) \EE (H). \]
\end{theorem}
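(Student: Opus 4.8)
The plan is to reduce everything to the local model and then pull back, term by term, the defining asymptotic expansions of $u$. Since being polyhomogeneous conormal is checked chart by chart and the \emph{b}-map $f$ carries charts of $X$ compatibly into charts of $Y$, a partition of unity reduces the statement to an interior \emph{b}-map $f:\RR^n_k\longrightarrow\RR^{n'}_{k'}$. I write the boundary hypersurfaces of the source as $G_1,\dots,G_k$, with $\rho_{G_i}=x_i$, and those of the target as $H_1,\dots,H_{k'}$, with $\rho_{H_j}=x_j'$. Because $f$ is an \emph{interior} \emph{b}-map, Definition \ref{bmap} gives $f^*x_j'=a_j\prod_{i=1}^k x_i^{e_{ij}}$ with $a_j\in\mathcal C^\infty$ strictly positive and $e_{ij}=e(G_i,H_j)\in\mathbb N$, while each $f^*y_j'$ is an arbitrary smooth function of $(x,y)$; in particular $f^*\big((x_j')^z\big)=a_j^{\,z}\prod_i x_i^{e_{ij}z}$ is well defined for every $z\in\CC$. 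The goal is then to show that the expansion of $f^*u$ at each $G_i$ is governed by the index set $(f^*\mathcal E)(G_i)$ of Theorem \ref{pullbackphg}.

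First I would treat the purely conormal case: if $u\in\Aa^{s}(Y)$, then $f^*u\in\Aa^{f^*s}(X)$ with $(f^*s)_{G_i}=\sum_j e(G_i,H_j)\,s_{H_j}$. Applying $x_i\partial_{x_i}$ to $f^*x_j'$ gives $\big(e_{ij}+x_i\partial_{x_i}\log a_j\big)\,f^*x_j'$, whose second summand is bounded and smooth, and $x_i\partial_{x_i}(f^*y_j')$ is smooth; so by the chain rule any $P\in\Diff_b^*(X)$ applied to $f^*u$ is a finite sum of pull-backs $f^*(Qu)$, $Q\in\Diff_b^*(Y)$, times bounded smooth functions. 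Since $f^*\big(\rho_{H_j}^{\,s_{H_j}}\big)=a_j^{\,s_{H_j}}\prod_i x_i^{e_{ij}s_{H_j}}$, the bound $Qu=\mathcal O(\rho^s)$ on $Y$ becomes $f^*(Qu)=\mathcal O(\rho^{f^*s})$ on $X$, hence $P(f^*u)=\mathcal O(\rho^{f^*s})$, which is conormality of $f^*u$ with the asserted weight. This estimate will also control every remainder appearing below.

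For the polyhomogeneous statement I would iterate the expansion of Definition \ref{defpoly} over the $k'$ boundary hypersurfaces of $Y$ — a finite recursion, since each step introduces asymptotics in one more of the variables $x_j'$ — so that, up to a conormal remainder of arbitrarily large weight (controlled by the previous step), $u$ is a finite sum of monomial terms $c(y')\prod_{j=1}^{k'}(x_j')^{z_j}\log^{l_j}x_j'$ with $c\in\mathcal C^\infty$ and $(z_j,l_j)\in\mathcal E(H_j)$. The computation at the heart of the proof is the pull-back of one such monomial:
\[ f^*\!\Big(\textstyle\prod_j(x_j')^{z_j}\log^{l_j}x_j'\Big)=\Big(\textstyle\prod_j a_j^{\,z_j}\big(\log a_j+\sum_i e_{ij}\log x_i\big)^{l_j}\Big)\cdot\prod_i x_i^{\,\sum_j e_{ij}z_j}, \]
in which each $a_j^{\,z_j}$ is smooth and strictly positive and each bracket is a polynomial of degree $\le l_j$ in the $\log x_i$ with smooth coefficients; also $f^*c$ is smooth on $X$. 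Collecting the outcome according to the $G_i$, at $G_i$ one gets powers $x_i^{\,\sum_j e_{ij}z_j}$ times polynomials in $\log x_i$ of controlled degree; summing over the finitely many monomials of $u$, the pairs (exponent, log-degree) that occur at $G_i$ are exactly those prescribed by $(f^*\mathcal E)(G_i)$, the extended union in its definition being precisely what records the extra powers of $\log x_i$ generated whenever two monomials of $u$ produce the same exponent $\sum_j e_{ij}z_j$. Letting the remainder weight tend to infinity then assembles the full expansion of $f^*u$ at every boundary hypersurface of $X$.

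The step I expect to be the main obstacle is exactly this collecting-and-bookkeeping: showing that the finite sums and products of functions polyhomogeneous at a given $G_i$ are again polyhomogeneous there with precisely the index set $(f^*\mathcal E)(G_i)$, logarithmic degrees included, and that this is coherent with the recursion over $k'$ — the coefficients arising in the expansion at the first hypersurface peeled off are themselves polyhomogeneous, with logarithms, in the remaining $x_j'$'s, so one must make sure the monomial reduction really terminates with the claimed index data. A closely related delicate point is the \emph{uniformity} of the pulled-back expansions near the corners of $X$, i.e. the existence of a single remainder order $N$ as in Definition \ref{defpoly}; this must be inherited from the corner-compatibility of the expansions of $u$ on $Y$, and securing it forces one to run the conormal estimate of the second step uniformly in all the expansion parameters simultaneously rather than one boundary hypersurface at a time.
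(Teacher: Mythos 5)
The paper does not prove Theorem~\ref{pullbackphg}: it states it together with the Push-forward Theorem and refers to Melrose's paper \cite{mel92} for both proofs. So there is no in-paper argument to compare against; what follows is an assessment of your sketch on its own terms.

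Your overall strategy --- localize via a partition of unity to the model $f:\RR^n_k\to\RR^{n'}_{k'}$, establish the purely conormal estimate by showing that each $P\in\Diff_b^*(X)$ applied to $f^*u$ decomposes as a finite sum of bounded smooth coefficients times $f^*(Qu)$ with $Q\in\Diff_b^*(Y)$, and then handle the polyhomogeneous case by pulling back monomial terms $\prod_j(x_j')^{z_j}\log^{l_j}x_j'$ with a conormal remainder controlled by the first step --- is the standard route and is essentially correct. The conormal step is right: using $x_i\partial_{x_i}f^*x_j'=(e_{ij}+x_i\partial_{x_i}\log a_j)f^*x_j'$, one indeed rewrites $x_i\partial_{x_i}(f^*u)$ in terms of $f^*(x_j'\partial_{x_j'}u)$ and $f^*(\partial_{y_l'}u)$ with bounded coefficients, and $f^*(\rho_Y^s)=\mathcal O(\rho_X^{f^*s})$ then gives the weighted bound. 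Your computation of the pulled-back monomial and the resulting exponent $\sum_j e_{ij}z_j$ and log-degree $\le\sum_j l_j$ at $G_i$ is also correct and matches the index set $(f^*\EE)(G_i)=\sum_j e(G_i,H_j)\EE(H_j)$ defined as a \emph{sum} of index sets.

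One statement is wrong, though, and I want to flag it because it reveals a misreading of the formula: you attribute the log powers to ``the extended union in its definition''. The pull-back index set in Theorem~\ref{pullbackphg} is a plain sum of index sets, not an extended union; the log degrees $k_1+k_2$ arise here simply because a product of two logarithmic monomials has log degree equal to the sum. When two distinct monomials of $u$ happen to pull back to the same exponent at $G_i$, you just add the two terms, and the log degree is the maximum of the two --- no extra log power appears. The extended union $\overline{\cup}$ is specific to the Push-forward Theorem~\ref{pft}, where integration of $s^a\log^k s$ against a \emph{b}-density genuinely produces an additional logarithm whenever two exponents coincide. It plays no role in pull-back, and if you keep it in mind here it will lead you astray when you try to prove the composition results that use both theorems.

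The two difficulties you flag at the end are real. The reduction of $u$ to finitely many monomials plus a remainder that is conormal of high order \emph{jointly at all boundary hypersurfaces} requires the corner-compatibility encoded by the common $N$ in Definition~\ref{defpoly}; one cannot naively peel off one hypersurface at a time, because the coefficient functions $a_{z,k}$ of the expansion at $H_1$ are themselves only polyhomogeneous (possibly with negative-order conormal behaviour) at $H_2,\dots,H_{k'}$, and the iterated remainders must be re-absorbed using the conormal estimate from your first step. This can be organized by induction on the number of boundary hypersurfaces of $Y$ (or on corner depth) and is exactly what Melrose's proof does, but your sketch stops short of carrying it out. I would not count this as a gap that defeats the argument --- you identify it and indicate the right fix --- but it is the place where a complete write-up has to do genuine work.
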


\begin{theorem}[The Push-forward theorem for polyhomogeneous conormal functions]\label{pft}
Let $f:X \longrightarrow Y$ be a \emph{b}-fibration between manifolds with corners and let $\EE$ be an index family for $X$. Suppose that $0< \inf \EE(G) \lp := \inf \{ \Re z : (z,k) \in \EE(G) \} \rp$ for each boundary hypersurface $G$ which is sent through $f$ in the interior of $Y$ (\emph{the integrability condition}). If $\mu \in \mathcal A \lp X, \Omega_b(X)  \rp$ and $f$ is proper on the support of $\mu$, then the push-forward $f_* \mu$ belongs to $\mathcal A^{f_* \EE} \lp Y,  \Omega_b(Y)  \rp$, where
\[  f_*(\EE)(H)=\overline{\bigcup}_{e(G,H)>0}  \frac{1}{e(G,H)} \EE(G).  \]
\end{theorem}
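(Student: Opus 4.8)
The plan is to recognise this as Melrose's push-forward theorem and to carry out the standard reduction to model b-fibrations. \emph{Localisation.} First I would use that polyhomogeneity is a local notion together with the projection formula $f_{*}\lp f^{*}\varphi\cdot\mu\rp=\varphi\cdot f_{*}\mu$ for $\varphi\in\mathcal C^{\infty}_{c}(Y)$: a partition of unity on $Y$ pulled back to $X$, combined with the properness of $f$ on $\supp\mu$, reduces the statement to the case in which $X$ and $Y$ are single coordinate patches, $\mu$ is compactly supported, and $f$ is in the normal form of Proposition \ref{bfib}. Using Remark \ref{rmkbnorm} to absorb positive prefactors and relabelling, this means $f^{*}y_{j}'=y_{j}$ and $f^{*}x_{j}'=\prod_{i\in I_{j}}x_{i}^{e_{ij}}$ with $I_{1},\dots,I_{l}$ pairwise disjoint and $e_{ij}\in\mathbb N$, the remaining $y$-variables and the $x_{i}$ with $i\notin\bigcup_{j}I_{j}$ being fibre variables, the latter sent into the interior of $Y$.

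\emph{Factorisation into elementary b-fibrations.} Next I would write $f$ as a finite composition of b-fibrations of four kinds: $(a)$ a projection forgetting one fibre $y$-variable; $(b)$ a projection $Z\times[0,\infty)_{x_{0}}\to Z$ forgetting one fibre $x$-variable, with $\{x_{0}=0\}$ mapping to the interior; $(c)$ a one-variable power map $[0,\infty)_{x}\to[0,\infty)_{s}$, $s=x^{e}$; and $(d)$ the merge $[0,\infty)_{x_{1}}\times[0,\infty)_{x_{2}}\to[0,\infty)_{s}$, $s=x_{1}x_{2}$, all other variables being inert parameters. Since a composition of b-fibrations is again one, since fibre integration of a b-density along such a map produces a b-density on the base (a direct computation in the normal form, of the kind in Proposition \ref{ridbdens}), and since the transformation $\EE\mapsto f_{*}\EE$ is compatible with composition --- a formal consequence of the associativity of $\overline{\cup}$, of the identity $\tfrac1a\lp E\,\overline{\cup}\,F\rp=\tfrac1aE\,\overline{\cup}\,\tfrac1aF$, and of b-normality --- it is enough to prove the theorem for $(a)$--$(d)$. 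Types $(a)$--$(c)$ are direct: for $(a)$ one differentiates under the integral sign, so each term $a_{z,k}(x,y)x^{z}\log^{k}x$ of the expansion integrates to $\lp\int a_{z,k}\,dy\rp x^{z}\log^{k}x$, with the conormal remainders of Definition \ref{defpoly} preserved and no new boundary face created; for $(b)$ the only new point is that $\int_{0}^{\infty}v\,\tfrac{dx_{0}}{x_{0}}$ converges at $x_{0}=0$ precisely because of the integrability hypothesis $\Re z>0$, after which the $\{x_{0}=0\}$-index set simply disappears; for $(c)$, $f$ is a diffeomorphism on interiors with $\tfrac{dx}{x}=\tfrac1e\tfrac{ds}{s}$ and coefficient $v(s^{1/e})$, so the index set scales by $\tfrac1e$, matching $e(\{x=0\},\{s=0\})=e$.

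\emph{The crux: the merge.} For $(d)$ I would pass to interior coordinates $(s,x_{1})$ with $x_{2}=s/x_{1}$, so that $\tfrac{dx_{1}}{x_{1}}\wedge\tfrac{dx_{2}}{x_{2}}=\tfrac{dx_{1}}{x_{1}}\wedge\tfrac{ds}{s}$ and $f_{*}\mu=w(s)\,\tfrac{ds}{s}$ with $w(s)=\int v(x_{1},s/x_{1})\,\tfrac{dx_{1}}{x_{1}}$ over a compact range of $x_{1}$, manifestly smooth in $s>0$. I would then split this integral --- equivalently, insert a partition of unity pulled back from $[\RR^{2}_{2};\{0\}]$, on which $x_{1}/x_{2}$ is a smooth coordinate --- into a piece concentrated where $x_{1}\to0$, one where $x_{2}=s/x_{1}\to0$, and a transitional piece. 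In the first two I would substitute the one-variable polyhomogeneous expansion of $v$ (in $x_{1}$, respectively in $x_{2}$), expand $\log^{l}(s/x_{1})=(\log s-\log x_{1})^{l}$ by the binomial theorem, and integrate term by term; in the transitional piece I would use the joint corner expansion from Definition \ref{defpoly}. One then finds that $w$ has an asymptotic expansion at $s=0$ indexed by $\EE(\{x_{1}=0\})\cup\EE(\{x_{2}=0\})$, except that whenever an exponent $z$ occurring in $\EE(\{x_{1}=0\})$ coincides with one occurring in $\EE(\{x_{2}=0\})$ the corresponding integrand collapses to $s^{z}$ times a pure power of $\log x_{1}$, and integrating $\tfrac{dx_{1}}{x_{1}}$ over a range of logarithmic length comparable to $|\log s|$ yields one extra factor $\log(1/s)$; this is exactly the mechanism producing the extended union, so $w$ is polyhomogeneous with index set $\EE(\{x_{1}=0\})\,\overline{\cup}\,\EE(\{x_{2}=0\})$, in agreement with $e(\{x_{1}=0\},\{s=0\})=e(\{x_{2}=0\},\{s=0\})=1$. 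Taking the one-variable expansions to high enough order makes the conormal remainder of $w$ of arbitrary order.

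\emph{Main obstacle.} Everything outside step $(d)$ is bookkeeping; the real difficulty lies in the merge, and within it in two delicate points: getting the logarithmic powers sharp, i.e.\ correctly identifying the resonant terms that produce the extended union, and controlling the conormal remainder uniformly up to the corner $x_{1}=x_{2}=0$, where the two one-variable expansions must be reconciled through the corner-compatibility constant $N$ of Definition \ref{defpoly}.
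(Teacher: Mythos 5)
The paper does not itself prove this theorem but states it with a citation to Melrose \cite[Theorem~5]{mel92}, and your outline faithfully reproduces that standard argument: localise, put $f$ in the normal form of Proposition~\ref{bfib}, factor $f$ into elementary $b$-fibrations (projections forgetting $y$- or fibre $x$-variables, power maps, and merges), and isolate the merge $(x_1,x_2)\mapsto x_1x_2$ as the one elementary piece generating extended unions, with the resonance computation yielding the extra power of $\log s$ that matches the ``$+1$'' in the definition of $\overline{\cup}$. The proposal is sound and correctly flags the uniform control of the corner remainder as the real technical content, so I see no substantive gap.
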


\begin{proposition}\label{desumflare}
Let $X$ be a manifold with corners, $Y \subset X$ a $p$-submanifold. Then the blow-down map 
$  \beta: [X;Y] \longrightarrow X $
provides a bijection between functions on $[X;Y]$ having empty index set towards the front face, and functions on $X$ which vanish rapidly towards the submanifold $Y$.
\end{proposition}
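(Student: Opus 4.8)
The plan is to reduce everything to the local model and then to play the fact that $\beta$ restricts to a diffeomorphism away from the front face against the behaviour at that face. Both conditions in the statement — vanishing rapidly at $Y$ and having empty index set at $\ff$ — are local near $Y$ and invariant under coordinate changes, so I would first pass to the local model $[\RR^n_k\times\RR^{n'}_{k'};\RR^n_k\times\{0\}]=\RR^n_k\times[\RR^{n'}_{k'};\{0\}]$, in which the $\RR^n_k$-factor is an inert spectator and $\beta$ acts on the second factor by $(r,\omega)\mapsto r\omega$, with $\ff=\{r=0\}$ and $\rho_\ff=r$. Off the front face $\beta$ is a diffeomorphism onto $X\setminus Y$, which makes the pull-back $\beta^{*}$ injective on the relevant spaces: a conormal function is determined by its restriction to the dense interior, so $\beta^{*}f_1=\beta^{*}f_2$ forces $f_1=f_2$. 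For the same reason the only candidate for a preimage of a given $g$ is $f:=g\circ\beta^{-1}$, defined a priori only on $X\setminus Y$, and the content of the proposition is that in one direction $\beta^{*}f$ vanishes to all orders at $\ff$ and in the other $f$ extends conormally across $Y$.

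For the forward inclusion I would use that, in coordinates adapted to $Y$, the ideal $\mathcal I_Y\subset\mathcal C^\infty(X)$ of functions vanishing on $Y$ is generated by the variables transverse to $Y$, and that each such variable pulls back under $\beta$ to $\rho_\ff$ times a coordinate function on $[X;Y]$, while conversely $\rho_\ff$ itself is the pull-back of one of them. Hence $\beta^{*}$ maps $\mathcal I_Y^{N}\cdot\mathcal A^{\EE}(X)$ into $\rho_\ff^{N}\mathcal A^{\beta^{*}\EE}([X;Y])$ for every $N$, the index sets at the lifts of the original boundary hypersurfaces being governed by the Pull-back Theorem~\ref{pullbackphg} (applicable because $\beta$ is an interior $b$-map). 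A function vanishing rapidly at $Y$ lies in $\bigcap_N\mathcal I_Y^{N}\cdot\mathcal A^{\EE}(X)$, so its pull-back has empty index set at $\ff$; one checks, exactly as in the proof of Proposition~\ref{ridbdens}, that this survives the $b$-vector fields $r\partial_r$ and the angular fields spanning $\mathcal V_b$ near $\ff$, which is precisely what ``empty index set at $\ff$'' requires.

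For surjectivity I would run the argument backwards. Given $g$ with empty index set at $\ff$, set $f:=g\circ\beta^{-1}$ on $X\setminus Y$; this is conormal there with the index family of $g$ at the original boundary hypersurfaces, and it remains to show $f$ extends to a conormal function on $X$ that is flat along $Y$. In a projective chart one transverse variable, say $x_{j_0}$, equals $\rho_\ff$, while the others read $x_i=\rho_\ff\,z_i$, $y_l=\rho_\ff\,z_l$, so $f=g(\rho_\ff,z)$ with $\rho_\ff\sim\operatorname{dist}(\cdot,Y)$ and $z$ confined to a compact set; the bound $g=\mathcal O(\rho_\ff^{N})$ then gives $f=\mathcal O(\operatorname{dist}(\cdot,Y)^{N})$ for all $N$. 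Differentiating $f$ in the $x$- and $y$-variables costs at most one factor of $\rho_\ff^{-1}$ per derivative, since $\partial_{x_i}$ and $\partial_{y_l}$ are $\rho_\ff^{-1}$ times $b$-vector fields on $[X;Y]$ after absorbing the angular coordinates as in Proposition~\ref{ridbdens}, while $g$ together with all its $b$-derivatives is $\mathcal O(\rho_\ff^{N})$ for every $N$; therefore every derivative of $f$ also vanishes rapidly at $Y$, so $f$ is conormal on $X$ with identically vanishing Taylor expansion along $Y$. Finally $\beta^{*}f=g$ on the dense interior and both are continuous, so they coincide.

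The step I expect to be the genuine obstacle is this last one: one must check that the factors of $\rho_\ff^{-1}$ picked up when converting $b$-vector fields on $[X;Y]$ back to ordinary derivatives on $X$ cannot defeat the rapid vanishing, which forces one to distinguish the $x$-type blow-up coordinates $x_i/x_{j_0}$ from the $y$-type ones $y_l/x_{j_0}$ (and to use that $z\partial_z$ of a conormal function is divisible by $z$) and to patch the resulting estimates across the several projective charts covering $\ff$. None of this is deep — it is the same bookkeeping as in Proposition~\ref{ridbdens} — but it is where the argument carries content beyond the bare fact that $\beta$ is a diffeomorphism away from the front face.
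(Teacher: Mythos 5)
The paper records Proposition~\ref{desumflare} \emph{without proof}: it is cited as a standard consequence of Melrose's blow-up formalism and is used later (in the proofs of Theorems~\ref{resolvintro} and~\ref{convprspectrintro1}) to identify residual cusp-surgery kernels on $\Xd$ with smooth kernels on $[0,\infty)\times X\times X$ that vanish to infinite order along the pinching locus. There is therefore no in-text proof to compare yours against, and I can only judge the argument on its own.

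Your proof is correct and is the natural one. The decomposition into three steps is right: injectivity of $\beta^*$ because $\beta$ restricts to a diffeomorphism over $X\setminus Y$; the forward inclusion because each generator of $\mathcal I_Y$ pulls back to $\rho_{\ff}$ times a smooth function on $[X;Y]$, so $\beta^*\bigl(\bigcap_N\mathcal I_Y^N\cdot\mathcal A^{\EE}(X)\bigr)\subset\bigcap_N\rho_{\ff}^N\,\mathcal A^{\beta^*\EE}([X;Y])$; and surjectivity by setting $f=g\circ\beta^{-1}$ and estimating ordinary derivatives of $f$ in projective charts. You also correctly identify the surjectivity step as the one that carries content. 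The mechanism there is exactly as you say: in the chart with $\rho_{\ff}=x_{j_0}$ one has $\partial_{x_{j_0}}=\rho_{\ff}^{-1}\bigl(\rho_{\ff}\partial_{\rho_{\ff}}-\sum_i z_i\partial_{z_i}\bigr)$ and $\partial_{x_i}=\rho_{\ff}^{-1}\partial_{z_i}$, each of which is $\rho_{\ff}^{-1}$ times a vector field smooth on $[X;Y]$; since $g$ together with all its chart-wise partial derivatives is $O(\rho_{\ff}^N)$ for every $N$, the negative powers of $\rho_{\ff}$ picked up under differentiation are absorbed, chart by chart across a finite cover of $\ff$. Two small refinements, neither of which is a gap: the phrase ``$z\partial_z$ of a conormal function is divisible by $z$'' holds only when the index set at $\{z=0\}$ is $\mathbb N$ (the smooth case, which is the one in play here), so it should be stated in that form; and to conclude $f\in\mathcal C^\infty(X)$ one should add the elementary Whitney-type remark that since every $\partial^\alpha f$ extends continuously by $0$ across $Y$, the zero-extension of $f$ is $\mathcal C^k$ for every $k$, hence smooth, and its Taylor series along $Y$ vanishes.
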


\section{Conormal distributions}
The central idea in Melrose's program of studying singular spaces is to describe the classical pseudodifferential operators (and later the \emph{b}-pseudodifferential operators, the fibered cusp operators, etc) as \emph{conormal distributions} on certain (maybe blown-up) geometric spaces. 

Following \cite[Chapter 2]{melroseconormal}, let $\overline{\RR^n}$ be the radial compactification of $\RR^n$, and consider the following boundary defining function for $\partial \overline{\RR^n}$
\[ \rho:= \frac{1}{(1+ |\xi|^2)^{1/2}}. \] 

The space of classical symbols of order $m$ on $\RR^n$ is exactly the space $\rho^{-m} \mathcal C^{\infty}(\overline{\RR^n})$. The Fourier transform and the inverse Fourier transform of a Schwartz function $\varphi \in \Ss(\RR^n)$ are defined as
\begin{align*}
\mathcal{F}(\varphi)(\xi) := \int_{\RR^n} e^{-ix\cdot \xi} \phi (x) dx, &&  \mathcal{F}^{-1}(\varphi)(x) = \frac{1}{2\pi} \int_{\RR^n} e^{ix \cdot \xi} \varphi (\xi) d\xi.
\end{align*}
and we extend it to tempered distributions by duality. 
\begin{definition}\label{con0}
We consider the space of distributions on $\RR^n$ which are \emph{conormal at the origin}
\[ \II^m \lp \RR^n, \{ 0 \} \rp := \mathcal{F}^{-1}\left( \rho^{-m} C^{\infty}(\overline{\RR^n}) \right).
\] 
\end{definition}
Thus the Fourier transform of a conormal distribution of order $m$ is a classical symbol of order $m$ on $\RR^n$. Let us remark that $\delta_0 \in \II^0 \lp \RR^n, \{ 0 \} \rp$ has a singularity at $\{ 0 \}$, but it is smooth everywhere else. Actually this statement holds true for any conormal distribution in that space (see e.g. \cite[Lemma 5]{noterares}).

\begin{proposition}
If $\psi \in \mathcal C^{\infty}_{c}(\RR^n)$ is a compactly supported function which is equal to $1$ in a neighborhood of the origin, then for any conormal distribution $u \in \II^{m}(\RR^n, \{ 0\}) $,
\[(1-\psi)u \in \mathcal{S}(\RR^n). \] 
\end{proposition}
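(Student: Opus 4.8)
The plan is to reduce the statement to the elementary fact that the inverse Fourier transform of a classical symbol of sufficiently negative order is a bounded continuous function, and then to upgrade this — by inserting powers of $x$ and derivatives — to Schwartz decay on the region where $1-\psi$ lives.

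First I would write $u = \mathcal F^{-1}(a)$ with $a \in \rho^{-m}\mathcal C^{\infty}(\overline{\RR^n})$ a classical symbol of order $m$, and fix an auxiliary cut-off $\chi \in \mathcal C^{\infty}_{c}(\RR^n)$ equal to $1$ near the origin. Since $\chi a \in \mathcal C^{\infty}_{c}(\RR^n) \subset \Ss(\RR^n)$ we get $\mathcal F^{-1}(\chi a) \in \Ss(\RR^n)$, and as $1-\psi$ is bounded together with all its derivatives (because $\psi$ has compact support), the Leibniz rule gives $(1-\psi)\,\mathcal F^{-1}(\chi a) \in \Ss(\RR^n)$. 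Hence it suffices to prove the claim for $v := \mathcal F^{-1}(b)$ with $b := (1-\chi)a$, which is now a genuine symbol of order $m$ in the usual sense: $|\partial_\xi^\alpha b(\xi)| \le C_\alpha (1+|\xi|)^{m-|\alpha|}$ for every multi-index $\alpha$.

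Next I would establish uniform bounds on $v$. For multi-indices $\alpha,\gamma$ one has $\mathcal F(x^\alpha \partial^\gamma v) = c_{\alpha\gamma}\,\partial_\xi^\alpha(\xi^\gamma b)$, which is a symbol of order $m+|\gamma|-|\alpha|$; choosing $|\alpha|$ large enough (depending on $|\gamma|$) makes it integrable over $\RR^n$, so $x^\alpha \partial^\gamma v$ is continuous and bounded. In particular $v$ is smooth away from the origin — this is exactly the statement recalled just before the Proposition — and for every $\gamma$ and every $N$ there is a constant with $|\partial^\gamma v(x)| \le C_{\gamma,N}\,|x|^{-N}$ on $\RR^n \setminus \{0\}$. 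Finally, choosing $R$ with $\supp\psi \subset \{|x|<R\}$, on $\{|x|\ge R\}$ the function $(1-\psi)v$ coincides with $v$ and therefore decays rapidly together with all its derivatives, while on the compact set $\{|x|\le R\}$ it is smooth (it vanishes identically near the origin where $\psi\equiv 1$) hence bounded with all derivatives; combining the two regions yields that $x^\alpha \partial^\gamma\big((1-\psi)v\big)$ is globally bounded for all $\alpha,\gamma$, i.e.\ $(1-\psi)v \in \Ss(\RR^n)$, and together with the first step $(1-\psi)u \in \Ss(\RR^n)$.

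This is a soft lemma and I do not expect a serious obstacle; the only point requiring care is the bookkeeping of orders in the last two steps — one must keep the order gained from each factor of $x$ ahead of the order lost to each derivative — together with the observation that the polynomial-gain estimate on $v$ is available only away from the origin, which is precisely the reason the factor $1-\psi$, vanishing near $0$, is needed to produce a globally Schwartz function.
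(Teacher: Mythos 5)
Your argument is correct, and essentially the standard one for this lemma. The paper itself does not prove this proposition; it only refers to an external source (Stan's lecture notes), so there is no in-text proof to compare against.

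Two small observations on the write-up. First, the auxiliary cut-off $\chi$ and the decomposition $a = \chi a + (1-\chi)a$ are unnecessary: an element $a \in \rho^{-m}\mathcal C^\infty(\overline{\RR^n})$ with $\rho = (1+|\xi|^2)^{-1/2}$ is already a smooth function on all of $\RR^n$ (since $\rho$ is nonvanishing and smooth on the interior of the radial compactification), and it satisfies the H\"ormander estimates $|\partial_\xi^\alpha a(\xi)| \le C_\alpha (1+|\xi|)^{m-|\alpha|}$ globally, with no singularity at $\xi=0$. So one can take $b=a$ directly; the decomposition is harmless but does no work. Second, in passing from "for each $\gamma$ and each sufficiently large $|\alpha|$, $x^\alpha \partial^\gamma v$ is bounded and continuous" to the pointwise bound $|\partial^\gamma v(x)| \lesssim |x|^{-N}$ away from the origin, one should either sum over all $\alpha$ with $|\alpha|=k$, or equivalently observe that $|x|^{2k}\partial^\gamma v = \mathcal F^{-1}\lp c\,\Delta_\xi^k(\xi^\gamma b)\rp$ is bounded and continuous once $2k > n + m + |\gamma|$; this is the standard bookkeeping and you clearly have it in mind, but it is worth making explicit since a single monomial $x^\alpha$ vanishes on a coordinate hyperplane and does not by itself control $|x|^{-|\alpha|}$.

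With those points tightened, the proof is complete: the global boundedness of $x^\alpha\partial^\gamma v$ gives rapid decay of all derivatives of $v$ outside any fixed ball, while near the origin $(1-\psi)v$ vanishes identically, and the transition annulus is compact where $v$ is smooth; hence $(1-\psi)u\in\Ss(\RR^n)$.
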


Furthermore, one can prove that for any $m \in \mathbb C$,
$ \Ss (\RR^n) \cdot \II^m \lp \RR^n, \{ 0 \} \rp \subset \II^m \lp \RR^n, \{ 0 \} \rp.   $

In order to extend the definition of a conormal distribution at the origin of an arbitrary vector space $W$, an useful trick is to slighty modify the definition of the Fourier transform to take into account densities: 
\begin{align*}
\mathcal F : \Ss(W) \longrightarrow \Ss \lp W',\Omega W' \rp, && \mathcal F (u) (\xi)=  \lp \int_W e^{-i \xi(x)} u(x) \mu_W(x) \rp \mu_{W'}(\xi),
\end{align*}
where $\mu_W$ and $\mu_{W'}$ are corresponding densities in the bundle $W$ and its dual $W'$. Then 
\[ G^*u(x) = \mathcal F^{-1} \lp \lp {}^t G^{-1} \rp^*  a  \rp (x). \]

\begin{definition}\label{distrconvect}
The space of conormal distribution at the origin of a vector space $W$ is given by
\[  \II^m \lp W, \{ 0 \} \rp : = \mathcal F^{-1} \lp \rho^{-m} \mathcal C^{\infty} \lp \overline{W} , \Omega W' \rp   \rp .  \]
\end{definition}

Furthermore, Definition \ref{distrconvect} also works for defining conormal distributions on a trivial bundle $W$ over a manifold $X$ (the points on $X$ are regarded as parameters, and we perform the Fourier transform in the fibers of the trivial bundle). In general, to give the definition of a distribution on a (non-trivial) vector bundle conormal to its zero section, one proceeds locally (see for instance \cite[(L3.50)]{melroseconormal} or \cite[Section 1.3]{noterares}).
  
\begin{definition}
The space of conormal distributions of order $m$ on a vector bundle $W$ conormal to its zero section is given by
\[ \II^m \lp W, 0_W \rp:=\mathcal F_{\fib} \lp    \rho^{-m} \mathcal C^{\infty} \lp \overline{W^*}, \Omega_{\fib} \rp \rp.  \]
\end{definition}

In order to introduce  distributions of order $m$ on a manifold which are conormal to a submanifold $Y \subset X$, it would be useful to regard $X$ as a bundle over $Y$ at least locally. We are able to do so by using the Collar neighborhood Theorem (see for instance \cite[Theorem 1]{melroseconormal}).

\begin{theorem}[Collar Neighborhood Theorem]\label{collar}
Let $X$ be a compact manifold and let $Y \subset X$ be a closed submanifold. Then there exist an open set $Y \subset D \subset X$, an open set $D'$ contained in the normal bundle $NY$ and a diffeomorphism $f:D \longrightarrow D'$ such that $f_{\vert_Y}: Y \longrightarrow NY$  identifies Y with the zero section $0_{NY}$ in the normal bundle and
$ d_y f_{\vert_{N_yY}}:N_yY \longrightarrow N_yY  $
is the identity.
\end{theorem}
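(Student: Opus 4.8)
The plan is to run the classical tubular-neighbourhood argument through the exponential map of an auxiliary Riemannian metric. First I would fix any Riemannian metric $g$ on $X$, which exists because $X$ is compact, hence paracompact. For each $y \in Y$ the metric gives an orthogonal splitting $T_yX = T_yY \oplus (T_yY)^{\perp}$, and the composite $(T_yY)^{\perp} \hookrightarrow T_yX \twoheadrightarrow T_yX / T_yY = N_yY$ is a linear isomorphism; I use it to realise the normal bundle $NY$ as the subbundle $(TY)^{\perp} \subset (TX)_{\vert_Y}$, so that a point of $NY$ is a pair $(y,v)$ with $v \in T_yX$, $v \perp T_yY$ (any two metrics give canonically diffeomorphic outcomes, so this choice is harmless).

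Next I would introduce the Riemannian exponential map $\exp : \mathcal{O} \longrightarrow X$, defined and smooth on an open neighbourhood $\mathcal{O}$ of the zero section of $TX$ (by compactness one could take $\mathcal{O} = TX$, but a neighbourhood suffices), and set
\[ E : \mathcal{O} \cap NY \longrightarrow X, \qquad E(y,v) := \exp_y(v). \]
Then $E(y,0) = y$, so $E$ restricts to the identity on the zero section $0_{NY} \cong Y$. The key pointwise computation is that at a point $(y,0)$ the differential $d_{(y,0)}E$, read through the canonical identification $T_{(y,0)}(NY) \cong T_yY \oplus N_yY$ and through $T_yX = T_yY \oplus (T_yY)^{\perp}$, is the identity: on the $T_yY$-summand because $y \mapsto E(y,0) = y$, and on the $N_yY$-summand because $v \mapsto \exp_y v$ has differential $\operatorname{id}$ at $v = 0$, which is the standard first-order property of $\exp$. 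By the inverse function theorem, $E$ is therefore a diffeomorphism from some neighbourhood of each point of $0_{NY}$ onto its image.

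Finally I would promote this to an honest diffeomorphism onto a neighbourhood of all of $Y$. This is the only non-formal step: one argues by contradiction, assuming no open $D' \supset 0_{NY}$ with $E\vert_{D'}$ injective exists, which produces sequences $(y_n,v_n) \neq (y_n',v_n')$ in $NY$ with $v_n, v_n' \to 0$ and $E(y_n,v_n) = E(y_n',v_n')$; compactness of $Y$ yields convergent subsequences $y_n \to y$ and $y_n' \to y'$, and passing to the limit gives $y = E(y,0) = E(y',0) = y'$, so both sequences eventually lie inside a fixed neighbourhood of $(y,0)$ on which $E$ is injective — a contradiction. Shrinking if necessary, I obtain an open $D' \subset \mathcal{O} \cap NY$ containing $0_{NY}$ such that $E\vert_{D'} : D' \longrightarrow D := E(D')$ is a diffeomorphism onto an open set $D$ with $Y \subset D \subset X$. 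Setting $f := (E\vert_{D'})^{-1}$ finishes the proof: $f\vert_Y$ is $y \mapsto (y,0)$, identifying $Y$ with $0_{NY}$, and since $d_{(y,0)}E\vert_{N_yY} = \operatorname{id}$ we get $d_yf\vert_{N_yY} = \operatorname{id}$ as well. The main obstacle is thus not any individual computation but the compactness argument upgrading the local diffeomorphism property into a genuine tubular neighbourhood; the rest is a routine verification of the properties of $\exp$.
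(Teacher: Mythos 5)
The paper itself does not prove this statement: it is cited as a classical result from Melrose's lecture notes (\cite[Theorem 1]{melroseconormal}), so there is no in-paper argument to compare against. Your proof is the standard tubular-neighbourhood construction via the Riemannian exponential map, and it is correct: fixing an auxiliary metric $g$ and realizing $NY$ as $(TY)^\perp$ gives the concrete model; $d_{(y,0)}E$ is the identity on the $T_yY$-summand because $E$ restricts to $\id$ on the zero section and on the $N_yY$-summand because $d_0\exp_y = \id$; and the sequential compactness argument (using compactness of $Y$) is exactly the correct device to upgrade the pointwise inverse-function-theorem conclusion to injectivity on a uniform neighbourhood of $0_{NY}$. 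The conclusion $d_yf\vert_{N_yY}=\id$ then follows by inverting the block-identity differential $d_{(y,0)}E$. The one thing worth keeping in mind, given the surrounding section of the paper, is that the result is later invoked for manifolds with corners and $p$-submanifolds; in that generality the exponential map must be taken with respect to a metric adapted to the corner structure (e.g.\ a product metric near faces) so that $E$ stays inside $X$, but for the statement as literally written (a compact manifold without corners) your argument needs no modification.
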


\begin{definition}
Let $X$ be a compact manifold with corners, and let $Y \subset X$ a closed submanifold. A distribution $u \in \mathcal C^{-\infty}(X)$ is said to be \emph{conormal to $Y$} if it can be written as $u=u_1+u_2$, where $u_2 \in \mathcal C^{\infty}(X)$, and $u_1=f^* v$, for $v \in I^m(NY,0_{NY})$ with compact support and $f$ is a diffeomorphism as in Theorem \ref{collar}.
\end{definition}

Now we can state the Push-forward and Pull-back theorems for conormal distributions (see for instance \cite[Theorem 3.17, Theorem 3.18]{grieser}).
\begin{theorem}[Push-forward of conormal distributions]\label{pftc}
Let $f : X \longrightarrow Y$ be a fibration between manifolds with corners, and let $Z \subset X$ be a submanifold such that for any point $z \in Z$
\[ T_z Z \cap T_z \lp f^{-1} (f(z)) \rp = \{0 \}.  \]
Let $\mu \in \II^m(X,Z)$ be a distributional density on $X$, conormal to $Z$, such that $f$ is proper on the support of $\mu$. Then
\begin{itemize}
\item[$i)$] If $f_{\vert_Z} : Z \longrightarrow f(Z)$ is a diffeomorphism, then the push-forward $f_* \mu$ is a smooth density.
\item[$i)$] Otherwise, $f(Z)$ is a proper submanifold of $Y$ and the push-forward is a conormal distributional density to $f(Z)$:
$f_* \mu \in \II^m \lp Y, f(Z) \rp. $
\end{itemize}
\end{theorem}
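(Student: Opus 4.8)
\medskip
\noindent\textbf{Proof plan.}
The plan is to reduce to a linear model by localization and the Collar Neighborhood Theorem, and then to carry out the push-forward by fiber integration directly on the Fourier transform side. First I would cover $\supp\mu$ by finitely many coordinate patches with a subordinate partition of unity; since both the push-forward and the property of being conormal are local over $Y$, and since $f$ is proper on $\supp\mu$ so that any fiber meets only finitely many patches, it suffices to treat a single patch. There I split $\mu = \mu_1 + \mu_2$ with $\mu_2$ a smooth density of proper support — whose fiber integral $f_*\mu_2$ is again a smooth density (fiber integration of a smooth density with proper support) — and $\mu_1$ the pull-back, under the diffeomorphism of Theorem \ref{collar}, of a density on the normal bundle $NZ$ conormal to the zero section. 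The transversality hypothesis $T_zZ \cap T_z(f^{-1}(f(z))) = \{0\}$ states exactly that $df$ is injective on $T_zZ$, hence $f|_Z$ is an immersion and $f(Z)$ is locally a submanifold of $Y$ with $\dim f(Z) = \dim Z$. Using the fibration structure I may take $f$ to be a projection $(\tilde y, w) \mapsto \tilde y$, with $w$ the fiber variable ranging in an open subset of $\RR^\ell$, $\ell = \dim X - \dim Y$; choosing coordinates $\tilde y = (y'', y')$ on $Y$ with $f(Z) = \{y' = 0\}$, pulling them back through $f$, and then correcting $w$ by the function whose graph is $Z$ inside the coordinate slice $\{y'=0\}$, I arrive at the model
\[
X = \RR^{d}_{y''} \times \RR^{k}_{y'} \times \RR^{\ell}_{w}, \qquad Y = \RR^{d}_{y''} \times \RR^{k}_{y'}, \qquad f(y'', y', w) = (y'', y'),
\]
with $Z = \{y' = 0,\ w = 0\}$ and $f(Z) = \{y' = 0\}$, where $d = \dim Z$ and $k = \codim_Y f(Z)$.

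In this model I write $\mu_1 = u(y'', y', w)\, dy''\, dy'\, dw$, where by Definition \ref{con0} (transported by the collar diffeomorphism)
\[
u(y'', y', w) = \int_{\RR^{k+\ell}} e^{\,i(y' \cdot \eta + w \cdot \zeta)}\, a(y'', \eta, \zeta)\, d\eta\, d\zeta,
\]
with $a \in \rho^{-m}\mathcal C^{\infty}\big(\overline{\RR^{k+\ell}}\big)$ a classical symbol of order $m$ in the frequencies $(\eta, \zeta)$ dual to the conormal directions $(y', w)$; any non-polyhomogeneous, Schwartz part of $u$ is absorbed into $\mu_2$. The push-forward is the fiber integral $f_*\mu_1 = \big(\int_{\RR^\ell} u(y'', y', w)\, dw\big)\, dy''\, dy'$. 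Carrying out the $w$-integration underneath the oscillatory integral — legitimate because the symbol bounds on $a$ together with the properness of $f$ on $\supp\mu$ control the $w$-behaviour — produces a factor $(2\pi)^\ell\, \delta(\zeta)$ and leaves
\[
\int_{\RR^\ell} u(y'', y', w)\, dw = (2\pi)^\ell \int_{\RR^{k}} e^{\,i y' \cdot \eta}\, a(y'', \eta, 0)\, d\eta.
\]

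It remains to recognize the right-hand side as a conormal distribution. The closure of $\{\zeta = 0\}$ in $\overline{\RR^{k+\ell}}$ is diffeomorphic to $\overline{\RR^{k}}$ and the boundary defining function $\rho = (1 + |\eta|^2 + |\zeta|^2)^{-1/2}$ restricts to $(1 + |\eta|^2)^{-1/2}$, so $a(y'', \cdot, 0) \in \rho^{-m}\mathcal C^{\infty}\big(\overline{\RR^{k}}\big)$ is again a classical symbol of order $m$. Thus, by Definition \ref{con0}, $\int e^{\,iy'\cdot\eta} a(y'', \eta, 0)\, d\eta$ is conormal of order $m$ to $\{y' = 0\} = f(Z)$; undoing the collar identification and summing over the partition of unity gives $f_*\mu \in \II^m(Y, f(Z))$, with $f(Z)$ a submanifold of positive codimension, which is $ii)$. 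Case $i)$ is exactly the situation $f(Z) = Y$, i.e. $k = 0$ (equivalently $f|_Z$ onto, hence — being a proper injective immersion between equidimensional manifolds — a diffeomorphism onto $Y$): then there are no remaining frequencies $\eta$, the last display collapses to the smooth density $(2\pi)^\ell\, a(y'', 0, 0)\, dy''$, and $f_*\mu$ is smooth. An equivalent route would be to blow up $Z$ in $X$ and $f(Z)$ in $Y$, note that the lift of $\mu_1$ is polyhomogeneous conormal at the front face, and invoke the Pull-back and Push-forward Theorems \ref{pullbackphg} and \ref{pft}; I prefer the direct Fourier computation since it keeps the bookkeeping minimal.

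I expect the main obstacle to be the rigorous justification of the $\delta(\zeta)$ step: one must show that the distributional fiber integral is genuinely computed by freezing $\zeta = 0$ in the symbol, with all estimates uniform in the parameter $y''$, and this is precisely where properness of $f$ on $\supp\mu$ enters — it makes the integration over a possibly non-compact fiber meaningful and makes the local pieces patch. A secondary, more routine, point is the careful tracking of the $1$-density factors under the collar diffeomorphism and under fiber integration, and the verification that the conormal class is independent of the chosen collar, which follows from its invariance under diffeomorphisms fixing $Z$.
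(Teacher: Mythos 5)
The paper does not prove this theorem; it is cited from \cite[Theorem~3.18]{grieser}, so there is no argument in the paper to compare your proposal against. Assessing it on its own terms: your plan --- localize, split off the smooth part via Theorem~\ref{collar}, pass to the linear model $X=\RR^d_{y''}\times\RR^k_{y'}\times\RR^\ell_w$, $Y=\RR^d_{y''}\times\RR^k_{y'}$, $f$ the projection, $Z=\{y'=0,w=0\}$, and compute the fiber integral on the Fourier side --- is the standard and correct route. The observation that restricting the symbol to $\{\zeta=0\}\cong\overline{\RR^k}\subset\overline{\RR^{k+\ell}}$ produces a classical symbol of the same order, and that with the paper's unshifted Definition~\ref{con0} the order of the push-forward is therefore again $m$, is right, as is the use of transversality to put $Z$ locally as a graph over $f(Z)$.

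The step you flag as the main obstacle --- the formal $\int e^{iw\cdot\zeta}\,dw=(2\pi)^\ell\delta(\zeta)$ interchange --- is in fact the whole content of the theorem in the linear model, and your proposal leaves it undone. Since $a$ is only a classical symbol, the $w$- and $(\eta,\zeta)$-integrals cannot be interchanged directly. The usual fix is to pair $u$ against test densities of the form $f^*\phi$, integrate by parts in $w$ (equivalently, divide $a$ repeatedly by $(1+|\zeta|^2)$ and compensate with $(1-\Delta_w)$) to reduce to a symbol of sufficiently negative order in $\zeta$ where the triple integral converges absolutely, carry out the $w$-integral there, and then pass back; properness of $f$ on $\supp\mu$ enters to make the test pairings well defined. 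Naming the difficulty and deferring it is a plan, not a proof, and here the deferred step is the one that carries the theorem.

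Two smaller points. Your reduction assumes $f(Z)$ is an embedded submanifold of $Y$; transversality gives that $f|_Z$ is an immersion, but embeddedness additionally requires injectivity of $f|_Z$, which is implicit in the statement rather than derivable from the stated hypotheses. Also, case $(i)$ as written in the paper ("$f|_Z:Z\to f(Z)$ is a diffeomorphism") cannot be the intended dichotomy: take $X=\RR^3$, $Y=\RR^2$, $f(y_1,y_2,w)=(y_1,y_2)$, $Z=\{y_2=0,w=0\}$, $\mu=\delta(y_2)\delta(w)\,dy_1\,dy_2\,dw$; then $f|_Z$ is a diffeomorphism onto $f(Z)=\{y_2=0\}$, yet $f_*\mu=\delta(y_2)\,dy_1\,dy_2$ is not smooth. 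Your reading --- $(i)$ is $k=0$, i.e.\ $f(Z)=Y$; $(ii)$ is $k>0$, i.e.\ $f(Z)\subsetneq Y$ --- is the correct one.
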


\begin{theorem}[Pull-back of conormal distributions]\label{pbtc}
Let $f : X \longrightarrow Y$ be a \emph{b}-map between manifolds with corners, and consider $Z \subset Y$ a submanifold such that for any point $y:=f(x) \in Y$, the tangent space $T_yY$ is spanned by $T_y Z$ and $df(T_xX)$.
Then $f^{-1}(Z) \subset X$ is a submanifold, and if $u \in \II^m(Y,Z)$ is a conormal distribution on $Y$ with respect to $Z$, then the pull-back $f^* u \in \II^m \lp X, f^{-1}(Z) \rp$ is also a conormal distribution. 
\end{theorem}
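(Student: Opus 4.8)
The plan is to reduce the statement to a local model after choosing coordinates adapted simultaneously to $f$, to $Z$, and to $f^{-1}(Z)$, and then to pull back an oscillatory-integral representation of $u$ directly. First I would unwind the spanning hypothesis: the condition $T_yY=T_yZ+df(T_xX)$ for every $x$ with $y=f(x)\in Z$ says exactly that the composite $T_xX\xrightarrow{df_x}T_yY\twoheadrightarrow N_yZ:=T_yY/T_yZ$ is surjective, i.e.\ that $f$ is transverse to $Z$; dually, $df_x^*$ is injective on $N_y^*Z$, so that $f^*u$ is well defined as a distribution whenever $\WF(u)\subset N^*Z$. Locally near a point $y_0\in Z$, by the Collar Neighbourhood Theorem~\ref{collar} --- in a corners-adapted form near $\partial Y$, keeping the $k:=\codim(Z\subset Y)$ transverse directions among the $y$-type variables --- choose coordinates $(y',y'')$ on $Y$ with $Z=\{y''=0\}$ and write $f=(f',f'')$. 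Transversality means $df''_x$ has rank $k$ at every $x\in f^{-1}(Z)=(f'')^{-1}(0)$; since $f$ is a b-map, using the b-map normal form of Proposition~\ref{bfib} together with the implicit function theorem for manifolds with corners, one checks that $f^{-1}(Z)$ is a submanifold in the sense of Definition~\ref{submfd}, of codimension $k$, and that $(f''_1,\dots,f''_k)$ completes to a coordinate system $(x',x'':=f'')$ on $X$ near the point, in which $f^{-1}(Z)=\{x''=0\}$.

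Next I would pull back an oscillatory-integral representation. Away from $f^{-1}(Z)$ the distribution $f^*u$ is smooth (write $u=u_1+u_2$ with $u_2$ smooth and $u_1=\kappa^*v$, $v\in\II^m(NZ,0_{NZ})$ compactly supported, $\kappa$ the collar diffeomorphism, and note $u_1$ is smooth off $Z$ while $f$ is smooth), so by a partition of unity on $X$ we may assume $u$ is supported near $y_0$. In the coordinates $(y',y'')$ the fibrewise Fourier description of $u_1$ reads
\[
u_1(y',y'')=(2\pi)^{-k}\int_{\RR^k} e^{iy''\cdot\eta}\,a(y',\eta)\,d\eta ,
\]
where $a\in\rho^{-m}\mathcal C^\infty\bigl(\overline{\RR^k_{\eta}}\bigr)$ is a classical symbol of order $m$ depending smoothly on $y'$. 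Substituting $f$ and using that $f$ is a b-map (so that composing the base variables of the symbol with $f'$ is licit and the $\eta$-integral still converges conormally), we get, in the coordinates $(x',x'')$ and up to a smooth function,
\[
f^*u_1(x',x'')=(2\pi)^{-k}\int_{\RR^k} e^{ix''\cdot\eta}\,\widetilde a(x',x'',\eta)\,d\eta ,\qquad \widetilde a(x',x'',\eta):=a\bigl(f'(x',x''),\eta\bigr),
\]
and $\widetilde a$ is again a classical symbol of order $m$ in $\eta$ with smooth coefficients, since composing a classical symbol with a smooth map in its base variables preserves the symbol class. By the definition of conormal distributions this says $f^*u_1\in\II^m\bigl(X,f^{-1}(Z)\bigr)$; adding $f^*u_2\in\mathcal C^\infty(X)$ and summing over the partition of unity gives $f^*u\in\II^m\bigl(X,f^{-1}(Z)\bigr)$.

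I expect the main obstacle to be the geometric bookkeeping in the first step: making fully rigorous, in the manifold-with-corners category, that transversality of a b-map $f$ to the submanifold $Z$ produces a submanifold preimage together with an adapted chart respecting the split into $x$-type and $y$-type variables --- in particular that the collar neighbourhood and the completion of $f''$ to coordinates can be carried out compatibly with the boundary stratification. Away from $\partial X$ this is classical; near the corners one leans on Proposition~\ref{bfib} and on the fact that the transverse directions of $Z$ may be taken among the $y$-type coordinates. Once the geometry is in normal form, the analytic content is the essentially formal observation that conormality is a fibrewise-Fourier/symbolic condition, manifestly stable under composing base variables with a smooth map and renaming fibre variables. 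An alternative route that avoids oscillatory integrals is Hörmander's iterated-regularity characterization --- $u\in\II^m(Y,Z)$ iff $u$ lies in a fixed weighted Sobolev space and stays there after applying arbitrarily many smooth vector fields tangent to $Z$ --- combined with the fact that $f^*$ intertwines vector fields tangent to $Z$ with vector fields tangent to $f^{-1}(Z)$ modulo multiplication operators; this needs a Sobolev pull-back bound for b-maps but is otherwise structural.
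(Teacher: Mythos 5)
The paper does not prove this theorem: it cites \cite[Theorem~3.18]{grieser} and uses the result as a black box. Your oscillatory--integral proof is, in substance, the standard argument (and the one Grieser's notes sketch), so there is no ``paper route'' to compare against; what matters is whether your argument is complete.

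Your reduction is essentially correct: transversality gives that $f$ is transverse to $Z$, hence $f^{-1}(Z)$ is a submanifold of codimension $k=\codim Z$; choosing coordinates $(y',y'')$ with $Z=\{y''=0\}$ and completing $x'':=f''$ to a chart on $X$ puts $f$ in the form $(x',x'')\mapsto(f'(x',x''),x'')$, and the fibrewise Fourier representation pulls back as you write. Two small points deserve a sentence each in a finished write-up. First, the amplitude you obtain, $\widetilde a(x',x'',\eta)=a(f'(x',x''),\eta)$, depends on the fibre variable $x''$ as well as the base variable $x'$; this is not literally of the form demanded by Definition~\ref{distrconvect}, so one should invoke the routine Taylor-expansion/integration-by-parts argument (replace $x''_j e^{ix''\cdot\eta}$ by $-i\partial_{\eta_j}e^{ix''\cdot\eta}$) to asymptotically sum down to an amplitude in $(x',\eta)$ only; this does not change the order $m$. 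Second, the Collar Neighbourhood Theorem as quoted (Theorem~\ref{collar}) is stated for closed submanifolds of compact manifolds without corners; for the corners-adapted version you invoke, one needs $Z$ to be (locally) a $p$-submanifold so that the normal fibration is compatible with the boundary stratification --- otherwise ``keeping the transverse directions among the $y$-type variables'' is not automatic. This is harmless for the paper's applications, where $Z$ is always a lifted diagonal and hence a $p$-submanifold, but it is an implicit hypothesis worth stating. With those two remarks incorporated, the argument is sound, and your transversality/Hörmander wavefront observation correctly justifies that $f^*u$ is \emph{a priori} well defined as a distribution. Your sketched alternative via iterated regularity under $\mathcal V_b$-tangent vector fields is also viable and would sidestep the collar chart entirely, at the cost of the Sobolev boundedness input you mention.
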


If $u$ is a tempered distribution on $\RR^n$ with compact support, one can define its \emph{wavefront set} $\WF (u)$, which is a subset in $\RR^n \times \lp \RR^n \setminus \{ 0 \} \rp$ (see for instance \cite[Section 4.2]{wfmelrose}). If $u_1,u_2 \in \mathcal C^{-\infty}_c(\mathbb R^n)$ are two compactly supported distributions satisfying
\[  (x, \xi) \in \WF(u_1) \Longrightarrow (x, -\xi) \notin \WF(u_2),  \]
then one can define the \emph{product of distributions} as
$ u_1 \cdot u_2: = \mathcal F^{-1} \lp \mathcal F(u_1) * \mathcal F(u_2) \rp, $
which is s a well-defined tempered distribution with
\begin{align*}
\WF(u_1 \cdot u_2) \subset{}& \left\{ (x, \xi) : \ x \in \supp u_1, \ (x,\xi) \in \WF(u_2)  \right\} \cup  \left\{ (x, \xi) : \ x \in \supp u_2, \ (x,\xi) \in \WF(u_1)   \right\}  \\
{}&  \left\{ (x, \xi) : \ \xi=\eta_1+\eta_2, \ (x,\eta_1) \in \WF(u_1), \ (x,\eta_2) \in \WF(u_2)  \right\},
\end{align*}
see for instance \cite[Proposition 4.13]{wfmelrose}.
Furthermore, these notion make sense for conormal distributions on manifolds with corners, see e.g. \cite[Theorem 3.19]{grieser} and  \cite[Appendix B]{epstein91}. 

\section{The cusp-surgery simple space}\label{calculndim}
We now proceed towards the construction of our adapted pseudodifferential calculus which will be used as a tool in the context of pinching a simple closed curve in a smooth compact oriented surface. Actually we are able to construct the \emph{cusp-surgery calculus} in a more general setup.  

Albin-Rochon-Sher \cite{ars} introduced the surgery fibered calculus in order to study the spectrum of the Hodge Laplacian on a closed manifold which degenerates to a manifold with fibered cusps. The calculus that we need here is a particular case of their surgery fibered calculus. First, our geodesic fibrates over a single point. Secondly, our condition of the invertibility of the spin structure in Hypothesis \ref{hypot} will imply that the resolvent family belongs to the small calculus, in the sense that it will have non-trivial polyhomogeneous behavior just towards the cusp diagonal face, and to the temporal boundary face at $\{ t=0 \}$ (see Fig. \ref{doublesp}). Furthermore, we rely on a self-contained analysis of blow-down maps in projective coordinates, defining and proving at the same time the existence of a \emph{b}-fibration from the double space to the simple space, and from the triple space to the double space. We hope that our construction will be helpful for those readers in search of a complete proof, but who may not be inclined to invest the effort required to fully familiarize themselves with all the subtleties of the theory of \emph{b}-fibrations between manifolds with corners.

As in \cite{ars}, let $X$ be a smooth $n$-dimensional compact manifold, and let $\gamma \subset X$ be a hypersurface. Let $x$ be a smooth function defined on a tubular neighborhood of $\gamma$ whose zero locus is exactly $\gamma$ and such that $d_p x \neq 0$ for any $p \in \gamma$. Consider $(h_t)_{t \geq 0}$ a family of metrics on $X \setminus \gamma$ which locally near $\gamma$ look like
\begin{align*}
h_t = (x^2+t^2)^{\frac{2-n}{n}}  \lp \frac{dx^2}{x^2 + t^2} + (x^2+t^2)g_{\gamma} \rp,
\end{align*}
where $g_{\gamma}$ is a Riemannian metric on $\gamma$. In particular, if $\gamma$ is a simple closed curve in a compact surface $X$, the family of metrics $(h_t)_{t \geq 0}$ describe a pinching process along $\gamma$ as in Section \ref{introducere}.

For $t>0$, $h_t$ extends to a smooth metric on $X$, and notice that the volume form of $h_t$ does not depend on $t$. The family $(h_t)_{t \geq 0}$ is singular as both $x,t \to 0$, thus following \cite{mcdonald}, we define \emph{the simple space} by the blow-up of $\gamma$ at time $t=0$ inside $[0,\infty) \times X$
\begin{equation}\label{spsimplu}
\Xs: = \left[ [0, \infty) \times X; \{ t=0 \} \times \gamma \right].
\end{equation}

Let $\ff$ be the front face of $\Xs$, and let $\tf$ be the temporal face, i.e., the lift of the old temporal boundary $\{ t=0 \}$ to $\Xs$. Denote by 
\begin{equation}\label{beta}
\beta : \Xs \longrightarrow [0,\infty) \times X
\end{equation}
the blow-down map and let us describe the projective coordinates on the simple space. We will denote by $y$ a $(n-1)$-tuple of coordinates on $\gamma$ and since it is a hypersurface without corners in $X$, all the components of $y$ are of $y$-type. 

First, the local coordinates $\lp t, \frac{x}{t},y  \rp$ are valid everywhere except at ${\beta}^{-1} \lp \{ t=0 \} \rp$, and $t$ is a boundary defining function for the front face. Secondly, the coordinates $\lp \frac{t}{x}, x, y \rp$ are suitable for $\Xs \setminus  {\beta} ^{-1} \lp \{x=0\} \rp$ in the region where $x > 0$, and the front face in this chart is given by the boundary defining function $x$. Finally,  the coordinates $\lp \frac{t}{-x},- x, y \rp$ work on the part where $x<0$, and in this chart, the boundary defining function for the front face is $-x$ (see Fig. \ref{simplesp}, where the $y$ variables are not represented since they play the role of parameters in this blow-up).

\begin{figure}[H]
\begin{center}
\includegraphics[width=12.5cm, height=6cm]{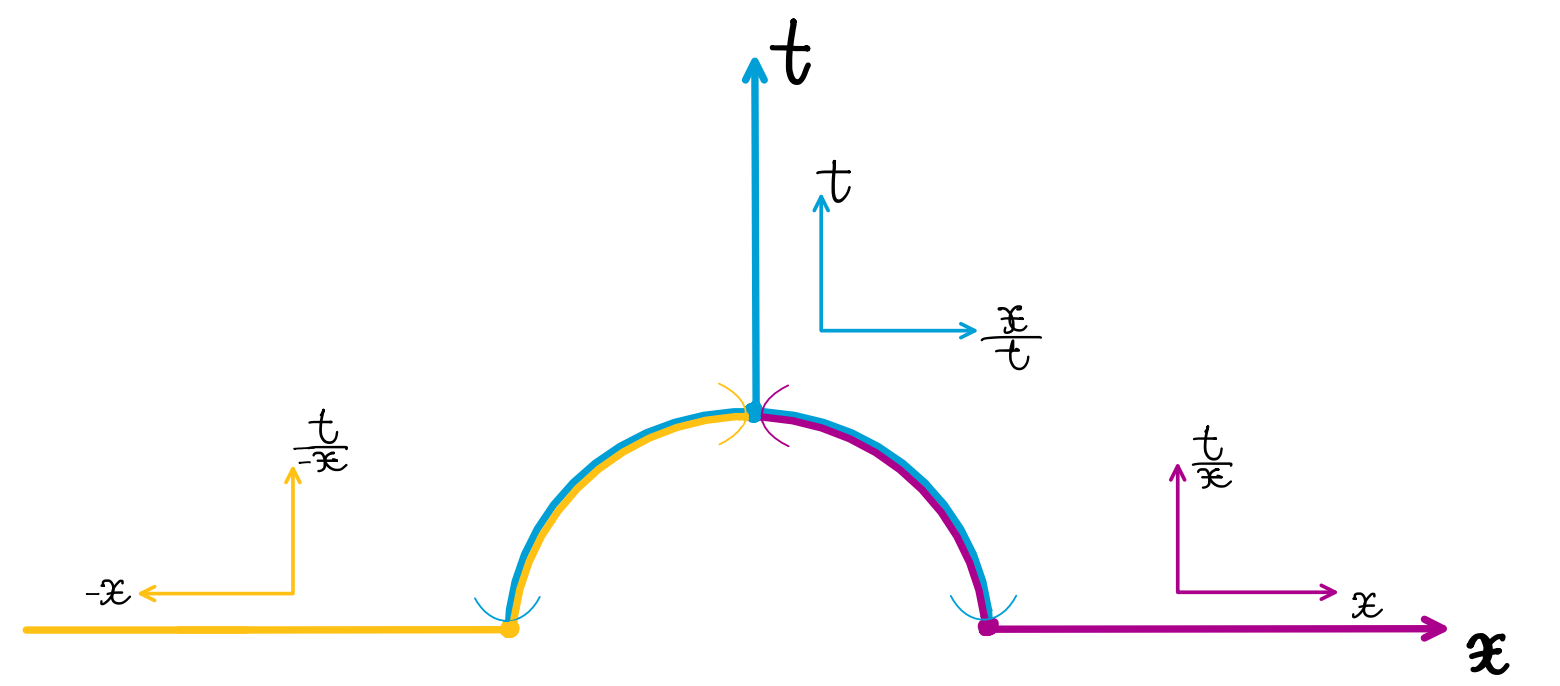}
\caption{The cups-surgery simple space with $y \in \gamma$ omitted}\label{simplesp}
\end{center}
\end{figure}

If $\Ss$ is a smooth vector bundle over the simple space $\Xs$, we denote by $\mathcal A^{0, 0} \lp \Xs, \Ss \rp$ the polyhomogeneous conormal sections on the simple space with values in $\Ss$ which have the index set $\mathbb N$ towards the front face $\ff$ and towards the temporal face $\tf$. For later use, we introduce 
\[ \mathcal A^{\alpha,\beta} \lp \Xs , \Ss \rp:= \rho_{\ff}^{\alpha} \rho_{\tf}^{\beta} \mathcal A^{0,0}  \lp  \Xs, \Ss \rp.  \]

\begin{remark}\label{notatie}
We denote by $X_{(i)}$, $i=\overline{1,2}$, the surface $X$ in the $i^{th}$ set of variables. For example, if we discuss about the product space $X \times X \times [0, \infty)$ and we need to be specific about which factor of $X$ we use, we will denote by $X_{(1)}$ the first factor, and by $X_{(2)}$ the second factor. We will also use these notation related to $\gamma$ or the densities coming from certain factors in a product space.
\end{remark}

\section{The cusp-surgery double space}
Denote by $\ffb$ the face obtained through the blow-up of the submanifold $\{t=0 \} \times \gamma_{(1)} \times \gamma_{(2)}$  inside $ [0, \infty) \times X^2$ and let $\tb$ be the lift of the temporal boundary $\{ t=0 \}$ through this blow-up. Remark that $\lp t, \xi_1:=\frac{x_1}{t},\xi_2:= \frac{x_2}{t} ,y_1, y_2 \rp$ are projective local coordinates near $\ffb$, suitable in the region where $x_1, x_2 \geq 0$, and away from $\tb$ (see Fig. \ref{bw1}, where $y_1$ and $y_2$ are parameters). 
\begin{figure}[H]
\begin{center}
\includegraphics[width=11.5cm, height=6.5cm]{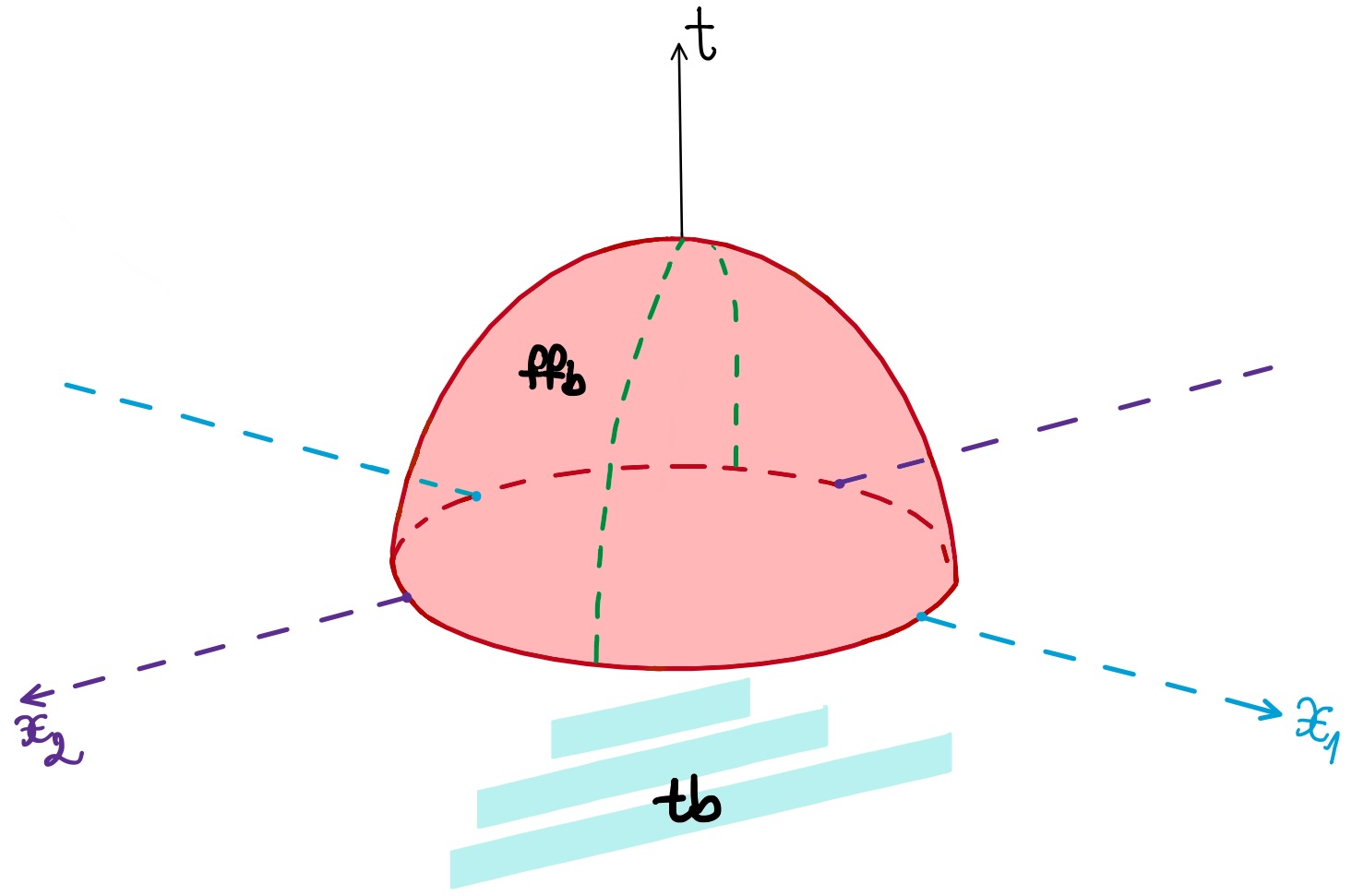}
\caption{The first blow-up in the \emph{b}-surgery double space}\label{bw1}
\end{center}
\end{figure}

Consider the \emph{cusp-surgery double space} $\Xd$ defined as
\begin{equation}\label{spdublu}
 \left[ [0, \infty) \times X^2 ; \{ t=0 \} \times \gamma_{(1)} \times \gamma_{(2)}; \{t=0\} \times \gamma_{(1)} ; \{t=0 \} \times \gamma_{(2)} ;  \lp \Diag \times [0, \infty) \rp \cap \ffb  \right] ,
\end{equation}
where $\Diag$ is the diagonal inside $\lp X \setminus \gamma \rp^2$, and we re-denoted by $\ffb$ the lift of the face $\ffb$ from $X^2_{\text{b}}:=\left[  [0, \infty) \times X^2 ; \{t=0 \} \times \gamma_{(1)} \times \gamma_{(2)} \right] $ (see Fig. \ref{bw1}) through the two blow-ups of the $p$-submanifolds $\{t=0\} \times \gamma_{(1)}$ and $\{t=0 \} \times \gamma_{(2)}$ (see Fig. \ref{doublesp}). Using the fact that $x$ defines $\gamma$ locally, we might abuse the notation and describe the double space as the following iterated blow-up
\begin{equation}\label{dublunerig}
 \Xd=\left[ [0, \infty) \times X^2 ; \{ t= x_1=x_2=0 \} ; \{t=x_1=0 \} ; \{t=x_2=0 \} ; \overline{ \{ \xi_1=\xi_2=t=0 \} } \right] .
\end{equation}

We will use the same notation $\ffb$ for the lift of the old $\ffb$ face on $\Xd$. Let $\tb$ be the temporal boundary of the double space $\Xd$, i.e., the lift of the old boundary $\{ t=0 \}$ in $\mathbb R_{+} \times X^2$ through the blow-down map
\begin{equation}\label{tildebeta}
\tilde{\beta} : \Xd \longrightarrow [0, \infty) \times X \times X.
\end{equation}

Furthermore, let $\ffc$ be the \emph{cusp front face} obtained through the last blow-up in \eqref{spdublu} and notice that $\ffc$ is a line bundle over $[0, \pi] \times \gamma \times \gamma$. Let $\Ba$ and respectively $\Bm$ be the new added faces through the blow-ups of $\{ t=0 \} \times \gamma_{(1)}$ and $\{ t=0 \} \times \gamma_{(1)}$ lifted to the double space $\Xd$.

\begin{figure}[H]
\begin{center}
\includegraphics[width=13cm, height=8cm]{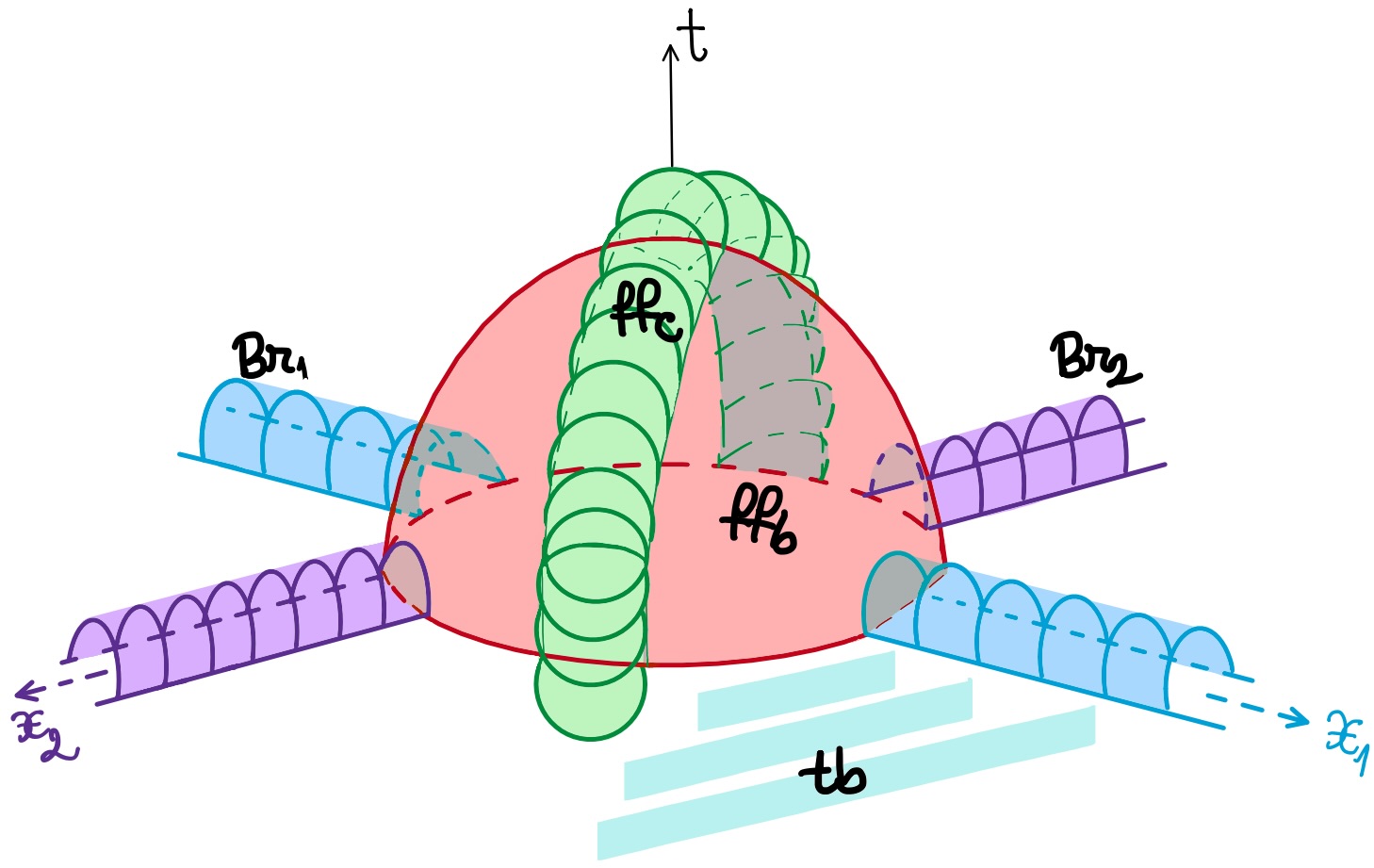}
\caption{The cusp-surgery double space with $y_1,y_2$ omitted}\label{doublesp}
\end{center}
\end{figure}

The reason for blowing-up $\{ t=0 \} \times \gamma_{(1)}$ and $\{ t=0 \} \times \gamma_{(2)}$ and producing the faces $\Ba$ and $\Bm$ is purely technical, because we need to obtain a \emph{b}-fibration over  the simple space $\Xs$. However, the definition of our cusp-surgery calculus will not involve these faces.

Now we define a ``projection" map $\pi^2_{(1)}$ between the double space $\Xd$ and the simple space $\Xs$ onto the first set of variables. Using Theorem \ref{comuteblowups}, since $ \{t= x_1=x_2=0 \} \subset \{t=x_1=0 \}$, we can commute the first two blow-ups in \eqref{spdublu}, thus $\Xd$ can be written as 
\[ \left[ [0, \infty)  \times X^2  , \{t=x_1=0 \} , \{t= x_1=x_2=0 \},   \{t=x_2=0 \},  \overline{ \{ \xi_1=\xi_2=t=0 \} } \right]. \]
Similar to \cite[Section~3.3]{mcdonald}, we define $\pi^2_{(1)}$ as the composition between the projection on the first factor $p_1 : \Xs \times X  \longrightarrow \Xs$ and the blow-down map $b$, as follows
\begin{equation}\label{mapx2cp}
\begin{tikzpicture}
  \matrix (m) [matrix of math nodes,row sep=3em,column sep=5em,minimum width=2em]
  {
       \Xs \times X =\left[ [0, \infty) \times X^2, \{t=x_1=0 \} \times X  \right] & \Xs\\
	   \Xd & {} \\  
  };    
  \path[->]
  (m-1-1) edge node [ above] {$p_1$} (m-1-2) 
  (m-2-1) edge node [left]{$b$} (m-1-1)
  (m-2-1) edge node [right]{$ \pi^2_{(1)}:=p_1 \circ b$} (m-1-2);
  
\end{tikzpicture}
\end{equation}
Remark that for $t>0$, $\pi^2_{(1)}$ is simply the projection onto the first factor.

\begin{proposition}\label{bfibspdublu}
The map $\pi^2_{(1)}: \Xd \longrightarrow \Xs$ is a \emph{b}-fibration.
\end{proposition}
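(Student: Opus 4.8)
The plan is to establish the two defining properties of a \emph{b}-fibration — \emph{b}-submersion and \emph{b}-normality — separately: the first by an abstract factorization, the second by a chart-by-chart inspection using Proposition~\ref{echivbnormal}. For the \emph{b}-submersion property I would first use the commutation of the first two blow-ups in \eqref{spdublu} — legitimate by Theorem~\ref{comuteblowups}, since $\{t=x_1=x_2=0\}\subset\{t=x_1=0\}$, exactly the reduction already carried out before the statement — to present $\Xd$ as an iterated blow-up $[\Xs\times X; Z_2; Z_3; Z_4]$ of $\Xs\times X$, where $Z_2$ is the lift of $\{t=x_1=x_2=0\}$, $Z_3$ the lift of $\{t=x_2=0\}$, and $Z_4$ the lift of the diagonal center $\lp\Diag\times[0,\infty)\rp\cap\ffb$. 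Then $\pi^2_{(1)}=p_1\circ b$, with $p_1\colon\Xs\times X\to\Xs$ the projection onto the closed factor $\Xs$, which is plainly a \emph{b}-fibration, and $b\colon\Xd\to\Xs\times X$ the composition of the three intervening blow-down maps. Projections and blow-down maps are interior \emph{b}-maps whose \emph{b}-differentials are surjective, and a composition of \emph{b}-submersions is a \emph{b}-submersion; hence $\pi^2_{(1)}$ is an interior \emph{b}-map and a \emph{b}-submersion.

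It then remains to show that $\pi^2_{(1)}$ is \emph{b}-normal, which by Proposition~\ref{echivbnormal} means that each boundary hypersurface of $\Xd$ has nonzero lift exponent over \emph{at most one} boundary hypersurface of $\Xs$ — and $\Xs$ has exactly two, $\ff$ and $\tf$. Away from $\gamma_{(1)}\cup\gamma_{(2)}$ none of the four centers is present, $\pi^2_{(1)}$ is, up to the obvious identifications, the honest projection $[0,\infty)\times X^2\to[0,\infty)\times X$, and there is nothing to check, so the whole issue is local near the preimage of $\{t=x_1=0\}$, where I would go through the five boundary hypersurfaces of $\Xd$ one at a time. In the chart $(t,\xi_1:=x_1/t,\xi_2:=x_2/t,y_1,y_2)$ valid near a generic point of $\ffb$ and away from $\tb$, the map reads $(t,\xi_1,\xi_2,y_1,y_2)\mapsto(t,\xi_1,y_1)$ in the chart $(t,x/t,y)$ of $\Xs$, so $(\pi^2_{(1)})^*\rho_{\ff}=t$ vanishes exactly to first order along $\ffb$ while $\tf$ is not visible in this chart: $\ffb$ lies over $\ff$ only. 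The same chart and its $x_1>0$, $x_1<0$ analogues near $\tb$ — with coordinates $(t/x_1,x_1,\dots)$ and $(t/(-x_1),-x_1,\dots)$ — show that $\tb$ lies over $\tf$ only. The face $\Ba$ is the front face of the blow-up of $\{t=x_1=0\}$, whose center is precisely the center defining $\ff\subset\Xs$ crossed with $X_{(2)}$, so $\pi^2_{(1)}(\Ba)\subset\ff$: $\Ba$ lies over $\ff$ only. A generic point of $\Bm$, the front face of the blow-up of $\{t=x_2=0\}$, has $t=0$ but $x_1\neq0$, hence maps into the interior of $\tf$: $\Bm$ lies over $\tf$ only. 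Finally, for the cusp front face $\ffc$ one passes to the projective coordinates resolving the diagonal $\{\xi_1=\xi_2,\ y_1=y_2\}$ inside $\{t=0\}$ — keeping $(t,\xi_1,y_1)$ as first-factor data and introducing the relative variables $\xi_1-\xi_2$ and $(y_1-y_2)/(\xi_1-\xi_2)$ — and reads off that $\pi^2_{(1)}$ simply forgets the relative and second-factor variables, carrying the interior of $\ffc$ into the interior of $\ff$: $\ffc$ lies over $\ff$ only. Collecting these, $(\pi^2_{(1)})^*\rho_{\ff}$ factors, up to a positive smooth function, as $\rho_{\ffb}\rho_{\ffc}\rho_{\Ba}$ and $(\pi^2_{(1)})^*\rho_{\tf}$ as $\rho_{\tb}\rho_{\Bm}$, no boundary hypersurface of $\Xd$ occurring in both; by Proposition~\ref{echivbnormal} the map is \emph{b}-normal, and with the previous paragraph it is a \emph{b}-fibration. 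The analogous statement for the projection $\pi^2_{(2)}$ onto the second factor follows from the symmetry $x_1\leftrightarrow x_2$, $y_1\leftrightarrow y_2$ of $\Xd$.

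The step I expect to be the main obstacle is the chart adapted to $\ffc$: after the final, diagonal, blow-up in \eqref{spdublu} the projective coordinates are the most intricate, and one must check simultaneously that $\pi^2_{(1)}$ is there a genuine projection between the $y$-type variables and that no $x$-type defining function is ``split'' between $\ff$ and $\tf$ — in particular one has to follow $\ffc$ as $\xi_1\to0$ and as $\xi_1\to\infty$, where it abuts $\ffb$, $\Ba$ and $\Bm$, and confirm that no spurious exponent is produced at those interfaces. A secondary, bookkeeping point, needed only to set up the factorization $\pi^2_{(1)}=p_1\circ b$, is to verify at each stage that the reordered centers $Z_2,Z_3,Z_4$ lift to \emph{p}-submanifolds of the partially blown-up spaces — via the clean-intersection criterion quoted after Theorem~\ref{comuteblowups} — so that $\Xd$ genuinely is an iterated blow-up of $\Xs\times X$ and $b$ genuinely is a composition of blow-down maps.
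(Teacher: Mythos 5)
Your $b$-normality argument is essentially sound — it extracts the same exponent matrix the paper later reads off, and the invocation of Proposition~\ref{echivbnormal} is the right tool — but the $b$-submersion half has a genuine gap. You assert that blow-down maps have surjective $b$-differentials; this is false in general. A blow-down $[X;Y]\to X$ is a $b$-submersion only when $Y$ is a boundary $p$-submanifold, i.e., cut out purely by $x$-type functions. The simplest failing example is $\beta\colon[\RR^2_1;\{0\}]\to\RR^2_1$ with $\RR^2_1=[0,\infty)_x\times\RR_y$: in the projective chart $(s,z)=(x,y/x)$ near the interior of the front face, $\beta^*x=s$ and $\beta^*y=sz$, so at a point $p\in\Int(\ff)$ the $b$-differential sends $s\partial_s\longmapsto[x\partial_x+y\partial_y]_0=[x\partial_x]_0$ (because $y$ vanishes at $0$ and $\partial_y$ is already a $b$-vector field) and $\partial_z\longmapsto[x\partial_y]_0=0$, giving a one-dimensional image inside the two-dimensional ${}^bT_0\RR^2_1$. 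Note that this $\beta$ \emph{is} $b$-normal; the paper's counterexample $[\RR^2_2;\{0\}]\to\RR^2_2$, which fails only $b$-normality while being a $b$-submersion, must not be read as saying that blow-downs can only lose $b$-normality.

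In your factorization the intermediate blow-downs all fall into the bad case: for instance the lift $Z_2$ of $\{t=x_1=x_2=0\}$ is cut out inside $\Xs\times X$ by the boundary defining function of $\ff$ together with the $y$-type function $x_2$. So $b\colon\Xd\to\Xs\times X$ is \emph{not} a $b$-submersion, and the surjectivity of ${}^bd\pi^2_{(1)}$ does not follow by composing with $p_1$. What makes the result true anyway is the further geometric fact that the directions killed by $b$ at each new front face lie in $\ker dp_1$; making this rigorous is exactly the content of the Hassell--Mazzeo--Melrose ``lifting $b$-fibrations'' lemma, which the paper cites (in the discussion following Proposition~\ref{bfibtrip}) as an alternative to its chart-by-chart verification of the criterion of Proposition~\ref{bfib} — a criterion that certifies $b$-submersivity and $b$-normality at once. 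To close the gap, either invoke that lemma and check its transversality hypotheses at each of the three blow-ups, or return to the coordinate patches you already set up for $b$-normality and verify in each that $\pi^2_{(1)}$ is simultaneously a projection on the $y$-type variables. A secondary slip: your chart for $\ffc$ does not match the final blow-up — the center in \eqref{dublunerig} is $\{t=\xi_1-\xi_2=0\}$ with the $\gamma$-variables as parameters, so projective coordinates near $\Int(\ffc)$ are $\lp t,(\xi_1-\xi_2)/t,\xi_1,y_1,y_2\rp$ as in case $(3)$ of the paper's proof, not $\lp\xi_1-\xi_2,(y_1-y_2)/(\xi_1-\xi_2)\rp$; this does not affect your $b$-normality conclusion but would matter for the $b$-submersion check you still owe.
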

\begin{proof}
Using Proposition \ref{bfib}, we will prove that $\pi^2_{(1)}$ is a \emph{b}-fibration locally near each point in $\Xd$ which has positive $x_1$ and $x_2$ coordinates. Similar computation work for all the other points in $\Xd$. From now on, we denote all the $x$-type variables in the sense of Section \ref{sem1} in bold. Furthermore, in all the projections that we will describe, we also have the coordinates $(y_1,y_2) \in \gamma^2$. They are just parameters in the blow-up, and the map $\pi^2_{(1)}$ acts on them as the projection onto the first factor $\pi^2_{(1)} : (y_1, y_2) \longmapsto y_1$. 
To simplify the notation, we will omit the coordinates along $\gamma$ in the computations below, since all of them are of $y$-type, and they are mapped into the  $y_1$ coordinates, which are again of $y$-type in the sense of Section \ref{sem1}.
We distinguish the following $15$ cases. 
\begin{figure}[H]
\begin{center}
\includegraphics[width=14cm, height=6cm]{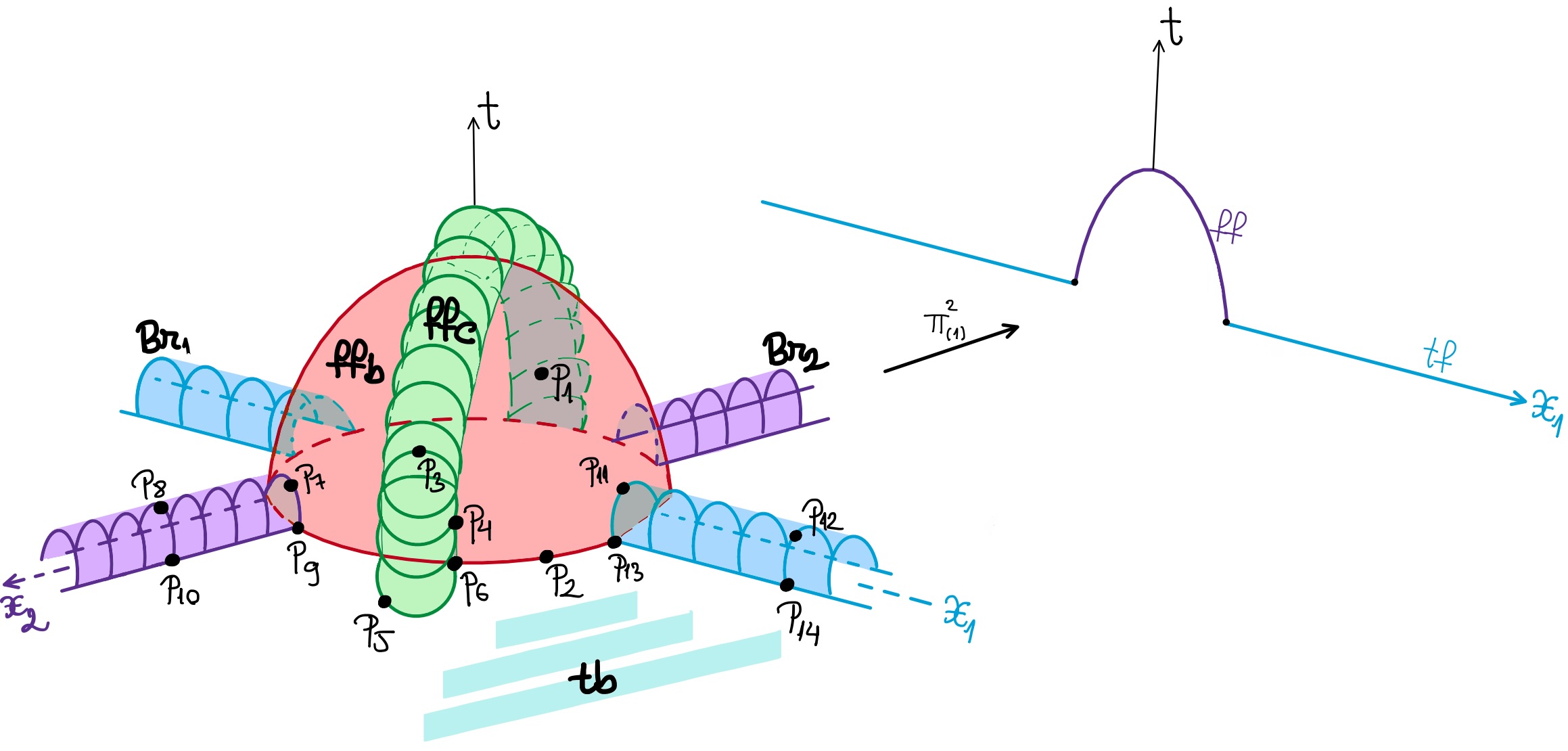}
\caption{The map $\pi^2_{(1)}$ is a \emph{b}-fibration}\label{bfibdublusimplu}
\end{center}
\end{figure}

\begin{enumerate}
\item A point $p_1 \in \Int(\ffb)$ (hence of codimension $1$, see Fig. \ref{bfibdublusimplu}) is mapped through $\pi^2_{(1)}$ on the interior of the front face of $\Xs$ and in local coordinates near $p_1$, the map $\pi^2_{(1)}$ is given by 
\begin{equation}\label{rosu1}
\lp \bm{ t},  \xi_1:=\frac{x_1}{t},\xi_2:=\frac{x_2}{t}  \rp  \longmapsto \lp \bm{ t} ,  \xi_1 \rp.
\end{equation}
\item A point $p_2 \in \Int(\ffb \cap \tb)$ is mapped through $\pi^2_{(1)}$ into one of the corners of codimension $2$ in $\Xs$, thus $\pi^2_{(1)}$ is locally given by
\begin{equation}\label{rosu2}
\lp  \bm{ \tau:=\frac{t}{x_1}}  , \bm {x_1} , \eta_2:=\frac{x_2}{x_1}  \rp \longmapsto \lp  \bm{ \tau}, \bm{ x_1} \rp.
\end{equation}

\item A point $p_3 \in \Int(\ffc)$ is mapped through $\pi^2_{(1)}$ on the front face of $\Xs$. We first change the coordinates on $\ffb$ in \eqref{rosu1} linearly to $ \lp t,  \xi_1,  u:=\xi_1-\xi_2 \rp$. Then the second blow-up of $\left\{ t=0,u=0 \right\}$ introduces near $p_3$ the coordinates $\lp { \bf t}, \xi_1, \frac{u}{t} \rp$ which are mapped through $\pi^2_{(1)}$ to $\lp {\bf t},  \xi_1 \rp$. 

\item Furthermore, a point $p_4 \in \Int(\ffc \cap \ffb)$ is mapped into the corners of $\Xs$, hence $\pi^2_{(1)}$ is locally given by
\[ \lp  \bm{ \frac{t}{u}} , \xi_1, \bm{ u} \rp \longmapsto \lp  \bm{ t=u \cdot \frac{t}{u}}, \xi_1 \rp.   \]

\item A point $p_5 \in \Int(\ffc \cap \tb)$ of codimension $2$ is mapped by $\pi^2_{(1)}$ into one of the corners of $\Xs$. Near $\tb$ the coordinates are $\lp \tau,  x_1, \eta_2 \rp$ as in \eqref{rosu2}, and after the second blow-up of $\left\{ x_1=0,  \eta_2=1 \right\}$, $\pi^2_{(1)}$ locally looks like
\[ \lp \bm{ \tau},  \bm{ x_1}, \frac{\eta_2-1}{x_1}   \rp \longmapsto \lp  \bm{ \tau},  \bm{ x_1} \rp.  \]

\item Furthermore, for $p_6 \in \Int(\ffb \cap \ffc \cap \tb)$, the map $\pi^2_{(1)}$ in suitable local coordinates near $p_6$ looks like
\[ \lp \bm{\tau},  \bm{\frac{x_1}{\eta_2-1}}, \bm{\eta_2-1}   \rp \longmapsto \lp  \bm{ \tau},  \bm{ x_1}  \rp.   \]

\item Let now $p_7 \in \Int(\Bm \cap \ffb)$  and remark that $\pi^2_{(1)}(p)$ belongs to the front face of $\Xs$. Since the coordinates $\lp \tau':=\frac{t}{x_2} , \mu_1:= \frac{x_1}{x_2}, x_2 \rp$ are suitable near $p_7$, after the second blow-up of $\left\{ \tau'=0,  \mu_1=0 \right\}$, $\pi^2_{(1)}$ is given by
\begin{equation}\label{movrosu}
 \lp  \bm{  \tau'},u:= \frac{\mu_1}{\tau'},  \bm{ x_2}  \rp  \longmapsto \lp \bm{ t=\tau' x_2}, \frac{x_1}{t}=u \rp. 
\end{equation}
\item The coordinates \eqref{movrosu} are also suitable for points $p_8 \in \Int(\Bm)$, the only difference is that the coordinate $x_2$ is then of $y$-type, so it can be absorbed into $\tau'$. 

\item Furthermore, near a point $p_9 \in \Int(\ffb \cap \Bm \cap \tb)$, $\pi^2_{(1)}$ locally looks like
\[  \lp  { \bf \frac{\tau'}{\mu_1}},  {\bf x_2},  {\bf \mu_1} \rp  \longmapsto \lp { \bf x_1= \mu_1 x_2 },  {\bf \frac{t}{x_1}= \frac{\tau'}{\mu_1}}  \rp.     \]

\item These coordinates also work for $p_{10} \in \Int(\Bm \cap \tb)$, the only difference in such a point is that $x_2$ is now a $y$-type variable, but it can be absorbed into $\mu_1$.

\item A point $p_{11} \in \Int(\Ba \cap \ffb)$ is mapped through $\pi^2_{(1)}$ to a corner of codimension $2$ in $\Xs$. Since the coordinates $\lp \tau, x_1, \eta_2 \rp$ from \eqref{rosu2} are suitable near $\ffb$, after the blow-up of $\left\{ \eta_2=0, \tau=0 \right\}$, $\pi^2_{(1)}$ looks near $p_{11}$ as
\begin{equation}\label{albastrurosu}
  \lp \bm{ \tau}, \bm{ x_1}, \frac{\eta_2}{\tau}   \rp \longmapsto \lp \bm{ \tau}, \bm{ x_1}   \rp.  
\end{equation}
\item The coordinates in \eqref{albastrurosu} also work for $p_{12} \in \Int(\Ba)$, the only difference is that $x_1$ is now of $y$-type. 

\item Furthermore, for $p_{13} \in \Int(\Ba \cap \ffb \cap \tb)$, the map $\pi^2_{(1)}$ is locally given by
\[    \lp \bm{ \frac{\tau}{\eta_2}} ,  \bm{ x_1}, \bm{ \eta_2}    \rp  \longmapsto \lp  \bm{ \tau} , \bm{ x_1}   \rp.  \]
\item These coordinates also work for $p_{14} \in \Int(\Ba \cap \tb)$, the only difference in such a point is that $x_1$ is now a $y$-type variable in both LHS and RHS. 
\item Finally, if $p_{15} \in \Int(\tb)$, then $\pi^2_{(1)}$ locally looks like
\[    \lp \bm{t} ,   x_1, x_2    \rp  \longmapsto \lp  \bm{ t} , { x_1}   \rp.    \]   
\end{enumerate} 
Using Proposition \ref{bfib} and the local forms of the map described above, we conclude that $\pi^2_{(1)}$ is indeed a \emph{b}-fibration.  
\end{proof} 

In this case, it was easy to construct the function $\pi^2_{(1)}$ in \eqref{mapx2cp} using a result of commutations of blow-ups. We remark that the existence of the function $\pi^2_{(1)}$ is also a consequence of the computations made in the above cases $(1)-(15)$. For constructing the map from the triple space to the double space, the strategy will be to simply use local computations as in this proof of Proposition \ref{bfibspdublu}. 

We note for later use that the ``projection" onto the second set of variables $\pi^2_{(2)}: \Xd \longrightarrow \Xs$ is, of course, also a \emph{b}-fibration.
Denote by $\mathcal A^{0, 0} \lp \Xd \rp$ the set of polyhomogeneous conormal sections on the double space which have index sets $\mathbb N$ towards the cusp front face $\ffc$ and towards the temporal boundary $\tb$. Moreover, we define
$ \mathcal A^{\alpha,\beta} \lp \Xd \rp:= \rho_{\ffc}^{\alpha} \rho_{\tb}^{\beta} \mathcal A^{0,0}  \lp  \Xd \rp.  $

\section{The cusp-surgery triple space}\label{sectiontriple}
In order to prove the composition theorem for our cusp-surgery calculus, we will need an auxiliary space, namely the \emph{cusp-surgery triple space} $\Xt$. We define it as an iterated blow-up of $p$-submanifolds in $[0, \infty) \times X^3$ in the following way. First we blow-up
\begin{itemize}
\item[$\bullet$] $\{ t=x_1=x_2=x_3=0  \}$; Denote by $\fffb$ the newly added face
\item[$\bullet$] $\{ t=x_2=x_3=0  \}$; $\{ t=x_1=x_3=0  \}$; $\{ t=x_1=x_2=0  \}$;  \\
Denote by $\Bu$, $\Bd$, $\Bt$ the newly added faces
\item[$\bullet$] $\{ t=x_1=0  \}$; $\{ t=x_2=0  \}$; $\{ t=x_3=0  \}$;  \\
Denote by $\Pu$, $\Pd$, $\Pt$ the new faces.
\end{itemize}

Until here we obtain McDonald's \emph{b}-surgery triple space from \cite[Section 4.1]{mcdonald} and \cite[Section 4.9]{melmaz}, let us denote it by $X_{\sbm}^3$. Since we want the three ``projections" from the triple space $\Xt$ to the double spaces $\Xd_{(12)}$, $\Xd_{(23)}$, and $\Xd_{(13)}$ to be \emph{b}-fibrations, we also need to blow-up the following $p$-submanifolds:
\begin{itemize}
\item[$\bullet$] $\{ x_1=x_2=x_3  \} \cap \fffb$; Denote by $\fffc$ the newly added face
\item[$\bullet$] $\{  x_2=x_3  \} \cap \fffb$; $\{  x_1=x_3 \} \cap \fffb $; $\{  x_1=x_2  \} \cap \fffb$;  \\
Denote by $\Cu$, $\Cd$, $\Ct$ the newly added faces
\item[$\bullet$]  $\{ x_2=x_3  \} \cap B_1 $; $\{ x_1=x_3  \} \cap B_2 $; $\{  x_1=x_2  \} \cap B_3 $;    \\
Denote by $\Tu$, $\Td$, $\Tt$ the new faces.
\end{itemize}
Notice that all the blow-ups occur at $\{ t=0 \}$ inside the space $[0,\infty) \times X^3$. 

\begin{figure}[H]
\begin{center}
\includegraphics[width=14.8cm, height=9cm]{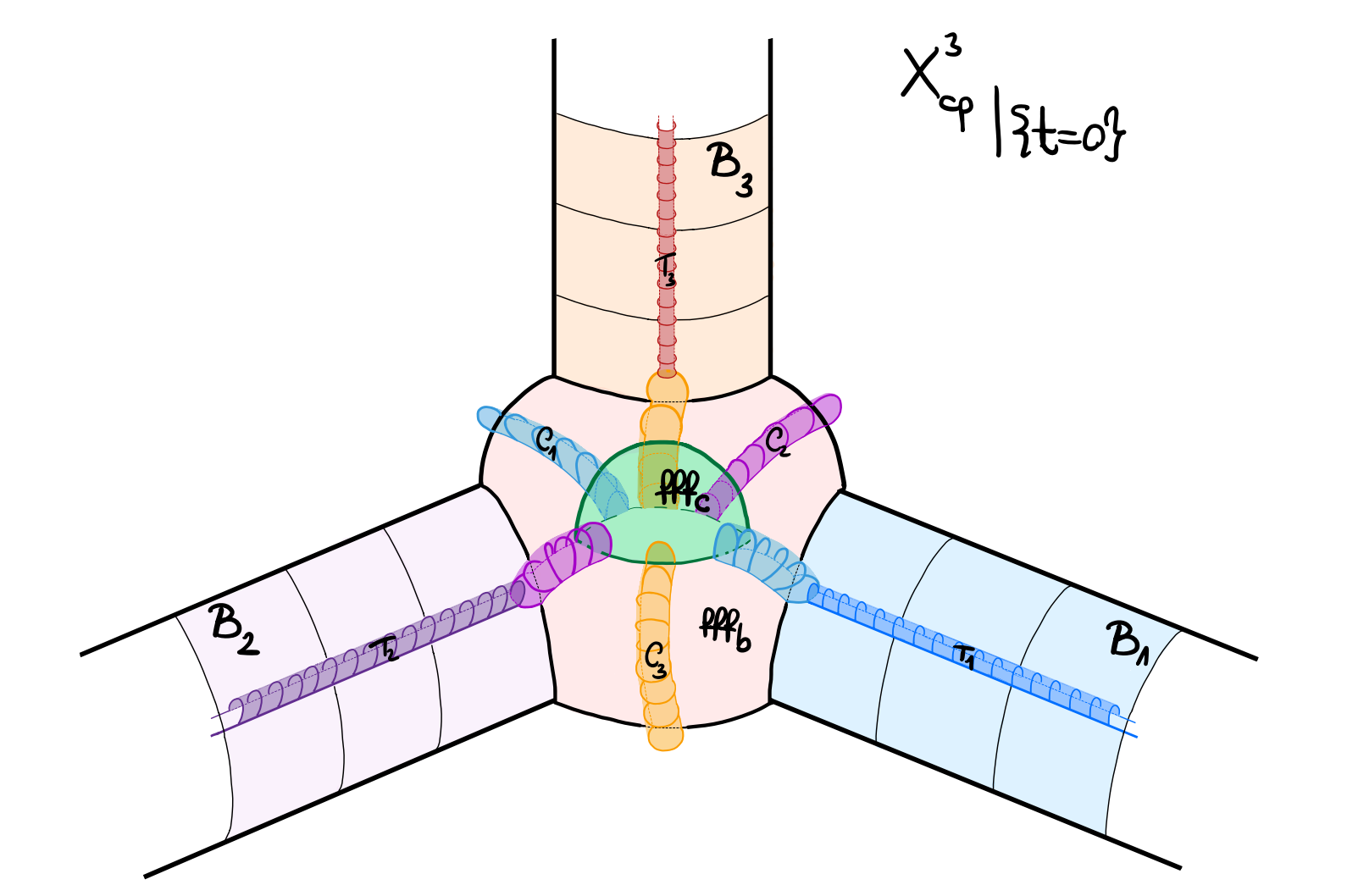}
\caption{The temporal boundary of the cusp-surgery triple space}\label{sptriplu}
\end{center}
\end{figure}

In Fig. \ref{sptriplu}, we represented the $\{ t=0 \}$ level in the cusp-surgery triple space $\Xt$. Notice that for simplicity, we draw just $1/8$ of the true figure (since the $x_1, x_2, x_3$ variables are of $y$-type, but we represented just the part of the figure having $x_1,x_2,x_3 \geq 0$).

\begin{proposition}\label{bfibtrip}
The projection $ \pi_{(12)}: (0,\infty) \times X^3 \longmapsto (0,\infty) \times X^2$ forgetting the last $X$ variable extends uniquely to a map from $\Xt$ to $\Xd$ which, furthermore, is a \emph{b}-fibration.
\end{proposition}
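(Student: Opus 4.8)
The plan is to follow the same scheme as the proof of Proposition~\ref{bfibspdublu}: cover $\Xt$ by projective coordinate charts adapted to its boundary hypersurfaces and their mutual intersections, write $\pi_{(12)}$ explicitly in each chart, and read off from these formulas both that the map extends from the interior and that it satisfies the two conditions of Proposition~\ref{bfib}. Uniqueness of the extension is automatic, since $(0,\infty)\times X^3$ is dense in $\Xt$ (all blow-ups occur in $\{t=0\}$) and on this dense set $\pi_{(12)}(t,p,q,r)=(t,p,q)$ already extends continuously to $\Xt$; so the genuine content is the $b$-fibration property. Alternatively one may reorganize the blow-ups defining $\Xt$ by repeated use of Theorem~\ref{comuteblowups} so as to present $\Xt$ as an iterated blow-up over $\Xs\times X$ and then over $\Xd$, obtaining the extension as a composition of a blow-down map with the projection $p_1$ exactly as in \eqref{mapx2cp}; but, as remarked after Proposition~\ref{bfibspdublu}, the local computation produces the extension for free.

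Before the computations I would settle the bookkeeping of which face goes where. Since $\pi_{(12)}$ forgets the third copy of $X$, a boundary hypersurface of $\Xt$ cut out by a condition not involving $x_3$ is carried onto the matching hypersurface of $\Xd$, whereas one whose defining locus involves $x_3$ is carried onto a ``smaller'' face once $x_3$ is dropped. In the notation of Definition~\ref{bmap}, the expected exponents are: $e(G,\ffb)\neq 0$ for $G\in\{\fffb,\Bt,\Cu,\Cd\}$; $e(G,\ffc)\neq 0$ for $G\in\{\fffc,\Ct,\Tt\}$ (the faces built from the diagonal $\{x_1=x_2\}$, with or without $x_3$); $e(G,\Ba)\neq 0$ for $G\in\{\Pu,\Bd,\Td\}$; $e(G,\Bm)\neq 0$ for $G\in\{\Pd,\Bu,\Tu\}$; and $e(G,\tb)\neq 0$ for $G=\Pt$ and for the temporal face of $\Xt$. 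The coordinates along $\gamma^3$ are $y$-type parameters projected onto $\gamma^1$; by the $(x_1,x_2)$-symmetry and the fact that only $x_3$ is forgotten, one may restrict to the region $x_1,x_2,x_3\ge 0$ and to a finite list of charts, in the spirit of the fifteen cases of Proposition~\ref{bfibspdublu} but with more faces.

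In each chart the task is then to divide by the appropriate $x$-variables — and, near the cusp faces, by the differences $\xi_i-\xi_j$ after the linear change $\xi_i\mapsto\xi_1-\xi_i$ that resolves the cusp directions, exactly as in case $(3)$ of Proposition~\ref{bfibspdublu} — write $\pi_{(12)}$ down, and check that (i) it is a projection in the $y$-type variables and (ii) each $x$-type boundary defining function of $\Xt$ occurs in the monomial expression of $\pi_{(12)}^*x_j'$ for at most one coordinate $x_j'$ of $\Xd$; these are precisely the hypotheses of Proposition~\ref{bfib}. For instance near $\Int(\fffb)$ one reads off $(t,\xi_1,\xi_2,\xi_3)\mapsto(t,\xi_1,\xi_2)$; near $\Int(\fffc)$ the map drops the resolved $\xi_1-\xi_3$ coordinate and keeps a projection; and near the $\Pt$-, $\Bd$-, $\Td$-type faces the forgotten $x_3$ sits in a coordinate that $\pi_{(12)}$ simply deletes, while $t$ (possibly appearing as a monomial $\tau\cdot x_j$) supplies the pullback of the temporal or $\Ba$ boundary defining function.

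The main obstacle is the $b$-normality check at the deep corners that mix the forgotten variable $x_3$ with the cusp faces — typical offenders being $\Int(\fffc\cap\fffb)$, $\Int(\Ct\cap\Bt)$, $\Int(\fffc\cap\Cu)$, and the triple intersections among $\Tu,\Td,\Tt$ and the $B$-faces — where several boundary defining functions of $\Xt$ are simultaneously active and one must verify that none of them is carried into two distinct coordinates of $\Xd$ (by Proposition~\ref{echivbnormal} this is exactly $b$-normality). This is precisely why the extra blow-ups $\fffc,\Cu,\Cd,\Ct,\Tu,\Td,\Tt$, absent from McDonald's $b$-surgery triple space $X^3_{\sbm}$, were built into the definition of $\Xt$: without resolving the directions $\xi_i-\xi_j$, the map $\pi_{(12)}$ would still be a $b$-submersion but would fail $b$-normality, just as the blow-down map $\beta:[\RR^2_2;0]\to\RR^2_2$ does. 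I expect that in the correct projective charts every such map takes one of the monomial normal forms allowed by Proposition~\ref{bfib}, with $b$-submersivity then immediate; assembling the charts both proves the $b$-fibration property and exhibits the extension, and uniqueness of the latter completes the proof.
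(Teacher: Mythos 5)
Your proposal takes essentially the same route as the paper: cover $\Xt$ by projective coordinate charts near each (open) boundary face and corner, write $\pi_{(12)}$ as an explicit monomial map in each chart, and read off from these normal forms both that the map extends continuously from the dense interior and that it satisfies the two conditions of Proposition~\ref{bfib}, with uniqueness automatic. Your exponent table $e(G,H)$ is the correct one (noting that the paper's definition of $\Ba$ versus $\Bm$ has a typo — both are printed as coming from $\{t=0\}\times\gamma_{(1)}$ — so your assignment of $\Ba$ and $\Bm$ is simply the opposite convention from what the paper's case-by-case computation uses), and you correctly identify the codimension-$\geq 2$ corners mixing the cusp-diagonal faces with the forgotten $x_3$ as the genuine content of the $b$-normality check, which is exactly what the paper's twenty-six explicit cases settle.
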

\begin{proof}
The plan is to look at all the interiors of boundary faces in $\Xt$. Near such a face $F$, we consider suitable projective coordinates in a neighborhood of $F$ intersected with the interior of $\Xt$, and also near the corresponding interior neighborhood of $\Xd$. Then we compute the local form of $ \pi_{(12)}$ in these coordinates. It will be evident from the computations that $ \pi_{(12)}$ \emph{extends} to the boundary face $F$ (such an extension is clearly unique by a density argument). Moreover, using Proposition \ref{bfib}, the map will also be a \emph{b}-fibration by inspection.

In all the cases below, we also have the variables $(y_1,y_2,y_3) \in \gamma^3$, they are just parameters in the blow-up. The interior projection acts on them as the projection onto the first two factors $\pi^3_{(12)} : (y_1,y_2,y_3) \longmapsto (y_1,y_2)$, and all of the coordinates are of $y$-type. To simplify the notation, we will omit to write the $(y_1,y_2,y_3)$ variables in the computations below.

We distinguish $19$ open faces for the \emph{b}-surgery triple space: the interiors of $\fffb$, $\Bu$, $\Bd$, $\Bt$, $\Pu$, $\Pd$, $\Pt$; the interiors of the $6$ intersections $B_i \cap P_j$, $i \neq j$ of codimension $2$; and $6$ faces of codimension $3$ arising as the intersection of $\fffb$ with the previous $6$ intersections of codimension $2$. Additionally, we have $38$ open faces of $\Xt$ which do not appear in $X_{\sbm}^3$, described as follows. First, we distinguish the interiors of $\fffc$ and $\fffc \cap \fffb$. Secondly, we have the $12$ faces arising from the ``vertical direction", i.e. the various interiors of intersections of $T_3$, $C_3$, $\Bt$, $\Pt$, $\fffb$ and $\fffc$, other than the ones already considered. Finally, we have $12$ open faces arising as above from the ``left direction" ($T_2$, $C_2$, $\Bd$, $\Pd$, $\fffb$, $\fffc$), and $12$ additional faces coming from the ``right" direction ($T_1$, $C_1$, $\Bu$, $\Pu$, $\fffb$, $\fffc$). 

Since we have a left-right symmetry, it is enough to check only $13$ faces corresponding to the b-surgery triple space, and $26$ cases for the faces of $\Xt$ which do not appear in $X_{\sbm}^3$, namely the two initial cases $\fffc$, $\fffc \cap \fffb$, and the $24$ more faces which arise from the ``vertical" and the ``right" direction. Note that the first $13$ cases corresponding to $X_{\sbm}^3$ are much easier to treat and some of them come as consequences of the faces corresponding to $\Xt$ treated below, see for example case $(8')$; for this reason, we omit them.

In the diagram \eqref{intersectiitriplu}, we highlight part of the intersection lattice of faces from the ``vertical" direction, i.e., among the hypersurfaces $\fffb$, $\fffc$, $\Ct$, $\Tt$, $\Bt$, $\Pt$. In each row, we list faces of corresponding codimension in $\Xt$. Each of them is the intersection of the closures of those faces from the previous line that are linked to it, and each segment represent an inclusion. 
\definecolor{black}{rgb}{0.0, 0.0, 0.0}
\definecolor{turquoise}{rgb}{0,0.7,0.5}
\definecolor{lightbrown}{rgb}{0.8,0.4,0.1}
\definecolor{lightgreen}{rgb}{0.5,1,0}
\definecolor{pinkish}{rgb}{0.8,0.1,0.5}
\definecolor{darkblue}{rgb}{0,0,0.5}
\definecolor{blue-green}{rgb}{0.0, 0.87, 0.87}
\definecolor{brightcerulean}{rgb}{0.11, 0.67, 0.84}
\definecolor{caribbeangreen}{rgb}{0.0, 0.8, 0.6}
\definecolor{carmine}{rgb}{0.59, 0.0, 0.09}
\definecolor{ceil}{rgb}{0.57, 0.63, 0.81}
\definecolor{darklavender}{rgb}{0.45, 0.31, 0.59}
\definecolor{darktangerine}{rgb}{1.0, 0.66, 0.07}
\begin{equation}
\label{intersectiitriplu}
\centering
\begin{split}
\begin{tikzpicture}
[x=1mm,y=1mm,scale=1.2]
\node (fffb) at (0,0) [fill,circle,inner sep=0.5mm] {};
\node (fffc) at (20,0) [fill,circle,inner sep=0.5mm] {};
\node (C3) at (40,0) [fill,circle,inner sep=0.5mm] {};
\node (T3) at (60,0) [fill,circle,inner sep=0.5mm] {};
\node (B3) at (80,0) [fill,circle,inner sep=0.5mm] {};
\node (P3) at (105,0) [fill,circle,inner sep=0.5mm] {};
\node at (fffb.north) [anchor=south] {$\fffb$};
\node at (fffc.north) [anchor=south] {$\fffc$};
\node at (C3.north) [anchor=south] {$\Ct$};
\node at (T3.north) [anchor=south] {$\Tt$};
\node at (B3.north) [anchor=south] {$\Bt$};
\node at (P3.north) [anchor=south] {$\Pt$};
\node (s2) at (10,-20) [fill,circle,inner sep=0.5mm] {};
\node (s5) at (20,-20) [fill,circle,inner sep=0.5mm] {};
\node (s4) at (30,-20) [fill,circle,inner sep=0.5mm] {};
\node (s1) at (40,-20) [fill,circle,inner sep=0.5mm] {};
\node (s9) at (50,-20) [fill,circle,inner sep=0.5mm] {};
\node (s13) at (60,-20) [fill,circle,inner sep=0.5mm] {};
\node (s12) at (70,-20) [fill,circle,inner sep=0.5mm] {};
\node (a2) at (84,-20) [fill,circle,inner sep=0.5mm] {};
\node (s7) at (95,-20) [fill,circle,inner sep=0.5mm] {};
\node at (s2.north) [anchor=south] {$F_2$};
\node at (s5.north) [anchor=south] {$F_5$};
\node at (s4.north) [anchor=south] {$F_4$};
\node at (s1.north) [anchor=south] {$S_1$};
\node at (s9.north) [anchor=south] {$F_9$};
\node at (s13.north) [anchor=south] {$F_{13}$};
\node at (s12.north) [anchor=south] {$F_{12}$};
\node at (s7.north) [anchor=south] {$F_7$};
\node at (a2.north) [anchor=south] {$S_2$};
\node (t6) at (20,-35) [fill,circle,inner sep=0.5mm] {};
\node (t14) at (40,-35) [fill,circle,inner sep=0.5mm] {};
\node (t11) at (60,-35) [fill,circle,inner sep=0.5mm] {};
\node (f8) at (80,-35) [fill,circle,inner sep=0.5mm] {};
\node at (t6.north) [anchor=south] {$F_6$};
\node at (t14.north) [anchor=south] {$F_{14}$};
\node at (t11.north) [anchor=south] {$F_{11}$};
\node at (f8.north) [anchor=south] {$F_8$};
\draw[color=turquoise,very thick] (fffb) to (s5);
\draw[color=turquoise,very thick] (C3) to (s5);
\draw[color=lightbrown,very thick] (C3) to (s13);
\draw[color=lightbrown,very thick] (B3) to (s13);
\draw[color=black,very thick] (fffb) to (s2);
\draw[color=black,very thick] (fffc) to (s2);
\draw[color=darkblue,very thick] (fffc) to (s4);
\draw[color=darkblue,very thick] (C3) to (s4);
\draw[color=caribbeangreen,very thick] (C3) to (s9);
\draw[color=caribbeangreen,very thick] (T3) to (s9);
\draw[color=blue-green,very thick] (T3) to (s12);
\draw[color=blue-green,very thick] (B3) to (s12);
\draw[color=pinkish,very thick] (C3) to (s7);
\draw[color=pinkish,very thick] (P3) to (s7);
\draw[color=brightcerulean,very thick] (fffb) to (s1);
\draw[color=brightcerulean, very thick] (B3) to (s1);
\draw[color=lightgreen, very thick] (fffb) to (a2);
\draw[color=lightgreen, very thick] (P3) to (a2);
\draw[color=carmine,very thick] (s2) to (t6);
\draw[color=carmine,very thick] (s4) to (t6);
\draw[color=carmine,very thick] (s5) to (t6);
\draw[color=ceil,very thick] (s5) to (f8);
\draw[color=ceil,very thick] (a2) to (f8);
\draw[color=ceil,very thick] (s7) to (f8);
\draw[color=darklavender,very thick] (s9) to (t11);
\draw[color=darklavender,very thick] (s12) to (t11);
\draw[color=darklavender,very thick] (s13) to (t11);
\draw[color=darktangerine,very thick] (s5) to (t14);
\draw[color=darktangerine,very thick] (s13) to (t14);
\draw[color=darktangerine,very thick] (s1) to (t14);
\end{tikzpicture}
\end{split}
\end{equation}
The open faces $F_{2},...,F_{14}$ are defined and studied below.
\begin{enumerate}
\item The face $F_1:= \Int(\fffc)$ of codimension $1$ (see Fig. \ref{sptriplu}) will be mapped by $\pi^3_{(12)}$  on the interior of the cusp front face $\ffc \subset \Xd_{(12)}$ (see Fig. \ref{doublesp}). The blow-up of $\{ t=x_1=x_2=x_3=0 \}$ introduces the projective coordinates $\lp t, \xi_1:=\tfrac{x_1}{t},  \xi_2:=\tfrac{x_2}{t},  \xi_3:=\tfrac{x_3}{t} \rp$. Set $u:=\xi_1-\xi_2$. The blow-up of $\{ t=0, \xi_1=\xi_2=\xi_3 \}$ produces near $F_1$ the coordinates $\lp t, \xi_1, \frac{u}{t}, \frac{\xi_1-\xi_3}{t} \rp$. Using the local coordinates near $\Int(\ffc)$ introduced in case $(3)$ from the proof of Proposition \ref{bfibspdublu}, the map $\pi_{(12)}$ is given in a neighborhood of $F_1$ by
\[  \lp {\bf t}, \xi_1, \frac{u}{t}, \frac{\xi_3- \xi_1}{t}   \rp \longmapsto \lp { \bf t}, \xi_1, \frac{u}{t} \rp ,  \]
where, again, we write in bold all the $x$-variables (here, the $t$-variable in LHS is a boundary defining function for $F_1$, while the $t$-variable in the RHS is a boundary defining function for $\ffc$). Clearly, $\pi_{(12)}$ extends on the interior of $F_1$ as a \emph{b}-fibration.

\item Let $F_2 := \Int( \fffc \cap \fffb)$, and let $v:=\xi_1-\xi_3$. Then suitable coordinates on the triple space in a neighborhood of $F_2$ are $\lp   \frac{t}{u} , \xi_1,u,  \nu:= \frac{v}{u} \rp$. Using the local coordinates in case $(4)$ from the proof of Proposition \ref{bfibspdublu} which are suitable near $\ffc \cap \ffb$, the map $\pi^3_{(12)}$ extends along $F_2$ as a \emph{b}-fibration by
\[\lp   \bm{\frac{t}{u}}   , \xi_1, \bm{u} ,   \frac{v}{u} \rp \longmapsto  \lp  \bm{\frac{t}{u}}, \xi_1, {\bf u} \rp.  \]
The $\frac{t}{u}$ variable in LHS is a boundary defining function for $\fffb$, while the one in the RHS is a boundary defining function for $\ffb$. Moreover, $u$ in the LHS defines $\fffc$, while $u$ in the RHS defines $\ffc$. 

Now we consider the faces along the ``vertical" direction in Fig. \ref{sptriplu}, i.e. those from diagram \eqref{intersectiitriplu}.

\item Consider the codimension $1$ face $F_3:=\Int(\Ct)$.  After the two blow-ups which produce the faces $\fffb$ and $\fffc$, we have the following local coordinates: $\lp T:=\frac{t}{v}, \xi_1, \mu:=\frac{u}{v}, v \rp$, where $T$ is a boundary defining function for $\fffb$ and $v$ is a boundary defining function for $\fffc$. To produce the face $\Ct$, we blow-up $\{ T=0, \mu=0  \}$, thus suitable coordinates on a neighborhood of $F_3$ are given by $(T, \xi_1, \frac{\mu}{T}, v  )$, where $T$ is now a boundary defining function for $\Ct$. Then the map $\pi_{(12)}$ looks in a neighborhood of  $F_3$ like
\begin{equation}\label{ecb3}
    \lp \bm{T}, \xi_1, \frac{\mu}{T} , v    \rp \longmapsto \lp \bm{t}, \xi_1, \frac{u}{t} \rp,
\end{equation}
and extends along $F_3$ by mapping it to $\Int (\ffc)$.
Notice that
\begin{equation}\label{not}
\begin{aligned}
\bm{t}= v \bm{T}, &&  \frac{u}{t}=\frac{\mu v}{Tv} = \frac{\mu}{T},
\end{aligned}
\end{equation}
thus the map $\pi^3_{(12)}$ extends as a \emph{b}-fibration near $F_3$. 

\item If $F_4:= \Int(\Ct \cap \fffc)$, we use the same coordinates as in \eqref{ecb3}, the only difference is that $v$ is of $x$-type defining $\fffc$:
\begin{equation}
    \lp \bm{T}, \xi_1, \frac{\mu}{T} , \bm{v}    \rp \longmapsto \lp \bm{t}, \xi_1, \frac{u}{t} \rp,
\end{equation}
thus $\pi_{(12)}$ extends along $F_4$ as a \emph{b}-fibration.

\item Let $F_5 := \Int (\Ct \cap \fffb)$. As in the above case $(3)$, after the first two blow-ups which produce the faces $\fffb$ and $ \fffc$, we work in the coordinates $\lp  T, \xi_1, \mu, v \rp$. In order to produce $\Ct$, we blow up $\{  T=0, \mu=0 \}$, and $\pi_{(12)}$ maps a neighborhood of $F_5$ to a neighborhood of $\ffb \cap \ffc$ as
\[       \lp \bm{ \frac{T}{\mu}}, \xi_1, \bm {\mu} , v    \rp \longmapsto \lp \bm{ \frac{t}{u} }, \xi_1, \bm{u} \rp .    \]
It clearly extends to the map $\pi^3_{(12)}$ along $F_5$, and by using \eqref{not} and the fact that $\bm{u}=\bm{\mu} v$, we deduce that $\pi^3_{(12)}$ is a \emph{b}-fibration near $F_5$.

\item Furthermore, if $F_6 := \Int (\Ct \cap \fffb \cap \fffc)$, then the coordinates from step $5)$ also work, the only difference is that $v$ is also of $x$-type in the LHS.

\item Let $F_7 := \Int (\Ct \cap \Pt)$. The blow-up which produces $\fffb$ introduces the projective coordinates $\lp \tau':= \frac{t}{x_2}, \mu_1:= \frac{x_1}{x_2},x_2, \mu_3:= \frac{x_3}{x_2}  \rp$. In order to obtain $\Pt$, we need to blow-up $\{ \tau'=0, \mu_3=0 \}$, and let us work in the coordinates $\lp \tau', \mu_1, x_2, \frac{\mu_3}{\tau'} \rp$, where $\tau'$ defines $\Pt$, and $x_2$ is a boundary defining function for $\fffb$. Now we blow-up $\{ \mu_1=1, x_2=0 \}$ to get $\Ct$, and the projective coordinates near $F_7$ are given by $\lp  \tau', \frac{\mu_1-1}{x_2}, x_2, \frac{\mu_3}{\tau'}  \rp$, where now $x_2$ defines $\Ct$. 

For the RHS, we use similar projective coordinates as in the case $(7)$ of Proposition \ref{bfibspdublu}. More precisely, we use the coordinates $(\tau', \mu_1, x_2)$ near $\ffb$, and then we blow-up $\{ x_2=0, \mu_1=1 \}$ to get $\ffc$. Then $\pi_{(12)}$ maps a neighborhood of $F_7$ to a neighborhood of $\ffc \cap \tb$ as
\[ \lp \bm{ \tau'}, \frac{\mu_1-1}{x_2}, \bm{x_2}, \frac{\mu_3}{\tau'}  \rp \longmapsto \lp \bm{\tau'}, \frac{\mu_1-1}{x_2}, \bm{x_2} \rp , \]
and it clearly extends as a b-fibration along $F_7$.

\item Furthermore, if $F_8 := \Int (\Ct \cap \Pt \cap \fffb)$, changing the coordinates from the previous step accordingly, we see that $\pi_{(12)}$ maps a neighborhood of $F_8$ to $\Int \lp \ffc \cap \tb \cap \fffb \rp$ as
\[ \lp \bm{ \tau'}, \bm{\mu_1-1}, \bm{\frac{x_2}{\mu_1-1}}, \frac{\mu_3}{\tau'}  \rp \longmapsto \lp \bm{\tau'}, \bm{\mu_1-1}, \bm{\frac{x_2}{\mu_1-1}} \rp, \]
and it extends along $F_8$ as a \emph{b}-fibration.

\item[${(8')}$] Let us see a computation corresponding to a face from $X_{\sbm}^3$. If $S_{8'}:=\Int ( \Pt \cap \fffb)$, then we can use the same coordinates as in case $(8)$, the only difference being that now the variables $\mu_1-1$ in the LHS and in the RHS are both of $y$-type.

\item Let $F_9 := \Int ( \Tt \cap \Ct)$. After the blow-up which produces $\fffb$, we use the projective coordinates $\lp t':=\frac{t}{x_3}, x_1':=\frac{x_1}{x_3}, x_2':= \frac{x_2}{x_3}, x_3  \rp$. To obtain $\Bt$, we need to blow-up $\{ t'=0, x_1'=0, x_2' =0 \}$. We get the projective coordinates $\lp \tau:= \frac{t'}{x_1'}, x_1', \eta_2:= \frac{x_2'}{x_1'}, x_3 \rp$, where $x_1'$ defines $\Bt$, and $x_3$ defines $\fffb$. Now we blow-up $\{ x_3=0, \eta_2=1 \}$ in order to obtain $\Ct$, and we use the projective coordinates $\lp \tau, x_1', \eta:=\frac{\eta_2-1}{x_3},x_3 \rp$, where $x_1'$ defines $\Bt$, and now $x_3$ defines $\Ct$. Finally, we blow-up $\{ x_1'=0, \eta=0 \}$ to obtain the boundary hypersurface $\Tt$, and $x_1'$ is a boundary defining function for $\Tt$, while $x_3$ defines $\Ct$. Then using the coordinates on $\ffc$ given in the case $5)$ of Proposition \ref{bfibspdublu}, $\pi_{(12)}$ looks in a neighborhood of $F_9$ as
\[  \lp \tau, \bm{x_1'}, \frac{\eta}{x_1'} , \bm{ x_3} \rp \longmapsto \lp  \tau, \bm{x_1=x_1' x_3}, \frac{\eta_2-1}{x_1}   \rp. \]
Notice that $\frac{\eta_2-1}{x_1}=\frac{\frac{x_2}{x_1}-1}{x_1}=\frac{\eta}{x_1'}$, which completes this case.

\item Furthermore, if $F_{10}:= \Int ( \Tt )$, the same coordinates as in the previous case work, the only difference is that the $x_3$ variable in the LHS is of $y$-type now. 

\item If $F_{11} := \Tt \cap \Ct \cap \Bt$, using similar coordinates as for $F_9$, $\pi_{(12)}$ is maps a neighborhood of $F_{11}$ to a neighborhood of $\Int \lp  \ffb \cap \ffc  \rp$ as
\[  \lp \tau, \bm{\frac{x_1'}{\eta}},\bm{\eta},\bm{ x_3} \rp \longmapsto \lp \tau, \bm{\frac{x_1}{\eta_2-1}=\frac{x_1'}{\eta}}, \bm{\eta_2-1 = \eta x_3} \rp ,  \]
which clearly extends along $F_{11}$ as a \emph{b}-fibration.

\item If $F_{12} := \Tt \cap \Bt$, then we use the same coordinates as in the previous case, the only difference is that the $x_3$ variable in the LHS is now of $y$-type. 

\item If $F_{13} := \Int(\Bt \cap \Ct)$, then we use the similar projective coordinates $\lp \tau, x_1', \eta_2-1, \frac{x_3}{\eta_2-1} \rp$ as those obtained in the above case $(9)$ near $\Ct$. Then $\pi_{(12)}$ maps a neighborhood of $F_{13}$ to a neighborhood of $\ffb \cap \ffc$ as
\[ \lp \tau, \bm{x_1'}, \bm{\eta_2-1}, \frac{x_3}{\eta_2-1} \rp \longmapsto \lp \tau, \bm{\frac{x_1}{\eta_2-1}} , \bm{\eta_2-1} \rp \in \Int (\ffb \cap \ffc).   \]
Since $\frac{x_1}{\eta_2-1} =x_1'   \frac{x_3}{\eta_2-1}$, the extension as a \emph{b}-fibration follows.

\item Furthermore, if $F_{14}:= \Int(\Bt \cap \Ct \cap \fffb)$, then the same coordinates as in the previous case $(13)$ work, the only difference is that the $\frac{x_3}{\eta_2-1}$ is of $x$-type because it defines $\fffb$.

Let us now consider the faces along the ``right" direction in Fig. \ref{sptriplu}, i.e., $\fffb$, $\fffc$, $\Cu$, $\Tu$, $\Bu$, $\Pu$. Their intersection structure is encoded in an identical diagram as in \eqref{intersectiitriplu} with different names of the faces.

\item Let $F_{15} := \Int (\Cu)$. After the blow-up which produces the face $\fffb$, we have the projective coordinates $(t, \xi_1,\xi_2,\xi_3)$ which change linearly to $(t,\xi_1,\xi_2,w:=\xi_3-\xi_2)$. To obtain the face $\Cu$, we need to blow-up $\{ t=0, w=0 \}$, thus $\pi_{(12)}$ maps a neighborhood of $F_{15}$ to a neighborhood of $\ffb$ as
\[  \lp \bm{t}, \xi_1, \xi_2 , \frac{w}{t}    \rp \longmapsto \lp \bm{t}, \xi_1, \xi_2 \rp, \]
where the $t$ variable in the LHS is a boundary defining function for $\Cu$, while the $t$ variable in the RHS defines $\fffb$.

\item Consider $F_{16} := \Int (\Cu \cap \fffb)$. Let $\lp t, u=\xi_1-\xi_2, \xi_2, w=\xi_3-\xi_2  \rp$ be coordinates after the blow-up of $\fffb$. To produce the face $\Cu$, we need to blow-up $\{ t=0, v'=0  \}$, and then $\pi_{(12)}$ maps a neighborhood of $F_{16}$ to a neighborhood of $\fffb$ as
\[  \lp \bm{ \frac{t}{w}}, u, \xi_2 ,  \bm{w}    \rp \longmapsto \lp \bm{t}, u , \xi_2 \rp ,\]
where the variables $\frac{t}{w}$ and $w$ in the LHS define $\fffb$ and $\Cu$, respectively, while the $t$ variable in the RHS defines $\ffb$. This map clearly extends along $F_{16}$ as a b-fibration.

\item Let $F_{17}:= \Int (\Cu \cap \fffc)$. As in the previous case $(16)$, we work in the projective coordinates $\lp t, u, \xi_2, w \rp$ along $\fffb$. We blow-up $\{ t=0, u=0, w=0 \}$ to produce $\fffc$ and get the projective coordinates $\lp T':=\tfrac{t}{u}, u, \xi_2, \nu':=\tfrac{w}{u} \rp$, where $T'$ defines $\fffb$, and $u$ is a boundary defining function for $\fffc$. In order to obtain $\Cu$, we blow-up $\{ T'=0, \nu'=0 \}$ and we use the projective coordinates $ \lp  T', u, \xi_2 , \frac{ \nu'}{T'}    \rp$, where $T'$ defines $\Cu$, and $u$ defines $\fffc$. 

On $\Xd$, we first have the projective coordinates $(t, u, \xi_2)$ along $\ffb$ which is given by the boundary defining function $t$. We blow-up $\{ t=0, u=0 \}$ to get $\ffc$, and the new coordinates are $\lp T'=\tfrac{t}{u}, u, \xi_2 \rp$, where $T'$ defines $\ffb$, and $u$ is a boundary defining function for $\ffc$. Now $\pi_{(12)}$ looks near $F_{17}$ as
\[  \lp \bm{ T'}, \bm{u}, \xi_2 , \frac{ \nu'}{T'}    \rp \longmapsto \lp \bm{T'}, \bm{u} , \xi_2 \rp , \]
and it clearly extends as a \emph{b}-fibration along $F_{17}$.

\item Furthermore, for $F_{18} :=\Int (\Cu \cap \fffc \cap \fffb)$, we use similar coordinates as in the previous case $(17)$, and $\pi_{(12)}$ looks near $F_{18}$ as
\[  \lp \bm{ \frac{T'}{\nu'}}, \bm{u}, \xi_2 , \bm{\nu'}  \rp \longmapsto \lp \bm{T'}, \bm{u} , \xi_2 \rp . \]

\item Let $F_{19}:= \Int (\Cu \cap  \Bu)$. Let $\lp \tau=\tfrac{t}{x_1}, x_1, \eta_2=\tfrac{x_2}{x_1} ,  \eta_3=\tfrac{x_3}{x_1} \rp$ be projective coordinates along $\fffb$. We blow-up $\{ \tau=0, \eta_2=0, \eta_3=0 \}$ to obtain $\Bu$, and we work in the projective coordinates $\lp \tau, x_1, \tfrac{\eta_2}{\tau}, \tfrac{\eta_3}{\tau} \rp$, where $\tau$ is a boundary defining function for $\Bu$, and $x_1$ defines $\fffb$. We change them linearly to $\lp \tau, x_1, \tfrac{\eta_2}{\tau}, N:= \tfrac{\eta_3}{\tau} -  \tfrac{\eta_2}{\tau} \rp$. To get $\Cu$, we need to blow-up $\{ x_1=0, N=0 \}$. Then $\pi_{(12)}$ maps a neighborhood of $F_{19}$ to a neighborhood of $\ffb \cap \Ba$ as
\[    \lp \bm{ \tau }, \bm{x_1}, \frac{\eta_2}{\tau} , \frac{N}{x_1}  \rp \longmapsto \lp \bm{\tau}, \bm{x_1} , \frac{\eta_2}{\tau} \rp ,  \]
where in the RHS we used the coordinates from case $(11)$ in the proof of Proposition \ref{bfibspdublu}.

\item If $F_{20} := \Int (\Cu \cap  \Bu \cap \fffb)$, then using similar projective coordinates as in the previous step, we remark that $\pi_{(12)}$ looks near $F_{18}$ as
\[   \lp \bm{ \tau }, \bm{\frac{x_1}{N}}, \frac{\eta_2}{\tau} , N \rp \longmapsto \lp \bm{\tau}, \bm{x_1} , \frac{\eta_2}{\tau} \rp ,   \]
thus it clearly extends as a \emph{b}-fibration near $F_{20}$.

\item Let $F_{21}:=\Int (\Cu \cap \Tu)$. After the blow-ups which produced the faces $\fffb$, $\Bu$ and $\Cu$, we have the coordinates  $\lp  \tau , x_1, \frac{\eta_2}{\tau} , n:=\frac{N}{x_1}  \rp$, as in the previous case $(19)$. In order to obtain $\Tu$, we need to blow-up $\{ \tau=0, n=0 \}$, thus  $\pi_{(12)}$ looks near $F_{21}$ as
\[    \lp \bm{ \tau }, \bm{x_1}, \frac{\eta_2}{\tau} , \frac{n}{\tau}  \rp \longmapsto \lp \bm{\tau}, \bm{x_1} , \frac{\eta_2}{\tau} \rp ,   \]
which extends as a \emph{b}-fibration along the face $F_{21}$.

\item Furthermore, if $F_{22}:= \Int ( \Tu )$, the same coordinates as in the previous case $(21)$ work, the only difference is that the $x_1$ variable is of $y$ type now in both LHS and RHS. 

\item If $F_{23} := \Int (\Cu \cap \Tu \cap \Bu)$, using similar coordinates as those introduced in the previous step, we see that $\pi_{(12)}$ maps a neighborhood of $F_{23}$ to a neighborhood of $\ffb \cap \Ba$ as
\[    \lp \bm{ \frac{\tau}{n} }, \bm{x_1}, \frac{\eta_2}{\tau} , \bm{n}  \rp \longmapsto \lp \bm{\tau}, \bm{x_1} , \frac{\eta_2}{\tau} \rp ,   \]
where in the LHS $\tfrac{\tau}{n}$ defines $\Bu$, $x_1$ defines $\Cu$, and $n$ defines $\Tu$.

\item Furthermore, if $F_{24} := \Int ( \Tu \cap \Bu)$, then the coordinates from the previous case $(23)$ also work, the only difference is that $x_1$ is of $y$ type in both LHS and RHS.

\item Let $F_{25} := \Int (\Cu \cap  \Pu)$. After the blow-up which produces $\fffb$, we work in the projective coordinates $\lp \tau'=\tfrac{t}{x_2},\mu_1=\frac{ x_1}{x_2},x_2 ,  \mu_3=\tfrac{x_3}{x_2} \rp$. In order to obtain $\Pu$, we blow-up $\{ \tau'=0, \mu_1=0 \}$, and we use the projective coordinates $\lp \tau',\frac{\mu_1}{\tau'} , x_2, \mu_3 \rp$. Now we blow-up $\{ x_2=0, \mu_3=1 \}$ in order to get the face $\Cu$. We get the coordinates  $\lp  \tau' ,\frac{\mu_1}{\tau'}, x_2 , \frac{\mu_3-1}{x_2}  \rp$, where $\tau'$ is a boundary defining function for $\Pu$, and $x_2$ defines $\Cu$. Then $\pi_{(12)}$ maps a neighborhood of $F_{25}$ to a neighborhood of $\ffb \cap \Bm$ as
\[    \lp \bm{ \tau' },\frac{\mu_1}{\tau'}, \bm{x_2} , \frac{\mu_3-1}{x_2}  \rp \longmapsto \lp \bm{\tau'}, \frac{\mu_1}{\tau'} , \bm{x_2} \rp  ,  \]
where in the RHS we used the coordinates of case $(7)$ in the proof of Proposition \ref{bfibdublusimplu}.

\item Finally, if $F_{26} := \Int (\Cu \cap  \Pu \cap \fffb)$, then using similar coordinates as in the previous step, $\pi_{(12)}$ looks near $F_{26}$ like
\[    \lp \bm{ \tau' },\frac{\mu_1}{\tau'}, \bm{\frac{x_2}{\mu_3-1}} , \bm{\mu_3-1}  \rp \longmapsto \lp \bm{\tau'}, \frac{\mu_1}{\tau'} , \bm{x_2} \rp,  \]  
which clearly extends on $F_{26}$ as a \emph{b}-fibration.   \qedhere
\end{enumerate} 
\end{proof}
By symmetry, the other two maps $\pi^3_{(23)}$ and $\pi^3_{(13)}$ are also \emph{b}-fibrations. 

We remark that the triple space of the (slightly) more complicated case of the $\varphi$-surgery calculus of Albin, Rochon, Sher \cite{ars} in \cite[Lemma C.2]{ars} can be treated essentially in the same way. Their original proof avoids explicit local computations, but relies instead on the exponent matrix computed by Mazzeo-Melrose in \cite{melmaz} for $X^3_{\text{sb}}$, Theorem \ref{comuteblowups} regarding commutations of blow-ups, and a technical lemma of ``lifting" \emph{b}-fibrations due to Hassel-Mazzeo-Melrose \cite{hasmazmel}. The proof of Proposition \ref{bfibtrip} shows by hand that all the exponents $e(G,H)$ from Definition \ref{bmap} appearing in $\pi^3_{(12)}$ belong to $\{ 0 ,1 \}$, as already remarked in \cite{ars} and \cite{melmaz}.

\section{The cusp-surgery calculus}
Let $\Ss$ be a smooth vector bundle over the simple space $\Xs$. In order to define our cusp-surgery calculus, we  first need to fix a convention regarding the densities. Denote by $\Omega(X)$ the $1$-density bundle over $X$. The metric $h_t$ is not defined at $t=0$ along the hypersurface $\gamma$. The family of corresponding densities is smooth outside $\gamma$ and it is constant in $t>0$ in a neighborhood of $\gamma$. In particular, the family extends along $\gamma \times \{ 0 \}$ and forms a smooth section in the pull-back of $\Omega(X)$ over the space $X \times [0,\infty)$. We denote this family density by $\mu_X$.

We will work with operators having a density factor in the second set of variables coming from $X$. More precisely, we will regard our operators as distributional kernels on $\Xd$ with values in the density bundle $\lp  \pi_2 \circ \tilde{\beta}\rp^* \Omega \lp X \rp$, the pull-back of the bundle $\Omega(X)$ from $X_{(2)}$  through the map $\lp  \pi_2 \circ \tilde{\beta}\rp$ as in the following diagram (see Remark \ref{notatie} for the notation $X_{(2)}$):

\begin{center}
\begin{tikzpicture}
[x=1mm,y=1mm]
\node (tl) at (0,15) {$(\pi_2 \circ \tilde{\beta})^*\Omega(X)$};
\node (tr) at (40,15) {$\Omega(X)$};
\node (bl) at (0,0) {$\Xd$};
\node (bm) at (20,0) {$X^2 \times \mathbb{R}_+$};
\node (br) at (40,0) {$X_{(2)}.$};
\draw[->] (bl) to node[above,font=\small]{$\tilde{\beta}$} (bm);
\draw[->] (bm) to node[above,font=\small]{$\pi_2$} (br);
\draw[->] (tr) to (br);
\draw[->,densely dashed] (tl) to (tr);
\draw[->,densely dashed] (tl) to (bl);
\end{tikzpicture}
\end{center}
The operators in the calculus will take values into the sections of a certain vector bundle that we are going to introduce now. More precisely, let us denote by 
\[   \Ss \boxtimes \Ss^*:= \lp \pi^2_{(1)}\rp^* \Ss \otimes \lp \pi^2_{(2)} \rp^* \Ss^*, \]
the tensor product of the pull-back of the bundle $\Ss$ through the map $\pi^2_{(1)}$ and the pull-back of the dual bundle $\Ss^*$ through $\pi^2_{(2)}$ as in the following diagram:
\begin{equation}
\label{eqn:label-bla-bla}
\centering
\begin{split}
\begin{tikzpicture}
[x=1mm,y=1mm]
\node (tl) at (0,15) {$\Ss$};
\node (tm) at (30,15) {$\Ss \boxtimes \Ss^*$};
\node (tr) at (60,15) {$\Ss^*$};
\node (bl) at (0,0) {$\Xs_{(1)}$};
\node (bm) at (30,0) {$X^2_{\cp}$};
\node (br) at (60,0) {$\Xs_{(2)}.$};
\draw[->] (tl) to (bl);
\draw[->] (tm) to (bm);
\draw[->] (tr) to (br);
\draw[->] (bm) to node[above,font=\small]{$\pi^2_{(1)}$} (bl);
\draw[->] (bm) to node[above,font=\small]{$\pi^2_{(2)}$} (br);
\end{tikzpicture}
\end{split}
\end{equation}
Let us consider the diagonal ``plane" $(0, \infty) \times \Diag$ inside $[0, \infty) \times X^2$, where $\Diag:= \{ (p,p): \ p\in X \}$. Furthermore, let $\Delta$ be the closure of the lift through $\tilde{\beta}$ of this ``plane"
\begin{equation}\label{planuldecon}
 \Delta:=  \overline{ \tilde{\beta}^{-1} \lp {\Diag} \times (0, \infty)  \rp }  \subset \Xd. 
\end{equation} 
Remark that $\Delta$ is diffeomorphic to the simple space $\Xs$ (see Fig. \ref{defop}).

\begin{definition}\label{distrconcalc}
Denote by
\[ \II^{m,0,0} (\Xd):= \II^{m,0,0} \lp \Xd, \Delta , \Ss \boxtimes \Ss^* \otimes \lp  \pi_2 \circ \tilde{\beta}\rp^* \Omega \lp X \rp    \rp \]
the set of distributions of order $m$ on the surgery-cusp double space $\Xd$ which
\begin{itemize}
\item[$i)$] are conormal of order $m$ to the ``plane" $\Delta $;
\item[$ii)$] have index sets $\mathbb N$ to the front face $\ffc$ and to the temporal boundary $\tb$ (these are extendible distributions across these faces);
\item[$iii)$] have empty index sets towards all the other boundary hypersurfaces (meaning that they vanish at infinite order there);
\item[$iv)$] contain a density factor in the second set of variables coming from $X_{(2)}$. 
\end{itemize} Furthermore, let
$ \II^{m, \alpha,\beta} \lp \Xd \rp := \rho_{\ffc}^{\alpha} \rho_{\tb}^{\beta}  \II^{m,0,0} (\Xd).  $
\end{definition}
The operators in the \emph{surgery-cusp calculus} are classical pseudodifferential operators on $X$ which depend smoothly on the time-parameter $t$ for $t>0$, and satisfy certain properties towards $\{t=0\}$.

\begin{definition}\label{distrconcalc1}
Let $\Psi^{m,0,0}_{\cp}(X)$ be the set of \emph{cusp-surgery pseudodifferential operators}  corresponding to the distributional kernels in $\II ^{m,-2,0} \lp \Xd \rp $, and furthermore, let
\[\Psi_{\cp}^{m, \alpha, \beta} (X):=\rho_{\ffc}^{- \alpha} \rho_{\tb}^{-\beta} \Psi^{m,0,0}_{\cp}(\Xd) = \II^{m,-(\alpha+2),-\beta}(\Xd). \]
\end{definition}
Note that $\Psi^{*,*,*}_{\cp}(X)$ and $\II^{*,*,*}(\Xd)$ differ by a $-2$ in the second index. This difference appears because of the density factor from the second set of variables.

\begin{figure}[H]
\begin{center}
\includegraphics[width=12cm, height=10.5cm]{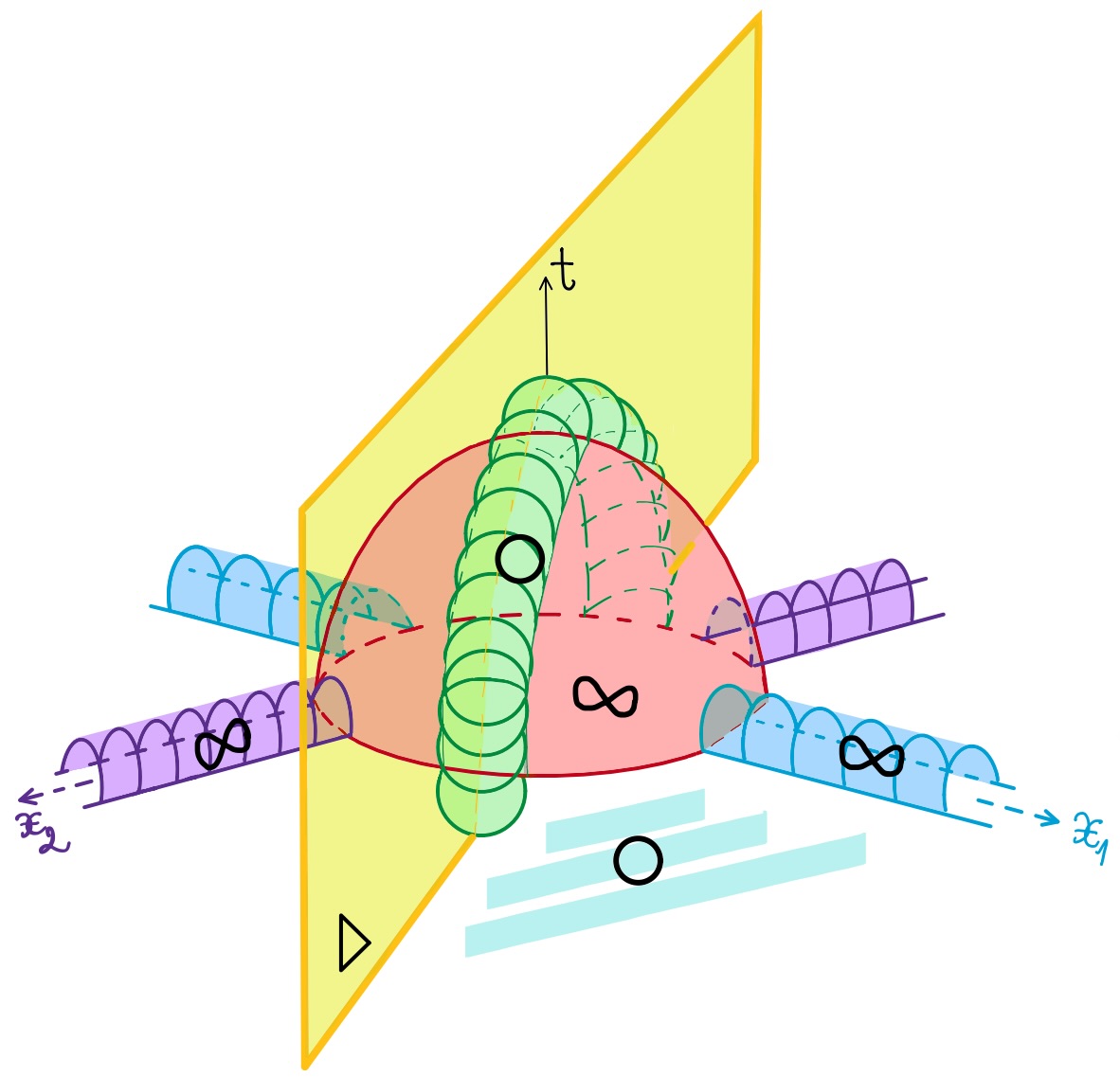}
\caption{Index sets and conormality of the cusp-surgery $\Psi$DO operators}\label{defop}
\end{center}
\end{figure}

We need to define and describe the mapping properties of these operators. Let $A \in \Psi^{m,\alpha,\beta}_{\cp}(X)$  be a cusp-surgery operator and let  $f \in \mathcal A^{\alpha',\beta'} \lp \Xs_{(2)} , \Ss \rp$ be a polyhomogeneous section with values in $\Ss$ (see Remark \ref{notatie}). Notice that we already have in $k_A$ a partial density from $\Xd$. The strategy to apply the operator $A$ on the section $f$ is the following:
\begin{itemize}
\item[$i)$] Take \emph{any} \emph{b}-density $ \tilde{\omega} \in \Omega^b \lp  \Xs_{(1)} \rp$ and pull it back through $\pi^2_{(1)}$ to get a partial density on the double space $\Xd$;
\item[$ii)$] Then $k_A \cdot \lp \pi^2_{(2)} \rp^* f  \cdot  {\pi^2_{(1)}}^*\tilde{\omega}$ is a section in the bundle $\Ss \otimes \Omega^b \lp \Xd \rp$ (see Fig. \ref{aplicare});
\item[$iii)$] We push-forward the \emph{b}-density appearing in $ii)$ through the map ${\pi^2_{(1)}}$ to obtain a section in $\Gamma \lp  \Xs, \Ss \otimes \Omega^b(\Xs) \rp$ (see \eqref{proiectii1}). 
\begin{equation}
\label{proiectii1}
\centering
\begin{split}
\begin{tikzpicture}
[x=1mm,y=1mm]
\node (tl) at (0,15) {$\Xd$};
\node (tm) at (30,15) {$X^2 \times \mathbb{R}_+$};
\node (tr) at (60,15) {$X_{(2)}$};
\node (bl) at (0,0) {$\Xs$};
\node (bm) at (30,0) {$X \times \mathbb{R}_+$.};
\draw[->] (tl) to node[above,font=\small]{$\widetilde{\beta}$} (tm);
\draw[->] (tm) to node[above,font=\small]{$\pi_2$} (tr);
\draw[->] (bl) to node[above,font=\small]{$\beta$} (bm);
\draw[->] (tl) to node[right,font=\small]{$\pi^2_{(1)}$} (bl);
\draw[->] (tm) to node[right,font=\small]{$\pi_{1,+}$} (bm);
\end{tikzpicture}
\end{split}
\end{equation}
\item[$iv)$] The final step is to ``divide" by $\tilde{\omega}$. 
\end{itemize}
It is easy to remark that if we start with another \emph{b}-density $h \tilde{\omega}$, where $h$ is a polyhomogeneous conormal function on $\Xs_{(1)}$, the result of step $iii)$ is $h$ times the former result. Therefore, in the next definition we will take $\tilde{\omega}$ to be the standard \emph{b}-density on $\Xs_{(1)}$, which is $\beta^* \lp \mu_{X_{(1)}} \otimes \tfrac{dt}{t} \rp$, where $\mu_X \in \Omega(X)$ is the density factor given by the volume form of the metric $h_t$, which is independent of $t$.
\begin{definition}\label{apl}
The action of a cusp-surgery pseudodifferential operator $A \in  \Psi^{m, \alpha,\beta}_{\cp}(X)$ on $f \in \mathcal A^{\alpha',\beta'} \lp \Xs_{(2)} , \Ss \rp$ is given by
\[   A(f):=  \frac{{\pi^2_{(1)}}_* \left[   \lp  \pi^2_{(2)} \rp^* f \cdot k_A   \cdot  {\pi^2_{(1)}}^*\tilde{\omega}   \right]    }{\tilde{\omega}} , \]
where $\omega:=\mu_{X_{(1)}} \otimes \tfrac{dt}{t} \in \Omega^b \lp X \times \mathbb R_+ \rp$ and $\tilde{\omega}:=\beta^* \omega \in \Omega^b \lp \Xs_{(1)}  \rp$. 
\end{definition}

\begin{proposition}
Let $A \in  \Psi^{m,\alpha,\beta}_{\cp}(X)$ and let $f \in \mathcal A^{\alpha',\beta'} \lp \Xs , \Ss \rp$. Then 
\[Af \in \mathcal A^{-\alpha+\alpha',-\beta+\beta'}(\Xs, \Ss). \]
\end{proposition}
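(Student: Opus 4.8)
The plan is to run through the four operations of Definition \ref{apl} one at a time, tracking the polyhomogeneous behaviour at the two boundary hypersurfaces $\ff$ and $\tf$ of $\Xs$ by means of the Pull-back and Push-forward Theorems; the conormal singularity of $k_A$ along $\Delta$ will be harmless, because $f$ pulls back to a section that is smooth near $\Delta$. For the first operation, apply the Pull-back Theorem \ref{pullbackphg} to the b-fibration $\pi^2_{(2)}\colon\Xd\to\Xs_{(2)}$: the pull-back $(\pi^2_{(2)})^*f$ is polyhomogeneous on $\Xd$ with values in $(\pi^2_{(2)})^*\Ss$. By the same local computations as in the proof of Proposition \ref{bfibspdublu}, valid for $\pi^2_{(2)}$ by the left--right symmetry, all exponents $e(G,H)$ of $\pi^2_{(2)}$ lie in $\{0,1\}$, the face $\ffc$ is carried to $\ff$ and $\tb$ to $\tf$; hence $(\pi^2_{(2)})^*f$ has index sets $\alpha'+\mathbb N$ at $\ffc$ and $\beta'+\mathbb N$ at $\tb$, and it is smooth on a neighbourhood of $\Delta$, where $\pi^2_{(2)}$ is a submersion.

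For the second operation, put $P:=k_A\cdot(\pi^2_{(2)})^*f\cdot(\pi^2_{(1)})^*\tilde\omega$, with the $\Ss^*$-factor of $k_A$ paired against $(\pi^2_{(2)})^*f$. Since $k_A\in\II^{m,-(\alpha+2),-\beta}(\Xd)$ is conormal of order $m$ to $\Delta$, with empty index sets at $\ffb,\Ba,\Bm$ and index sets $-(\alpha+2)+\mathbb N$ at $\ffc$ and $-\beta+\mathbb N$ at $\tb$, and since $(\pi^2_{(2)})^*f$ is smooth near $\Delta$ (so the product of distributions is unobstructed), the product $k_A\cdot(\pi^2_{(2)})^*f$ is again conormal of order $m$ to $\Delta$, with index sets at each boundary hypersurface obtained by adding those of the two factors — still empty at $\ffb,\Ba,\Bm$, equal to $-(\alpha+2)+\alpha'+\mathbb N$ at $\ffc$ and to $-\beta+\beta'+\mathbb N$ at $\tb$, with no logarithmic terms created. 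Multiplying by the partial b-density $(\pi^2_{(1)})^*\tilde\omega$ makes $P$ a conormal-to-$\Delta$ distributional section of $(\pi^2_{(1)})^*\Ss\otimes\Omega^b(\Xd)$, once one identifies $(\pi_2\circ\tilde\beta)^*\Omega(X_{(2)})\otimes(\pi^2_{(1)})^*\Omega^b(\Xs_{(1)})$ with $\Omega^b(\Xd)$ modulo explicit front-face weights coming from the four blow-downs (Proposition \ref{ridbdens}).

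For the third and fourth operations, recall that $\pi^2_{(1)}$ is a b-fibration (Proposition \ref{bfibspdublu}) and is proper, its fibres being compact, and that $\Delta$ is transverse to every boundary hypersurface of $\Xd$ and maps diffeomorphically onto $\Xs$, so $T_z\Delta\cap T_z\bigl((\pi^2_{(1)})^{-1}(\pi^2_{(1)}(z))\bigr)=\{0\}$ for every $z\in\Delta$. Writing $P$ as the sum of a piece supported near $\Delta$ and a piece supported away from it, the Push-forward Theorem for conormal distributions (Theorem \ref{pftc}) handles the first and the Push-forward Theorem for polyhomogeneous conormal functions (Theorem \ref{pft}) the second; together they show that $(\pi^2_{(1)})_*P$ is smooth on the interior of $\Xs$ and polyhomogeneous up to $\partial\Xs$. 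The integrability hypothesis of Theorem \ref{pft} is vacuous here, because every boundary hypersurface of $\Xd$ lies over $\{t=0\}$ or over $\gamma$ and is therefore carried into $\partial\Xs$; moreover among the boundary hypersurfaces $G$ with $e(G,\ff)>0$ only $\ffc$ carries a non-empty index set for $P$ (with $e(\ffc,\ff)=1$), and only $\tb$ is relevant for $\tf$, so the index set of $(\pi^2_{(1)})_*P$ at $\ff$ equals that of $P$ at $\ffc$ and its index set at $\tf$ equals that of $P$ at $\tb$. Dividing by $\tilde\omega$ strips off the b-density and, once the weights recorded in the second operation are accounted for, leaves a polyhomogeneous section with index sets $-\alpha+\alpha'+\mathbb N$ at $\ff$ and $-\beta+\beta'+\mathbb N$ at $\tf$, that is, $Af\in\mathcal A^{-\alpha+\alpha',-\beta+\beta'}(\Xs,\Ss)$.

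The one genuinely delicate point is the density bookkeeping linking the second and fourth operations: one must pin down precisely the front-face weights acquired in passing from $[0,\infty)\times X^2$ to $\Xd$ and from $[0,\infty)\times X$ to $\Xs$, via Proposition \ref{ridbdens}, so that after the fibre integration and the division by $\tilde\omega$ the shift at $\ff$ becomes exactly $-\alpha$ and the shift at $\tf$ exactly $-\beta$. This is the computation behind the normalization $\Psi^{m,\alpha,\beta}_{\cp}(X)=\II^{m,-(\alpha+2),-\beta}(\Xd)$ of Definition \ref{distrconcalc1} — the ``$-2$'' in the second index being exactly the net contribution of the densities at $\ff$. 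Granting it, everything else follows directly from the Pull-back and Push-forward theorems together with the $\{0,1\}$-exponent analysis already performed for $\pi^2_{(1)}$ in Proposition \ref{bfibspdublu}.
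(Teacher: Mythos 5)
Your proof follows exactly the same four-step scheme (pull back $f$, multiply by $k_A$ and the reference $b$-density, push forward along $\pi^2_{(1)}$, divide by $\tilde\omega$) as the paper, and all the structural claims — the exponent matrix of $\pi^2_{(1)}$ with entries in $\{0,1\}$, which faces of $\Xd$ lie over $\ff$ and which over $\tf$, the transversality of $\Delta$ to the fibres, the vacuousness of the integrability condition — are correct and match the paper's reasoning. The one place where you diverge is that you stop short of the density arithmetic: you declare it ``the one genuinely delicate point'' and frame the ``$-2$'' in Definition \ref{distrconcalc1} as the answer, but you don't actually verify it. That arithmetic is the substance of the proof, not a normalization sanity-check. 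The paper does it via Proposition \ref{ridbdens}: $\tilde\omega=\beta^*(\mu_{X_{(1)}}\otimes\tfrac{dt}{t})\in\rho_{\ff}\,\Omega^b(\Xs)$ (the blow-up creating $\ff$ is $[\RR^2_1;\{0\}]$, so $n-k=1$), while $\tilde\beta^*(\mu_{X_{(1)}}\otimes\mu_{X_{(2)}}\otimes\tfrac{dt}{t})\in\rho_{\ffb}^2\rho_{\ffc}^3\,\Omega^b(\Xd)$ (the blow-up creating $\ffb$ contributes $\rho_{\ffb}^2$, which lifts to $\rho_{\ffb}^2\rho_{\ffc}^2$ through the last blow-up, and the last blow-up itself adds one more $\rho_{\ffc}$). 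Combined with the $-(\alpha+2)$ from $k_A$ and $\alpha'$ from $(\pi^2_{(2)})^*f$, the index of $P$ at $\ffc$ is $-\alpha+\alpha'+1$; the push-forward keeps it, and dividing by $\tilde\omega$ removes the $+1$ to leave $-\alpha+\alpha'$ at $\ff$ (and $-\beta+\beta'$ at $\tf$ unchanged). You should carry out that computation rather than treating it as a consequence of the definition — the definition was chosen so that it works, but the verification is still required.
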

\begin{proof}
Let us separate the distributional and the density parts in $k_A$ by writing it as
\[  k_A=u \cdot \lp \pi_2  \circ \tilde{\beta} \rp^* \mu_{X_{(2)}},  \]
where $u \in \II^{m,-\alpha-2,-\beta}  \lp \Xd, \Delta , \Ss \otimes \Ss^* \rp$. Remark that using Proposition \ref{ridbdens},
\[  \tilde{\omega}=\beta^* \omega =\beta^* \lp  \mu_{X_{(1)}} \otimes \frac{dt}{t} \rp \in \rho_{\ff} \cdot \Omega^b \lp \Xs \rp , \]
and 
\begin{equation}\label{dens}
\begin{aligned}
 \lp \pi^2_{(1)} \rp^* \tilde{\omega} \otimes k_A \cdot f ={}& u \cdot f \  \tilde{\beta}^*  \lp  \pi_{1,+}^* \lp \mu_{X_{(1)}} \otimes \tfrac{dt}{t} \rp \otimes \pi_2^* \mu_{X_{(2)}}   \rp \\
={}& u \cdot f \  \tilde{\beta}^* \lp  \mu_{X_{(1)}} \otimes \mu_{X_{(2)}} \otimes \tfrac{dt}{t} \rp,
\end{aligned}
\end{equation}
where $\pi_{1,+}$ and $\pi_2$ are the projections from \eqref{proiectii1}. We use again Proposition \ref{ridbdens} to study the density factor in \eqref{dens}. The blow-up of $\{  t=x_1=x_2=0 \}$ in \eqref{dublunerig} (the one which produces $\ffb$ in the double space $\Xd$) is locally modeled by $[\RR^3_1; \{0 \}]$. Hence the \emph{b}-density $  \mu_{X_{(1)}} \otimes \mu_{X_{(2)}} \otimes \tfrac{dt}{t} \in \Omega^b \lp X^2 \times \mathbb R_{+}  \rp$ lifts through this blow-up to a \emph{b}-density multiplied by a factor of $\rho_{\ffb}^2$. 

The last blow-up in \eqref{spdublu} (the one which produces the cusp front face $\ffc$) is of type $[\RR^2_1 ;\{ 0 \}]$, thus when we lift the \emph{b}-density through this one, we get another factor of $\rho_{\ffc}$. Furthermore, the previous factor $\rho_{\ffb}^2$ lifts through this final blow-up to $\rho_{\ffb}^2\rho_{\ffc}^2$. We obtain that
\begin{equation}\label{tildebeta*}
  \tilde{\beta}^* \lp  \mu_{X_{(1)}} \otimes \mu_{X_{(2)}} \otimes \tfrac{dt}{t} \rp \in \rho_{\ffb}^2 \rho_{\ffc}^3 \Omega^b (\Xd), 
\end{equation}  
thus  $\lp \pi^2_{(1)} \rp^* \tilde{\omega} \otimes k_A \cdot f \in \rho_{\ffc}^{-\alpha-2+3 +\alpha'} \rho_{\tb}^{-\beta+\beta'} \Omega^b \lp \Xd  \rp $. 

By the Pull-Back Theorem \ref{pullbackphg}, we can read the index sets of $\lp \pi^2_{(2)} \rp^* f$ (see Fig. \ref{aplicare}).
\begin{figure}[H]
\begin{center}
\includegraphics[width=14.5cm, height=4.5cm]{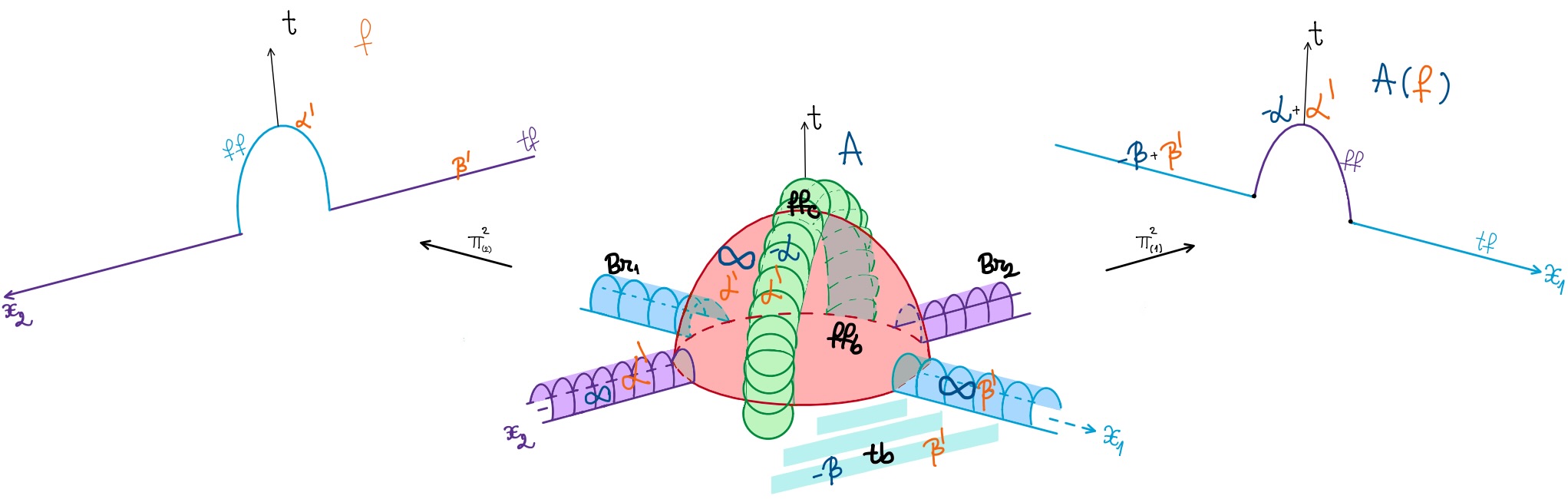}
\caption{Mapping properties of the cusp-surgery pseudodifferential operators}\label{aplicare}
\end{center}
\end{figure}

We use the fact that $\pi^2_{(1)}$ is a \emph{b}-fibration (see Proposition \ref{bfibspdublu}) in order to apply the Push-Forward Theorem \ref{pftc}. Remark that the conormality is integrated away, since it is transversal to the fibers in which we integrate. In order to find the index sets towards the boundary hypersurfaces of $\Xs$, notice that the integrability condition in the Push-Forward Theorem \ref{pft} is empty in our case.

The boundary hypersurfaces $\ffc$, $\ffb$ and $\Bm$ in $\Xd$ project through $\pi^2_{(1)}$ over the front face $\ff$ in $\Xs_{(1)}$ (see Fig. \ref{aplicare}), thus the index set towards $\ff$ is $-\alpha+\alpha'+1$. The boundary hypersurfaces $\Ba$ and $\tb$  project through $\pi^2_{(1)}$ over the temporal face $\tf$ in $\Xs_{(1)}$, thus the index set towards $\tf$ is $-\beta+\beta'$. Therefore
 \[    {\pi^2_{(1)}}_* \left[   \lp \pi^2_{(1)} \rp^* \tilde{\omega} \otimes k_A \cdot f  \right]  \in \rho_{\ff}^{-\alpha+\alpha'+1} \rho_{\tf}^{-\beta+ \beta'} \Omega^b \lp \Xs_{(1)} \rp. \]

By Definition \ref{apl}, it follows that
\[ A(f) \in \rho_{\ff}^{-\alpha+\alpha'} \rho_{\tf}^{-\beta+\beta'} \mathcal A^{0,0} \lp \Xs_{(1)} \rp.    \qedhere  \]
\end{proof}

\section{The cusp-surgery symbol}
Denote by $N^*\Delta$ the conormal bundle to the ``plane" $\Delta$ (see \eqref{planuldecon}), and let $SN^*$ be the sphere bundle in $N^* \Delta$ over $\Xs$. The principal symbol map for conormal distributions induces a surjective homomorphism 
\[ \sigma_{\cp}  : \Psi^{m,\alpha,\beta}_{\cp} \lp X \rp \longrightarrow \mathcal C^{\infty} \lp \Xs, SN^*\Delta  \rp, \]
which is called the \emph{principal cusp surgery symbol}. Remark that the kernel of $\lp \sigma_{\cp} \rp_{m}$ is precisely the set of cusp-surgery pseudodifferential operators of order $m-1$, i.e.,
\[  \ker \lp \sigma_{\cp} \rp_{m} =  \Psi^{m-1,\alpha,\beta}_{\cp} (X). \]
\section{The normal operator and the indicial family}
We will define the normal operator of a cusp-surgery pseudodifferential operator $A$ as the ``restriction" of the distributional kernel of $A$ to the cusp front face $\ffc$. In order to do this, we need to explain the restriction of the density bundle. First, we have the following diagram:
\begin{equation}
\label{diagr1}
\centering
\begin{split}
\begin{tikzpicture}
[x=1mm,y=1mm]
\node (tl) at (0,15) {$(\pi_2 \circ \widetilde{\beta} \circ \iota)^*\Omega(X)$};
\node (tr) at (60,15) {$\Omega(X)$};
\node (bl) at (0,0) {$\ffc$};
\node (bm1) at (20,0) {$\Xd$};
\node (bm2) at (40,0) {$X^2 \times \mathbb{R}_+$};
\node (br) at (60,0) {$X_{(2)}$,};
\draw[->,densely dashed] (tl) to (bl);
\draw[->] (tr) to (br);
\draw[<-,>=right hook] (bl) to ($ (bl)!0.5!(bm1) $) node[above,font=\small]{$\mathrm{\iota}$};
\draw[->] ($ (bl)!0.5!(bm1) $) to (bm1);
\draw[->] (bm1) to node[above,font=\small]{$\tilde{\beta}$} (bm2);
\draw[->] (bm2) to node[above,font=\small]{$\pi_2$} (br);
\end{tikzpicture}
\end{split}
\end{equation}
where $\iota$ is the inclusion of the cusp front face into the double space.
\begin{definition}
If $A \in \Psi^{m,\alpha,\beta}_{\cp} \lp X \rp$ is a cusp-surgery pseudodifferential operator, we define \emph{the normal operator} of $A$ as 
\[   \mathcal N_{\alpha} (A):={\lp \rho_{\ffc}^{\alpha} k_A \rp}_{\vert_{\ffc}} ,  \]
where we regard the density factor in $k_A$ as a section in $\lp \pi_2 \circ \tilde{\beta} \circ \iota \rp^* \Omega(X)$ (see \eqref{diagr1}).
\end{definition}

If we use the normal operator in a context in which the order $\alpha$ is clear, then we might simply write $\NN$ instead of $\NN_\alpha$. However, notice that we get another density bundle $p^* \Omega \lp  \mathbb R \times \gamma  \rp$ over the cusp front face $\ffc$ (see \eqref{diagr2}), where $p:[0,\pi] \times \mathbb R \times \gamma^2 \longrightarrow \mathbb R \times \gamma$ is the projection onto the first factor of $\gamma$. Therefore we have two density bundles $ \lp  \pi_2 \circ \tilde{\beta} \rp^* \Omega(X)$ and $\Omega \lp \mathbb R \times \gamma \rp$ over $\ffc$.
\begin{equation}
\label{diagr2}
\centering
\begin{split}
\begin{tikzpicture}
[x=1mm,y=1mm]
\node (tl) at (0,15) {$p^*\Omega(\mathbb{R} \times \gamma)$};
\node (tr) at (50,15) {$\Omega(\mathbb{R} \times \gamma)$};
\node (bl) at (0,0) {$\ffc$};
\node (bm) at (20,0) {$[0,\pi] \times \mathbb{R} \times \gamma^2$};
\node at ($ (bl.east)!0.5!(bm.west) $) {$=$};
\node (br) at (50,0) {$\mathbb{R} \times \gamma$};
\draw[->,densely dashed] (tl) to (bl);
\draw[->] (bm) to node[above,font=\small]{$p$} (br);
\draw[->] (tr) to (br);
\end{tikzpicture}
\end{split}
\end{equation}
One can easily check the following result.
\begin{lemma}\label{densffc}
The densities bundles $ \lp  \pi_2 \circ \tilde{\beta} \rp^* \Omega(X)$ and $\Omega \lp \mathbb R \times \gamma \rp$ over the cusp front face $\ffc$ are related by a square of $\rho_{\ffc}$:
\[\rho_{\ffc}^{-2} \lp  \pi_2 \circ \tilde{\beta} \rp^* \Omega(X) \simeq p^* \Omega \lp \mathbb R \times \gamma \rp.   \]
\end{lemma}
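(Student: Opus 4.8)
The statement is local near $\ffc$, so the plan is to cover $\ffc$ by the projective charts coming from the iterated blow-up \eqref{spdublu} and, on each of them (together with a collar of $\ffc$ in $\Xd$), to produce a nowhere-vanishing smooth function $h$ with
\[ (\pi_2\circ\tilde\beta)^*\mu_{X_{(2)}} \;=\; \rho_{\ffc}^{\,2}\, h\cdot p^*\dvol_{\mathbb R\times\gamma}, \]
the equality being of sections of line bundles over that neighbourhood. Since any boundary defining function for $\ffc$ is determined up to a positive smooth factor, the exponent $2$ is then chart-independent and the local identifications patch to the asserted isomorphism.

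First I would record the elementary fact, already used to say that $\dvol_{h_t}$ is $t$-independent, that near $\gamma$ one has $\mu_X=\sqrt{\det g_\gamma}\,|dx\wedge dy|$, independent of both $x$ and $t$; this is exactly the computation that the conformal weights in $h_t$ cancel in the determinant. Next comes the conceptual reduction: because $\pi_2\circ\tilde\beta\circ\iota$ sends $\ffc$ into $\gamma\subset X$ (on $\ffc$ the image has $x_2=0$ and $y_2=y_1$), the restriction to $\ffc$ of $(\pi_2\circ\tilde\beta)^*\Omega(X)$ factors through $\Omega(X)|_\gamma$, which splits canonically as $\Omega(\gamma)\otimes|N^*\gamma|$, the last factor being the density bundle of the normal direction, trivialised by $|dx|$; under this splitting the pulled-back section is $\sqrt{\det g_\gamma}\,|dx|\otimes q^*|dy|$, where $q\colon\ffc\to\gamma$ is the projection onto the common $\gamma$-coordinate. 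Likewise $p^*\Omega(\mathbb R\times\gamma)=|ds|\otimes q^*\Omega(\gamma)$, with $s$ the fibre coordinate of the line bundle $\ffc\to[0,\pi]\times\gamma^2$ and with $p$ followed by the projection to $\gamma$ equal to $q$ (both land on the $\gamma_{(1)}$-copy, i.e. $y_2\big|_{\ffc}=y_1$). The $\Omega(\gamma)$-factors therefore cancel, and the lemma is equivalent to the one-dimensional statement $|dx|=\rho_{\ffc}^{\,2}\cdot(\text{unit})\cdot|ds|$ on a collar of $\ffc$, i.e. to comparing the transverse coordinate $x=x_2$ with the fibre coordinate $s$ of $\ffc$.

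It remains to verify this in coordinates. In the chart of case $(3)$ of the proof of Proposition~\ref{bfibspdublu}, where $t$ is a boundary defining function for $\ffc$, the blow-down $\tilde\beta$ gives $x_2=t\,\xi_2$ with $\xi_2$ smooth and nonvanishing on $\Int(\ffc)$, contributing one factor of $\rho_{\ffc}$; the fibre coordinate $s$ of $\ffc$ is, in turn, a rescaling of the diagonal-normal variable by a further power of $\rho_{\ffc}$ (this is where the model structure enters — the last blow-up in \eqref{spdublu} is of $[\mathbb R^2_1;\{0\}]$-type, exactly as in Proposition~\ref{ridbdens}), and assembling the two contributions yields the factor $\rho_{\ffc}^{\,2}$. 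The remaining charts — those in which a lift of $\pm x_1$ or $\pm x_2$, rather than $t$, defines $\ffc$ — are handled by the same bookkeeping and are mutually symmetric, so only one must be checked in detail; alternatively one may simply invoke the chart-independence noted above. The genuinely delicate point, and the main obstacle, is pinning down the precise identification of the cusp model on $\ffc$ — the projection $p$ of \eqref{diagr2} and the fibre coordinate $s$ — in terms of the iterated-blow-up coordinates; once that description is fixed, everything else is a short Jacobian computation, and the power $\rho_{\ffc}^{\,2}$ drops out.
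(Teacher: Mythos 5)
The paper gives no proof of this lemma (it is prefaced only by ``One can easily check the following result''), so there is nothing to compare against; your coordinate approach is essentially the only natural one, and it does reach the right conclusion. The key computation is correct: with $u=(\xi_2-\xi_1)/t=(x_2-x_1)/t^2$ we have $x_2=x_1+ut^2$, hence $dx_2=t^2\,du$ at fixed $(x_1,t)$, and since $t=\rho_{\ffc}$ on this chart this is exactly the factor $\rho_{\ffc}^2$ that the lemma asserts. Two of your supporting claims, however, are false, even though neither affects the outcome.

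First, $y_2\big|_{\ffc}\neq y_1$ in general. The centre of the last blow-up is, per \eqref{dublunerig}, the closure of $\{\xi_1=\xi_2=t=0\}$, which is of codimension two and does \emph{not} contain the condition $y_1=y_2$; this is also forced by the paper's description of $\ffc$ as a line bundle over $[0,\pi]\times\gamma\times\gamma$ and by $\mathcal N(A)$ being a $\gamma$-suspended operator, i.e.\ a conormal distribution on $\gamma^2\times\mathbb R_u$ rather than a fibrewise symbol. Thus the two $\Omega(\gamma)$-factors do not literally ``cancel'': one is pulled back through $y_2$, the other through $y_1$. They are nevertheless identified, up to the strictly positive smooth factor $\bigl(\det g_\gamma(y_2)/\det g_\gamma(y_1)\bigr)^{1/2}$, by the common trivialisation $\sqrt{\det g_\gamma}\,|dy|$, and that factor is harmlessly absorbed into the nowhere-vanishing $h$. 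Second, $\xi_2$ is \emph{not} nonvanishing on $\Int(\ffc)$ (it vanishes at the middle angle $\theta=\pi/2$). But this hypothesis is not needed: the first factor of $\rho_{\ffc}$ comes from $dx_2=\xi_2\,dt+t\,d\xi_2=t\,d\xi_2$ at fixed $t$, irrespective of the value of $\xi_2$. With these two statements corrected, the remainder of your argument — the reduction to the transverse one-dimensional comparison and the final chart-independence remark — goes through.
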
 

Remark that the normal operator takes values in a family of operators on $X$. More precisely, if $A \in \Psi^{m,\alpha,\beta}_{\cp} \lp X \rp$ is a surgery-cusp pseudodifferential operator, then $\NN(A)$ is a family 
\[   \NN(A):[0, \pi] \longrightarrow \Psi^m_{\sus} \lp \gamma \rp, \]
where 
 \[ \Psi^m_{\sus}(\gamma):= \II^m \lp \gamma \times \gamma \times \mathbb R_u, \Diag_{\gamma} \times \{ u=0 \} \rp \]
is the set of \emph{suspended operators} on $\gamma$ (see e.g. \cite[Section 4]{melmaz98} or \cite[Section 2]{moroktheory}), i.e., translation-invariant conormal distributions which decrease rapidly as $u>>0$. We denoted by $\Diag_{\gamma}:=\{ (y,y): \ y \in \gamma \}$ the diagonal inside $\gamma^2$.

To simplify the notation, we will denote the maps from $[0, \pi] \longrightarrow \Psi^m_{\sus} \lp \gamma \rp$ by $\Psi_{[0, \pi], \sus} \lp \gamma \rp$, thus we regard the normal operator as
\[ \NN : \Psi^{m, \alpha, \beta}_{\cp} (X) \longrightarrow  \Psi_{[0, \pi], \sus} \lp \gamma \rp. \]
\begin{definition}
The \emph{indicial family} $\mathcal I (A) (\xi)_{\xi \in \mathbb C}$ of an operator $A \in \Psi^{m, \alpha, \beta}_{\cp} \lp X \rp$ is given by the Fourier transform in the fiber of the cusp front face $\ffc$ of the normal operator $\mathcal N (A)$.
\end{definition}

\section{The temporal operator}
Since the surgery-calculus that we are constructing contains a parameter in time, it is useful to consider another ``symbol operator" (as in \cite[Section 5.2]{mcdonald} or \cite[Section 3.5]{ars}) defined by restricting cusp-surgery distributional kernels to the temporal face $\tb$, playing the role of the \emph{surgery normal homomorphism} in \cite{melmaz}. We will restrict the density section from $\Omega \lp  X_{(2)} \rp$ using the pull-back of this bundle over the temporal boundary, as follows:
\begin{equation}
\label{diagr3}
\centering
\begin{split}
\begin{tikzpicture}
[x=1mm,y=1mm]
\node (tl) at (0,15) {$(\pi_2 \circ \widetilde{\beta} \circ \mathrm{j})^*\Omega(X)$};
\node (tr) at (60,15) {$\Omega(X)$};
\node (bl) at (0,0) {$\tb$};
\node (bm1) at (20,0) {$\Xd$};
\node (bm2) at (40,0) {$X^2 \times \mathbb{R}_+$};
\node (br) at (60,0) {$X_{(2)}$,};
\draw[->,densely dashed] (tl) to (bl);
\draw[->] (tr) to (br);
\draw[<-,>=right hook] (bl) to ($ (bl)!0.5!(bm1) $) node[above,font=\small]{$\mathrm{j}$};
\draw[->] ($ (bl)!0.5!(bm1) $) to (bm1);
\draw[->] (bm1) to node[above,font=\small]{$\tilde{\beta}$} (bm2);
\draw[->] (bm2) to node[above,font=\small]{$\pi_2$} (br);
\end{tikzpicture}
\end{split}
\end{equation}
where $\mathrm{j}$ is the inclusion of the temporal boundary hypersurface into the double space.
\begin{definition}
If $A \in \Psi^{m,\alpha,\beta}_{\cp} \lp X \rp$ is a cusp-surgery pseudodifferential operator, we define \emph{the temporal symbol} as
\[   \mathcal T (A):= \lp {\rho^{\beta}_{\tb}} {k_A} \rp_{\vert_{\tb}} \in \Psi^m_{\cc} \lp X \setminus \gamma \rp ,  \]
where we restrict the density factor in $k_A$ to be a section in $\lp \pi_2 \circ \tilde{\beta} \circ \mathrm{j} \rp^* \Omega \lp X \rp$ (see \eqref{diagr3}).
\end{definition}

\section{The Composition Theorem}\label{compunere}
\begin{theorem}\label{thcomp}
Let $A \in \Psi^{m,\alpha,\beta}_{\cp} \lp X \rp$ and $B \in \Psi^{m',\alpha',\beta'}_{\cp} \lp X \rp$ be two cusp-surgery pseudodifferential operators. Then their composition $AB$ is also a cusp-surgery pseudodifferential operator which belongs to $\Psi^{m+m', \alpha+\alpha',\beta+\beta'}_{\cp} \lp X \rp. $
\end{theorem}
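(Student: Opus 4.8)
The plan is to follow the classical strategy for composition theorems in Melrose-type calculi, using the triple space $\Xt$ constructed in Section \ref{sectiontriple} together with the three \emph{b}-fibrations $\pi^3_{(12)}$, $\pi^3_{(23)}$, $\pi^3_{(13)} : \Xt \longrightarrow \Xd$ whose existence was established in Proposition \ref{bfibtrip}. First I would recall that composing two operators $A$ and $B$ amounts, at the level of Schwartz kernels on the product $X\times X\times X\times [0,\infty)$, to pulling back $k_A$ via the projection forgetting the third factor and $k_B$ via the projection forgetting the first factor, multiplying, and pushing forward via the projection forgetting the middle factor. The content of the theorem is that this operation, performed on the blown-up spaces, stays inside the calculus. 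Concretely: lift $k_A \in \II^{m,-(\alpha+2),-\beta}(\Xd)$ through $(\pi^3_{(12)})^*$ and $k_B \in \II^{m',-(\alpha'+2),-\beta'}(\Xd)$ through $(\pi^3_{(23)})^*$ to conormal distributions on $\Xt$, form the product, and then push forward through $\pi^3_{(13)}$ to land back on $\Xd$.

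The key steps, in order, would be the following. Step 1: Pull-back. Using the Pull-back Theorem for conormal distributions (Theorem \ref{pbtc}) applied to the \emph{b}-fibration $\pi^3_{(12)}$, check that $(\pi^3_{(12)})^* k_A$ is conormal on $\Xt$ to the lift of $\Delta$ (the preimage of the diagonal ``plane''), and use the Pull-back Theorem for polyhomogeneous conormal functions (Theorem \ref{pullbackphg}) with the exponents $e(G,H)\in\{0,1\}$ computed in the proof of Proposition \ref{bfibtrip} to track the index sets at every boundary hypersurface of $\Xt$; the crucial point, to be verified face by face from the local coordinate formulas in that proof, is that the two lifted diagonal-conormal submanifolds meet the lifts of $k_A$ and $k_B$ transversally enough, and that the only faces receiving nontrivial (i.e.\ $\mathbb N$-type) index sets are $\fffc$ and the temporal faces. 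Step 2: Product. The two lifted conormal distributions are conormal to two distinct $p$-submanifolds of $\Xt$ which intersect cleanly inside the interior diagonal region, so their wavefront sets are transversal away from the common conormal directions, and the product is a well-defined conormal distribution on $\Xt$ conormal to the intersection of the two lifts — which is precisely the lift of the full (triple) diagonal; the density factors also multiply correctly, using the \emph{b}-density bookkeeping of Proposition \ref{ridbdens} exactly as in the proof of the mapping property proposition. Step 3: Push-forward. Since $\pi^3_{(13)}$ is a \emph{b}-fibration (Proposition \ref{bfibtrip}) and the relevant diagonal submanifold projects diffeomorphically onto $\Delta\subset\Xd$, the Push-forward Theorem for conormal distributions (Theorem \ref{pftc}) yields a conormal distribution on $\Xd$ of the correct order $m+m'$, and the Push-forward Theorem \ref{pft} for the index sets — whose integrability hypothesis must be checked to be vacuous or satisfied on the faces projecting to the interior of $\Xd$ — gives the index set $\mathbb N$ at $\ffc$ and $\tb$ and empty index sets elsewhere, hence the result lies in $\II^{m+m',-(\alpha+\alpha'+2),-\beta-\beta'}(\Xd) = \Psi^{m+m',\alpha+\alpha',\beta+\beta'}_{\cp}(X)$.

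The main obstacle I expect is Step 3's index-set computation: one must verify that after the threefold lift, multiplication, and pushforward, no logarithmic terms or spurious negative powers are created at the faces $\ffb$, $\ffc$, $\tb$, $\Ba$, $\Bm$ of $\Xd$, i.e.\ that the extended unions $\overline{\cup}$ occurring in Theorem \ref{pft} all simplify back to $\mathbb N$ (or $\infty$). This hinges delicately on the exponents $e(G,H)\in\{0,1\}$ established in Proposition \ref{bfibtrip} and on the precise combination of the $-2$ shift from the $X_{(2)}$-density convention with the $\rho_{\ffb}$- and $\rho_{\ffc}$-powers produced when lifting \emph{b}-densities (Proposition \ref{ridbdens}); getting the arithmetic to close is the technical heart of the argument. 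A secondary subtlety is that one must also confirm, as in Proposition \ref{echivbnormal} and the remark following it, that each face of $\Xt$ maps into at most one face of $\Xd$ under $\pi^3_{(13)}$ so that the \emph{b}-normality needed for the pushforward holds — but this is already contained in the face-by-face analysis of Proposition \ref{bfibtrip}. Finally, one checks compatibility of the symbol maps ($\sigma_{\cp}$, $\NN$, $\Te$) with composition, which follows by restricting the above construction to the respective boundary faces; this is routine once Steps 1--3 are in place.
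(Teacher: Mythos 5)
Your overall strategy --- pull back $k_A$ and $k_B$ to $\Xt$ via $(\pi^3_{(12)})^*$ and $(\pi^3_{(23)})^*$, multiply, push forward via $\pi^3_{(13)}$, and track orders, densities, and index sets via Propositions \ref{bfibtrip}, \ref{ridbdens} and Theorems \ref{pullbackphg}, \ref{pft}, \ref{pbtc}, \ref{pftc} --- is the same as the paper's, and your Step~3 index-set plan together with the density arithmetic you flag (which the paper carries out explicitly: the lifted reference $b$-density lies in $\rho_{\fffb}^3\rho_{\fffc}^5\,\Omega^b(\Xt)$, pushes forward to $\rho_{\ffb}^3\rho_{\ffc}^5\,\Omega^b(\Xd)$, and dividing by $\tilde{\beta}^*\lp \mu_{X_{(1)}}\otimes\mu_{X_{(3)}}\otimes\tfrac{dt}{t}\rp\in\rho_{\ffb}^2\rho_{\ffc}^3\,\Omega^b(\Xd)$ leaves $\rho_{\ffb}\rho_{\ffc}^2\mu_{X_{(3)}}$, precisely matching the $-2$ shift in Definition \ref{distrconcalc1}) are the right plan. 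The genuine gap is in your Step~2. The product $(\pi^3_{(12)})^* k_A \cdot (\pi^3_{(23)})^* k_B$ is \emph{not} a conormal distribution to the intersection of the two lifted diagonals; it is singular along their \emph{union} $(\pi^3_{(12)})^{-1}\Delta_{(12)} \cup (\pi^3_{(23)})^{-1}\Delta_{(23)}$. For instance, the product of the Heaviside functions of $x_1$ and of $x_2$ on $\RR^2$, each conormal to one coordinate axis, is the indicator of the first quadrant, with jumps along both positive half-axes and not merely a single singularity at the origin. So the product is not of the form $\mu\in\II^{m+m'}(\Xt,Z)$ for a single submanifold $Z$, and the Push-forward Theorem \ref{pftc}, which presupposes exactly that form, cannot be applied to it in Step~3.

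What rescues the argument, and what the paper invokes, is a specialized statement about the push-forward of a \emph{product} of two conormal distributions (Epstein--Melrose--Mendoza \cite[Theorem~B7.20]{epstein91}). Because the excess singularities of the product away from the intersection of the two lifted diagonals are transversal to the fibers of $\pi^3_{(13)}$ and are therefore smoothed out by the fiber integration, the push-forward is again conormal, of order $m+m'$, to the image $\Delta_{(13)}$ of the intersection --- but this does not follow from Theorem~\ref{pftc} applied to the product as if it were a single conormal distribution. You should replace Step~2 by the correct statement that the product is singular along the union, and then cite or re-prove the push-forward result for products. With that fixed, the rest is sound: the faces of $\Xt$ over $\ffc\subset\Xd_{(13)}$ are $\fffc$, $\Cd$, $\Td$ with index sets $0$, $\infty$, $\infty$, so the extended union $0\,\overline{\cup}\,\infty\,\overline{\cup}\,\infty=0$ is benign, only the temporal face of $\Xt$ sits over $\tb$, and the exponents $e(G,H)\in\{0,1\}$ from Proposition \ref{bfibtrip} ensure no logarithmic terms appear. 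The paper's organisation --- first $m=m'=-\infty$ with $\alpha=\beta=0$ to isolate the density and index-set bookkeeping, then general orders --- also keeps the technical heart you identify more transparent than trying to run all three steps simultaneously for general $m,m'$.
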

\begin{proof}
Melrose's principle for composing pseudodifferential operators is to define the kernel of $AB$ as the push-forward through the map ${\pi^3_{(13)}}  $ of the product of the two pull-backs of the kernels of $A$ and $B$ to the triple space $\Xt$ (see e.g. \cite{melrose}):
\[ k_{AB}={\pi^3_{(13)}}_{*}   \lp  {\pi^3_{(12)}}^{*} k_A \cdot  {\pi^3_{(23)}}^{*} k_B   \rp .\]
We have to be careful with the density factors. Let us summarize the maps involved in the process of the composition in the following diagram:

\begin{equation}
\label{diagrtriplu}
\centering
\begin{split}
\begin{tikzpicture}
[x=1mm,y=1mm]
\node (tl) at (0,15) {$\Xd_{(12)}$};
\node (tm) at (40,15) {$\Xt$};
\node (tr) at (80,15) {$\Xd_{(23)}$};
\node (bl) at (0,0) {$X_{(2)}$};
\node (bm) at (40,0) {$X^3 \times [0,\infty)$};
\node (br) at (80,0) {$X_{(3)}$};
\node (et) at (25,-10) {$\Xd_{(13)}$};
\node (em) at (25,-25) {$\Xs$};
\node (eb) at (25,-40) {$X \times \mathbb{R}_+$.};
\draw[->] (tl) to node[left,font=\small]{$p_2$} (bl);
\draw[->] (tm) to node[right,font=\small]{$\tilde{\tilde{\beta}}$} (bm);
\draw[->] (tr) to node[right,font=\small]{$p_3$} (br);
\draw[->] (tm) to node[above,font=\small]{$\pi^3_{(12)}$} (tl);
\draw[->] (tm) to node[above,font=\small]{$\pi^3_{(23)}$} (tr);
\draw[->] (bm) to node[above,font=\small]{$\pi_2$} (bl);
\draw[->] (bm) to node[above,font=\small]{$\pi_3$} (br);
\draw[->] (et) to node[left,font=\small]{$\pi^2_{(1)}$} (em);
\draw[->] (em) to node[right,font=\small]{$\beta$} (eb);
\draw[->] (tm) to[out=210,in=90] node[left,font=\small]{$\pi^3_{(13)}$} (et);
\draw[->] (bm) to[out=-90,in=30] node[right,font=\small]{$\pi_{1,+}$} (eb);
\end{tikzpicture}
\end{split}
\end{equation}
For simplicity, let us first suppose that  $A,B \in \Psi^{-\infty,0,0}_{\cp} \lp X \rp$, hence
\begin{align*}
k_A=h_A \  p_2^* \mu_{X_{(2)}},
&&
k_B=h_B \  p_3^* \mu_{X_{(3)}},
\end{align*}
where $h_A$ and $h_B$ are smooth functions on the two double spaces $\Xd_{(12)}$ and $\Xd_{(23)}$. The strategy is the following:
\begin{itemize}
\item[$i)$] Consider the pull-backs of the kernels $k_A$ and $k_B$ on the triple space $\Xt$;  \\ Remark that now we have a \emph{partial} density in the variables of $\Xt$;
\item[$ii)$] We pull-back the missing \emph{b}-density $\mu_{X_{(1)}} \otimes \tfrac{dt}{t} \in \Omega^b \lp  X \times \mathbb R_{+} \rp$ through the map $\beta \circ \pi^2_{(1)} \circ \pi^3_{(13)}$ and we tensor it with the previous partial density (see \eqref{diagrtriplu});
\item[$iii)$] We push-forward the result through $\pi^3_{(13)}$ to get a (full) \emph{b}-density $\nu$ on $\Xd_{(13)}$;
\item[$iv)$] We have to ``divide" $\nu$ by $\mu_{X_{(1)}} \otimes \tfrac{dt}{t}$. 
\end{itemize}
Let us study first what happens with the density factors in the process of composition. 

{\bf Step $ \bf{i)}$.} The partial density on $\Xt$ is the following
\begin{equation}\label{d1}
\begin{aligned}
\lp {\pi^3_{(12)}}^* p_2^* \mu_{X_{(2)}} \rp   \otimes \lp {\pi^3_{(23)}}^* p_3^* \mu_{X_{(3)}} \rp ={}&  \lp \pi_2 \circ \tilde{\tilde{\beta}} \rp^* \mu_{X_{(2)}} \otimes  \lp \pi_3 \circ \tilde{\tilde{\beta}} \rp^* \mu_{X_{(3)}}  \\
={}&  \tilde{\tilde{\beta}}^* \lp \pi_2^* \mu_{X_{(2)}}  \otimes  \pi_3^* \mu_{X_{(3)}} \rp. 
\end{aligned}
\end{equation}

{\bf Step $ \bf{ii)}$.} Using the previous step \eqref{d1}, the complete density on $\Xt$ is given by
\begin{equation}\label{d2}
\begin{aligned}
{}&\tilde{\tilde{\beta}}^* \lp \pi_2^* \mu_{X_{(2)}} \otimes   \pi_3^* \mu_{X_{(3)}} \rp \otimes \lp \beta \circ \pi^2_{(1)} \circ \pi^3_{(13)}\rp^* \lp \mu_{X_{(1)}} \otimes \tfrac{dt}{t} \rp \\
={}& \tilde{\tilde{\beta}}^* \lp \pi_2^* \mu_{X_{(2)}} \otimes  \pi_3^* \mu_{X_{(3)}} \rp \otimes \lp \pi_{1,+} \circ \tilde{\tilde{\beta}} \rp^* \lp \mu_{X_{(1)}} \otimes \frac{dt}{t} \rp \\
={}&   \tilde{\tilde{\beta}}^* \lp  \pi_2^* \mu_{X_{(2)}} \otimes  \pi_3^* \mu_{X_{(3)}}  \otimes  \pi_{1,+}^*  \lp \mu_{X_{(1)}} \otimes \frac{dt}{t} \rp      \rp \\
={}& \tilde{\tilde{\beta}}^* \lp \mu_{X_{(1)}} \otimes \mu_{X_{(2)}} \otimes \mu_{X_{(3)}} \otimes \frac{dt}{t}
\rp.
\end{aligned}
\end{equation}
Now we use Proposition \ref{ridbdens} to compute the exponents of the boundary defining functions appearing in the density factor from \eqref{d2}. The blow-up of $\{ t=x_1=x_2=x_3=0 \}$ in the definition of $\Xt$ (see Section \ref{sectiontriple}) produces $\fffb$, and the local model for this procedure is $[ \RR^4_1  ; \{ 0 \}]$. Therefore the b-density $\mu_{X_{(1)}} \otimes \mu_{X_{(2)}} \otimes \mu_{X_{(3)}} \otimes \frac{dt}{t} \in \Omega^b \lp X^3 \times \mathbb R_{+} \rp$ lifts through this blow-up to a \emph{b}-density having a factor of $\rho_{\fffb}^3$. 

The local model for the blow-up which produces $\fffc$ in the triple space $\Xt$ is $[ \RR^4_2 ; \{ 0 \} ]$, thus when we lift a \emph{b}-density through this blow-up, we get a factor of $\rho_{\fffc}^2$. Furthermore, the previous factor $\rho_{\fffb}^3$ lifts to $\rho_{\fffb}^3 \rho_{\fffc}^3$. In total, we obtain that
\begin{equation}\label{bdenspetrip}
\tilde{\tilde{\beta}}^* \lp \mu_{X_{(1)}} \otimes \mu_{X_{(2)}} \otimes \mu_{X_{(3)}} \otimes \frac{dt}{t} \rp \in \rho_{\fffb}^3 \rho_{\fffc}^5 \Omega^b \lp\Xt   \rp .
\end{equation}

{\bf Step $ \bf{iii)}$.} Now using the Push-Forward Theorem \ref{pft}, we get that the push-forward $\nu$ of the density in \eqref{bdenspetrip} to $\Xd_{(13)}$ satisfies
\begin{equation}
\nu:={{\pi^3}_{(13)}}_* \lp \tilde{\tilde{\beta}}^* \lp \mu_{X_{(1)}} \otimes \mu_{X_{(2)}} \otimes \mu_{X_{(3)}} \otimes \frac{dt}{t} \rp \rp \in \rho_{\ffb}^3 \rho_{\ffc}^5 \Omega^b \lp\Xd_{(13)}  \rp.
\end{equation}

{\bf Step $ \bf{iv)}$.} For the final step, let us first remark that, as seen in \eqref{tildebeta*},
\[ \eta:= \tilde{\beta}^*   \lp   \mu_{X_{(1)}} \otimes \mu_{X_{(3)}} \otimes \tfrac{dt}{t} \rp \in \rho_{\ffb}^2 \rho_{\ffc}^3 \Omega^b \lp   \Xd_{(13)}  \rp , \]
and hence we get
$   \frac{\nu}{\eta} \in \rho_{\ffc}^2 \rho_{\ffb} \mu_{X_{(3)}},  $ 
which is the density factor that we wanted to obtain.

Let us now study what happens with the polyhomogeneous conormal functions $h_A$ and $h_B$. In Fig. \ref{sptriplu}, we represented $1/8$ of the temporal boundary of the triple space $\Xt$, and notice that this face is exactly the usual cusp triple space (see \cite[Section 5]{melmaz98}). One can visualize each actual boundary hypersurface of $\Xt$ as a family indexed by $[0, \infty)$ of the boundary hypersurfaces of the usual cusp triple space. For example, $\fffc$ is actually a family of half-spheres $S^2$. 

Using the Pull-Back Theorem \ref{pullbackphg}, the only non-empty index sets of 
$ \lp \pi^3_{(12)} \rp^* h_A \cdot  \lp \pi^3_{(23)} \rp^* h_B   $
occur towards the faces $\fffc$ (where the index set is $0$), and towards the temporal boundary of the triple space (partially-represented in Fig. \ref{sptriplu}). The index sets corresponding to all the other boundary hypersurfaces of $\Xt$ are empty, since either $\lp \pi^3_{(12)} \rp^* h_A$ or $ \lp \pi^3_{(23)} \rp^* h_B$ vanish infinitely fast to each of them.

By Proposition \ref{bfibtrip}, the map $\pi^3_{(13)}$ is a \emph{b}-fibration, and we can apply the Push-Forward Theorem \ref{pft}. Notice that the boundary hypersurfaces $\fffc, \Cd, \Td$ in $\Xt$ are the only ones which project over $\ffc \subset \Xd_{(13)}$ (one can check this from Proposition \ref{bfibtrip}). Hence the index set of
\[  {\pi^3_{(13)}}_* \left[ \  \lp \pi^3_{(12)} \rp^* h_A \cdot  \lp \pi^3_{(23)} \rp^* h_B \ \right]  \]
towards the cusp front face in $\Xd_{(13)}$ is given by
$ 0 \ \overline{\cup} \ \infty \ \overline{\cup} \ \infty = 0$.  
Furthermore, the temporal face of $\Xt$ is the only one projecting over the temporal face of $\Xd$ (and this projection is exactly the usual one from the cusp triple space towards the cusp double space). Therefore the index towards $\tb \subset \Xd_{(13)}$ is $0$. Moreover, we obtain empty index sets towards all the other boundary hypersurfaces in $\Xd_{(13)}$, hence the composition belongs to the calculus
\[  AB \in \Psi^{-\infty, 0, 0}_{\cp}(X). \] 

Notice that if $A$ and $B$ have non-zero index sets towards the respective cusp front faces and temporal boundaries, their behavior is transferred as expected to the composition. More precisely, if $A \in \Psi^{-\infty, \alpha, \beta}_{\cp} (X)$ and $B \in \Psi^{-\infty, \alpha', \beta'}_{\cp} (X)$, then
$AB \in \Psi^{-\infty, \alpha+\alpha', \beta + \beta'}_{\cp}(X). $

Furthermore, if $A$ and $B$ are given by conormal distributions of orders $m$ and $m'$, the statement follows using the Pull-Back and the Push-forward Theorems \ref{pbtc}  and \ref{pftc}. More precisely, $\lp \pi^3_{(12)} \rp^* k_A$ is a distributional density on $\Xt$ conormal to $\lp \pi^3_{(12)} \rp^{-1} \Delta_{(12)}$, while  $\lp \pi^3_{(23)} \rp^* k_B$ is a distribution on $\Xt$ conormal to $\lp \pi^3_{(12)} \rp^{-1} \Delta_{(23)}$. Thus their product is a distribution singular along the union of the double diagonals. Using a result on the push-forward of multiplication of conormal distributions (see \cite[Theorem~B7.20]{epstein91}), we obtain that
\[  {\pi^3_{(13)}}_* \left[ \  \lp \pi^3_{(12)} \rp^* k_A \cdot  \lp \pi^3_{(23)} \rp^* k_B \ \right]  \]
is a conormal distribution of order $m+m'$ to the ``plane" $\Delta_{(13)}$, and that the principal symbol of the composition satisfies 
\[ \sigma_{\cp}(AB)=\sigma_{\cp}(A) \sigma_{\cp}(B). \qedhere \] 
\end{proof}
The proof of the Composition Theorem in the more complicated case of the surgery $\varphi$-calculus can be found in \cite[Appendix C]{ars}.

\section{Composition of normal operators}\label{multiplnormali}
\begin{proposition}\label{compunereopnorm}
Let $A \in \Psi^{m,\alpha,\beta}_{\cp} \lp X \rp$, $B \in \Psi^{m,\alpha',\beta'}_{\cp} \lp X \rp$ be two cusp-surgery operators. Then the normal operator of the composition $AB$ is obtained by the convolution of the two normal operators of $A$ and $B$:
\[ \NN(AB)= \NN(A) \ast \NN(B). \]
\end{proposition}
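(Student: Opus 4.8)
The plan is to run the same push-forward machinery as in the Composition Theorem \ref{thcomp}, but now restricted to the temporal-free part of the boundary lattice, and to identify the bookkeeping of the $\ffc$-faces with the convolution on the cusp front face. Recall from the proof of Theorem \ref{thcomp} that $k_{AB} = {\pi^3_{(13)}}_* \lp {\pi^3_{(12)}}^* k_A \cdot {\pi^3_{(23)}}^* k_B \rp$, and that among the boundary hypersurfaces of $\Xt$ only $\fffc$, $\Cd$, $\Td$ project onto $\ffc \subset \Xd_{(13)}$ under $\pi^3_{(13)}$. The first step is therefore to restrict the whole construction to $\ffc$: by definition, $\NN_{\alpha+\alpha'}(AB) = \lp \rho_{\ffc}^{\alpha+\alpha'} k_{AB} \rp_{\vert \ffc}$, and since restriction to a boundary hypersurface commutes with push-forward along a $b$-fibration when the fibration is compatible with that face (here $\pi^3_{(13)}$ restricted to $\fffc \cup \Cd \cup \Td$ is, up to the line-bundle factor, the usual cusp composition $b$-fibration from the cusp triple space to the cusp double space), one gets $\NN_{\alpha+\alpha'}(AB)$ as the push-forward of the product of the restrictions $\NN_\alpha(A)$ and $\NN_{\alpha'}(B)$ pulled back to the appropriate faces of the cusp triple space.

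The second step is to recognize that this restricted push-forward is exactly the convolution product on $\Psi_{[0,\pi],\sus}(\gamma)$. The cusp front face $\ffc$ is a line bundle over $[0,\pi] \times \gamma \times \gamma$, with fiber coordinate $u \in \RR$ (the $\frac{u}{t}$ or $\frac{\xi_1-\xi_2}{t}$ coordinate appearing in case $(3)$ of Proposition \ref{bfibspdublu}); the normal operator is translation-invariant in $u$ and rapidly decaying as $u \to +\infty$. On the triple space, the face $\fffc$ carries two such fiber variables, say $u = \xi_1 - \xi_2$ and $v = \xi_2 - \xi_3$, and the three $b$-fibrations to the double spaces read off $u$, $v$, and $u+v$ respectively. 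Pushing forward along $\pi^3_{(13)}$ integrates out one of the fiber variables while keeping $u+v$ fixed — this is precisely the integral $\int \NN(A)(u+v-w)\, \NN(B)(w)\, dw$ defining convolution. One has to check that the $\gamma$-variables behave as in the ordinary composition of suspended (equivalently, $b$-convolution) operators on $\gamma$, but this is identical to the well-known composition law for suspended operators in \cite{melmaz98}, now carried along the extra parameter $s \in [0,\pi]$ which plays a purely spectator role (the last blow-up in \eqref{spdublu} produces $[0,\pi]$ from the angular variable, and $\pi^3_{(13)}$ acts as the identity on it). The density-factor bookkeeping is exactly the $\rho_{\ffc}$-power count already performed in Step $iv)$ of the proof of Theorem \ref{thcomp} together with Lemma \ref{densffc}, so the normalizations $\rho_{\ffc}^{\alpha}, \rho_{\ffc}^{\alpha'}, \rho_{\ffc}^{\alpha+\alpha'}$ match up and the convolution is an honest operation $\Psi^m_{\sus}(\gamma) \times \Psi^{m'}_{\sus}(\gamma) \to \Psi^{m+m'}_{\sus}(\gamma)$ at each fixed $s$.

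The main obstacle is the third, bookkeeping-intensive step: verifying that $\Cd$ and $\Td$ contribute nothing beyond $\fffc$ to $\NN(AB)$. From the proof of Proposition \ref{bfibtrip}, the faces $\Cd$, $\Td$ (the ``left-direction'' analogues of $\Ct$, $\Tt$ in diagram \eqref{intersectiitriplu}) map onto $\ffc$, so a priori the restriction $k_{AB}\vert_{\ffc}$ could pick up contributions from the restrictions of ${\pi^3_{(12)}}^* k_A$ and ${\pi^3_{(23)}}^* k_B$ to those faces. The resolution is that on $\Cd$ and $\Td$ one of the two pulled-back kernels has an empty index set (it vanishes to infinite order there): indeed $\Cd$, $\Td$ sit inside the lift of $\ffb$ of one of the double factors but inside a non-front, non-temporal, non-diagonal face of the other, where the corresponding kernel vanishes rapidly by condition $iii)$ of Definition \ref{distrconcalc}. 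Hence the product vanishes to infinite order on $\Cd$ and $\Td$, and by the extended-union formula in the Push-Forward Theorem \ref{pft} the index set of $k_{AB}$ towards $\ffc$ is $0 \,\overline{\cup}\, \infty \,\overline{\cup}\, \infty = 0$, with the restriction to $\ffc$ determined solely by the $\fffc$ contribution — which is the convolution identified in the second step. Assembling these three steps yields $\NN(AB) = \NN(A) \ast \NN(B)$.
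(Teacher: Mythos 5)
Your proof is correct and captures the same essential content as the paper, but it takes a more structural route whereas the paper works by hand in coordinates.

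The paper's proof picks explicit projective coordinates $\lp t,\, u=\tfrac{\xi_2-\xi_1}{t},\,\xi_2 \rp$, $\lp t,\,\xi_1,\, v=\tfrac{\xi_3-\xi_1}{t} \rp$, $\lp t,\,\xi_2,\, w=v-u \rp$ near the three cusp front faces of the double spaces and $\lp t,u,\xi_2,v\rp$ near $\fffc$, writes down the push-forward integral from the Composition Theorem explicitly in these coordinates, and then restricts at $\{t=0\}$; the convolution drops out by inspection from $v=u+w$. You instead argue conceptually: (a) by the extended-union computation already present in the Composition Theorem, only $\fffc$ (not $\Cd,\Td$) carries a non-trivial index set, so the restriction of the push-forward to $\ffc$ is determined by the $\fffc$ contribution alone, and (b) on $\fffc$ the three projections read off the $\RR$-fiber variables $u$, $v$, $u+v$, so pushing forward with $u+v$ held fixed is convolution, with the base $[0,\pi]\times\gamma\times\gamma$ behaving as in suspended composition \`a la \cite{melmaz98}. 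Your reasoning in step (a) about $\Cd,\Td$ is right (in fact both pull-backs vanish rapidly there, not just one, but your weaker claim suffices). The one place where the paper is tighter and yours is more hand-wavy is the assertion that ``restriction to a boundary hypersurface commutes with push-forward along a $b$-fibration compatible with that face'' — the paper avoids invoking this general principle by simply computing the push-forward first as a function of $t$ and then setting $t=0$, which also handles the corner $\ffc\cap\tb$ by continuity at the end. That said, your invocation is justified by the Push-Forward Theorem once the index sets are controlled, and the density-normalization bookkeeping via Lemma \ref{densffc} matches the paper's. The trade-off is that the paper's version produces an explicit integral formula for $\NN(AB)$, whereas yours establishes the convolution structure more quickly at the cost of appealing to the already-proven Composition Theorem index-set computation.
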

\begin{proof}
Let us start with two operators $k_A \in \Psi^{m,0,0}_{\cp} \lp X_{(12)} \rp$ and  $k_B \in \Psi^{m,0,0}_{\cp} \lp X_{(23)} \rp$ and let
\begin{align*}
 k_A= \rho_{\ffc_{(12)} }^{-2} h_A \cdot \lp \pi^2  \circ \tilde{\beta} \rp^* \mu_{X_{(2)}},
 &&
 k_B=\rho_{\ffc_{(23)}}^{-2} h_B \cdot \lp \pi^2  \circ \tilde{\beta} \rp^* \mu_{X_{(3)}},
\end{align*}
where $  h_A$ and $ h_B$ are now conormal distributions in $ \II^{m,0,0} \lp  \Xd, \Ss \boxtimes \Ss^*  \rp$ as in Definition \ref{distrconcalc1}. We choose suitable coordinates near all four cusp front faces of $\Xd_{(12)}$, $\Xd_{(23)}$, $\Xd_{(13)}$ and $\Xt$.

Let $(t,x_1,x_2,y_1,y_2)$ be local coordinates near $[0, \infty ) \times \gamma \times \gamma$ on $[0,\infty) \times X^2_{(12)}$. After the first blow-up of $\{ t=x_1=x_2=0\}$, suitable coordinates away from the temporal boundary $\tb$ are $\lp t, \xi_1:=\tfrac{x_1}{t}, \xi_2:=\tfrac{x_2}{t} , y_1, y_2\rp$. Now we blow-up $\{ t=0; \xi_2-\xi_1=0  \}$ and the coordinates
\begin{equation}\label{coord12}
  \lp  t, u:=\frac{\xi_2-\xi_1}{t},\xi_2  \rp   
\end{equation}
together with $(y_1,y_2)$ are suitable on the interior of the cusp front face $\ffc \subset\Xd_{(12)}$ (away from $\ffc \cap \ffb$ and $\ffc \cap \tb$). Remark that $t$ is a boundary defining function for $\ffc$, $u$ is the variable along the $\mathbb R$ fiber of the cusp face, while $\xi_2$ is the variable along $[0, \pi]$ (see Fig. \ref{doublesp}). 

Let $(t,x_1,x_3,y_1,y_3)$ be local coordinates on $[0,\infty) \times X^2_{(13)}$. After the first blow-up of $\{ t=x_1=x_3=0\}$, suitable coordinates away from the temporal boundary $\tb$ are $\lp t, \xi_1, \xi_3:=\tfrac{x_3}{t} ,y_1, y_3 \rp$. In order to obtain the cusp front face, we blow-up $\{ t=0; \xi_3-\xi_1=0  \}$, hence the coordinates
\[  \lp  t, \xi_1,  v:=\frac{\xi_3-\xi_1}{t}  \rp   \]
together with $(y_1,y_3)$ are suitable on the interior of the cusp front face $\ffc \subset \Xd_{(13)}$. Moreover, $t$ is a boundary defining function for $\ffc$, $v$ is the variable along the fiber $\mathbb R$ cusp face, while $\xi_1$ is the variable along $[0, \pi]$. 

Now let $(t,x_2,x_3,y_2,y_3)$ be the coordinates on $[0,\infty) \times X^2_{(23)}$. After the first blow-up of $\{ t=x_2=x_3=0\}$, suitable coordinates away from the temporal boundary $\tb$ are $\lp t, \xi_2:=\tfrac{x_2}{t}, \xi_3 ,y_2,y_3 \rp$. Now we blow-up $\{ t=0; \xi_3-\xi_2=0  \}$ and the coordinates
\begin{equation}\label{cord}
 \lp  t, \xi_2, w:=\frac{\xi_3-\xi_2}{t}=v-u  \rp   
\end{equation} 
together with $(y_2,y_3)$ are suitable on the interior of the cusp front face $\ffc \subset \Xd_{(23)}$ and furthermore, $t$ is a boundary defining function for $\ffc$, $w$ is the variable along the fiber $\mathbb R$ cusp face, while $\xi_2$ is the variable along $[0, \pi]$. 

Using the results in Section \ref{compunere}, the only boundary hypersurface which projects onto the $\ffc$ in $\Xd_{(13)}$ is the cusp front face $\fffc$ in the triple space $\Xt$, thus we have to carefully choose coordinates near this face. If $(t,x_1,x_2,x_3,y_1,y_2,y_3)$ are the initial local coordinates on $[0,\infty) \times X^3$ near $\gamma^3$, after the first blow-up of $\{ t=x_1=x_2=x_3=0\}$, suitable coordinates near $\fffb$ are $\lp t,\xi_1, \xi_2, \xi_3, y_1,y_2,y_3 \rp$. Then we blow-up $\{ t=0; \xi_1=\xi_2=\xi_3  \}$ and the functions
\[  \lp t, u= \frac{\xi_2-\xi_1}{t} , \xi_2, v=\frac{\xi_3-\xi_1}{t}  \rp   \]
together with $(y_1,y_2, y_3)$ form a system of coordinates on the interior of the triple cusp front face $\fffc \subset \Xt$. Using the Composition Theorem \ref{thcomp} and Lemma \ref{densffc}, it follows that
\begin{equation}\label{comptemp}
\begin{aligned}
 h_{AB} \lp t, \xi_1, v  , y_1,y_3 \rp  dv dy_3 ={}& \int_{(u,y_2) \in \mathbb R \times \gamma} h_A \lp t,u,\xi_2 , y_1, y_2 \rp   du dy_2  \cdot  h_B \lp   t, \xi_2, w, y_2, y_3 \rp dw dy_3   \\
 ={}& \int_{(u,y_2) \in \mathbb R \times \gamma} h_A \lp t,u,\xi_2 y_1, y_2\rp     \cdot  h_B \lp   t, \xi_2, v-u, y_2, y_3 \rp  du dy_2 dv dy_3.
 \end{aligned}
\end{equation}

Since we are interested in the normal operators, we need to restrict equation \eqref{comptemp} to the cusp front faces, which are all given in the chosen coordinates by $t=0$. Using \eqref{coord12}, let us first remark that
$     \xi_1=  \xi_2-ut, $
thus ${\xi_1}_{\vert_{t=0}}=\xi_2$, and then equation \eqref{comptemp} implies at $\{ t=0 \}$ that
\[  \NN(AB) \lp \xi_2,v, y_1, y_3 \rp =   \int_{(u,y_2) \in \mathbb R \times \gamma} h_A \lp u,\xi_2 , y_1, y_2 \rp     \cdot  h_B \lp    \xi_2, v-u, y_2, y_3 \rp  du dy_2 dv dy_3,  \]
hence
\begin{equation}\label{compnorm}
  \NN(AB) (\cdot, \xi_2, y_1, y_3) = \NN(A) (\cdot , \xi_2, y_1, y_2) \ast \NN(B) (\cdot, \xi_2, y_2, y_3).  
\end{equation}
Remark that if we start with the cusp-surgery operators $A \in \Psi^{m,\alpha,\beta}_{\cp} (X)$,  $B \in \Psi^{m',\alpha',\beta'}_{\cp} (X)$, due to the Composition Theorem \ref{compunere}, the operator $AB$ contains a factor of $\rho_{\ffc}^{-(\alpha+\beta)}$, thus \eqref{compnorm} remains true. Therefore, once we fix the angle $\xi_2 \in [0, \pi]$, the normal operator of the composition of two cusp-surgery operators $A,B$ restricted to the $\RR$-fiber at angle $\xi_2$ is the convolution of the corresponding normal operators $\NN(A)$ and $\NN(B)$ in the corresponding $\RR$-fibers of the same angle. 

By continuity, this also holds true at the intersection of the temporal boundary $\tb$ with the cusp front face $\ffc$, even though the coordinates used above are not suitable on the temporal boundary. We will see another computational argument for this statement in the next section. 

As a consequence of this section, we also remark that the indicial family behaves well under composition, being multiplicative. 
\end{proof}

\section{Composition of temporal operators}\label{multipltemp}
\begin{proposition}\label{compunereoptemp}
Let $A \in \Psi^{m,\alpha,\beta}_{\cp} \lp X \rp$ and $B \in \Psi^{m,\alpha',\beta'}_{\cp} \lp X \rp$. Then the temporal operator of the composition 
$\Te (AB) \in \Psi^{m+m'}_{\cc}(X)$
is obtained as the composition of the cusps operators $\Te(A) \in \Psi^m_{\cc}(X)$ and $\Te(B) \in \Psi^{m'}_{\cc}(X)$.
\end{proposition}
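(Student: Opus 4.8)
The plan is to run the argument of Proposition \ref{compunereopnorm} essentially verbatim, replacing the cusp front faces $\ffc$ everywhere by the temporal boundaries $\tb$. Recall from the proof of the Composition Theorem \ref{thcomp} that
\[ k_{AB}={\pi^3_{(13)}}_{*}\lp{\pi^3_{(12)}}^{*}k_A\cdot{\pi^3_{(23)}}^{*}k_B\rp, \]
with the density factors normalised as in that proof, and recall also the two geometric facts established there: first, among all boundary hypersurfaces of $\Xt$ the temporal boundary $\tb\subset\Xt$ is the \emph{only} one sent by $\pi^3_{(13)}$ onto $\tb\subset\Xd_{(13)}$; second, the restricted map $\pi^3_{(13)}\big|_{\tb}$ is precisely the usual projection of the cusp triple space onto the cusp double space. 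The same applies to $\pi^3_{(12)}$, $\pi^3_{(23)}$ and to $\tilde{\tilde{\beta}}\big|_{\tb}$. In other words the temporal boundary of $\Xt$ is diffeomorphic to the cusp triple space $X^3_{\cc}$ of $X$ equipped with the cusp structure induced by $\gamma$ (the $\{t=0\}$ slice drawn in Fig. \ref{sptriplu}, cf. \cite[Section 5]{melmaz98}), the temporal boundary of each $\Xd_{(ij)}$ is the cusp double space $X^2_{\cc}$, and under these identifications the three restricted projections are exactly the three \emph{b}-fibrations used to define composition in the cusp calculus; likewise, since $\mu_X$ is $t$-independent near $\gamma$, the density factor of $k_A$ restricts along $\tb$ to the volume density of $g_0$ on $X\setminus\gamma$, i.e.\ to the density convention of $\Psi_{\cc}(X\setminus\gamma)$.

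Next I would observe that restriction to $\tb$ commutes with the operations producing $k_{AB}$. For the pull-backs this is automatic: $\pi^3_{(ij)}$ carries $\tb\subset\Xt$ into $\tb\subset\Xd_{(ij)}$ and is a \emph{b}-submersion there, so $\big({\pi^3_{(ij)}}^{*}k_\bullet\big)\big|_{\tb}=\big(\pi^3_{(ij)}\big|_{\tb}\big)^{*}\big(k_\bullet\big|_{\tb}\big)$ up to the normalising power of $\rho_{\tb}$. For the push-forward I would invoke the standard principle (see e.g.\ \cite{mel92}) that for a \emph{b}-fibration $f$ whose fibre meets a boundary hypersurface $H'$ of the target exactly in the single boundary hypersurface $H=f^{-1}(H')$ of the source, one has $\big(f_{*}\mu\big)\big|_{H'}=\big(f\big|_{H}\big)_{*}\big(\mu\big|_{H}\big)$ — ``the normal operator of a push-forward is the push-forward of the normal operator''; here $H=\tb\subset\Xt$, $H'=\tb\subset\Xd_{(13)}$, and $\pi^3_{(13)}$ is a \emph{b}-fibration by Proposition \ref{bfibtrip}. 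The only bookkeeping left is the exponent of $\rho_{\tb}$ in the density factor, which was already tracked in the proof of Theorem \ref{thcomp}, where the $\tb$-behaviour is ``transferred as expected'', so that multiplying $k_{AB}$ by $\rho_{\tb}^{\beta+\beta'}$ and restricting produces no spurious factor. Combining these with $\Te(A)=\big(\rho_{\tb}^{\beta}k_A\big)\big|_{\tb}$ and $\Te(B)=\big(\rho_{\tb}^{\beta'}k_B\big)\big|_{\tb}$ gives
\[ \Te(AB)=\big(\rho_{\tb}^{\beta+\beta'}k_{AB}\big)\big|_{\tb}=\big(\pi^3_{(13)}\big|_{\tb}\big)_{*}\Big(\big(\pi^3_{(12)}\big|_{\tb}\big)^{*}\Te(A)\cdot\big(\pi^3_{(23)}\big|_{\tb}\big)^{*}\Te(B)\Big), \]
which is exactly the defining formula for the composition $\Te(A)\circ\Te(B)$ in the cusp calculus $\Psi_{\cc}(X\setminus\gamma)$. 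Since the cusp calculus is closed under composition (see \cite{melnistor}) and its principal symbol is multiplicative, this composition lies in $\Psi^{m+m'}_{\cc}(X\setminus\gamma)$, which proves the claim.

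Alternatively — and this is the ``computational argument'' announced at the end of Section \ref{multiplnormali} — one can carry out the whole proof in projective coordinates, exactly as in the proof of Proposition \ref{compunereopnorm}: pick coordinates adapted to $\tb$, and to $\tb\cap\ffc$, in each of $\Xd_{(12)}$, $\Xd_{(23)}$, $\Xd_{(13)}$ and $\Xt$, write $\pi^3_{(ij)}$ in these coordinates using the local normal forms from the proof of Proposition \ref{bfibtrip}, and set $t=0$. Each $x$-type boundary defining function of a face meeting $\tb$ then restricts to the corresponding object on the cusp triple, resp.\ double, space, so the push-forward integral defining $k_{AB}$ restricts to the integral defining $\Te(A)\Te(B)$; as a by-product, letting in addition the cusp-front variable degenerate recovers by continuity the convolution identity \eqref{compnorm} on $\tb\cap\ffc$, which is the compatibility statement promised in Section \ref{multiplnormali}.

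The main obstacle is the geometric identification underlying all of this: checking carefully that $\tb\subset\Xt$, together with the three restricted projections, really is the cusp triple space with its standard \emph{b}-fibrations, and keeping exact track of the powers of $\rho_{\tb}$ entering the density factors. Both are routine once one reads off the local normal forms of $\pi^3_{(ij)}$ at the faces meeting $\tb$ from the proof of Proposition \ref{bfibtrip}, but they require some care; everything else is a formal consequence of the Pull-back and Push-forward Theorems \ref{pbtc}, \ref{pftc} together with the multiplication-of-conormal-distributions results already used in Theorem \ref{thcomp}.
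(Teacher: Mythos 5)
Your primary argument takes a genuinely different route from the paper: you argue abstractly that restriction to $\tb$ commutes with pull-back and push-forward, while the paper's proof (which you outline as the ``alternative'') works entirely in projective coordinates, carrying out the same local chart computations as in Proposition~\ref{compunereopnorm} but near the temporal face rather than the cusp front face, and then setting $\tau=\tau'=0$. The paper never invokes a general ``normal operator of a push-forward is the push-forward of the normal operator'' principle; it just writes the integral in coordinates and restricts. The advantage of your approach is conceptual economy; the advantage of the paper's is that the bookkeeping of boundary-defining-function powers is completely explicit, which is exactly the point you yourself flag as the main obstacle.

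There is, however, a real imprecision in the functorial route as you state it. You invoke a restriction principle for a $b$-fibration $f$ ``whose fibre meets a boundary hypersurface $H'$ of the target exactly in the single boundary hypersurface $H=f^{-1}(H')$ of the source.'' In the situation at hand this hypothesis is not satisfied: $\bigl(\pi^3_{(13)}\bigr)^{-1}\bigl(\tb\subset\Xd_{(13)}\bigr)$ is a \emph{union} of several boundary hypersurfaces of $\Xt$, not the single face $\tb\subset\Xt$. For instance, by the same mechanism that sends $\Pt$ onto $\tb\subset\Xd_{(12)}$ with exponent $1$ in case $(7)$ of the proof of Proposition~\ref{bfibtrip}, the face $\Pd$ is sent onto $\tb\subset\Xd_{(13)}$ with exponent $1$. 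What actually saves the argument is not uniqueness of $f^{-1}(H')$, but the fact (established in the proof of Theorem~\ref{thcomp}) that the product ${\pi^3_{(12)}}^{*}k_A\cdot{\pi^3_{(23)}}^{*}k_B$ has empty index set at every boundary hypersurface of $\Xt$ except $\fffc$ and $\tb\subset\Xt$, so all other preimage faces of $\tb\subset\Xd_{(13)}$ contribute nothing. With this correction, and keeping track as you do of the $\rho_{\tb}^{\beta+\beta'}$ normalisation and the $t$-independence of $\mu_X$ near $\gamma$, the abstract argument goes through; but as written the hypothesis of your restriction principle is false, so the step would not compile without the rapid-vanishing observation. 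Your computational alternative is, as you note, exactly what the paper does, and is correct.
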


\begin{proof}
Let  us start with the simpler case when $k_A \in \Psi^{m,0,0}_{\cp} \lp X_{(12)} \rp$. Denote again by
\begin{align*}
 k_A= \rho_{\ffc_{(12)} }^{-2} h_A \cdot \lp \pi^2  \circ \tilde{\beta} \rp^* \mu_{X_{(2)}},
 &&
 k_B=\rho_{\ffc_{(23)}}^{-2} h_B \cdot \lp \pi^2  \circ \tilde{\beta} \rp^* \mu_{X_{(3)}},
\end{align*}
where $ \rho_{\ffc_{(12)} }^{-2} h_A$ and $\rho_{\ffc_{(23)}}^{-2} h_B$ are now conormal distributions on $\Xd$, as in Definition \ref{distrconcalc}. We describe suitable coordinates near all the four temporal faces of $\Xd_{(12)}$, $\Xd_{(23)}$, $\Xd_{(13)}$ and $\Xt$.

Let $(t,x_1,x_2,y_1,y_2)$ be local coordinates near $[0, \infty) \times \gamma^2$ in $[0,\infty) \times X^2_{(12)}$. After the first blow-up of $\{ t=x_1=x_2=0\}$, the coordinates $\lp \tau:=\tfrac{t}{x_1}, x_1, \eta_2:=\tfrac{x_2}{x_1} , y_1, y_2 \rp$ are suitable near the temporal boundary $\tb$. Now we blow-up $\{ x_1=0; \eta_2=1  \}$ and the coordinates
\begin{equation}\
  \lp  \tau, x_1, u:=\frac{\eta_2-1}{x_1}  \rp   
\end{equation}
together with $y_1, y_2$ are suitable near $\tb$, away from $\{ x_1=0\}$ and $\ffb$ inside $\Xd_{(12)}$. Remark that $x_1$ is a boundary defining function for $\ffc$, $u$ is the variable along the $\mathbb R$ fiber of the cusp face, while $\tau$ is the variable along $[0, \pi]$. 

Let $(t,x_1,x_3, y_1, y_3)$ be the coordinates near $[0, \infty ) \times \gamma^2$ in $[0,\infty) \times X^2_{(13)}$. After the first blow-up of $\{ t=x_1=x_3=0\}$, we choose the projective coordinates $\lp \tau, x_1, \eta_3:=\tfrac{x_3}{x_1} ,y_1, y_3 \rp$. In order to obtain the cusp front face, we blow-up $\{ x_1=0; \eta_3=1  \}$, hence the coordinates
\[  \lp  \tau , x_1,  v:=\frac{\eta_3-1}{x_1}  \rp   \]
together with $y_1, y_3$ are suitable on $ \Xd_{(13)}$, away from $\{ x_1=0 \}$ and $\ffb$. Moreover, $x_1$ is a boundary defining function for $\ffc$, $v$ is the variable along the fiber $\mathbb R$ in the cusp face, while $\tau$ is the variable along $[0, \pi]$.

Now let $(t,x_2,x_3, y_2, y_3)$ be the coordinates near $[0,\infty) \times \gamma_{(2)} \times \gamma_{(3)}$. After the first blow-up of $\{ t=x_2=x_3=0\}$, the projective coordinates $\lp \tau'=\tfrac{t}{x_2}, x_2, \mu_3:=\tfrac{x_3}{x_2} ,y_2, y_3 \rp$ are suitable near $\tb$, but away from $\{  x_2=0 \}$ and $\ffb$. Now we blow-up $\{ x_2=0; \mu_3=1  \}$ and the coordinates
\begin{equation}
 \lp  \tau', x_2, w:=\frac{\mu_3-1}{x_2}  \rp   
\end{equation} 
together with $y_2, y_3$ are suitable near $\tb$, but away from $\{ x_2=0  \} $ and $\ffb$. Furthermore, $x_2$ is a boundary defining function for $\ffc$, $w$ is the variable along the fiber $\mathbb R$ cusp face, while $\tau'$ is the variable along $[0, \pi]$. 

Now we carefully choose coordinates in the triple space $\Xt$. If $(t,x_1,x_2,x_3,y_1, y_2, y_3)$ are the initial coordinates near on $[0,\infty) \times \gamma_{(1)} \times \gamma_{(2)} \times \gamma_{(3)}$, after the first blow-up of $\{ t=x_1=x_2=x_3=0\}$, we consider the projective coordinates $\lp \tau,x_1, \eta_2, \eta_3 , y_1, y_2, y_3 \rp$. Then we blow-up $\{ x_1=0; \eta_2=\eta_3=1  \}$ and we use the coordinates
\[  \lp \tau, x_1, u= \frac{\eta_2-1}{x_1} , v=\frac{\eta_3- 1}{x_1}  \rp   \]
together with $y_1, y_2, y_3$.
Using the Composition Theorem \ref{thcomp}, it follows that
\begin{equation}\label{comptemp1}
\begin{aligned}
 h_{AB} \lp \tau, x_1, v  \rp  dv dy_3 ={}& \int_{(u,y_2) \in \mathbb R \times \gamma} h_A \lp \tau,x_1,u \rp   du dy_2  \cdot  h_B \lp   \tau', x_2, w \rp dw dy_3 .
 \end{aligned}
\end{equation}
In order to restrict the equation \eqref{comptemp1} to the temporal boundary $\tb$, we need to set $\tau=\tau'=0$ (or $\tau=\tau'=\pi$), and then we get
\[   \Te (AB) \lp x_1, v  \rp  = \int_{(u,y_2) \in \mathbb R \times \gamma} \Te( A) \lp x_1, u \rp     \cdot  \Te(B) \lp   x_2, w \rp  d u d y_2 \]
which is exactly the composition of cusp operators (see e.g. \cite{melmaz98}). Furthermore, let us remark that
\begin{align*}
x_2=x_1 \eta_2 = x_1 \lp ux_1 +1  \rp, &&
x_3=x_1 \eta_3=x_1 \lp  v x_1 +1 \rp,
\end{align*}
therefore we obtain that
\[  w=\frac{\mu_3-1}{x_2} = \frac{\frac{x_2}{x_3}-1}{x_1 (u x_1 +1)} = \frac{v-u}{(u x_1 +1)^2} , \]
in particular, it follows that $w_{\vert_{x_1=0}}=v-u$. If we restrict again \eqref{comptemp} to the cusp front faces $\ffc$ which are given by $ \{ x_1=0=x_2 \}$, we remark again that the composition of the normal operators is made through a convolution. Remark that the two restrictions (the normal and the temporal operators) are then compatible.

If we start with the cusp-surgery operators $A \in \Psi^{m,\alpha,\beta}_{\cp} (X)$,  $B \in \Psi^{m', \alpha',\beta'}_{\cp} (X)$, due to the Composition Theorem \ref{compunere}, the operator $AB$ contains a factor of $\rho_{\tb}^{-(\beta+\beta')}$, thus the results remains true. Therefore, the temporal operator of the composition of two cusp-surgery operators $A,B$ is simply the composition of the cusp operators $\Te(A)$ and $\Te (B)$.
\end{proof}

We summarize Sections \ref{compunere}, \ref{multiplnormali} and \ref{multipltemp} in the following statement.
\begin{theorem}[The Composition Theorem]\label{compositiontheorem}
Consider two cusp-surgery pseudodifferential operators $A \in \Psi^{m,\alpha,\beta}_{\cp} \lp X \rp$ and $B \in \Psi^{m',\alpha',\beta'}_{\cp} \lp X \rp$. Then the composition
 \[ AB \in \Psi^{m+m', \alpha+\alpha',\beta+\beta'}_{\cp} \lp X \rp, \]
and furthermore, all the three symbols are multiplicative
\begin{align*}
\sigma_{\cp}(AB)=\sigma_{\cp}( A)  \sigma_{\cp} (B), && \NN(AB)=\NN ( A) \ast  \NN (B), && \Te(AB)=\Te ( A)  \circ  \Te (B).
\end{align*} 
\end{theorem}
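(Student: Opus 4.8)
The plan is to assemble the statement from the three results already established in the preceding sections: Theorem \ref{thcomp}, Proposition \ref{compunereopnorm} and Proposition \ref{compunereoptemp}. Indeed, the content of Theorem \ref{compositiontheorem} is precisely their conjunction, so the ``proof'' consists of recalling how the kernel of the composition is built on the triple space and checking that each of the three leading symbols is read off correctly.

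First I would recall Melrose's recipe: given $A \in \Psi^{m,\alpha,\beta}_{\cp}(X)$ and $B \in \Psi^{m',\alpha',\beta'}_{\cp}(X)$, one sets
\[ k_{AB} = {\pi^3_{(13)}}_{*}\lp {\pi^3_{(12)}}^{*} k_A \cdot {\pi^3_{(23)}}^{*} k_B \rp \]
on the triple space $\Xt$. The fact that this lands in $\Psi^{m+m',\alpha+\alpha',\beta+\beta'}_{\cp}(X)$, together with $\sigma_{\cp}(AB)=\sigma_{\cp}(A)\,\sigma_{\cp}(B)$, is exactly Theorem \ref{thcomp}: the density factors contribute the exponents computed there via Proposition \ref{ridbdens}, the two pulled-back kernels are conormal to the two lifted diagonals, their product is conormal to the union of the double diagonals, and the Pull-back and Push-forward Theorems for conormal distributions (Theorems \ref{pbtc} and \ref{pftc}) --- applicable because $\pi^3_{(13)}$ is a \emph{b}-fibration by Proposition \ref{bfibtrip} --- produce a conormal distribution of order $m+m'$ along $\Delta$ inside $\Xd_{(13)}$, with index set $\mathbb N$ towards $\ffc$ and $\tb$ and empty index sets towards the remaining boundary hypersurfaces; multiplicativity of the principal symbol is the standard statement for push-forward of the product of two conormal distributions.

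Next I would invoke Propositions \ref{compunereopnorm} and \ref{compunereoptemp}. Restricting $k_{AB}$ to the cusp front face $\ffc \subset \Xd_{(13)}$ amounts to restricting ${\pi^3_{(12)}}^{*} k_A \cdot {\pi^3_{(23)}}^{*} k_B$ to the triple cusp front face $\fffc \subset \Xt$ (the only boundary hypersurface of $\Xt$ mapping onto $\ffc$, apart from faces on which the integrand vanishes to infinite order), and in the fibre coordinates near $\fffc$ the push-forward along $\pi^3_{(13)}$ becomes, at the front face, convolution in the $\RR$-variable fibrewise over the angle $\xi_2 \in [0,\pi]$; this is $\NN(AB)=\NN(A)\ast\NN(B)$. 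Symmetrically, restricting to the temporal boundary means restricting to the temporal face of $\Xt$, which is the usual cusp triple space, and the push-forward there is exactly the composition law of the cusp calculus, giving $\Te(AB)=\Te(A)\circ\Te(B)$; compatibility of the two restrictions along $\ffc \cap \tb$ follows by continuity, as observed at the end of Section \ref{multiplnormali}.

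The genuine difficulty of this theorem has already been absorbed by the earlier sections: the nontrivial geometric input is the structure of the triple space $\Xt$ and the verification that $\pi^3_{(13)}$ and its permutations are \emph{b}-fibrations (Proposition \ref{bfibtrip}), together with the density-exponent bookkeeping of Proposition \ref{ridbdens} used repeatedly in the proofs of Theorem \ref{thcomp} and Propositions \ref{compunereopnorm}, \ref{compunereoptemp}. Once these are in hand, no further obstacle arises, and the present statement is purely a matter of collecting Sections \ref{compunere}, \ref{multiplnormali} and \ref{multipltemp} into a single conclusion.
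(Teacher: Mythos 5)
Your proposal is correct and takes the same approach as the paper: Theorem \ref{compositiontheorem} is stated explicitly as a summary of Sections \ref{compunere}, \ref{multiplnormali} and \ref{multipltemp}, and your reassembly of Theorem \ref{thcomp} and Propositions \ref{compunereopnorm}, \ref{compunereoptemp} faithfully reproduces the content of those sections.
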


\section{Spin structures and the family of Dirac operators}
Let us first recall some notions on the spinor bundle over a spin Riemannian manifold following \cite[Chapter 2]{spinorial} and \cite[Chapter 3]{berline}. Consider $V$ to be a real vector space of dimension $n$ equipped with a scalar product $g$. The \emph{Clifford algebra} $\Cl_n(V)$ is the tensor algebra $T(V)= \oplus_{k\geq 0} V^{\otimes k}$ factorised by  the ideal generated by the set $\lbrace v\otimes w+w\otimes v+ 2 g(v,w): v,w \in V  \rbrace$. Furthermore, the \emph{spin group} is the subgroup of $\Cl_n(V)$ generated by the set of even products of unit vectors
\[ \Spin(V):= \langle v_1...v_{2k} : \Vert v_i \Vert=1, \ v_i \in V \rangle   .\]
One can prove that 
\begin{equation}\label{spinso}
\Spin(V) \longrightarrow \SO(n)
\end{equation} 
is a double covering, where $\SO(n) \subset \mathcal M_n (\mathbb R)$ is the special orthogonal group (see e.g. \cite[Proposition 1.20]{spinorial}). Now let $n$ be even, and consider $V=\mathbb R^n$ with the standard basis $\{e_1,...,e_n \}$ endowed with the standard scalar product. Consider $P$ the complex vector space generated by \[ \left\{ \tfrac{1}{\sqrt 2} (e_1-i e_2),..., \tfrac{1}{\sqrt 2} (e_{n-1}-i e_{n}) \right\}. \] 
Then the \emph{spinor module} is the $2^{n/2}$ dimensional complex vector space $\Sigma_n:= \Lambda^* P$. 
The \emph{Clifford representation} $\cl : V \longrightarrow \End \Sigma_n$ is defined in the following way
\[ \cl(v)=\frac{1}{\sqrt 2} \lp v-iJv \rp \wedge  \cdot - \frac{1}{\sqrt 2} \lp v+iJv \rp  \lnot  \ \cdot ,\]
where $J$ is the standard almost complex structure on $\mathbb R^n$ mapping $e_1 \mapsto e_2, \ e_3 \mapsto e_4,...$, and clearly $\cl$ extends to the whole complexified Clifford algebra $\Cl_n (\mathbb R^n) \otimes_{\mathbb R} \mathbb C$. From now on, we denote by $\cl$ the restriction of the Clifford action to the spin group $\Spin(n):=\Spin (\mathbb R^n) \subset \Cl_n (\mathbb R^n)$,
\begin{equation}\label{cliff}
 \cl : \Spin(n) \longrightarrow \End \Sigma_n. 
\end{equation}

A \emph{spin structure} on an oriented Riemannian manifold $(X,g)$ of dimension $n$ is a principal bundle $\Psp X$ with group $\Spin(n)$ which is a $2:1$ covering over the principal frame bundle $\PSOn X$, and this covering is compatible with the group covering in \eqref{spinso}.

We say that $X$ is a \emph{spin manifold} if it admits a spin strucure. Once we fixed a spin structure on an even dimensional spin Riemannian manifold $(X,g)$, we define the \emph{spinor bundle} as 
\[ \Ss:=\Psp X \times_{\Spin(n)} \Sigma_n.  \]
Remark that for even $n$, the spinor bundle admits a splitting 
$\Ss = \Ss^+ \oplus \Ss^{-} ,$
where the \emph{postive} and \emph{negative spinors} are the ones corresponding to the splitting of $\Sigma_n= \Lambda^* P$ into even and odd forms, respectively.

The vector bundle $\Ss$ is endowed with a natural Levi-Civita connection $\nabla$ obtained in the following manner: we lift the Levi-Civita connection of the metric $g$ from the frame bundle $\PSOn X$ to the spin structure $\Psp X$, and then we consider the connection induced on the associated vector bundle $\Ss$.
\begin{definition}
The Dirac operator $\Dd$ of a spin structure $\Psp X$ on a spin Riemannian manifold $(X,g)$ is defined as the composition of the Clifford action and the Levi-Civita connection
\begin{center}
\begin{tikzpicture}
[x=1mm,y=1mm]

\node (t1) []   at (-40,0)   {$\Gamma\left(X,\Ss \right)$};
\node (t2) []   at (0,0)   {$\Gamma\left(X, T^{*}X \otimes \Ss \right)$};
\node (t3) []   at (50,0)   {$\Gamma\left(X,\Ss \right).$};

\draw[->]             (t1) to node[yshift=4mm]{$\nabla$} (t2);
\draw[-> ]             (t2) to node[yshift=4mm]{$\cl$} (t3);
\draw[->, dashed]  [in=-165,out=-15]           (t1) to node[yshift=-4mm]{$\Dd:=\cl \circ \nabla$} (t3);
\end{tikzpicture}
\end{center}
\end{definition}
Locally, if $\lbrace dx_i \rbrace$ is a local basis in $T^*X$,  then $\nabla=\sum_{i}  dx_i \otimes \nabla_{\partial_i}$ and it follows that the Dirac operator is given by $\Dd=\sum_{i} \cl(dx_i)\nabla_{\partial_i}$.

\begin{example}\label{strspins1}
Spin structures are also defined for odd dimensional manifolds. As an example, let $X= S^1$, whose orthonormal frame is identified with $S^1$. There are only two spin structures over the circle, corresponding to the $2:1$ coverings of the orthonormal frame $\{ \partial_y \}$. The trivial spin structure has two connected components $S^1\sqcup S^1$. The non-trivial spin structure is $S^1$, and it covers the orthonormal frame through the map $z \longmapsto z^2$. In both cases, the spinor bundle over $X$ is trivial of rank $1$. The two spin structures induce the corresponding two Dirac operators on $X=S^1$:
\begin{align*}
\Ddd_1 = i \partial_y, && \Ddd_2 = \frac{1}{2} + i \partial_y.
\end{align*}
Notice that the Dirac operator $\Ddd_2$ induced by the non-trivial spin structure is invertible.
\end{example}

Let us now go back to our $2$-dimensional context. We want to study the behavior of the spectrum of the Dirac operator in the pinching process of the surface $(X,g_t)$ along a simple closed curve $\gamma$. The key point is to add the following:
\begin{hypothesis}\label{hypot} 
Let $\Pspd X$ be a \emph{non-trivial} spin structure on $X$ with respect to $\gamma$, i.e., a spin structure on $X$ which restricted to the geodesic is non-trivial, meaning that the induced Dirac operator on the geodesic is invertible (see Example \ref{strspins1}):
\[ \Dd_{\vert \gamma} = \frac{1}{2} + i \partial_y. \]
This is equivalent to saying that the lift $\tilde{p}$ of the orthonormal frame $p$ along the geodesic (formed by the tangent vector $\dot{\gamma}$ and its orthogonal) does not close continuously in $\Pspd \gamma$.

Bär \cite[Theorem 1]{Bar} proved that on a hyperbolic surface of finite area equipped with a spin structure which is non-trivial along all cusps, the spectrum of the Dirac operator is discrete. Moroianu \cite{moroweyl} generalised this result in any dimension for a wider class of metrics conformal to \emph{exact cusp metrics}. In our case, the metric $g_0$ in \eqref{metr} is conformal to a cusp metric as in \cite[Theorem 2]{moroweyl}, ensuring us that the spectrum of the Dirac operator on the limit surface $(X \setminus \gamma, g_0)$ is discrete.
\end{hypothesis}
For a fixed spin structure on $X$ non-trivial along $\gamma$ as above, we have so far a family of Dirac operators $\Dd_t$ corresponding to the metric $g_t$ at any time $t >0$, and we also have the Dirac operator $\Dd_0$ at time $t=0$ on $X \setminus \gamma$. In the next Section \ref{extspin}, we will rigorously describe this family and we will regard it as a cusp-surgery differential operator in our calculus $\Psi_{\cp}^{*,*,*}(X)$ constructed over the surface $X$. 

\section{The extension of the spinor bundle up to the front face}\label{extspin}
We need to define the spinor bundle $\Ss$ on $\Xs$, up to the front face $\ff$, together with a natural Levi-Civita connection. We first extend the family of tangent bundles $(TX)_{t > 0}$ and the cusp-tangent bundle $T^{\cc}X$ at time $\{ t=0 \}$ up to the front face $\ff$ in the simple space $\Xs$. Recall that we have a family of metrics $(g_t)_{t \geq 0}$ on the surface $X \setminus \gamma$ which locally near the curve $\gamma$ has the form
\begin{equation}\label{gtGt}
g_t=\lp x^2+t^2 \rp \lp  \frac{dx^2}{\lp x^2+t^2 \rp^2} + dy^2 \rp =: \lp x^2+t^2 \rp  G_t ,
\end{equation}
where we remark that $G_0=\tfrac{dx^2}{x^4} + dy^2$ is the cusp metric corresponding to $p=1$ in \cite[$(4)$]{moroweyl}. Consider now the vector fields
\begin{align*}
W:=(x^2+t^2) \partial_x,  &&  Y:=\partial_y,
\end{align*}
which form an orthonormal basis near $\gamma$ for the metric $G_t$. Let
\begin{align*}
\rho:=\sqrt{x^2+t^2}, && \theta:=\frac{t}{x}
\end{align*}
be polar coordinates on the simple space $\Xs$, and notice that $\rho$ is a boundary defining function for the front face $\ff$. Let us lift the vector fields $\left\{ W,Y \right\}$ to the simple space through the blow-down map $\beta : \Xs \longrightarrow \mathbb R_{+} \times X$ (see \eqref{beta}). We obtain that
\begin{equation}\label{betaW}
\beta^* W = \beta^* \lp \rho^2 \partial_x \rp = \rho^2 \lp \frac{\partial \rho}{\partial x} \partial_{\rho}  +  \frac{\partial \theta}{\partial x} \partial_{\theta  } \rp= \rho \lp \rho \cos \theta \partial_{\rho } -  \sin \theta \partial_{\theta}  \rp,
\end{equation}
and $\beta^*Y= Y$ (since the blow-up in $\Xs$ does not affect the variables on the geodesic $\gamma$). Motivated by $\eqref{betaW}$, let us consider the following family of vector fields on the simple space
\[ F:=\left\{ V \in \mathcal V \lp \Xs \rp : \ V(t)=0, \ V(\rho) \in \rho^2 \mathcal C^{\infty} \lp \Xs \rp, V (\theta) \in \rho \mathcal C^{\infty} \lp \Xs \rp \right\}, \]
and remark that $F$ forms a Lie algebra. Furthermore, there exists a smooth bundle $\mathcal F$ having as sections the vector fields in $F$. Indeed, we obtain the vector bundle $\mathcal F$ by glueing the following three vector bundles:
\begin{itemize}
\item[$1)$] The family of tangent bundles for $t>0$ over $X \times (0, \infty)$, i.e., the pull-back of $TX$ through the projection on the first factor in the following diagram
\begin{equation}
\centering
\begin{split}
\begin{tikzpicture}
[x=1mm,y=1mm]
\node (tl) at (0,15) {$p^*_1 TX$};
\node (tr) at (30,15) {$TX$};
\node (bl) at (0,0) {$X \times (0,\infty)$};
\node (br) at (30,0) {$X.$};
\draw[->,densely dashed] (tl) to (bl);
\draw[->] (tr) to (br);
\draw[->] (bl) to node[above,font=\small]{$p_1$} (br);
\end{tikzpicture}
\end{split}
\end{equation}
\item[$2)$]
The bundle $\pi_1^* \lp TX_{\vert_{\lp X \setminus \gamma \rp}} \rp$ over $\lp X \setminus \gamma \rp \times [0, \infty)$, i.e., the pull-back of the tangent bundle restricted to the complement of the geodesic $\gamma$ in the surface $X$ through the projection onto the first factor:
\begin{equation}
\centering
\begin{split}
\begin{tikzpicture}
[x=1mm,y=1mm]
\node (tl) at (0,15) {$\pi_1^* TX_{\vert_{\lp X \setminus \gamma \rp}} $};
\node (tr) at (30,15) {$ TX_{\vert_{\lp X \setminus \gamma \rp}} $};
\node (bl) at (0,0) {$\lp X \setminus \gamma \rp \times [0,\infty)$};
\node (br) at (30,0) {$X \setminus \gamma.$};
\draw[->,densely dashed] (tl) to (bl);
\draw[->] (tr) to (br);
\draw[->] (bl) to node[above,font=\small]{$\pi_1$} (br);
\end{tikzpicture}
\end{split}
\end{equation}
Clearly this bundle agrees with the one described in $1)$ on the intersection $(X \setminus \gamma) \times (0, \infty)$.
\item[$3)$]
A trivial bundle $T$ of rank $2$ over a tubular neighborhood $U$ of the front face:
\begin{equation}
\centering
\begin{split}
\begin{tikzpicture}
[x=1mm,y=1mm]
\node (tl) at (0,15) {$\widetilde{\pi}^* \pi^* T\gamma$};
\node (tm) at (30,15) {$\pi^* T\gamma$};
\node (tr) at (60,15) {$T\gamma$};
\node (bl) at (0,0) {$U=\ff \times [0, \epsilon)$};
\node (bm) at (30,0) {$\ff = \gamma \times [0,\pi]$};
\node (br) at (60,0) {$\gamma$,};
\draw[->,densely dashed] (tl) to (bl);
\draw[->,densely dashed] (tm) to (bm);
\draw[->] (tr) to (br);
\draw[->] (bl) to node[above,font=\small]{$\widetilde{\pi}$} (bm);
\draw[->] (bm) to node[above,font=\small]{$\pi$} (br);
\end{tikzpicture}
\end{split}
\end{equation}
where $\pi$ and $\widetilde{\pi}$ are the projections onto the first factor on the corresponding spaces. We glue the trivial bundle $T$ with the two bundles in $1)$ and $2)$ by identifying the two spanning sections in $T$ to the vector fields $\beta^* W$ and $Y$.
\end{itemize}
Therefore we obtain the vector bundle $\mathcal F$ over $\Xs$, which will play the role of the tangent bundle in constructing the spinor bundle up to the front face. Notice that the restriction of $\mathcal F$ to the boundary hypersurfaces $\ff \cup \tf$ is the cusp tangent bundle ${}^c TX$.

Now we introduce a ``global" spin structure $\Pspd \mathcal F$ over the simple space $\Xs$. We will do this be discussing the frame bundle and the spin structure on each of the three open sets described in the items $1)$, $2)$ and $3)$ above. Remark that at each time $t>0$, we have the frame bundle $\PSO X_t$ and the spin structure $\Pspd X_t$ satisfying Hypothesis \ref{hypot}, corresponding to the metric $G_t$. We also have them at time $t=0$ on $X \setminus \gamma$. We claim that we can extend the family of spin structures up to the front face $\ff$. 

In order to identify the frame bundles (and afterwords the spinor bundles) at different time moments, we use the Bourguignon--Gauduchon method \cite{bgaud}. More precisely, consider the adapted metric
\[  \widetilde{G_t}:= G_t + dt^2 \]
on $\RR_+ \times X$, and remark that the tangent spaces $TX_{t_0}$ and $TX_{t_1}$ at different positive time moments $t_0, t_1 >0$ (with the different metrics $G_{t_0}$ and $G_{t_1}$) are isometric by the parallel transport of the metric $\widetilde{G_t}$ along the vertical geodesics (the integral curves of $\partial_t$). It follows that we have an identification between the frame bundles $\PSO X_{t_0}$ and $\PSO X_{t_1}$ at time $t_0, t_1$, which provides an identification between the spin structures $\Pspd X_{t_0}$ and $\Pspd X_{t_1}$. 

Thus on the first open set $X \times (0, \infty)$, we pull-back the existing spin structure from $(X, G_{t_0})$, for any positive time $t_0 >0$, and the choice is independent of $t_0$:
\begin{equation}
\label{spinstr1}
\centering
\begin{split}
\begin{tikzpicture}
[x=1mm,y=1mm]
\node (tl) at (0,15) {$p^*_1 \Pspd X_{t_0}$};
\node (tr) at (30,15) {$\Pspd X_{t_0}$};
\node (bl) at (0,0) {$X \times (0,\infty)$};
\node (br) at (30,0) {$\lp X, G_{t_0} \rp.$};
\draw[->,densely dashed] (tl) to (bl);
\draw[->] (tr) to (br);
\draw[->] (bl) to node[above,font=\small]{$p_1$} (br);
\end{tikzpicture}
\end{split}
\end{equation}
The same Bourguignon--Gauduchon method of parallel transport also provides a spin structure over the second open set $\lp X \setminus \gamma \rp \times [0, \infty)$, since the metric $G_t$ is non-degenerate there up to time $t=0$. This spin structure and the one defined in \eqref{spinstr1} agree on the intersection $\lp  X \setminus \gamma \rp \times (0, \infty)$. 

Finally, on the third open set $U$, the tubular neighborhood of the front face $\ff$, the tangent bundle $\mathcal F$ is trivial, generated by the orthonormal frame 
\[  p:= \{ \beta^*W, Y \}. \]
We define the spin structure on $U$ such that the lift of $p$ along $\gamma$ does \emph{not} close continuously, according to Hypothesis \ref{hypot}. We therefore obtain a well-defined spin structure
\begin{equation}
\label{defspistr}
\centering
\begin{split}
\begin{tikzpicture}
[x=1mm,y=1mm]
\node (tr) at (30,15) {$\Pspd \mathcal F$};
\node (br) at (30,0) {$\Xs,$};
\draw[->] (tr) to (br);
\end{tikzpicture}
\end{split}
\end{equation}
over the simple space, up to the front face $\ff$, which agrees with the spin structures on $(X,G_t)$, for each $t>0$. It is a principal bundle covering $2:1$ the frame bundle of $\mathcal F$ with respect to the metric $\widetilde{G_t}$
\[  \Pspd \mathcal F \longrightarrow \PSO \mathcal F. \]

\begin{definition}\label{fibratulspinorilor}
The \emph{extension of the spinor bundle} over the simple space $\Xs$ (including the front face) is defined as
\[ \Ss:=\Pspd \mathcal F \times_{\Spin(2)} \Sigma_2 = \Ss^+ \oplus \Ss^{-}.  \]
\end{definition}

Now we need to define a natural connection in the tangent bundle $\mathcal F$. Remark that the bundle $\mathcal F$ is horizontal, in the sense that 
\[ \widetilde{G_t} \lp V, \partial_t \rp =0, \] 
for any $V \in F$. Moreover,
\[ 0= \partial_t  \widetilde{G_t} \lp V, \partial_t \rp = \widetilde{G_t} \lp \nabla_{\partial_t} V, \partial_t \rp +  \widetilde{G_t} \lp V, \nabla_{\partial_t} \partial_t \rp,  \]
and since $\nabla_{\partial_t} \partial_t =0$ (because the vertical lines are geodesics in $\Xs \setminus \ff$), it follows that 
\[  \widetilde{G_t} \lp \nabla_{\partial_t} V , \partial_t \rp=0, \]
thus the vector field $\nabla_{\partial_t} V$ is also horizontal. 

\begin{definition}\label{fconex}
Let $\mathcal F$ be a smooth vector bundle over a smooth manifold $M$. We say that 
\[ \nabla: \Gamma(\mathcal F) \times \Gamma(\mathcal F)  \longrightarrow  \Gamma(\mathcal F)  \]
is an \emph{$\mathcal F$-connection} if it is $\mathcal C^{\infty}(X)$ linear in the first argument, and it verifies the Leibniz rule in the second argument.
\end{definition}

The family of Levi-Civita connections $\nabla:=\lp \nabla^{\LC}_t \rp_{t \geq 0}$ on the tangent spaces of $(X,G_t)$ up to $\{ t=0 \}$ in $\mathbb R_+ \times X$ induces an $\mathcal F$-connection (see Definition \ref{fconex}) on the bundle $\mathcal F$ away from the front face $\ff$. 

The metric $\widetilde{G_t}$ is flat near the geodesic $\gamma$, thus
\begin{equation}\label{nbl}
\nabla_{\beta^* W} \beta^* W =\nabla_{Y} Y=\nabla_{\beta^* W} Y= \nabla_{Y} \beta^* W =0,
\end{equation}
therefore $\nabla$ trivially extends up to the front face $\ff$.

In this manner, we obtain an $\mathcal F$-connection in the bundle $\mathcal F$ over the simple space $\Xs$, which induces an $\mathcal F$-connection in the spinor bundle $\Ss$ (see Definition \ref{fibratulspinorilor}). From now on, we will denote this $\mathcal F$-connection acting on the spinor bundle $\Ss$ by $\nabla$.


\section{The Dirac operator as a cusp-surgery differential operator}
Now we define the Dirac operator of the metric $G_t$ as in the general case, i.e., the composition of the Clifford action (see \eqref{cliff}) and the $\mathcal F$-connection on the extended spinor bundle $\Ss$ 
\[ \Ddd:= \cl \circ \nabla. \]
The key point is that we merged the family of Dirac operators $\lp \Ddd_t \rp_{t>0}$ on the spin Riemannian surfaces $(X,G_t)$, together with the family of Dirac operators $\lp \Ddd_t \rp_{t \geq 0}$ on the family of Riemannian surfaces $(X \setminus \gamma, G_t)$, and we extended it up to the front face of $\Xs$ to obtain  a cusp-surgery differential operator $\Ddd \in \Psi^{1,0,0}_{\cp}(X)$.

Using the splitting of the spinor bundle (see Definition \ref{fibratulspinorilor}), we regard the Dirac operator as a $2 \times 2$ matrix acting as follows
\begin{equation}
\label{diagrdirac}
\Ddd= \left[ \begin{matrix}
0 & \ \Ddd^- \\
\ \Ddd^+ & 0
\end{matrix} \right]
\colon \phantom{+}
\begin{matrix}
\mathcal{S}^+ \\ \oplus \phantom{+} \\ \mathcal{S}^-
\end{matrix}
\longrightarrow \phantom{+}
\begin{matrix}
\mathcal{S}^+ \\ \oplus \phantom{+} \\ \mathcal{S}^-.
\end{matrix}
\end{equation}
We want to prove that the normal operator of $\Ddd$ is invertible. In order to obtain this result, we first need some local computations. Recall from Section \ref{extspin} that the orthonormal frame
\[ p:=\{ \beta^*W,Y  \} \in \PSO \mathcal F \]
is parallel along the geodesic $\gamma$. If we fix 
$\tilde{p} \in \Pspd \mathcal F$
a lift of $p$, it is not continuous due to our Hypothesis \ref{hypot}, but remark instead that \[e^{\tfrac{iy}{2}} \tilde{p} = \left\{ e^{\tfrac{iy}{2}} \beta^* W ,   e^{\tfrac{iy}{2}} Y \right\} \] 
\emph{is} continuous along the geodesic $\gamma$. Therefore a local smooth basis of spinors near the geodesic $\gamma$ is given by
\begin{align*}\label{fi+}
\varphi^+:=e^{\tfrac{iy}{2}} \left[ \tilde{p}, 1 \right], && \varphi^-:=e^{\tfrac{iy}{2}} \left[ \tilde{p}, \tfrac{1}{\sqrt{2}} \lp e_1-i e_2 \rp \right].
\end{align*}
Using \eqref{nbl}, let us remark that 
\begin{equation}\label{calculenabla}
\begin{aligned}
\nabla_{\beta^* W} \varphi^+ = 0, && \nabla_{\beta^* W} \varphi^- = 0 && \nabla_{Y} \varphi^+ = \frac{i}{2} \varphi^+, && \nabla_{Y} \varphi^- = \frac{i}{2}\varphi^-.  
\end{aligned}
\end{equation}

\section{The invertibility of the indicial family of the Dirac operator}
Our aim in this section is to prove that under the Hypothesis \ref{hypot} regarding the invertibility of the spin structure on the two cusps, the indicial family of the Dirac operator is invertible. First, we compute the normal operator of  
\[\Ddd^{-}\Ddd^+ : \Ss^+ \longrightarrow \Ss^+ \]
which acts on the sections of a fiber bundle of rank $1$ for which the positive spinor $\varphi^+=e^{\tfrac{iy}{2}} s^+$ is a local basis near the geodesic $\gamma$. Remark that the distributional kernel of $\Ddd^{-}\Ddd^{+}$ is given by
\begin{equation}\label{kerdir}
k_{\Ddd^{-} \Ddd^{+}} = \Ddd^{-} \Ddd^{+}_{(1)} \lp   k_{\id_{\Ss^+}} \rp,
\end{equation}
where 
\[     k_{\id_{\Ss^+}}  = \delta \lp x_1 - x_2 \rp \delta \lp y_1 - y_2 \rp \varphi_y^+ \otimes \varphi_x^+   \]
is the kernel of the identity operator acting on $\Ss^+$, $x=(x_1,y_1)$, $y=(x_2,y_2)$ are two points near the geodesic $\gamma$, and we denoted by $\Ddd^- \Ddd^+_{(1)}$ the action of the operator in the first set of variables. Using the Lichnerowicz formula (see e.g. \cite[Theorem~3.52]{berline}), we have that
\[ \Ddd^{-} \Ddd^{+} = \nabla^*\nabla + \frac{1}{4} \scal,    \]
where $\nabla$ is the $\mathcal{F}$-connection that we extended up to the front face $\ff \subset \Xs$ in Section \ref{extspin}, and $\scal$ is the scalar curvature of $(X,G_t)$ which is zero near the geodesic $\gamma$ (since the metric $G_t$ is flat there). Notice that locally in the orthonormal basis $\{ \beta^*W, \partial_{y} \}$ of the bundle $\mathcal F$, the connection $\nabla$ is given by
\begin{equation}\label{nabla}
\nabla ( \cdot ) = \lp \beta^* W \rp^{\sharp} \otimes \nabla_{\beta^* W} (\cdot) + dy \otimes \nabla_{Y} (\cdot).
\end{equation}
In order to compute the adjoint $\nabla^*$, consider the $1$-form $\alpha:=g_1 \beta^*W + g_2 Y$, and let $u:=f \varphi^+$ be another spinor. Using the relations $\nabla_{\beta^*W} \varphi^+=0$ and $\nabla_{Y} \varphi^+=\tfrac{i}{2} \varphi^+$ (see \eqref{calculenabla}), we get 
\begin{align*}
\langle \nabla^* \lp \alpha \otimes \varphi^+  \rp , u  \rangle_{L^2}={}& \langle \alpha \otimes \varphi^+, \nabla u   \rangle_{L^2} = \int_{X} \lp g_1 \overline{\beta^*W f} + g_2 \overline{\lp \tfrac{i}{2} + Y \rp f} \rp \dvol_{G_t} \\
={}& \int_X \lp \lp -\tfrac{i}{2} - Y \rp g_2f - \partial_x g_1 f  \rp \dvol_{G_t} \\
={}&\left\langle \lp \lp -\tfrac{i}{2} -Y \rp g_2 - \beta^*W g_1  \rp \varphi^+, u \right\rangle_{L^2(X,G_t)},
\end{align*}
and therefore the adjoint of $\nabla$ is obtained as
\begin{equation}\label{adjnabla}
\nabla^* \lp g_1 \beta^* W \otimes \varphi^+ + g_2 d y \otimes \varphi^+ \rp = \lp \lp - \tfrac{i}{2} - Y \rp g_2 - \beta^*W g_1  \rp \varphi^+.
\end{equation}
Furthermore, remark that using \eqref{nabla}, we have
\begin{align*}
\nabla u ={}& \nabla \lp f \varphi^+ \rp = d f \otimes \varphi^+ + f \nabla \varphi^+ =  d f \otimes \varphi^+ + f d y \otimes \tfrac{i}{2} \varphi^+  \\
={}&\left[ \beta^* W(f) (\beta^*W)^{\sharp} + \lp \partial_y f + \tfrac{i}{2} f  \rp d y \right] \otimes \varphi^+,
\end{align*}
and by \eqref{adjnabla}, it follows that
\begin{equation}\label{nablastnabla}
\begin{aligned}
\nabla^* \nabla \lp f \varphi^+ \rp ={}&  \lp -(\beta^* W)^2 f + \lp -\tfrac{i}{2} - \partial_y \rp \lp \partial_y f + \tfrac{i}{2} \rp  \rp \varphi^+ \\
={}& - \lp (\beta^* W)^2 + \lp \partial_y^2 + \tfrac{i}{2}  \rp^2 f  \rp \varphi^+.
\end{aligned}
\end{equation}
If we denote by $A:= \partial_y + \tfrac{i}{2}$, equation \eqref{nablastnabla} implies that
\[ \nabla^* \nabla \lp f \varphi^+ \rp = \lp (\beta^*W)^* (\beta^*W) + A^*A \rp f \otimes \varphi^+ .  \]
Let us pull-back the vector field 
\begin{equation}\label{w}
W=(x_1^2+t^2) \partial_{x_1}
\end{equation}
(acting in the first set of variables due to \eqref{kerdir}) through the blow-down map $\tilde{\beta}$ (see \eqref{tildebeta}). We will work in polar coordinates $(\rho, \sigma, \theta)$, which are suitable over all the cusp front face. We first linearly change the variables $(t,x_1,x_2,y_1,y_2)$ to 
\begin{equation}\label{cod1}
\lp t,    v_1:=\tfrac{1}{\sqrt{2}} (x_1+x_2) ,  v_2:=\tfrac{1}{\sqrt{2}} (x_1-x_2) , y_1, y_2   \rp.
\end{equation} 
After the blow up of $\{ t=v_1=v_2=0 \}$ in \eqref{dublunerig} (which produces $\ffb$), we introduce polar coordinates
\begin{align*}
v_1= \rho \cos \sigma \cos \theta, && v_2=\rho \sin \sigma, && t=\rho \cos \sigma \sin \theta,
\end{align*}
where $\theta \in [0, \pi)$ and $\sigma \in \left[ -\tfrac{\pi}{2}, \tfrac{\pi}{2}  \right]$. Hence the new coordinates are:
\begin{equation}\label{cod2}
 \lp \rho:= \sqrt{t^2 + v_1^2 + v_2^2} , \  \sigma:= \arcsin \tfrac{v_2}{\rho}, \
  \theta:= \arctan \tfrac{t}{v_1}  , \ y_1,  \ y_2 \rp.
\end{equation}
Now we blow-up $\{ \rho=0, \sigma=0 \}$ to obtain coordinates along the cups front face $\ffc$
\begin{equation}\label{cod3}
 \lp  r:=\rho, u:=\tfrac{\sigma}{\rho}, \theta, y_1, y_2  \rp,
 \end{equation}
and notice that $r$ is a boundary defining function for the cusp front face inside $\Xd$. 

One can easily check that in the coordinates \eqref{cod1}, the vector field $W=(x_1^2+t^2) \partial_{x_1}$ from \eqref{w} is given by
\begin{equation}\label{w1}
W= \tfrac{1}{\sqrt 2} \lp  \tfrac{1}{2} (v_1 + v_2)^2 + t^2 \rp \lp \partial_{v_1} + \partial_{v_2} \rp.
\end{equation}
Since
\begin{align*}
{}&\partial_{v_1} =\cos \sigma \cos \theta \partial_{\rho} - \frac{\sin \theta}{\rho \cos \sigma} \partial_{\theta} - \frac{\sin \sigma \cos \theta}{\rho} \partial_{\sigma}, && \partial_{v_2}= \sin \sigma \partial_{\rho} + \frac{\cos \sigma}{\rho} \partial_{\sigma},
\end{align*}
it follows that the lift of $W$ through the first blow-up (the one which produces $\ffb$, see Fig. \ref{bw1}) is given by
\begin{equation}\label{wbw1}
\begin{aligned}
{}&\frac{\rho^2}{\sqrt{2}} \lp \frac{1}{2} \lp \cos \sigma \cos \theta + \sin \sigma \rp^2 + \cos^2 \sigma \sin^2 \theta  \rp \cdot \\
{}&\lp  \lp \cos \sigma \cos \theta + \sin \sigma  \rp \partial_{\rho} - \frac{\sin \theta}{\rho \cos \sigma} \partial_{\theta} + \frac{1}{\rho} \lp \cos \sigma - \sin \sigma \cos \theta \rp \partial_{\sigma}  \rp .
\end{aligned}
\end{equation}
Notice that 
\begin{align*}
\partial_{\rho}= \partial_{r} - \frac{u}{r} \partial_u, && \partial_{\sigma}=\frac{1}{r} \partial_{u},
\end{align*}
thus the lift of $W$ in \eqref{w} through the blow-down map $\tilde{\beta}: \Xd \longrightarrow [0, \infty) \times X \times X$ in the coordinates \eqref{cod3} is
\begin{equation}\label{wbw2}
\begin{aligned}
\tilde{\beta}^* W ={}& \frac{r^2}{\sqrt{2}} \lp \frac{1}{2} \lp \cos(ru) \cos \theta + \sin (ru)  \rp^2  + \cos^2 (ru) \sin^2 \theta \rp \cdot \\ 
{}&\left[ \lp \cos(ru) \cos \theta + \sin (ru)  \rp \partial_r  - \frac{u}{r} \lp \cos(ru) \cos \theta + \sin (ru) \rp \partial_u -  \right.  \\
{}&\left. \frac{\sin \theta}{r \cos(ru)} \partial_{\theta} + \frac{1}{r^2} \lp \cos(ru) - \sin (ru) \cos \theta  \rp \partial_u   \right].
\end{aligned}
\end{equation}
The normal operator is given by the restriction of the operator to the cusp front face, thus we restrict \eqref{wbw2} to $\{ r=0 \}$:
\begin{align*}
\tilde{\beta}^* W_{\vert_{\ffc}} = \frac{1}{\sqrt{2}} \lp \frac{1}{2} \cos^2 \theta + \sin^2 \theta  \rp \partial_u = \frac{1}{\sqrt{2}} \lp \frac{1}{2} + \frac{\sin^2 \theta}{2} \rp \partial_u.
\end{align*}
If we denote by $\xi$ the cotangent variable of $u$ in the $\mathbb R$-fiber of the cusp front face at a fixed angle $\theta$, then the indicial family of the Dirac operator is given by
\begin{align*}
\II(\Ddd)(\xi) = \Dd_{\vert_{\gamma}} + i \xi \frac{1}{2 \sqrt{2}} \lp 1 + \sin^2 \theta  \rp,
\end{align*} 
thus 
\[ \II(\Ddd)(\xi)^* \II(\Ddd)(\xi) = \Dd^2_{\vert \gamma} + \frac{\xi^2}{8} \lp 1 + \sin^2 \theta \rp^2 \geq \frac{1}{4},  \]
for any $\xi \in \mathbb R$. Using our Hypothesis \ref{hypot}, we obtained that the indicial family is invertible, and therefore the normal operator of the Dirac operator $\Ddd$ is invertible.

\section{The Dirac operator for the metric \texorpdfstring{$g_t$}{gt}}

We managed to compute the normal operator of the Dirac operator $\Ddd$ corresponding to the family of metrics $(G_t)_{t \geq 0}$ in \eqref{gtGt}. Now we want to go back to the Dirac operator $\Dd$ which interest us, the one corresponding to the family of metrics $(g_t)_{t \geq 0}$, where
\[ g_t=(x^2+t^2) G_t \]
near $\gamma$. The two families of metrics $(g_t)_{t \geq 0}$ and $(G_t)_{t \geq 0}$ differ through a conformal change, namely the multiplication by the square of the boundary defining function for the front face $\rho_{\ff}$.
Recall the following classical result (see for instance \cite[Proposition~2.31]{spinorial} or \cite[Proposition~1.3.10]{ginoux}).
\begin{proposition}\label{confdirac}
Let $M$ be an $n$-dimensional spin manifold with a fixed spin structure, and let $g, \bar{g}$ be two conformal metrics on $M$ such that $\bar{g}=e^{2u}g$, where $u \in \mathcal C^{\infty}(M)$. Then there exists an isomorphism between the spinor bundles 
\begin{align*}
{}&\Ss_{(M,g)} \longrightarrow \Ss_{(M, \bar{g})}, \\
{}& \ \ \ \ \ s \longmapsto \bar{s}
\end{align*}
and the two Dirac operators $\Ddd$, $\overline{\Ddd}$ corresponding to the metrics $g$, $\bar{g}$ respectively, are related by the formula
\[     \overline{\Ddd} \lp e^{{-\tfrac{n-1}{2}} u} \ \bar{s} \rp=  e^{{-\tfrac{n+1}{2}} u} \ \overline{\Ddd s}. \] 
\end{proposition}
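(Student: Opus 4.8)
The plan is to follow the classical route (see \cite{spinorial}, \cite{ginoux}): first I would build the isomorphism $\Ss_{(M,g)}\to\Ss_{(M,\bar g)}$ from a rescaling of orthonormal frames and record how Clifford multiplication transforms under it; then I would compare the two spinorial Levi--Civita connections by means of the Koszul formula for $\nabla^{\bar g}$; and finally I would contract with Clifford multiplication and absorb the leftover zeroth-order term using the Leibniz rule for the Dirac operator.

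\emph{Construction of the isomorphism.} Since $\bar g=e^{2u}g$, a local $g$-orthonormal frame $(e_1,\dots,e_n)$ of $TM$ is rescaled to the local $\bar g$-orthonormal frame $(\bar e_1,\dots,\bar e_n):=(e^{-u}e_1,\dots,e^{-u}e_n)$; this assignment is $\SO(n)$-equivariant, so it defines a canonical isomorphism between the oriented orthonormal frame bundles of $(M,g)$ and $(M,\bar g)$. Since the spin structure is fixed, it lifts to an isomorphism of the corresponding principal $\Spin(n)$-bundles, and associating the spinor module $\Sigma_n$ yields the isomorphism $s\mapsto\bar s$ of the statement; it is a fibrewise isometry and is compatible with the splitting $\Ss=\Ss^+\oplus\Ss^-$. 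Writing a spinor in the two orthonormal frames --- where it carries the \emph{same} $\Sigma_n$-valued components --- one reads off that the Clifford actions are intertwined by the same rescaling: for $X\in\Gamma(TM)$,
\[ X\cdot_{\bar g}\bar s=e^{u}\,\overline{X\cdot_{g}s}, \]
where $\cdot_{g}$, $\cdot_{\bar g}$ denote Clifford multiplication by the vector $X$ with respect to $g$, resp.\ $\bar g$ (equivalently $\alpha\cdot_{\bar g}\bar s=e^{-u}\,\overline{\alpha\cdot_{g}s}$ for a fixed $1$-form $\alpha$).

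\emph{Transformation of the connection.} The technical core is the comparison of $\nabla^{\bar g}$ with $\nabla^{g}$ on spinors. Starting from the Koszul identity on $TM$,
\[ \nabla^{\bar g}_X Y=\nabla^{g}_X Y+du(X)\,Y+du(Y)\,X-g(X,Y)\,\operatorname{grad}_g u, \]
I would compute the difference of the two $\mathfrak{so}(n)$-valued connection $1$-forms in a local $g$-orthonormal frame and transport it through the spin representation $\cl$ of \eqref{cliff}. Under $s\mapsto\bar s$ this yields a relation of the shape
\[ \nabla^{\bar g}_X\bar s=\overline{\nabla^{g}_X s}-\tfrac12\,\bigl(X^{\flat}\wedge du\bigr)\cdot_{\bar g}\bar s, \]
the precise form and sign of the correction (a combination of $X\cdot_{\bar g}(\operatorname{grad}u)\cdot_{\bar g}\bar s$ and $du(X)\,\bar s$) being dictated by the sign conventions of \cite{spinorial}, \cite{ginoux}. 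I expect this step to be the main obstacle: the simultaneous bookkeeping of the Clifford-algebra-valued connection forms, of the normalization of $\mathfrak{so}(n)\simeq\mathfrak{spin}(n)\subset\Cl_n(\RR^n)$, and of the $e^{u}$-rescalings from the previous step is the delicate part, while everything else is Clifford algebra.

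\emph{Contraction and conclusion.} Writing $\overline{\Ddd}\,\bar s=\sum_i\bar e_i\cdot_{\bar g}\nabla^{\bar g}_{\bar e_i}\bar s$, substituting the connection formula, using $\sum_i\bar e_i\cdot_{\bar g}\bar e_i\cdot_{\bar g}=-n$ together with elementary Clifford identities (so that the correction terms collapse to a single zeroth-order term proportional to $(n-1)\,du$), and invoking the Clifford rescaling above, one obtains
\[ \overline{\Ddd}\,\bar s=e^{-u}\,\overline{\Ddd s}+\tfrac{n-1}{2}\,du\cdot_{\bar g}\bar s. \]
On the other hand the Leibniz rule for the Dirac operator gives, for any $h\in\mathcal C^{\infty}(M)$, $\overline{\Ddd}\,(h\bar s)=h\,\overline{\Ddd}\,\bar s+dh\cdot_{\bar g}\bar s$. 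Taking $h=e^{-\frac{n-1}{2}u}$, so that $dh=-\tfrac{n-1}{2}e^{-\frac{n-1}{2}u}\,du$, the two $du$-terms cancel exactly --- this is precisely why the weight $\tfrac{n-1}{2}$ enters --- and what remains is
\[ \overline{\Ddd}\bigl(e^{-\frac{n-1}{2}u}\,\bar s\bigr)=e^{-\frac{n-1}{2}u}e^{-u}\,\overline{\Ddd s}=e^{-\frac{n+1}{2}u}\,\overline{\Ddd s}, \]
which is the asserted identity. Read backwards, the same computation shows that $\tfrac{n-1}{2}$ is the conformal weight of the spinor bundle for which $\Ddd$ is conformally covariant.
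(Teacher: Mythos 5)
The paper does not prove Proposition~\ref{confdirac}; it simply cites \cite[Proposition~2.31]{spinorial} and \cite[Proposition~1.3.10]{ginoux}, and your sketch is a faithful outline of the standard argument found there: rescale orthonormal frames by $e^{-u}$ to identify the frame and spin bundles, track the induced change in the spinorial Levi--Civita connection via the Koszul formula, contract with Clifford multiplication to produce $\overline{\Ddd}\bar s = e^{-u}\overline{\Ddd s}$ plus a zeroth-order term proportional to $\tfrac{n-1}{2}\,du\cdot_{\bar g}\bar s$, and choose the conformal weight $\tfrac{n-1}{2}$ so that the Leibniz rule for $\overline{\Ddd}$ on $e^{-\frac{n-1}{2}u}\bar s$ cancels that term. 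The precise sign and Clifford form of the intermediate connection correction are convention-dependent, as you correctly flag, but the final cancellation and the identity $\overline{\Ddd}\bigl(e^{-\frac{n-1}{2}u}\bar s\bigr)=e^{-\frac{n+1}{2}u}\overline{\Ddd s}$ come out correctly, so your proposal is sound.
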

In our case, 
\[ g_t = \rho_{\ff}^2 G_t = e^{2 \log \rho_{\ff}} G_t, \]
thus by Proposition \ref{confdirac}, we get
\begin{align*}
{}&\Dd \lp e^{-\tfrac{1}{2} \log \rho_{\ff}} \  s \rp = e^{-\tfrac{3}{2}  \log \rho_{\ff} } 
\ \Ddd s \ \Longleftrightarrow  \ \Dd \lp \rho_{\ff}^{-1/2 } \ s \rp = \rho_{\ff}^{-3/2} \Ddd s,
\end{align*}
where we denoted by $\Dd$ the Dirac operator of the metric $g_t$. It follows that
\begin{equation}\label{dirpl}
\Dd^+ \lp f \varphi^+ \rp = \rho_{\ff}^{-3/2}  \Ddd^+ \lp \rho_{\ff}^{1/2} \ f \varphi^+ \rp.
\end{equation}
Since $\Ddd^+= \lp \cl (W) \nabla_W + \cl (Y) \nabla_Y \rp^+$ and $W \lp \rho_{\ff}^{1/2} \rp = \tfrac{x}{2} \rho_{\ff}^{1/2} $, we get
\begin{align*}
 \Dd^+ \lp \rho_{\ff}^{1/2} \ f \varphi^+ \rp  {}&= \rho_{\ff}^{1/2} \Ddd^+ \lp f \varphi^+ \rp + \cl (W) W \lp \rho_{\ff}^{1/2}  \rp  f \varphi^+  = \rho_{\ff}^{1/2} \Ddd^+ \lp f \varphi^+ \rp  + \tfrac{x}{2} \rho_{\ff}^{1/2} \cl(W) \lp f \varphi^+ \rp.
\end{align*}
Therefore equation \eqref{dirpl} becomes
\begin{equation}\label{dirplus}
\Dd^+ \lp f \varphi^+ \rp  = \rho_{\ff}^{-1} \lp  \Ddd^+  (f \varphi^+ ) +  \tfrac{x}{2}  \cl(W) \lp f \varphi^+ \rp  \rp,
\end{equation}
and we obtain a similar expression for $\Dd^-$
\begin{equation}\label{dirminus}
\Dd^- \lp f \varphi^- \rp  = \rho_{\ff}^{-1} \lp \Ddd^-  (f \varphi^- ) +  \tfrac{x}{2}  \cl(W) \lp f \varphi^- \rp  \rp.
\end{equation}
Using \eqref{dirplus} and \eqref{dirminus}, we obtain the connection between the Dirac operator ${\Dd}$ of the family of metrics $(g_t)_{t \geq 0}$ and the Dirac operator $\Ddd$ of the family of metrics $(G_t)_{t \geq 0}$
\begin{equation}\label{legaturadiraci}
 \Dd= \rho_{\ff}^{-1} \ \lp  \Ddd + \tfrac{x}{2} \cl(W) \rp.
\end{equation}
Thus the Dirac operator $\Dd$ of the family of metrics $(g_t)_{t \geq 0}$ is a cusp-surgery differential operator belonging to
\[ \Dd \in \Psi^{1,1,0}_{\cp}(X) .\] 
Notice that the normal operator does not detect the term $\tfrac{x}{2} \cl (W)$ in \eqref{legaturadiraci},  thereby proving the following result.
\begin{proposition}\label{norminv}
The normal operator of the Dirac operator $\Dd$ of the metric $g_t$ is equal to the normal operator of the operator $\Ddd$:
\[ \NN_1 (\Dd) = \NN_0 (\Ddd).  \] 
Furthermore, the normal operator $\NN_1(\Dd)$ is invertible.
\end{proposition}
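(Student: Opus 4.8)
The statement will follow from what is already in hand: the factorisation \eqref{legaturadiraci}, $\Dd=\rho_{\ff}^{-1}\bigl(\Ddd+\tfrac{x}{2}\cl(W)\bigr)$, and the invertibility of $\NN_0(\Ddd)$ proved in the previous section from the bound $\II(\Ddd)(\xi)^*\II(\Ddd)(\xi)\geq\tfrac14$. The plan is threefold: first discard the term $\tfrac{x}{2}\cl(W)$ at the level of the normal operator, then match $\NN_1(\rho_{\ff}^{-1}\Ddd)$ with $\NN_0(\Ddd)$, and finally transfer invertibility.

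For the first step, note that $W=\beta^*W$ is a smooth nowhere-vanishing section of $\mathcal F$ up to the front face $\ff$ (indeed it is part of the orthonormal frame $\{\beta^*W,Y\}$ of Section~\ref{extspin}), so Clifford multiplication $\cl(W)$ is a smooth endomorphism of $\Ss$ and $\cl(W)\in\Psi^{0,0,0}_{\cp}(X)$; moreover $x/\rho_{\ff}$ is a smooth function on $\Xs$ which is bounded near $\ff$, so multiplying by it does not change the order class. Hence $\rho_{\ff}^{-1}\tfrac{x}{2}\cl(W)=\tfrac12(x/\rho_{\ff})\,\cl(W)\in\Psi^{0,0,0}_{\cp}(X)\subset\Psi^{1,1,0}_{\cp}(X)$, and, regarded as an element of $\Psi^{1,1,0}_{\cp}(X)$, this operator has an extra order of vanishing at $\ffc$; scaling its kernel by $\rho_{\ffc}$ and restricting to $\ffc$ — that is, applying $\NN_1$ — therefore gives zero. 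Consequently $\NN_1(\Dd)=\NN_1(\rho_{\ff}^{-1}\Ddd)$.

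For the second step, observe that multiplication by $(\pi^2_{(1)})^*\rho_{\ff}^{-1}$ raises the cusp index by one: on a neighbourhood of $\Delta\cap\ffc$ — the only part of $\Xd$ near $\ffc$ that the Schwartz kernel of a differential operator touches — the function $(\pi^2_{(1)})^*\rho_{\ff}$ is a boundary defining function for $\ffc$, up to a positive smooth factor. (In the polar coordinates of the previous section this is transparent: $(\pi^2_{(1)})^*\rho_{\ff}=\sqrt{x_1^2+t^2}$ whereas $\rho_{\ffc}=r=\sqrt{t^2+x_1^2+x_2^2}$, and their ratio extends smoothly and strictly positively to $\ffc$.) Thus $\rho_{\ff}^{-1}\Ddd\in\Psi^{1,1,0}_{\cp}(X)$, and unwinding the definition of $\NN_1$, together with the density identification of Lemma~\ref{densffc} — which absorbs both the $\rho_{\ffc}^{-2}$ built into the density convention and the smooth factor above — returns precisely the restriction to $\ffc$ that computes $\NN_0(\Ddd)$. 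This proves $\NN_1(\Dd)=\NN_0(\Ddd)$. Since the previous section shows that $\NN_0(\Ddd)$ is fibrewise an invertible suspended operator on $\gamma$ (equivalently, its indicial family is invertible for all $\xi\in\RR$ and every angle), the equality just obtained gives the invertibility of $\NN_1(\Dd)$.

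The step I expect to require the most care is the matching $\NN_1(\rho_{\ff}^{-1}\Ddd)=\NN_0(\Ddd)$: making it fully rigorous means keeping simultaneous track of the $\rho_{\ffc}$-weight shift, the density factor and its rescaling through Lemma~\ref{densffc}, and the explicit polar form of $(\pi^2_{(1)})^*\rho_{\ff}$ near $\ffc$, and checking that no residual angular factor is left over. This subtlety is harmless for the invertibility assertion, however, since any such factor is smooth and strictly positive on $\ffc$.
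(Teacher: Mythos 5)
Your proposal is correct and follows essentially the same route the paper takes, which in the text is compressed into the single remark that the normal operator ``does not detect the term $\tfrac{x}{2}\cl(W)$'' in \eqref{legaturadiraci}. You expand this into the three legitimate steps (extra $\rho_{\ffc}$-vanishing of $\rho_{\ff}^{-1}\tfrac{x}{2}\cl(W)$, matching $\NN_1(\rho_{\ff}^{-1}\Ddd)$ with $\NN_0(\Ddd)$ via the fact that $(\pi^2_{(1)})^*\rho_{\ff}$ is a boundary defining function for $\ffc$ near $\Delta$, then transferring invertibility), and you are right to flag that the exact equality $\NN_1(\Dd)=\NN_0(\Ddd)$ requires choosing $\rho_{\ffc}$ compatibly with $(\pi^2_{(1)})^*\rho_{\ff}$ (otherwise a smooth strictly positive angular factor survives), which is harmless for the invertibility claim that the rest of the paper actually uses.
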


Let $\lambda \in \RR$ such that the cusp differential operator $\Dd_0 - \lambda \in \Psi^{1,0}_{\cc}(X)$ is invertible, where we denoted by $\Dd_0$ the Dirac operator at time $\{ t=0 \}$ in the pinching process. Remark that the temporal operator of the cusp-surgery differential operator $\Dd$ is 
\[  \Te (\Dd) = \Dd_0, \]
thus $\Te (\Dd - \lambda) = \Dd_0 - \lambda$ is invertible. Furthermore, $\lambda \in \Psi^{0,0,0}_{\cp}(X)$ is not ``seen" by the normal operator $\NN_1$, hence
\[ \NN_1 (\Dd)= \NN_1 (\Dd - \lambda), \]
which together with Proposition \ref{norminv} implies that the normal operator of $\Dd - \lambda$ is also invertible. Finally, the principal cusp-surgery symbol $\sigma_{\cp} \lp \Dd-\lambda \rp$ is also invertible, as is usually the case with Dirac operators, and we state these results in the following proposition.
\begin{proposition}\label{D-lambda}
If $\lambda \in \RR$ such that $\Dd_0 - \lambda \in \Psi^{1,0}_{\cc}(X)$ is invertible, then all the three symbols  $\sigma_{\cp}$, $\NN$, and $\Te$ of the cusp surgery differential operator $\Dd-\lambda$ are invertible.
\end{proposition}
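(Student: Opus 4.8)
The plan is to treat the three leading symbols of $\Dd-\lambda$ one at a time, using throughout that the perturbation $\lambda\cdot\id$ lies in $\Psi^{0,0,0}_{\cp}(X)$, so it is of strictly lower conormal and front‑face order than $\Dd$ while having the same temporal order $\beta=0$; consequently it only affects the temporal symbol, and even there in the mildest possible way. For the temporal symbol, I would use the identification $\Te(\Dd)=\Dd_0$ recorded just above the statement together with linearity of $\Te$ and $\Te(\lambda\cdot\id)=\lambda\cdot\id$ (the kernel of $\lambda\cdot\id$ restricts at $\{t=0\}$ to $\lambda$ times the identity on $X\setminus\gamma$), to get $\Te(\Dd-\lambda)=\Dd_0-\lambda$, which is invertible in $\Psi^{1,0}_{\cc}(X\setminus\gamma)$ by hypothesis.

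For the normal operator the relevant homomorphism is $\NN_1$, since $\Dd\in\Psi^{1,1,0}_{\cp}(X)$. Here I would argue that $\lambda\cdot\id\in\Psi^{1,0,0}_{\cp}(X)$ has front‑face order one less than generic, hence lies in the kernel of $\NN_1$: after the $\rho_{\ffc}$‑normalization defining $\NN_1$, its kernel picks up a positive power of $\rho_{\ffc}$ and vanishes along $\ffc$. This is the same mechanism that makes the term $\tfrac{x}{2}\cl(W)$ of \eqref{legaturadiraci} invisible to $\NN_1$ (there because $x$ vanishes on $\ffc$). Thus $\NN_1(\Dd-\lambda)=\NN_1(\Dd)$, which is invertible by Proposition \ref{norminv}, itself a consequence of the invertibility of the indicial family $\II(\Ddd)$ obtained from Hypothesis \ref{hypot} via the bound $\II(\Ddd)(\xi)^*\II(\Ddd)(\xi)\geq\tfrac14$.

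For the principal cusp‑surgery symbol, I would note that $\lambda\cdot\id\in\Psi^{0,\alpha,\beta}_{\cp}(X)$ lies in the kernel of $(\sigma_{\cp})_1$, so $\sigma_{\cp}(\Dd-\lambda)=\sigma_{\cp}(\Dd)$ as a section over $SN^*\Delta$, and likewise the zeroth‑order bundle endomorphism $\tfrac{x}{2}\cl(W)$ drops out. From the local form $\Ddd=\sum_i\cl(dx_i)\nabla_{\partial_i}$ only the first‑order part enters the symbol, so on a nonzero covector $\zeta\in N^*\Delta$ the operator $\sigma_{\cp}(\Dd)(\zeta)$ is, up to a nonzero scalar and the metric identification coming from the bundle $\mathcal F$, the Clifford multiplication $\cl(\zeta)$, which is invertible because $\cl(\zeta)^2=-|\zeta|^2\cdot\id\neq 0$. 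Hence $\sigma_{\cp}(\Dd-\lambda)$ is invertible everywhere on the sphere bundle, and combining the three items finishes the argument.

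The part needing the most care — and there is no genuine analytic obstacle — is the order bookkeeping: one must verify that the degree‑zero pieces $\lambda\cdot\id$ and $\tfrac{x}{2}\cl(W)$ neither perturb the degree‑one object $\sigma_{\cp}$ nor survive the normalized restriction defining $\NN_1$, so that their only footprint is in $\Te$, where it is exactly the shift $\Dd_0\mapsto\Dd_0-\lambda$ permitted by the hypothesis; once this is settled, invertibility of each of the three symbols is immediate from Proposition \ref{norminv}, the hypothesis, and the standard Clifford‑algebra computation.
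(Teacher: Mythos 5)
Your proof is correct and follows essentially the same route as the paper: the temporal symbol is handled by the identity $\Te(\Dd-\lambda)=\Dd_0-\lambda$ and the hypothesis; the normal operator is handled by observing that $\lambda\cdot\id$ has front-face order strictly less than $\Dd$, so it is annihilated by the $\rho_{\ffc}$-normalization defining $\NN_1$ and $\NN_1(\Dd-\lambda)=\NN_1(\Dd)$, which Proposition \ref{norminv} shows is invertible; and the principal symbol is handled by noting $\lambda\cdot\id$ is of conormal order zero and invoking the standard Clifford-algebra identity $\cl(\zeta)^2=-|\zeta|^2\id$. The only difference is that you explicitly spell out the ellipticity of the cusp-surgery symbol where the paper simply says ``as is usually the case with Dirac operators,'' so your write-up is slightly more self-contained but not a genuinely different argument.
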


\section{The construction of the parametrix}
As for the usual pseudodifferential calculus (see for instance \cite[Chapter 4]{spinorial}), the \emph{b}-calculus (see e.g., \cite[Section 4.16]{melrose}) or the fibred cusp calculus (see \cite[Section 3 and Proposition 5]{melmaz98}), we have the following results.
\begin{proposition}\label{sirscurtexact}
There exists three natural short exact sequences:
\begin{itemize}
\item[$i)$] $0 \longrightarrow \Psi^{m-1,\alpha, \beta}_{\cp}(X) \longrightarrow \Psi^{m,\alpha, \beta}_{\cp}(X) \xrightarrow{ \ \sigma_{\cp} \ } \mathcal C^{\infty} \lp S^*N(\Delta) \rp \longrightarrow 0    $.
\item[$ii)$] $0 \longrightarrow \Psi^{m,\alpha-1, \beta}_{\cp}(X) \longrightarrow \Psi^{m,\alpha, \beta}_{\cp}(X) \xrightarrow{ \ \mathcal N \ } \Psi^m_{[0, \pi], \sus}(\gamma)  \longrightarrow 0    $.
\item[$iii)$] $0 \longrightarrow \Psi^{m,\alpha, \beta-1}_{\cp}(X) \longrightarrow \Psi^{m,\alpha, \beta}_{\cp}(X)  \xrightarrow{\ \Te \ } \Psi^m_{\cc}(X)   \longrightarrow 0    $.
\end{itemize}
\end{proposition}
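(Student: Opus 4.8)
The plan is to handle the three sequences in parallel, since each reduces to the same two tasks. First, the leftmost inclusions are immediate from Definitions \ref{distrconcalc} and \ref{distrconcalc1}: lowering any one of the orders $(m,\alpha,\beta)$ by one only strengthens a vanishing or conormality requirement, so it produces a subspace. Second, one must establish exactness in the middle — that the kernel of $\sigma_{\cp}$ (resp. $\NN$, $\Te$) is exactly the displayed ``one order lower'' space — and surjectivity of the symbol map. Exactness in the middle is a matter of unravelling the definitions, while surjectivity requires a quantization (extension) construction in each of the three cases.

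For $i)$, exactness in the middle was already recorded in the section on the cusp-surgery symbol: $\ker(\sigma_{\cp})_m = \Psi^{m-1,\alpha,\beta}_{\cp}(X)$, because the principal symbol of an order-$m$ conormal distribution vanishes precisely when the distribution has order $m-1$. For surjectivity, given $a \in \mathcal C^{\infty}(S^*N(\Delta))$ I would extend it to a symbol homogeneous of degree $m$ on $N^*\Delta \setminus 0$, cut off near the zero section, and quantize fibrewise using the Collar Neighborhood Theorem \ref{collar} applied to $\Delta \cong \Xs$ inside $\Xd$, then multiply by a cutoff supported in a small tubular neighborhood of $\Delta$ and tensor with the prescribed density factor. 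The resulting $A \in \II^{m,0,0}(\Xd)$ has $\sigma_{\cp}(A)=a$; its index sets at $\ffc$ and $\tb$ are $\mathbb{N}$ because the construction is smooth in the base directions of $\Delta$, and the index sets at $\ffb,\Ba,\Bm$ are empty because those faces are disjoint from $\Delta$, so a distribution supported near $\Delta$ vanishes to infinite order there. Twisting by $\rho_{\ffc}^{-\alpha}\rho_{\tb}^{-\beta}$ yields the general $(\alpha,\beta)$.

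For $ii)$ and $iii)$ the arguments mirror each other. For $A \in \Psi^{m,\alpha,\beta}_{\cp}(X)$, the normal operator $\NN_\alpha(A)=(\rho_{\ffc}^\alpha k_A)|_{\ffc}$ (read through the density identification of Lemma \ref{densffc}) is the leading coefficient of the polyhomogeneous expansion of $k_A$ at $\ffc$; it vanishes exactly when that expansion starts one power higher, i.e. when $A \in \Psi^{m,\alpha-1,\beta}_{\cp}(X)$, which is exactness in the middle of $ii)$. For surjectivity, given $B \in \Psi^m_{[0,\pi],\sus}(\gamma)$ — a conormal distribution on $\gamma^2 \times \RR$, conormal to $\Diag_\gamma \times \{0\}$, depending smoothly on the angle — I would use that $\ffc$ is a line bundle over $[0,\pi]\times\gamma\times\gamma$ together with a collar neighborhood of $\ffc$ in $\Xd$ to extend $B$ to a distribution on $\Xd$ supported near $\ffc$, conormal to $\Delta$ (whose trace on $\ffc$ is $\Diag_\gamma$ inside the zero section of the $\RR$-fiber), with index set $\mathbb{N}$ at $\ffc$ and trivial behaviour at the remaining faces; multiplying by $\rho_{\ffc}^{-\alpha}$ and the density factor gives a preimage in $\Psi^{m,\alpha,\beta}_{\cp}(X)$. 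Sequence $iii)$ is identical with $\tb$ in place of $\ffc$, a collar of $\tb$ in $\Xd$, the cusp calculus $\Psi^m_{\cc}(X\setminus\gamma)$ in place of the suspended calculus, and $\Te(A)=(\rho_{\tb}^\beta k_A)|_{\tb}$.

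I expect the main obstacle to be precisely these surjectivity steps — specifically, checking that the extended kernels land in the \emph{small} calculus: that the quantization of a symbol has empty index sets at $\ffb,\Ba,\Bm$ and exactly index set $\mathbb{N}$ at $\ffc$ and $\tb$, and that the collar extensions in $ii)$ and $iii)$ respect the line-bundle structure of $\ffc$ (resp. the product structure near $\tb$) closely enough that the infinite-order vanishing at the off-diagonal faces, and the correct index sets along intersections such as $\ffc\cap\ffb$ and $\tb\cap\ffc$, are preserved. These compatibilities follow from the description of $\Xd$ in \eqref{spdublu}–\eqref{dublunerig} and the projective-coordinate analysis already carried out for the double space. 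One should also remark, via Theorem \ref{compositiontheorem}, that $\sigma_{\cp}$, $\NN$, $\Te$ are algebra morphisms, although this is not needed for the exactness of the three sequences themselves.
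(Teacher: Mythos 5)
The paper does not prove Proposition~\ref{sirscurtexact}; it only states it, prefaced by citations to \cite[Chapter 4]{spinorial}, \cite[Section~4.16]{melrose}, and \cite[Section 3 and Proposition 5]{melmaz98}, where the analogous short exact sequences are established for the classical, $b$-, and fibred-cusp calculi respectively. So there is no proof in the paper to compare against, and your task was to supply one. What you wrote is essentially the standard Melrose argument, and it is correct in outline: the inclusions are immediate from Definitions~\ref{distrconcalc} and \ref{distrconcalc1}; exactness in the middle is the stated identification $\ker(\sigma_{\cp})_m=\Psi^{m-1,\alpha,\beta}_{\cp}(X)$ for $i)$, and for $ii)$--$iii)$ the fact that a polyhomogeneous distribution with index set $\mathbb{N}$ at a face and vanishing leading coefficient is divisible by the corresponding boundary defining function; surjectivity is a quantization near $\Delta$ for $\sigma_{\cp}$, and a collar-neighborhood extension off $\ffc$ (resp.\ $\tb$) for $\NN$ (resp.\ $\Te$). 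You correctly flag the delicate points: the quantization and the collar extensions must land in the \emph{small} calculus (empty index sets at $\ffb$, $\Ba$, $\Bm$, index sets exactly $\mathbb{N}$ at $\ffc$ and $\tb$), and the restriction defining $\NN_\alpha$ is only finite after the density shift of Lemma~\ref{densffc} absorbs the $\rho_{\ffc}^{-2}$ coming from the $-2$ offset in Definition~\ref{distrconcalc1}. One further point you glossed over, and which the paper also glosses over, is the behaviour at the corner $\ffc\cap\tb$: after multiplying by $\rho_{\tb}^{\beta}$ the kernel still carries a $\rho_{\ffc}$-weight, so the restriction $\Te(A)$ is a cusp operator whose own weight at the (cusp) front face records $\alpha$; the paper simply writes the target as $\Psi^m_{\cc}(X\setminus\gamma)$, suppressing this. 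Your argument is compatible with that convention, but a fully rigorous surjectivity proof for $iii)$ would need to choose the collar extension so as to match a prescribed behaviour at $\ffc\cap\tb$, and for $ii)$ so as to preserve the rapid decay of the suspended kernel in the $\RR$-fiber of $\ffc$. These are the points at which the sketch would need to be fleshed out; the route you chose is the right one.
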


\begin{proposition}\label{asimptsum}
Let $m, \alpha, \beta \in \RR$ and consider a sequence of cusp surgery pseudodifferential operators $(A_j)_{j  \in \mathbb N}$, where
\[ A_j \in \Psi^{m-j, \alpha-j, \beta-j}_{\cp}(X).  \]
Then there exists an \emph{asymptotic sum} $A=\sum_{j \in \mathbb N} A_j \in \Psi^{m,\alpha,\beta}_{\cp}(X)$, in the sense that for any natural number $k$
\[ A - \sum_{j=0}^{k-1} A_j \in \Psi^{m-k, \alpha-k, \beta-k}. \]
\end{proposition}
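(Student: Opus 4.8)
The plan is to run the standard Borel--type asymptotic summation directly on the Schwartz kernels over $\Xd$, exactly as for the $b$-calculus \cite{melrose}, the fibered cusp calculus \cite{melmaz98} and the $\varphi$-surgery calculus \cite{ars}; an alternative would be to iterate the three short exact sequences of Proposition \ref{sirscurtexact}. First I would fix the natural Fréchet topology on each $\Psi^{m,\alpha,\beta}_{\cp}(X)$: seminorms controlling the symbol of the conormal distribution $k_A$ along $\Delta$ in the sense of Definition \ref{distrconcalc}, the weighted $b$-conormal seminorms $\sup\bigl|\rho_{\ffc}^{\alpha+2}\rho_{\tb}^{\beta}\,P\,k_A\bigr|$ for $P\in\Diff_b^*(\Xd)$, and the coefficients of the polyhomogeneous expansions at $\ffc$ and $\tb$. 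With this topology the inclusions $\Psi^{m-1,\alpha-1,\beta-1}_{\cp}(X)\hookrightarrow\Psi^{m,\alpha,\beta}_{\cp}(X)$ are continuous, and $\Psi^{-\infty,-\infty,-\infty}_{\cp}(X)=\bigcap_{k\in\mathbb N}\Psi^{m-k,\alpha-k,\beta-k}_{\cp}(X)$ is exactly the space of smooth kernels on $\Xd$ vanishing to infinite order at every boundary hypersurface.

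Next I would construct the cut-off family. Fix an excision function $\theta$ on the conormal fibre $N^*\Delta$ with $\theta\equiv 0$ near the zero section and $\theta\equiv 1$ outside a neighbourhood of it, and fix $\chi\in\mathcal C^{\infty}_c[0,\infty)$ with $\chi\equiv 1$ near $0$. For scales $t_j\nearrow\infty$ and $\epsilon_j\searrow 0$ to be chosen, set
\[ \phi_j:=1-\bigl(1-\theta(\xi/t_j)\bigr)\bigl(1-\chi(\rho_{\ffc}/\epsilon_j)\bigr)\bigl(1-\chi(\rho_{\tb}/\epsilon_j)\bigr),\qquad A:=\sum_{j}\phi_j A_j. \]
The precise combination is chosen so that $1-\phi_j$ is a product of three factors, each improving one of the three orders to $-\infty$: $1-\theta(\xi/t_j)$ is compactly supported in the conormal frequency and hence smooths the $\Delta$-singularity, $1-\chi(\rho_{\ffc}/\epsilon_j)$ vanishes to infinite order at $\ffc$, and $1-\chi(\rho_{\tb}/\epsilon_j)$ at $\tb$; since $A_j$ already vanishes rapidly at $\ffb,\Ba,\Bm$, this gives $(1-\phi_j)A_j\in\Psi^{-\infty,-\infty,-\infty}_{\cp}(X)$ for every $j$. (The inclusion--exclusion form is needed because $\ffc$ and $\tb$ intersect, so a plain product of two ``$\equiv1$ near the face'' cut-offs would be $1$ near neither face individually.) Conversely $\phi_jA_j$ is, up to a residual term, concentrated in $\{|\xi|\ge t_j\}\cup\{\rho_{\ffc}\le 2\epsilon_j\}\cup\{\rho_{\tb}\le 2\epsilon_j\}$, and on these three regions the seminorms of $A_j$ measured in any fixed $\Psi^{m-k,\alpha-k,\beta-k}_{\cp}(X)$ with $k<j$ carry the gains $t_j^{-(j-k)}$, $(2\epsilon_j)^{j-k}$ and $(2\epsilon_j)^{j-k}$ respectively; here it is essential to work with $b$-vector fields, so that the derivatives of the cut-offs such as $(\rho_{\ffc}\partial_{\rho_{\ffc}})^{l}\chi(\rho_{\ffc}/\epsilon_j)$ remain bounded uniformly in $\epsilon_j$.

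Then I would run the usual diagonal argument: enumerate a countable generating family of seminorms for each $\Psi^{m-k,\alpha-k,\beta-k}_{\cp}(X)$ and, using the estimates above, choose $t_j$ large and $\epsilon_j$ small so that seminorm number $n$ of $\phi_jA_j$ in $\Psi^{m-k,\alpha-k,\beta-k}_{\cp}(X)$ is $\le 2^{-j}$ for all $k<j$ and $n<j$. Since $\phi_jA_j\in\Psi^{m-j,\alpha-j,\beta-j}_{\cp}(X)\subset\Psi^{m-k,\alpha-k,\beta-k}_{\cp}(X)$ whenever $j\ge k$, the series $\sum_{j\ge k}\phi_jA_j$ is then absolutely convergent in $\Psi^{m-k,\alpha-k,\beta-k}_{\cp}(X)$ for every $k$; in particular $A\in\Psi^{m,\alpha,\beta}_{\cp}(X)$ is well defined. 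Finally, for each $k$,
\[ A-\sum_{j=0}^{k-1}A_j=\sum_{j=0}^{k-1}(\phi_j-1)A_j+\sum_{j\ge k}\phi_jA_j, \]
where the first sum is finite with every summand in $\Psi^{-\infty,-\infty,-\infty}_{\cp}(X)$ and the second lies in $\Psi^{m-k,\alpha-k,\beta-k}_{\cp}(X)$, which is the assertion.

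The Borel scheme itself is entirely routine. The real obstacle is the preparatory bookkeeping: pinning down the Fréchet structure on $\Psi^{m,\alpha,\beta}_{\cp}(X)$ compatibly with the density and bundle factors of Definition \ref{distrconcalc}, and verifying that multiplication by $\theta(\xi/t_j)$ and by the spatial cut-offs genuinely preserves the calculus with seminorm bounds uniform in the scaling parameters.
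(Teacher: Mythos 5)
The paper states Proposition \ref{asimptsum} without proof, treating it as the standard asymptotic completeness of the calculus (as in the $b$-, cusp and $\varphi$-surgery settings). Your Borel-type summation on $\Xd$ — with a three-fold cut-off $\phi_j$ combining a Fourier multiplier at $N^*\Delta$ and one boundary cut-off each at $\ffc$ and $\tb$ via inclusion--exclusion, estimated with $b$-derivatives so that the rescaled cut-offs are uniformly bounded, followed by the diagonal choice of scales $t_j,\epsilon_j$ — is exactly the expected argument and is correct.
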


Now let us prove that whenever we start with an operator having all the three symbols invertible, we can find an inverse modulo residual operators.
\begin{proposition}[The existence of the parametrix]\label{parametrix}
If $A \in  \Psi^{m,\alpha, \beta}_{\cp}(X) $ is a cusp-surgery pseudodifferential operator having all the three symbols invertible, then there exists an operator $ Q \in \Psi^{-m,-\alpha, -\beta}_{\cp}(X)$ such that 
\begin{align*}
{}& AQ=1-R, && QA=1-R',
\end{align*}
where $R,R' \in \Psi^{-\infty, -\infty, -\infty}_{\cp}(X)$ are residual operators, and $Q$ is called a \emph{parametrix} for $A$.
\end{proposition}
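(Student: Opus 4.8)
The plan is the standard symbol-by-symbol inversion, powered by the three short exact sequences of Proposition~\ref{sirscurtexact}, the asymptotic summation of Proposition~\ref{asimptsum}, and the multiplicativity of the three symbols (Composition Theorem~\ref{compositiontheorem}); throughout, invertibility of a symbol is understood with inverse in the respective calculus. I first build a \emph{right} parametrix $Q_R\in\Psi^{-m,-\alpha,-\beta}_{\cp}(X)$ with $AQ_R=1-R$, $R\in\Psi^{-\infty,-\infty,-\infty}_{\cp}(X)$, in three successive stages (for $\sigma_{\cp}$, then $\NN$, then $\Te$); the mirror construction yields a \emph{left} parametrix $Q_L$ with $Q_LA=1-R'$, $R'\in\Psi^{-\infty,-\infty,-\infty}_{\cp}(X)$. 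The two are reconciled by $Q_L-Q_R=Q_L(AQ_R+R)-Q_R=(1-R')Q_R+Q_LR-Q_R=Q_LR-R'Q_R$, which lies in $\Psi^{-\infty,-\infty,-\infty}_{\cp}(X)$ since, by the Composition Theorem, the residual operators form a two-sided ideal; hence $Q:=Q_R$ works in both identities.

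\textbf{Principal symbol.} Part~$i)$ of Proposition~\ref{sirscurtexact} gives $Q_0\in\Psi^{-m,-\alpha,-\beta}_{\cp}(X)$ with $\sigma_{\cp}(Q_0)=\sigma_{\cp}(A)^{-1}$; then $\sigma_{\cp}(AQ_0)=1=\sigma_{\cp}(\mathrm{Id})$, so $R_0:=1-AQ_0\in\Psi^{-1,0,0}_{\cp}(X)$, whence $R_0^k\in\Psi^{-k,0,0}_{\cp}(X)$. By the Borel-type argument behind Proposition~\ref{asimptsum} one forms $Q_{(\sigma)}:=Q_0\sum_{k\ge0}R_0^k\in\Psi^{-m,-\alpha,-\beta}_{\cp}(X)$ with $AQ_{(\sigma)}=1-R_{(\sigma)}$, $R_{(\sigma)}\in\Psi^{-\infty,0,0}_{\cp}(X)$.

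\textbf{Normal, then temporal operator.} Using part~$ii)$ of Proposition~\ref{sirscurtexact}, I iteratively remove $\NN$ of the remainder. Since $\NN(A)$ is invertible with fiberwise convolution inverse $\NN(A)^{-1}\in\Psi^{-m}_{[0,\pi],\sus}(\gamma)$ — for suspended operators, invertibility of the indicial family (for the Dirac operator this is Hypothesis~\ref{hypot} via $\II(\Ddd)(\xi)^{*}\II(\Ddd)(\xi)\ge\tfrac14$) gives an inverse inside the suspended calculus — lifting $\NN(A)^{-1}\ast\NN(R_{(\sigma)})\in\Psi^{-\infty}_{[0,\pi],\sus}(\gamma)$ to some $P\in\Psi^{-\infty,-\alpha,-\beta}_{\cp}(X)$ yields $AP\in\Psi^{-\infty,0,0}_{\cp}(X)$ with $\NN(R_{(\sigma)}-AP)=\NN(R_{(\sigma)})-\NN(A)\ast\NN(P)=0$, hence $R_{(\sigma)}-AP\in\Psi^{-\infty,-1,0}_{\cp}(X)$ by exactness. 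Iterating (each step gaining a unit of $\rho_{\ffc}$-decay, the corrections lying in ever smaller spaces) and summing asymptotically produces $Q_{(N)}\in\Psi^{-m,-\alpha,-\beta}_{\cp}(X)$ with $AQ_{(N)}=1-R_{(N)}$, $R_{(N)}\in\Psi^{-\infty,-\infty,0}_{\cp}(X)$. The identical scheme applied to $\Te$ — via part~$iii)$ of Proposition~\ref{sirscurtexact}, the invertibility of $\Te(A)$ with inverse in $\Psi^{-m}_{\cc}(X)$, and corrections in $\Psi^{-\infty,-\infty,-\beta}_{\cp}(X)$, which have $\rho_{\ffc}$-order $-\infty$ and so do not disturb the $\ffc$-order already attained — produces $Q_R$.

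\textbf{Main obstacle.} The delicate part is the bookkeeping in the normal-operator stage: one needs the fiberwise convolution inverse of $\NN(A)$ to belong to $\Psi^{-m}_{[0,\pi],\sus}(\gamma)$ (this is precisely where the invertibility hypothesis, for the Dirac operator Hypothesis~\ref{hypot} through the indicial family, is used), and the lift furnished by part~$ii)$ of Proposition~\ref{sirscurtexact} must be taken in a space with low enough $\rho_{\ffc}$- and $\rho_{\tb}$-orders that the later corrections never undo the gains of the earlier stages; granting this, the construction is the classical Neumann-type iteration.
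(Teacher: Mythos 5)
Your proposal is correct, but it follows a genuinely different route from the paper. The paper's proof first establishes a diagram-chasing Lemma (its Lemma~\ref{param}) that, given a compatible triple of symbols $(\eta,\tau,\sigma)$, produces a \emph{single} $Q_0\in\Psi^{m,\alpha,\beta}_{\cp}(X)$ whose $\sigma_{\cp}$, $\NN$, and $\Te$ all equal the prescribed values simultaneously; applying this to the inverses of the three symbols of $A$ gives $R_0:=1-AQ_0\in\Psi^{-1,-1,-1}_{\cp}(X)$, so that a \emph{single} Neumann series $Q_0(1+R_0+R_0^2+\cdots)$ is summable by the literal statement of Proposition~\ref{asimptsum} and produces the right parametrix in one stroke. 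You instead carry out three \emph{sequential} Neumann-type iterations (first $\sigma_{\cp}$, then $\NN$, then $\Te$), each time choosing the correction in a space with $-\infty$ order in the slots already brought under control so the earlier gains are not disturbed; this is the classical b-calculus style and avoids the diagram chase entirely, at the cost of being a bit longer and of invoking asymptotic summation in a single index at a time.

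Two small points to tighten. First, the summations $\sum_k Q_0R_0^k$ with $Q_0R_0^k\in\Psi^{-m-k,-\alpha,-\beta}_{\cp}(X)$ (and the analogous one in the $\NN$ stage with only the $\rho_{\ffc}$-order decreasing) are not literally the case covered by Proposition~\ref{asimptsum}, which assumes all three orders decrease simultaneously; you note this by saying ``the Borel-type argument behind Proposition~\ref{asimptsum},'' which is the right instinct, but it would be cleaner to say explicitly that you are using the one-slot variant (which holds by the same Borel construction, since the index sets at $\ffc$ and $\tb$ are unaffected by the conormal summation, and conversely). Second, at the end, after observing $Q_L-Q_R\in\Psi^{-\infty,-\infty,-\infty}_{\cp}(X)$, you conclude ``hence $Q:=Q_R$ works in both identities''; strictly, $Q_RA=1-\bigl(R'-(Q_R-Q_L)A\bigr)$, so the residual operator on the left side is not the original $R'$ but an adjusted one — still residual, since the residual operators are a two-sided ideal, so the conclusion stands, but it is worth saying.
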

\begin{proof}
We first arrange the short exact sequences from Proposition \ref{sirscurtexact} in a $3$-dimensional commutative diagram, as follows:
\begin{equation}
\label{parametrix3d}
\centering
\begin{split}
\begin{tikzpicture}
[x=1mm,y=1mm]
\node (111) at (0,0) {$\Psi^{m-1,\alpha-1,\beta-1}_{\cp}(X)$};
\node (211) at (40,0) {$\Psi^{m-1,\alpha,\beta-1}_{\cp}(X)$};
\node (311) at (80,0) {$\rho_{\tb}\Psi^{m-1}_{[0, \pi],\sus}(\gamma)$};
\node (121) at (0,30) {$\Psi^{m,\alpha-1,\beta-1}_{\cp}(X)$};
\node (221) at (40,30) {$\Psi^{m,\alpha,\beta-1}_{\cp}(X)$};
\node (321) at (80,30) {$\rho_{\tb}\Psi^{m}_{[0, \pi],\sus}(\gamma)$};
\node (131) at (0,60) {$C^\infty(S^* N(\Delta))$};
\node (231) at (40,60) {$C^\infty(S^* N(\Delta))$};
\node (331) at (80,60) {$C^\infty \lp S^*_{\sus}(\gamma) \rp$};
\node (112) at (10,10) {$\Psi^{m-1,\alpha-1,\beta}_{\cp}(X)$};
\node (212) at (50,10) {$\Psi^{m-1,\alpha,\beta}_{\cp}(X)$};
\node (312) at (90,10) {$\Psi^{m-1}_{\sus}(\gamma)$};
\node (122) at (10,40) {$\Psi^{m,\alpha-1,\beta}_{\cp}(X)$};
\node (222) at (50,40) {$\bm{\Psi^{m,\alpha,\beta}_{\cp}(X)}$};
\node (322) at (90,40) {{\color{blue}$\Psi^{m}_{[0,\pi],\sus}(\gamma)$}};
\node (132) at (10,70) {$C^\infty(S^* N(\Delta))$};
\node (232) at (50,70) {{\color{purple}$C^\infty(S^* N(\Delta))$}};
\node (332) at (90,70) {$C^\infty \lp S^*_{\sus}(\gamma) \rp$};
\node (113) at (20,20) {$\Psi^{m,\alpha-1,\beta-1}_{\cc}(X)$};
\node (213) at (60,20) {$\Psi^{m-1}_{\cc}(X)$};
\node (313) at (100,20) {$\Psi^{m-1}_{\{0, \pi\},\sus}(\gamma)$};
\node (123) at (20,50) {$\Psi^{m,-1}_{\cc}(X)$};
\node (223) at (60,50) {{\color{orange} $\Psi^{m}_{\cc}(X)$}};
\node (323) at (100,50) {$\Psi^{m}_{\{0, \pi \}\sus}(\gamma)$};
\node (133) at (20,80) {$C^\infty(S^{*c}X)$};
\node (233) at (60,80) {$C^\infty(T^{*c}X)$};
\node (333) at (100,80) {$C^\infty \lp S^*_{\{0, \pi \},\sus}(\gamma) \rp$};
\draw[->] (111) to (211);
\draw[->] (211) to (311);
\draw[->] (121) to (221);
\draw[->] (221) to (321);
\draw[->] (131) to (231);
\draw[->] (231) to (331);
\draw[->,densely dashed] (112) to (212);
\draw[->,densely dashed] (212) to (312);
\draw[->,densely dashed] (122) to (222);
\draw[->,densely dashed] (222) to (322);
\draw[->] (132) to (232);
\draw[->] (232) to (332);
\draw[->,densely dashed] (113) to (213);
\draw[->,densely dashed] (213) to (313);
\draw[->,densely dashed] (123) to (223);
\draw[->,densely dashed] (223) to (323);
\draw[->] (133) to (233);
\draw[->] (233) to (333);
\draw[->] (111) to (121);
\draw[->] (121) to (131);
\draw[->] (211) to (221);
\draw[->] (221) to (231);
\draw[->] (311) to (321);
\draw[->] (321) to (331);
\draw[->,densely dashed] (112) to (122);
\draw[->,densely dashed] (122) to (132);
\draw[->,densely dashed] (212) to (222);
\draw[->,densely dashed] (222) to (232);
\draw[->] (312) to (322);
\draw[->] (322) to (332);
\draw[->,densely dashed] (113) to (123);
\draw[->,densely dashed] (123) to (133);
\draw[->,densely dashed] (213) to (223);
\draw[->,densely dashed] (223) to (233);
\draw[->] (313) to (323);
\draw[->] (323) to (333);
\draw[->,densely dashed] (111) to (112);
\draw[->,densely dashed] (112) to (113);
\draw[->,densely dashed] (211) to (212);
\draw[->,densely dashed] (212) to (213);
\draw[->] (311) to (312);
\draw[->] (312) to (313);
\draw[->,densely dashed] (121) to (122);
\draw[->,densely dashed] (122) to (123);
\draw[->,densely dashed] (221) to (222);
\draw[->,densely dashed] (222) to (223);
\draw[->] (321) to (322);
\draw[->] (322) to (323);
\draw[->] (131) to (132);
\draw[->] (132) to (133);
\draw[->] (231) to (232);
\draw[->] (232) to (233);
\draw[->] (331) to (332);
\draw[->] (332) to (333);
\end{tikzpicture}
\end{split}
\end{equation}
where the maps are:
\begin{equation}
\label{directii}
\centering
\begin{split}
\begin{tikzpicture}
[x=1mm,y=1mm]
\node (tl) at (0,30) {$\bullet$};
\node (tr) at (20,16) {$\bullet$};
\node (ml) at (0,15) {$\bullet$};
\node (mm) at (10,8) {$\bullet$};
\node (bl) at (0,0) {$\bullet$};
\node (bm) at (20,0) {$\bullet$};
\node (br) at (40,0) {$\bullet$.};
\node at (tr.east) [anchor=west] {};
\node at (br.south) [anchor=north] {};
\draw[->] (ml) to node[left,font=\small]{$\sigma_{\cp}$} (tl);
\draw[->,densely dashed] (mm) to node[above left,font=\small]{$\Te$} (tr);
\draw[->] (bm) to node[above,font=\small]{$\mathcal{N}$} (br);
\draw[<-,>=right hook] (bl) to ($ (bl)!0.5!(ml) $) node[left,font=\small]{$i$}; \draw[->] ($ (bl)!0.5!(ml) $) to (ml);
\draw[<-,>=right hook,densely dashed] (bl) to ($ (bl)!0.5!(mm) $) node[above left,font=\small]{$i$}; \draw[->,densely dashed] ($ (bl)!0.5!(mm) $) to (mm);
\draw[<-,>=right hook] (bl) to ($ (bl)!0.5!(bm) $) node[above,font=\small]{$i$}; \draw[->] ($ (bl)!0.5!(bm) $) to (bm);
\end{tikzpicture}
\end{split}
\end{equation}
For each $i,j,k \in \{ 0,1,2 \}$, we denote the map from the node $(i,j,k)$ to $(i+1,j, k)$ by $\dd_{\NN}^{i,j,k}$, the map from $(i,j,k)$ to $(i,j+1, k)$ by $\dd_{\Te}^{i,j,k}$, and the map from the node $(i,j,k)$ to $(i,j,k+1)$ by $\dd_{\sigma_{\cp}}^{i,j,k}$, in correspondence with the three directions in \eqref{directii}. 

Notice that if $m=\alpha=\beta=0$, \eqref{parametrix3d} is a diagram of $\mathbb C$-algebras, and otherwise, it is a diagram of additive groups. Furthermore, remark that each square diagram in \eqref{parametrix3d} is commutative. We first prove the following diagram chasing result.

\begin{lemma}\label{param}
If $\eta \in {\color{blue} \Psi^m_{[0,\pi],\sus} (\gamma)}$, $\tau \in  {\color{orange} \Psi^m_{\cc} (X)}$, and $\sigma \in {\color{purple} \mathcal C^{\infty} \lp S^*N (\Delta) \rp}$ satisfying the compatibility conditions
\begin{align*}
\dd_{\Te}^{2,1,1} \eta = \dd_{\NN}^{1,2,1} \tau, &&
\dd_{\sigma_{\cp}}^{2,1,1} \eta = \dd_{\NN}^{1,1,2} \sigma, &&
\dd_{\sigma_{\cp}}^{1,2,1} \tau = \dd_{\Te}^{1,1,2} \sigma, 
\end{align*}
then there exists an operator $Q_0 \in \Psi^{m,\alpha,\beta}_{\cp}(X)$ such that 
\begin{align*}
\dd_{\NN}^{1,1,1} \lp Q_0 \rp = \eta, && \dd_{\Te}^{1,1,1} \lp Q_0 \rp = \tau, && \dd_{\sigma_{\cp}}^{1,1,1} \lp Q_0 \rp = \sigma.
\end{align*}
\end{lemma}
\begin{proof}
The first step is to find an operator $X_{\eta, \tau} \in \Psi^{m, \alpha, \beta}_{\cp} (X)$ such that $\dd_{\NN}^{1,1,1} (X_{\eta, \tau}) = \eta$ and $\dd_{\Te}^{1,1,1} (X_{\eta, \tau}) = \tau$. Since $\dd_{\Te}^{1,1,1}$ is surjective, there exists an operator $X_{\tau} \in \Psi^{m, \alpha, \beta}_{\cp} (X)$ which is mapped through $\dd_{\Te}^{1,1,1}$ into $\tau$. Using the commutativity of the diagram and the first compatibility condition, we have that
\[ \dd_{\Te}^{2,1,1} \lp \dd_{\NN}^{1,1,1} X_{\tau} - \eta \rp =0,   \]
thus there exists an operator $P' \in \rho_{\tb} \Psi^m_{[0, \pi], \sus}(\gamma)$ such that
\[ \dd_{\Te}^{2,0,1} (P')=\dd_{\NN}^{1,1,1} X_{\tau} - \eta .\] 
Since $\dd_{\NN}^{1,0,1}$ is surjective, take $P \in \Psi^{m,\alpha,\beta-1}_{\cp} (X)$ such that $\dd_{\NN}^{1,0,1} (P)=P'$. One can easily check that
\[ X_{\eta,\tau}:= -\dd_{\Te}^{1,0,1} (P)+X_{\tau}   \]
satisfies the conditions that we want, i.e.,
\begin{align*}
\dd_{\NN}^{1,1,1} (X_{\eta, \tau}) = \eta, && \dd_{\Te}^{1,1,1} (X_{\eta, \tau}) = \tau.
\end{align*}

We notice that $\dd_{\sigma_{\cp}}^{1,1,1} \lp X_{\eta, \tau} \rp$ is not necessarily equal to $\sigma$, hence we need to modify $X_{\eta, \tau}$. Remark that
\[ \dd_{\Te}^{112} \lp  \sigma- \dd_{\sigma_{\cp}}^{1,1,1} X_{\eta,\tau}  \rp = \dd_{\Te}^{1,1,2} \sigma - \dd_{\sigma_{\cp}}^{1,2,1} \dd_{\Te}^{1,1,1} X_{\eta, \tau} =
\dd_{\Te}^{1,1,2} \sigma -  \dd_{\sigma_{\cp}}^{1,2,1}  \tau =0,
\]
thus there exists $Z \in \mathcal C^{\infty} \lp S^*N (\Delta) \rp$ such that 
\begin{equation}\label{Z}
 \dd_{\Te}^{1,0,2} Z =   \sigma- \dd_{\sigma_{\cp}}^{1,1,1} X_{\eta,\tau} ,   
 \end{equation}
and moreover, since $\dd_{\sigma_{\cp}}^{1,0,1}$ is surjective, it follows that there exists $T \in \Psi^{m,\alpha,\beta-1}_{\cp}(X)$ verifying
\begin{equation}\label{T}
  \dd_{\sigma_{\cp}}^{1,0,1} (T)=Z.
  \end{equation}
Using \eqref{Z} and the second compatibility condition, we get
\begin{align*}
\dd_{\NN}^{1,1,2} \dd_{\Te}^{1,0,2} Z {}&= \dd_{\NN}^{1,1,2} \lp \sigma- \dd_{\sigma_{\cp}}^{1,1,1}  X_{\eta,\tau}  \rp= \dd_{\NN}^{1,1,2} \sigma- \dd_{\sigma_{\cp}}^{2,1,1} d_{\NN}^{1,1,1} X_{\eta, \tau} = \dd_{\NN}^{1,1,2} \sigma - \dd_{\sigma_{\cp}}^{2,1,1} \eta=0,
\end{align*}
which implies that $\dd_{\NN}^{1,0,2} Z =0$, hence there exists $Z' \in \mathcal C^{\infty} \lp S^*N \Delta \rp$ such that
\begin{equation}\label{Z'}
Z= \dd_{\NN}^{0,0,2} Z' .
\end{equation}  
Moreover, using the fact that the map $\dd_{\sigma_{\cp}}^{0,0,1}$ is surjective, we take $T'' \in \Psi^{m,\alpha-1, \beta-1}_{\cp}(X)$ such that
\begin{equation}\label{T''}
Z'= \dd_{\sigma_{\cp}}^{0,0,1} T''.
\end{equation}
Notice that using relations \eqref{T}, \eqref{Z'} and \eqref{T''} and the commutativity of the diagram \eqref{parametrix3d}, we get
\begin{align*}
\dd_{\sigma_{\cp}}^{1,0,1} \lp  -T + \dd_{\NN}^{0,0,1} T''   \rp = -Z + \dd_{\NN}^{0,0,2} \dd_{\sigma_{\cp}}^{0,0,1} T'' =-Z+  \dd_{\NN}^{0,0,2} Z'= -Z + Z=0,
\end{align*}
meaning that $ -T + \dd_{\NN}^{0,0,1} T'' \in \Ker  \dd_{\sigma_{\cp}}^{1,0,1} = \Imag \dd_{\sigma_{\cp}}^{1,0,0}$, hence there exists $S \in \Psi^{m-1, \alpha, \beta-1}_{\cp}(X)$ such that
\begin{equation}\label{alfa}
\dd_{\sigma_{\cp}}^{1,0,0} S=-T + \dd_{\NN}^{0,0,1} T''.
\end{equation}
It follows that
\begin{equation}\label{da}
\dd_{\sigma_{\cp}}^{1,0,1} \lp T+ \dd_{\sigma_{\cp}}^{1,0,0} S \rp=\dd_{\sigma_{\cp}}^{1,0,1} \dd_{\NN}^{0,0,1} T''= \dd_{\NN}^{0,0,2} \dd_{\sigma_{\cp}}^{0,0,1} T'' =   \dd_{\NN}^{0,0,2} Z' =
 Z,
\end{equation}
and let us denote by $\delta := T+ \dd_{\sigma_{\cp}}^{1,0,0} S$. We claim that 
\[Q_0:=X_{\eta,\tau} + \dd_{\Te}^{1,0,1} \delta \in \Psi^{m,\alpha,\beta}_{\cp}(X)\] 
is sent through the maps $\dd_{\Te}^{1,1,1}$, $\dd_{\sigma_{\cp}}^{1,1,1}$ and $\dd_{\NN}^{1,1,1}$ to $\tau$, $\sigma$ and $\eta$. Indeed, we first remark that
\begin{align*}
{}& \dd_{\Te}^{1,1,1} Q_0= \dd_{\Te}^{1,1,1} \lp X_{\eta,\tau} + \dd_{\Te}^{1,0,1} \delta  \rp = \tau + 0 = \tau.
\end{align*} 
Now using \eqref{da} and \eqref{Z} and the commutativity of the diagram \eqref{parametrix3d}, we get:
\begin{align*}
\dd_{\sigma_{\cp}}^{1,1,1} Q_0 {}&= \dd_{\sigma_{\cp}}^{1,1,1} \lp X_{\eta,\tau} + \dd_{\Te}^{1,0,1} \delta  \rp =  \dd_{\sigma_{\cp}}^{1,1,1} X_{\eta,\tau} + \dd_{\Te}^{1,0,2} \dd_{\sigma_{\cp}}^{1,0,1} \delta \\
{}&=  \dd_{\sigma_{\cp}}^{1,1,1} X_{\eta,\tau} + \dd_{\Te}^{1,0,2} Z = \dd_{\sigma_{\cp}}^{1,1,1} X_{\eta,\tau} + \sigma-\dd_{\sigma_{\cp}}^{1,1,1} X_{\eta,\tau} =\sigma. 
\end{align*}
Finally, using the definition of $\delta$ and \eqref{alfa}, remark that
\[ \delta=T + \dd_{\sigma_{\cp}}^{1,0,0} S = T-T+\dd_{\NN}^{0,0,1} T'' = \dd_{\NN}^{0,0,1} T'',  \]
thus $\delta \in \Imag  \dd_{\NN}^{0,0,1} = \Ker \dd_{\NN}^{1,0,1}$, which by the commutativity of the diagram \eqref{parametrix3d} implies that 
\[ \dd_{\NN}^{1,1,1} \dd_{\Te}^{1,0,1} \delta = \dd_{\Te}^{2,0,1} \dd_{\NN}^{1,0,1} \delta =0, \]
and furthermore
\begin{align*}
{}&\dd_{\NN}^{1,1,1} Q_0 = \dd_{\NN}^{1,1,1} \lp X_{\eta,\tau} + \dd_{\Te}^{1,0,1} \delta  \rp = \eta + 0 =\eta,
\end{align*}
which ends the proof of the Lemma.
\end{proof}
Let us now prove the existence of the parametrix. Clearly the three symbols of $A$ satisfy the compatibility conditions in the hypothesis of Lemma \ref{param}, therefore the inverses of the three symbols also satisfy them.

By Lemma \ref{param} applied for the three inverses of the symbols of $A$, there exists an operator $Q_0 \in \Psi^{-m,-\alpha,-\beta}_{\cp}(X)$ with the following properties:
\begin{align*}
\NN( Q_0)= \NN(A)^{-1}, && \Te(Q_0)=\Te(A)^{-1}, && \sigma_{\cp}(Q_0)= \sigma_{\cp}(A)^{-1}.
\end{align*}
Denote by 
\[ R_0:=1-AQ_0 \in \Psi^{0,0,0}_{\cp} (X), \]
and notice that using the multiplicativity of the normal operators (see Section \ref{multiplnormali}), we have
\[ \NN \lp 1- A Q_0 \rp = 1- 1=0. \]
Similarly, using the multiplicativity of the cusp-surgery principal symbol and of the temporal operator (see Section \ref{multipltemp}), we obtain that $\sigma_{\cp} (R_0) =0$ and $\Te R_0 =0$. Therefore by Proposition \ref{sirscurtexact}, it follows that $R_0$ is actually a ``better" operator belonging to $\Psi^{-1,-1,-1}_{\cp}(X)$. Using the Neumann series, we remark that for $k \in \mathbb N^*$ we have
\begin{equation}\label{paramk}
A Q_0 \lp 1+ R_0 +R_0^2 +...+R_0^{k-1} \rp = \lp 1- R_0 \rp \lp 1+ R_0 +R_0^2 +...+R_0^{k-1} \rp = 1-R_0^k. 
\end{equation}
We consider $Q_k:=Q_0 \lp 1+ R_0 +R_0^2 +...+R_0^{k-1} \rp$ which is a right parametrix of order $k$ (since the remainder $R_0^k$ in \eqref{paramk} belongs to $\Psi_{\cp}^{-k,-k,-k}(X)$). Using Proposition \ref{asimptsum}, we obtain a right-parametrix $Q$ as the asymptotic sum of \eqref{paramk} as $k \to \infty$: 
\[ AQ=1-R, \text{ where } R \in \Psi_{\cp}^{-\infty,-\infty,-\infty}(X).  \]
By a standard argument, one can easily check that $Q$ is also a left-parametrix.
\end{proof}

\section{Family of resolvents for the Dirac operator}
\begin{proof}[Proof of Theorem \ref{resolvintro}]
In Proposition \ref{D-lambda} we proved that if $\lambda \in \RR \setminus \Spec \Dd_0$, then all the three symbols of $\Dd - \lambda$ are invertible. By Proposition \ref{parametrix}, there exists a parametrix $Q(\lambda) \in \Psi^{-1,-1,0}_{\cp}(X)$,
\begin{equation}\label{constrrez1}
\begin{aligned}
(\Dd-\lambda)Q(\lambda)=1 + T_1 (\lambda), && Q(\lambda)(\Dd-\lambda)=1 + T_2 (\lambda),
\end{aligned}
\end{equation}
where $T_1 , T_2 \in \Psi^{-\infty,-\infty,-\infty}_{\cp}(X)$ are residual operators depending on $\lambda$ (for simplicity, we will not emphasize this dependence in the notation). Remark that $T_1$ is a smooth kernel on the double space $\Xd$, but using Proposition \ref{desumflare}, we can actually regard it as a family of smooth kernels on $[0, \infty) \times X \times X$ with vanishing Taylor series at $\{ t=0 \}$. 

Notice that the operator norm of $T_1$ in the space of bounded operators acting on $L^2 (X, g_t)$ satisfies  
\[ \Vert T_1(t) \Vert_{\mathcal B \lp L^2(X,g_t) \rp}  \longrightarrow 0  \]
as $t$ goes to zero. Thus there exists a time $t_0 (\lambda)$ such that for $t \leq t_0(\lambda)$, all the $L^2$-operator norms of $T_1(t)$ on $L^2(M, g_t)$ are less than $1$. It follows that $1 + T_1$ is invertible for $t \leq t_0 (\lambda)$, and the inverse is given by the Neumann series 
\[ \lp 1 + T_1  \rp^{-1}=  1-T_1+ T_1^2 -T_1^3... \in \mathcal B \lp  L^2(X,g_t)   \rp.\] 
Furthermore, one can easily check that the series $-T_1+ T_1^2 -T_1^3...$ converges in all the $\mathcal C^k_{t,x_1,x_2}$ norms on $[0,\infty) \times X \times X$, thus $V:=-T_1+ T_1^2 -T_1^3...$ is well-defined and it is a residual operator, i.e.,
\[ V \in \Psi^{-\infty,-\infty,-\infty}_{\cp} (X).\] 
Therefore we proved that for small time $t$ (less than $t_0 (\lambda)$), the inverse $\lp 1+ T_1(\lambda)\rp^{-1}$ exists, and, very importantly, it belongs to the cusp-surgery calculus since we wrote it as
\[ \lp 1+ T_1(\lambda)\rp^{-1} = 1+V   \in 1+ \Psi^{-\infty,-\infty,-\infty}_{\cp}(X)  .\]
By \eqref{constrrez1}, we have that
\begin{align*}
(\Dd - \lambda) Q(\lambda) (1+V) = \lp 1 + T_1 \rp (1+V) =1.
\end{align*}
Since both $Q(\lambda)$ and $(1+V)$ are cusp-surgery operators, by the Composition Theorem \ref{thcomp}, their composition
\[ \Rr(\lambda):= Q(\lambda) (1+V) \]
also belongs to the calculus, more precisely, $\Rr(\lambda) \in \Psi^{-1,-1,0}_{\cp}(X)$. Therefore we constructed a right-inverse for $(\Dd-\lambda)$, and in a similar manner, we also get a left-inverse. Then, by a standard argument, the two inverses coincide, and for small time $t \leq t_0 (\lambda)$, we obtain \emph{the resolvent} of $\Dd- \lambda$ as a cusp-surgery operator
\[  \Rr(\lambda) = \lp \Dd-\lambda \rp^{-1} \in \Psi^{-1,-1,0}_{\cp}(X).    \qedhere  \] 
\end{proof}

The importance of Theorem \ref{resolvintro} is that we managed to merge the family of resolvents $(\Dd_t-\lambda)^{-1}$ for $0< t \leq t_0(\lambda)$ together with the zero time resolvent of $\Dd_0 - \lambda$ on $\lp X \setminus \gamma, g_0 \rp$ into a cusp-surgery operator of orders $(-1,-1,0)$. 
\begin{remark}\label{holom}
Recall that due to Hypothesis \ref{hypot}, the spectrum of $\Dd_0$ is discrete (see e.g. \cite{moroweyl}). Notice that the family of resolvents 
\begin{align*}
\Rr: \mathbb C \longrightarrow \Psi^{-1,-1,0}_{\cp} (X), && \lambda \longmapsto \Rr(\lambda),
\end{align*}
is holomorphic on the complement of $\Spec \Dd_0$, since there it is the inverse of the invertible holomorphic family $\lp \Dd- \lambda \rp$. Furthermore, given an $\epsilon>0$, if $t_0 (\lambda_0)$ is  the time provided by Theorem \ref{resolvintro} for $\lambda_0 \notin \Spec \Dd_0$, notice that for $\lambda \notin \Spec \Dd_0$ in a small neighborhood of $\lambda_0$ (depending on $\epsilon$), the time $t_{0}(\lambda)$ is not less than $t_0(\lambda_0) - \epsilon$.
 \end{remark}

\section{The convergence of the spectral projectors}
\begin{proof}[Proof of Theorem \ref{convprspectrintro1}]
Under Hypothesis \ref{hypot}, the spectrum of $\Dd_0$ on $X \setminus \gamma$ is discrete (see \cite[Theorem 2]{moroweyl}). Therefore we can consider a circle $\mathcal C (\lambda_0, \epsilon)$ centered at $\lambda_0$ of fixed radius $\epsilon $ such that $\lambda_0$ is the only eigenvalue of $\Dd_0$ in the interior of $\mathcal C$. Since the contour $\mathcal C (\lambda_0, \epsilon)$ is compact, by Remark \ref{holom}, there exists a common time $t_1(\lambda_0) \in (0, \infty)$ such that for any $\lambda \in \mathcal C (\lambda_0, \epsilon)$, the resolvent $\Rr(\lambda)$ is well-defined up to time $t_1$. 

As seen in Remark \ref{holom}, the resolvent family $\Rr (\lambda)$ is holomorphic on $\mathbb C \setminus \Spec \Dd_0$, hence for all $0 \leq t \leq t_1$, the spectral projector of $\Dd_t$ onto the interval $[\lambda_0- \epsilon,\lambda_0 + \epsilon]$ is given by
\begin{equation}\label{projector}
\PP_{ \left[ \lambda_0- \epsilon,\lambda_0 + \epsilon \right] }= \frac{1}{2 \pi i} \int_{\mathcal C (\lambda_0, \epsilon)} R(\lambda) d\lambda \in \Psi^{-1,-1,0}_{\cp} (X).
\end{equation}
Furthermore, notice that 
\begin{equation}
(\sigma_{\cp})_{-1} R(\lambda)= \left[ (\sigma_{\cp})_{1} \lp \Dd-\lambda \rp \right]^{-1} = \left[ (\sigma_{\cp})_{1}  \Dd \right]^{-1} 
\end{equation}
does not depend on $\lambda$. Using \eqref{projector}, the cusp-surgery principal symbol of the spectral projector $\PP_{ \left[ \lambda_0- \epsilon,\lambda_0 + \epsilon \right] }$ is given by
\[ \sigma_{\cp} \lp \PP_{ \left[ \lambda_0- \epsilon,\lambda_0 + \epsilon \right] } \rp =  \int_{\mathcal C (\lambda_0, \epsilon) } \sigma_{\cp}  R(\lambda) d\lambda =0,  \]
because the integrand is holomorphic in the disk $D(\lambda_0, \epsilon)$. By Proposition \ref{sirscurtexact} $i)$, it follows that actually $\PP_{ \left[ \lambda_0- \epsilon,\lambda_0 + \epsilon \right] } \in \Psi^{-2,-1,0}_{\cp}(X)$. With a similar argument, we notice that
\[ \NN  \PP_{ \left[ \lambda_0- \epsilon,\lambda_0 + \epsilon \right] } =  \int_{\mathcal C (\lambda_0, \epsilon)} \NN  R(\lambda)  d\lambda =0,  \]
and using Proposition \ref{sirscurtexact} $ii)$, we obtain that the spectral projector belongs to $\Psi^{-2,-2,0}_{\cp}(X)$.
We claim that actually $\PP_{ \left[ \lambda_0- \epsilon,\lambda_0 + \epsilon \right] }$ is smoothing and it vanishes rapidly towards the cusp front face, i.e.,
\[ \PP_{[\lambda_0 -\epsilon,\lambda_0 +\epsilon]} \in \Psi^{-\infty,-\infty,0}_{\cp}(X).\]
Indeed, let $\lambda \in \mathbb C$. Since both the normal operator $\NN$ and the cusp-surgery symbol $\sigma_{\cp}$ of $\Dd - \lambda \in \Psi^{1,1,0}_{\cp}(X)$ are invertible, by a similar argument as in Proposition \ref{parametrix}, there exists $Q (\lambda) \in \Psi^{-1,-1,0}_{\cp} (X)$ an inverse modulo $\Psi^{-\infty,-\infty,0}_{\cp}(X)$
\begin{align*}
\lp \Dd - \lambda \rp Q(\lambda) = 1 +T, && T \in \Psi^{-\infty,-\infty,0}_{\cp}(X),
\end{align*}
and furthermore, $Q(\lambda)$ is holomorphic on $\mathbb C$.

Notice that if $\lambda \notin \Spec \Dd_0$, then $Q(\lambda) \equiv \Rr (\lambda) \lp \modd \Psi^{- \infty, - \infty, 0}_{\cp}(X) \rp$, and in particular, this relation holds true for $\lambda \in \mathcal C (\lambda_0, \epsilon)$. We regard equation \eqref{projector} modulo the space of operators $\Psi^{-\infty,-\infty,0}_{\cp}(X)$, and we get
\begin{equation}
\PP_{ \left[ \lambda_0- \epsilon,\lambda_0 + \epsilon \right] } \equiv \frac{1}{2 \pi i} \int_{\mathcal C (\lambda_0, \epsilon)} Q(\lambda) d\lambda  \  \lp \modd \Psi^{- \infty, - \infty, 0}_{\cp}(X) \rp .
\end{equation}
Since the integrand is holomorphic on $\mathbb C$, we obtain that
\[ \PP_{ \left[ \lambda_0- \epsilon,\lambda_0 + \epsilon \right] } \lp \modd \Psi^{- \infty, - \infty, 0}_{\cp}(X) \rp =0, \]
which yields the conclusion
\[  \PP_{ \left[ \lambda_0- \epsilon,\lambda_0 + \epsilon \right] }  \in \Psi^{-\infty, - \infty, 0}_{\cp}(X).   \]
Furthermore, by Proposition \ref{desumflare}, the Schwartz kernel of the spectral projector $\PP_{ \left[ \lambda_0- \epsilon,\lambda_0 + \epsilon \right]}$ is smooth on $[0, \infty) \times X \times X$ and it vanishes rapidly at $\{ t=0 \} \times \gamma$.
\end{proof}

\begin{corollary}
Under Hypothesis \ref{hypot}, if $\lambda_0$ is an eigenvalue for $\Dd_0$ of multiplicity $1$, the corresponding eigenfunctions of the Dirac operator $\Dd_t$ on the spin Riemannian surface $(X, g_t)$ are smooth up to $\{ t=0 \}$ and they vanish rapidly towards $\{ t=0 \} \times \gamma \times \gamma$.
\end{corollary}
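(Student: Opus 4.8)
The plan is to read off the eigenfunctions from the spectral projector of Theorem~\ref{convprspectrintro1}, exploiting that $\lambda_0$ is simple so that this projector has rank one near $t=0$. Fix $\epsilon>0$ with $[\lambda_0-\epsilon,\lambda_0+\epsilon]\cap\Spec\Dd_0=\{\lambda_0\}$ and let $\PP:=\PP_{[\lambda_0-\epsilon,\lambda_0+\epsilon]}\in\Psi^{-\infty,-\infty,0}_{\cp}(X)$ be the cusp-surgery operator produced by Theorem~\ref{convprspectrintro1}; for each $t\in[0,t_1)$ its restriction at time $t$ is the honest $L^2(X,g_t)$-orthogonal projector of $\Dd_t$ onto its eigenvalues in $[\lambda_0-\epsilon,\lambda_0+\epsilon]$. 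Since $\lambda_0$ has multiplicity one, Corollary~\ref{convspectruluii} with $(a,b)=(\lambda_0-\epsilon,\lambda_0+\epsilon)$ gives $t_2\in(0,t_1)$ such that for $0\le t\le t_2$ the operator $\Dd_t$ has a single eigenvalue $\lambda(t)$ in $(\lambda_0-\epsilon,\lambda_0+\epsilon)$, simple, with $\lambda(0)=\lambda_0$. Hence $\PP(t)$ has rank one and $\PP(t)f=\langle f,\psi_t\rangle_{L^2(g_t)}\,\psi_t$ for a unit eigenfunction $\psi_t$ of $\Dd_t$, unique up to a phase.

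To turn the family $\{\psi_t\}$ into one smooth object I would pick a fixed $\phi\in\mathcal C^\infty_c(X\setminus\gamma,\Ss)$ with $\langle\phi,\psi_0\rangle_{L^2(g_0)}\neq 0$ — possible since $\psi_0\neq 0$ in $L^2$ and $\mathcal C^\infty_c(X\setminus\gamma)$ is dense there — and set $\widetilde\psi:=\PP\phi$. On the simple space $\phi$ lies in $\mathcal A^{\infty,0}(\Xs,\Ss)$ (it is smooth, $t$-independent, and vanishes identically near the front face), so the mapping property of cusp-surgery operators (the Proposition after Definition~\ref{apl}), applied to $\PP\in\Psi^{-\infty,-\infty,0}_{\cp}(X)$, gives
\[\widetilde\psi=\PP\phi\in\mathcal A^{\infty,0}(\Xs,\Ss);\]
by Proposition~\ref{desumflare} this says precisely that $\widetilde\psi$ is a smooth section of $\Ss$ on $[0,\infty)\times X$, up to $\{t=0\}$, which vanishes rapidly at $\{t=0\}\times\gamma$. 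At each $t\le t_2$ one has $\widetilde\psi_t=c(t)\psi_t$ with $c(t)=\langle\phi,\psi_t\rangle_{L^2(g_t)}$ and $c(0)\neq 0$.

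It remains to check that the scalar $c$ is smooth and nonvanishing near $t=0$, so that $\psi_t=c(t)^{-1}\widetilde\psi_t$ inherits smoothness up to $\{t=0\}$ and rapid decay at $\{t=0\}\times\gamma$ (equivalently, the rank-one kernel $\psi_t(x)\overline{\psi_t(y)}$ vanishes rapidly at $\{t=0\}\times\gamma\times\gamma$). Since $\PP(t)$ is a self-adjoint idempotent, $|c(t)|^2=\|\widetilde\psi_t\|^2_{L^2(g_t)}=\langle\PP\phi,\phi\rangle_{L^2(g_t)}=\int_X(\PP\phi)(t,\cdot)\,\overline{\phi}\,\dvol_{g_t}$; the volume form is $dx\,dy$ near $\gamma$ and hence extends smoothly across $t=0$ (it is the density $\mu_X$), while $\phi$ is compactly supported away from $\gamma$, so $t\mapsto|c(t)|^2$ is smooth and, by $c(0)\neq0$, strictly positive for small $t$. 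Fixing the phase of $\psi_t$ so that $c(t)>0$ then makes $c\in\mathcal C^\infty$ and finishes the proof. The only content beyond Theorem~\ref{convprspectrintro1} is this extraction of a smooth normalized family of eigenfunctions from a smooth family of rank-one projectors; the delicate inputs are the simplicity of $\lambda_0$ together with Corollary~\ref{convspectruluii} (to know the rank is constantly one near $t=0$) and the $t$-independence of the volume density near $\gamma$ (so that the $L^2$-normalization does not destroy smoothness at $t=0$).
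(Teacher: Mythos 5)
Your argument is correct, and it fills in what the paper leaves implicit: the corollary is stated without a proof environment, so the intended reasoning is precisely that a rank-one spectral projector smooth up to $t=0$ yields a smooth normalized eigenfunction. You realize this by (a) noting via Corollary~\ref{convspectruluii} (or, equivalently, by the constancy of the integer $\Tr\PP_{[\lambda_0-\epsilon,\lambda_0+\epsilon]}(t)$) that the rank is exactly one for small $t$, (b) applying the projector to a fixed test spinor $\phi$ supported away from $\gamma$ so that the mapping property of $\Psi^{-\infty,-\infty,0}_{\cp}(X)$ combined with Proposition~\ref{desumflare} gives a section $\widetilde\psi$ of $\Ss$ smooth on $[0,\infty)\times X$ and vanishing rapidly at $\{t=0\}\times\gamma$, and (c) renormalizing by the scalar $c(t)=\langle\phi,\psi_t\rangle_{L^2(g_t)}$, using $|c(t)|^2=\langle\PP\phi,\phi\rangle_{L^2(g_t)}$ and the $t$-smoothness of the volume form on the support of $\phi$ to show $c$ is smooth and nonvanishing near $t=0$. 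All of these steps are sound, and the parenthetical reinterpretation of the $\gamma\times\gamma$ statement in terms of the rank-one Schwartz kernel $\psi_t(x)\overline{\psi_t(y)}$ correctly reconciles the corollary's phrasing with Theorem~\ref{convprspectrintro1}. This is the natural and essentially unique route, and it is exactly what Theorem~\ref{convprspectrintro1} is set up to make immediate.
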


Pfäffle \cite{pfaffle} studied the continuity of the spectrum of the Dirac operator in \emph{hyperbolic} degenerations of manifolds of dimensions $2$ and $3$, under the hypothesis that the spectrum of the Dirac operator on the limit manifold is discrete. He proved that given $\epsilon, \Lambda >0$, there exists a time $t_0$ from which the eigenvalues inside the compact interval $I:=[- \Lambda, \Lambda]$ of all the Dirac operators $(\Dd_t)_{0 \leq t \leq t_0}$ converge to the limit eigenvalues of $\Dd_0$ from $I$. The following result generalizes the $2$-dimensional case of \cite[Theorem~1.2]{pfaffle} for metrics which are hyperbolic \emph{only} near the pinched curve.

\begin{proof}[Proof of Corollary \ref{convspectruluii}]
By Theorem \ref{convprspectrintro1}, it follows that the map $t \longmapsto \Tr P^t_{[a,b]}$ is smooth up to $\{ t=0 \}$, where we denoted by $ P^t_{[a,b]}$ the spectral projector of the Dirac operator on the interval $[a,b]$ at time $t$. Remark that
\[ \Tr  P^t_{[a,b]} = \rank  P^t_{[a,b]} : [0, \infty) \longrightarrow \mathbb N,  \]
thus $ \rank  P^t_{[a,b]}$ is actually a constant function for $t \in [0, t_1']$, where $t_1'$ is the minimum of the $t_1$'s provided by Theorem \ref{convprspectrintro1} for each eigenvalue of the limit operator in the interval $(a,b)$, and the first conclusion follows. 

Let $\lambda_j(0) \in \Spec \Dd_0 \cap (a,b)$ be a limit eigenvalue of multiplicity $m$, and let $\epsilon>0$ such that $[\lambda_j(0) - \epsilon , \lambda_j(0) + \epsilon] \cap \Spec \Dd_0 = \{ \lambda_j(0) \}$. With a similar argument as above, there exists a time $t_2(\epsilon)$ from which there are exactly $m$ eigenvalues of $\Dd_t$, $t \leq t_2(\epsilon)$, in the interval $[\lambda_j(0) - \epsilon , \lambda_j(0) + \epsilon]$. The number of limit eigenvalues in the interval $(a,b)$ is finite, hence the second part of the statement follows as well.
\end{proof}

\section{Trace-class operators in the cusp-surgery calculus}
It is well known that a classical pseudodifferential operator $A$ of order $m$ on a compact manifold $X$ is trace-class if and only if the order $m < - \dim X$ (see e.g. \cite[Section 2.6]{berline}). In this case, the trace is obtained by integrating the point-wise trace of the Schwartz kernel $k_A$ over the diagonal inside $X^2$, written equivalently as
\[ \Tr A = \int_X \Tr k_A(x,x). \]

Furthermore, following \cite[Section 4]{morolauter}, the \emph{cusp trace} of a cusp pseudodifferential operator $A \in \Psi^{m,\alpha}_{\cc}(X)$ satisfying $m<- \dim X$, $\alpha<-1$, is well-defined as
\[ {}^{\cc}{\!} \Tr A  = \int_{\Delta_{\cc}} k_A \in \mathbb C, \]
where $\Delta_{\cc} \cong X$ is the cusp diagonal inside the cusp double space.

Now let us consider an operator $A \in \Psi^{m,\alpha,\beta}_{\cp}(X)$ in the $n$-dimensional context of Section \ref{calculndim}. In order to obtain a well-defined trace functional, we ask the Schwartz kernel of $A$ to be continuous towards the temporal boundary $\tb$, and we impose the necessary conditions for the temporal operator $\Te (A) \in \Psi^{m,\alpha}_{\cc}(X)$ to be trace-class. Therefore the three conditions imposed on the orders of $A$ are  
\begin{equation}\label{conditiiordine}
\begin{aligned}
m< - \dim X, && \alpha < -1, && \beta \leq 0.
\end{aligned}
\end{equation}
We say that a cusp-surgery operator satisfying these conditions is \emph{trace-class}. For $t \geq 0$, let us denote by $\Delta_t$ the intersection of $\{ t \} \times X \times X \subset \Xd$ with the conormality ``plane" $\Delta$ (see Fig. \ref{defop}). For $t>0$, $\Delta_t$ is actually the diagonal inside $X^2$, and the slice $\{ t \} \times X \times X \subset \Xd$ is transversal to the conormality ``plane" $\Delta$. Thus we can restrict a cusp-surgery operator $A \in \Psi^{*,*,*}_{\cp}(X)$ to $\Delta_t$, and the result is a classical pseudodifferential operator acting on $(X,g_t)$.

\begin{definition}
The \emph{cusp-surgery trace} of a trace-class operator $A \in \Psi^{m, \alpha,\beta}_{\cp}(X)$ satisfying \eqref{conditiiordine} is a function ${}^{\cp}{\!} \Tr (A) : [0, \infty) \longrightarrow \mathbb C$ defined as
$ {}^{\cp}{\!} \Tr (A) (t) =\int_{\Delta_t} \Tr k_A. $
\end{definition}

Notice that for fixed time $t >0$, using Lidskii's Theorem (see for instance \cite{shubin}), the cusp-surgery trace maps the Schwartz kernel
\[  {k_A}_{ \vert_{\{ t \} \times \Diag}}   \longmapsto  \int_{(X,g_t)} \Tr  {k_A}_{\vert_{\Diag}} = \Tr_{L^2 (X, g_t)} \lp A_t \rp , \]
where $\Diag$ is the diagonal inside $X^2$, and $A_t$ is the restriction of the operator $A$ to $(X, h_t )$.

\begin{figure}[H]
\begin{center}
\includegraphics[width=11cm, height=4.7cm]{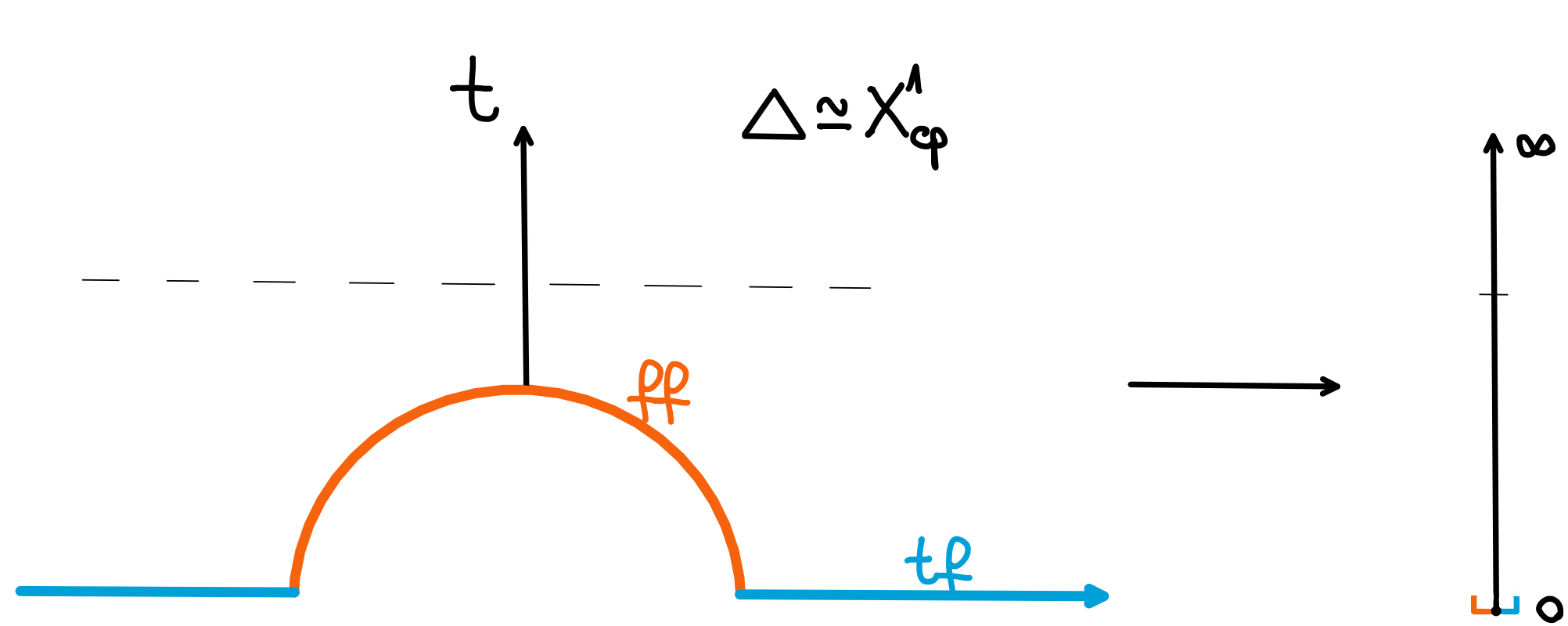}
\caption{The cusp-surgery trace as a push-forward}\label{figtrace}
\end{center}
\end{figure}
\begin{proposition}\label{traceclassndim}
Let $A \in \Psi^{m, \alpha, \beta}_{\cp}(X)$ be a trace-class cusp-surgery pseudodifferential operator, i.e., suppose that
\begin{align*}
 m<  - \dim X, && \alpha < -1, && \beta \leq 0. 
 \end{align*}
\begin{itemize}
\item[$i)$] If $\alpha - \beta \notin \mathbb Z$, then 
\[ {}^{\cp}{\!} \Tr A \in t^{- \alpha} \mathcal C^{\infty}  [0, \infty) + t^{- \beta} \mathcal C^{\infty}  [0, \infty) . \]
\item[$ii)$] If $\alpha - \beta \in \mathbb Z$, then 
\[  {}^{\cp}{\!} \Tr A \in t^{ \min (-\alpha, - \beta)} \mathcal C^{\infty}  [0, \infty) + t^{ \max(-\alpha,-\beta) } \log t \cdot \mathcal C^{\infty}  [0, \infty) . \]
\end{itemize}
\end{proposition}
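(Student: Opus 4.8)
The cusp-surgery trace is, by definition, the integral of $\Tr k_A$ over the slices $\Delta_t$, so it is a push-forward. The strategy is to realize $^{\cp}\!\Tr A$ as the push-forward of a polyhomogeneous conormal density on a suitable blow-up of $[0,\infty)$ under the map which records the time coordinate, and then to invoke the Push-forward Theorem \ref{pft} to read off the index sets. First I would restrict attention to a neighborhood of the diagonal plane $\Delta\subset \Xd$; away from $\gamma$ the kernel is a smooth classical pseudodifferential kernel in $t$ (and the conormal singularity along $\Delta$ is integrated away, exactly as in the proof of the mapping property of the calculus, since $\Delta$ is transversal to the $t$-slices), so its contribution to $^{\cp}\!\Tr A$ is visibly in $\mathcal C^\infty[0,\infty)$. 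The content is entirely local near $\ffc\cap\tb$.

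\textbf{The local model and the extra blow-up.} Near $\ffc$ the double space looks, in the projective coordinates $(r,u,\theta,y_1,y_2)$ introduced for the normal-operator computation (with $r=\rho_{\ffc}$ a boundary defining function for $\ffc$, $u$ the $\mathbb R$-fiber variable, $\theta\in[0,\pi]$, and $\tb$ cut out additionally), like a corner with the two boundary hypersurfaces $\ffc$ and $\tb$. The restriction of $\Delta$ to a $t$-slice, integrated, amounts to integrating $\Tr k_A$ over the $u$ and $\gamma$ directions of $\ffc$; since $k_A\in\rho_{\ffc}^\alpha\rho_{\tb}^\beta\mathcal A^{0,0}(\Xd)$ carries the partial density, after pairing with the leftover $b$-density on $X_{(1)}$ (Proposition \ref{ridbdens}, as in the mapping-property proof, giving the $\rho_{\ffb}^2\rho_{\ffc}^3$ factor) the object to be pushed forward is a $b$-density on the corner whose index sets at $\ffc$ and $\tb$ are $\mathbb N+(\alpha+\text{shift})$ and $\mathbb N+\beta$ respectively; one checks the trace-class conditions $m<-\dim X$, $\alpha<-1$, $\beta\le 0$ make the $u$- and $\gamma$-integrals converge and the leading orders come out to $-\alpha$ at $\ffc$ and $-\beta$ at $\tb$ after dividing by the standard $b$-density $dt/t$. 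The push-forward map here is the composition $\Xd\to [0,\infty)$ recording $t$; in the local corner coordinates this is exactly $t=r\cdot(\text{bdf of }\tb)$ up to a positive smooth factor, i.e. a model $b$-fibration $[0,\infty)^2\to[0,\infty)$, $(a,b)\mapsto ab$. Applying the Push-forward Theorem \ref{pft} to this model $b$-fibration with index sets $E$ at $\ffc$ and $F$ at $\tb$ gives
\[
f_*(E,F)= \tfrac{1}{1}E \ \overline{\cup}\ \tfrac{1}{1}F,
\]
the extended union, because both faces map to the single boundary hypersurface $\{t=0\}$ with exponent $1$.

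\textbf{Reading off the dichotomy.} With $E=(-\alpha)+\mathbb N$ (no logs, from the small-calculus polyhomogeneity) and $F=(-\beta)+\mathbb N$, the extended union is: if $-\alpha-(-\beta)=\beta-\alpha\notin\mathbb Z$, the two arithmetic progressions never collide, so $E\overline{\cup}F=E\cup F$ and no logarithms are produced — this yields $t^{-\alpha}\mathcal C^\infty[0,\infty)+t^{-\beta}\mathcal C^\infty[0,\infty)$, case $(i)$. If $\alpha-\beta\in\mathbb Z$, then the progressions overlap from the larger of $-\alpha,-\beta$ onward, and the definition of $\overline{\cup}$ inserts a term with $\log$-multiplicity $1$ precisely at exponents $\ge\max(-\alpha,-\beta)$, while below that the terms from $\min(-\alpha,-\beta)$ up to one less than $\max$ remain pure powers; hence $^{\cp}\!\Tr A\in t^{\min(-\alpha,-\beta)}\mathcal C^\infty[0,\infty)+t^{\max(-\alpha,-\beta)}\log t\,\mathcal C^\infty[0,\infty)$, case $(ii)$. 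Finally I would glue: the contribution away from $\gamma$ is smooth, the contribution near $\ffc\cap\tb$ has the stated form, and since $\min(-\alpha,-\beta)$ can be $0$ (when $\beta=0\le-\alpha$ fails... actually $\alpha<-1$ forces $-\alpha>1$, and $\beta\le0$ gives $-\beta\ge0$, so $\min\ge 0$ always), the smooth piece is absorbed.

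\textbf{Main obstacle.} The genuinely delicate point is not the index-set bookkeeping but verifying that the push-forward map $^{\cp}\!\Tr$ really is (a restriction to $\Delta$ of) a push-forward along a \emph{$b$-fibration} in a neighborhood of the corner $\ffc\cap\tb$, with exponent exactly $1$ at each of the two relevant faces — equivalently, that the standard $b$-density $dt/t$ lifts correctly and that no further blow-up of $\Xd$ is needed to make the trace integration a clean fibration (unlike the composition argument, which needed the triple space). Here the transversality of the $t$-slices to $\Delta$ and the precise form $t=r\cdot(\rho_{\tb})\cdot(\text{positive})$ in projective coordinates — already visible in \eqref{cod2}--\eqref{cod3} — must be checked on each of the coordinate charts covering $\ffc$ (interior, $\ffc\cap\ffb$, $\ffc\cap\tb$, and the triple intersections), and one must confirm that on the auxiliary faces $\Ba,\Bm,\ffb$ the kernel vanishes to infinite order so they contribute nothing. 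Once that geometric verification is in place, Theorems \ref{pullbackphg} and \ref{pft} finish the argument mechanically.
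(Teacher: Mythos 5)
Your proof is correct and follows essentially the same route as the paper: realize ${}^{\cp}\!\Tr A$ as a push-forward of the diagonal restriction of $k_A$ along a $b$-fibration onto $[0,\infty)$, and read off the dichotomy from the extended union of index sets via the Push-Forward Theorem \ref{pft}. The paper phrases this slightly more cleanly by observing that $\Delta$ is globally diffeomorphic to the simple space $\Xs$ and taking the $b$-fibration to be $\pi=\pi_+\circ\beta:\Xs\to[0,\infty)$ (whose only two boundary hypersurfaces $\ff,\tf$ both map to $\{0\}$ with exponent $1$, so the integrability condition is vacuous), which sidesteps the chart-by-chart verification near $\ffc\cap\tb$ and the gluing with the region away from $\gamma$ that your version carries out.
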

\begin{proof}
Remark that $\Delta$ is diffeomorphic to the simple space $\Xs$ (see Fig. \ref{defop}). Therefore, as a function of the time variable $t$, the cusp-surgery trace of the operator $A$ is obtained as the push-forward of ${k_A}_{\vert_{\Delta}}$ from the simple space $\Xs$ to $[0, \infty)$ (see Fig. \ref{figtrace}).
One can easily check the \emph{b}-fibration property for the map 
\begin{align*}
\pi : \Xs \longrightarrow [0, \infty), && \pi = \pi_+ \circ \beta,
\end{align*}
where $\beta$ is the blow-down map in \eqref{beta}, and $\pi_+ : [0, \infty) \times X \longrightarrow [0, \infty)$ is the projection onto the first factor. Furthermore, both boundary hypersurfaces $\ff$ and $\tf$ are mapped through the map $\pi$ to $\{  0 \}$, thus the integrability condition in the Push-Forward Theorem \ref{pft} is empty.

Notice that the index set of ${(k_A)}_{\vert_{\Delta}}$ towards the front face $\ff$ is $- \alpha + \mathbb N^*:= \{ (-\alpha + n,0): \ n \geq 1 \}$, where $\alpha < -1$ (see \eqref{conditiiordine}). Furthermore, the index set towards the temporal face $\tf$ is $-\beta + \mathbb N:= \{ (-\beta +n,0): \ n \geq 0 \}$, with $\beta \leq 0$. Therefore by the Push-Forward Theorem \ref{pft}, the index set towards the (only) boundary hypersurface $\{ 0 \} \subset [0, \infty)$ is 
\begin{equation}
\begin{aligned}
 I_{\{ 0 \}} ={}& \lp -\alpha + \mathbb N \rp \overline{\cup} \lp - \beta + \mathbb N \rp = \lp -\alpha + \mathbb N \rp \cup \lp -\beta + \mathbb N \rp \cup \left\{  \lp \alpha+n=\beta+m , 1 \rp : \ n,m \in \mathbb N  \right\}.
\end{aligned}
\end{equation}
Notice that logarithmic terms appear if and only if $\alpha-\beta \in \mathbb Z$, and the conclusion follows.
\end{proof}

In the particular $2$-dimensional case described in Section \ref{introducere}, we obtain Proposition \ref{traceclassintro}.

\section{Cusp-surgery trace of resolvents}
\begin{proof}[Proof of Corollary \ref{puterirez}]
By Theorem \ref{resolvintro}, it follows that $\Rr(\lambda) \in \Psi^{-1,-1,0}_{\cp}(X)$, and using the Composition Theorem \ref{compositiontheorem}, we get
\[ \Rr(\lambda)^k \in \Psi^{-k,-k,0}_{\cp}(X).  \]
Using Proposition \ref{traceclassintro} $ii)$ for $\alpha=-k$, $\beta=0$ (hence $\alpha-\beta \in \mathbb Z$), we get
\[ {}^{\cp} {\!} \Tr \lp \Rr(\lambda)^k \rp \in \mathcal C^{\infty}[0, \infty) + t^{k} \log t \cdot \mathcal C^{\infty}[0, \infty),  \]
and the conclusion follows.
\end{proof}

\begin{proof}[Proof of Corollary \ref{improverares}]
We remark that by the Composition Theorem \ref{compositiontheorem}, we have 
\[ \sigma_{\cp} \widetilde{\Rr} (\lambda) = \left[ \sigma_{\cp} \lp  \Dd^2- \lambda   \rp \right]^{-1} =\sigma_{\cp} \widetilde{\Rr} (\lambda_0), \]
thus $\sigma_{\cp} \lp \widetilde{\Rr} (\lambda)-\widetilde{\Rr} (\lambda_0) \rp =0$ and by Proposition \ref{sirscurtexact}, it follows that the difference satisfies
\[    R(\lambda)-R(\lambda_0) \in \Psi^{-3,-2,0}_{\cp}(X), \]
thus it is trace-class (see \eqref{conditiiordine}). Proposition \ref{traceclassintro} implies the conclusion.
\end{proof}

\end{document}